\DeclareSymbolFont{cyrletters}{OT2}{wncyr}{m}{n}
\DeclareFontFamily{OT1}{rsfs}{}
\numberwithin{equation}{subsection}
\newcommand{\Mot}{\mathrm{Mot}}
\newcommand{\coAd}{\mathrm{Ad}^*}
\newcommand{\Kinfty}{\mathrm{K}_{\infty}}
\newcommand{\aG}{\mathfrak{a}_G}
\newcommand{\aGs}{\mathfrak{a}_G^*}
\newcommand{\Ginfty}{G} 
\newcommand{\Hinfty}{H}
\newcommand{\ZZ}{\mathbf{Z}}
\newcommand{\Spec}{\mathrm{Spec}}
\newcommand{\Z}{\mathbf{Z}}
\newcommand{\Afinite}{\mathbf{A}_{\mathrm{f}}}
\newcommand{\diag}{\mathrm{diag}}
\newcommand{\Ext}{\operatorname{Ext}}
\newcommand{\CH}{\operatorname{CH}}
\newcommand{\coad}{\widetilde{\mathfrak{g}}}
\newcommand{\Killing}{B}
\newcommand{\omegat}{\tilde{\omega}}
\newcommand{\etat}{\tilde{\eta}}
\newcommand{\cQ}{\mathcal{Q}}
\newcommand{\cR}{\mathcal{R}}
\newcommand{\cM}{\mathcal{M}}
\newcommand{\D}{\mathcal{D}}
\newcommand{\MM}{\mathrm{MM}}
\newcommand{\Gal}{\operatorname{Gal}}
\newcommand{\p}{\mathfrak{p}}
\newcommand{\tr}{\operatorname{tr}}
\newcommand{\op}{\mathrm{op}}
\newcommand{\gK}{\mathfrak{g},\Kinfty}
\newcommand{\gKZ}{\mathfrak{g},\Kinfty^0}
\newcommand{\fg}{\mathfrak{g}}
\newcommand{\sigmabar}{\bar{\sigma}}
 \newcommand{\LW}{{}^L W}
\newcommand{\Aut}{\operatorname{Aut}}
\newcommand{\End}{\operatorname{End}}
\newcommand{\Qbar}{\overline{\Q}}
\newcommand{\bm}{\mathrm{bm}}
\newcommand{\Ad}{\operatorname{Ad}}
\newcommand{\Hom}{\operatorname{Hom}}
\newcommand{\rank}{\operatorname{rank}}
\newcommand{\Lie}{\mathrm{Lie}}
\newcommand{\disc}{\operatorname{disc}}
\newcommand{\SO}{\mathrm{SO}}
\newcommand{\triv}{\mathrm{triv}}
\newcommand{\Res}{\operatorname{Res}}
\newcommand{\Q}{\mathbf{Q}}
\newcommand{\GG}{\mathbf{G}}
\newcommand{\volume}{\mathrm{volume}}
\newcommand{\SL}{\operatorname{SL}}
\newcommand{\C}{\mathbf{C}}
\newcommand{\dR}{\mathrm{dR}}
\newcommand{\mot}{\mathrm{mot}}
\newcommand{\adele}{\mathbf{A}}
\newcommand{\Sym}{\mathrm{Sym}}
\newcommand{\OO}{\mathcal{O}}
\newcommand{\R}{\mathbf{R}}
\newcommand{\G}{\mathbf{G}}
 \newcommand{\HH}{\mathbf{H}}
\newcommand{\Qlbar}{\overline{\Q}_{\ell}}
\newcommand{\LG}{{}^{L}G}
\newcommand{\PGL}{\mathrm{PGL}}
\newcommand{\Sp}{\mathrm{Sp}}
\DeclareFontShape{OT1}{rsfs}{n}{it}{<-> rsfs10}{}
\DeclareMathAlphabet{\mathscr}{OT1}{rsfs}{n}{it}
\newcommand{\GL}{\mathrm{GL}}
\newcommand{\vol}{\operatorname{vol}}
\newtheorem{theorem}{Theorem}[subsection]
\newtheorem{definition}{Definition}[subsection]
\newtheorem*{lemma*}{Lemma}
\newtheorem{lemma}{Lemma}[subsection]
\newtheorem{prediction}{Prediction}[subsection]
\newtheorem{prop}{Proposition}[subsection]
\newtheorem{conj}{Conjecture}[subsection]
\theoremstyle{remark}
\newtheorem{rem}{Remark}
\newcommand{\m}{\mathcal{M}}
\newcommand{\var}{\mathrm{Var}}
\newcommand{\cl}{\operatorname{cl}}
\newcommand{\et}{\mathrm{et}}
\newcommand{\mm}{\! \mathcal{M}}
\newcommand{\Frob}{\operatorname{Frob}}
\newcommand{\comp}{\operatorname{comp}}
\newcommand{\B}{\mathrm{B}}
\newcommand{\DR}{\mathrm{DR}}
\newcommand{\cDR}{\mathcal{DR}}
\newcommand{\num}{\mathrm{num}}
\newcommand{\rat}{\mathrm{rat}}
\newcommand{\Gr}{\mathrm{Gr}}
\newcommand{\M}{\widetilde{M}}
\newcommand{\ve}{\varepsilon}
\newcommand{\A}{\mathbf{A}}
\newcommand{\Gh}{\widehat{G}}
\newcommand{\Gth}{\mathring{G}} %
\newcommand{\GC}{^{C}{}{G}{}{}}
\renewcommand{\Re}{\operatorname{Re}}
\newcommand{\sigmab}{{\bar{\sigma}}}
\newcommand{\bM}{\mathbf{M}}
\newcommand\mnote[1]{\marginpar{\tiny #1}}
\newcommand{\cL}{\mathcal{L}}
\begin{document} 

\setcounter{tocdepth}{1}

\title{Automorphic cohomology, motivic cohomology, and the adjoint $L$-function}
\author{Kartik Prasanna and Akshay Venkatesh}
\begin{abstract} We propose  a relationship between the cohomology of arithmetic groups, and the motivic cohomology
of certain (Langlands-)attached motives. The motivic cohomology group in question is that related, by Beilinson's conjecture,
to the adjoint $L$-function at $s=1$. 
We present evidence for the conjecture  using the theory of periods of automorphic forms, and using analytic torsion. 
\end{abstract}
\maketitle

\tableofcontents

 \section{Introduction}

A remarkable feature of the cohomology $H^*(\Gamma, \C)$ of arithmetic groups  $\Gamma$ is  
their spectral degeneracy: 
Hecke operators can act in several different degrees with exactly the same eigenvalues.
For an elementary introduction to this phenomenon, see \cite[\S 3]{Takagi}.
 In some cases, such as Shimura varieties,  it can be explained by the action of a Lefschetz $\mathrm{SL}_2$ but in general it is more mysterious. 

We shall propose  here
 that this degeneracy arises from a hidden degree-shifting action of a certain motivic cohomology group on $H^*(\Gamma, \Q)$. 
 This is interesting both as an extra structure of $H^*(\Gamma, \Q)$, and because it exhibits
a way to access the motivic cohomology group.  We do not know how to define the action directly, but we give a formula for the action tensored with $\mathbf{C}$, 
 using the archimedean regulator.      Our conjecture, then, is that this action over $\C$ respects $\Q$ structures. 
  
The conjecture has numerical consequences: it predicts what the ``matrix of periods'' for a cohomological automorphic form should look like.
 We shall verify a small number of these predictions. This is the main evidence for the conjecture at present;
 we should note that we found the verifications somewhat miraculous, as they involve a large amount of cancellation 
 in ``Hodge--linear algebra.''

It takes a little while to formulate the conjecture: in \S \ref{sec:numerical_invariants} we will set up   notation for the cohomology of arithmetic groups;
as usual it is more convenient to work with adelic quotients. 
We formulate the conjecture itself in \S \ref{sec:conj_formulation}.   \S \ref{sec:tori} discusses the case of tori -- this is  just a small reality check. In \S \ref{sec:edge} we describe how to extract numerically testable predictions from the conjecture,
some of which we have verified.

  \subsection{Cohomological representations} \label{sec:numerical_invariants} 

 Fix a reductive $\Q$-group $\mathbf{G}$, which we always suppose to have no central split torus. Let $S$ 
be the associated symmetric space; for us, this will be
$G/\Kinfty^0$, where $\Kinfty^0$
is a maximal compact {\em connected} subgroup
of $G := \mathbf{G}(\R)$; thus $S$ need not be connected, but $G$ preserves an orientation on it.

Let $\Afinite$ denote the finite adeles  of $\Q$ and let $K \subset \mathbf{G}(\Afinite)$ be a level structure, i.e., an open compact subgroup; we suppose that $K$ factorizes as $K = \prod_{v} K_v$. 
We may form the associated arithmetic manifold
$$Y(K) = \mathbf{G}(\Q) \backslash S \times \mathbf{G}(\Afinite)/K.$$  
If the level structure $K$ is fixed (as in the rest of the introduction) we allow ourselves to just write $Y$ instead of $Y(K)$. 

  The cohomology $H^*(Y, \Q)$ is naturally identified with the direct sum $\bigoplus H^*(\Gamma_i, \Q)$
  of group cohomologies of various arithmetic subgroups $\Gamma_i \leqslant \mathbf{G}$, indexed by the connected components of $Y$.
  However, it is much more convenient to work with $Y$; for example, the full Hecke algebra for $\mathbf{G}$
  acts on the cohomology of $Y$ but may permute the contributions from various components. 
 
 As we recall in \eqref{dimension cle} below, the 
 action of the Hecke algebra on $H^*(Y, \C)$ 
 often exhibits the same eigencharacter in several different cohomological degrees. 
  Our conjecture will propose the existence of extra endomorphisms of $H^*(Y, \Q)$ that commute with the Hecke algebra and explain this phenomenon.

First of all, we want to localize at a given character of the Hecke algebra.  For each $v$  
not dividing the level of $K$, i.e., at which $K_v$ is hyperspecial, let $\chi_v: \mathcal{H}(\mathbf{G}(\Q_v), K_v) \rightarrow \Q$
be a character. Consider the set of automorphic representations $\pi = \otimes \pi_v$ of $\mathbf{G}(\adele)$
such that:
\begin{itemize}
\item[-] $\pi^K \neq 0$
\item[-]   $\pi_{\infty}$ has nonvanishing  $(\gKZ)$-cohomology.
\item[-] For finite places $v$ not dividing the level of $K$ (places for which $K_v$ is hyperspecial)  the representation $\pi_v$ is spherical
and corresponds to the character $\chi_v$. 
\end{itemize} 
 This is a finite set, which we shall assume to be nonempty, say $$\Pi = \{\pi_1, \dots, \pi_h\}.$$   These automorphic representations are nearly equivalent,    and therefore belong to a single Arthur packet.   We will assume that {\em each $\pi_{i}$ is cuspidal and tempered at $\infty$.}
 Here, the cuspidality assumption is to avoid complications of non-compactness;  temperedness is important
 for the way we formulate our conjecture, but (conditionally on Arthur's conjecture) should follow
 from the character $\chi_v$ being tempered for  just one place $v$.  
  
We will be interested in the part of cohomology which transforms according to the character $\chi$, which
we will denote by a subscript $\Pi$:
\begin{align} H^*(Y, \Q)_{\Pi} = \{ h \in H^*(Y, \mathbf{Q}): T h = \chi_v (T) h \mbox{ for all $T \in  \mathcal{H}(\mathbf{G}(\Q_v), K_v)$}
\\  \nonumber \mbox{  and all places $v$ not dividing the level of $K$.}\}\end{align}
   We sometimes abridge $H^i(Y, \Q)_{\Pi}$ to $H^i_{\Pi}$. 

   In particular, under our assumptions above, 
$H^*(Y, \mathbf{C})_{\Pi}$ can be computed from the $(\gKZ)$-cohomology of the $\pi_i$.
The computation of the $(\gK)$-cohomology of tempered representations (see \cite[Theorem III.5.1]{BW} and also \cite[5.5]{Borel2} for the noncompact case) 
implies that
\begin{equation} \label{dimension cle} \dim H^j(Y, \mathbf{R})_{\Pi} = k \cdot   {\delta \choose j-q}, \end{equation}
where we understand ${\delta \choose a} =0 $ if $a \notin [0,\delta]$, 
\begin{equation} \label{dim-inv}  \delta :=\rank  \Ginfty - \rank \Kinfty,    \quad q := \frac{\dim Y-\delta}{2},\end{equation}
and $k = \dim H^q(Y, \mathbf{R})_{\Pi}$. 
For example, if $\mathbf{G} = \SL_{2m}$, then $q = m^2$ and $\delta = m-1$.  

In words, \eqref{dimension cle} asserts that the Hecke eigensystem indexed by $\Pi$
occurs in every degree between $q$ and $q+\delta$, with multiplicity proportional to ${\delta \choose j-q}$.

  \subsubsection{Galois representations and motives attached to $\Pi$}
    
In the situation just described,  $\Pi$ should conjecturally \cite{BG} have attached to it a compatible system of Galois representations 
$ \rho_\ell : \Gal(\Qbar/\Q) \rightarrow \LG(\Qlbar)$. Actually all that is important for us is the composition
 with the adjoint or the co-adjoint representation of $\LG$: 
 $$\Ad \rho_\ell: \Gal(\Qbar/\Q) \rightarrow \GL(\widehat{\mathfrak{g}} \otimes \Qlbar), \quad \Ad^*  \rho_\ell: \Gal(\Qbar/\Q) \rightarrow \GL(\widetilde{\mathfrak{g}} \otimes \Qlbar),
 $$ 
 where $\widehat{\mathfrak{g}}$ denotes the Lie algebra of $\widehat{\G}$ (considered as a split reductive $\Q$-group)  and $\widetilde{\fg} = \Hom(\widehat{\mathfrak{g}}, \Q)$ is its linear dual.
 In fact,     if $\G$ is not simply connected the representation $\rho_{\ell}$ requires, for its definition, a modification of the notion of $L$-group (see again \cite{BG});
 however, no such modification should be required for
  $\Ad \rho_{\ell}$ or $\Ad^* \rho_{\ell}$.
 
  We will assume throughout, as is predicted by the Langlands program, 
 that $\Ad \rho_\ell$ and $\Ad^* \rho_\ell$ are Galois representations underlying a Grothendieck motive; this motive (which is necessarily of weight zero) will be denoted by $\Ad \Pi$  or $\Ad^* \Pi$ respectively.
 Thus, for example, the Galois representation on the etale realisation of $\Ad \Pi$  is identified with  $\Ad \rho_{\ell}$.
    
    Before we proceed, a brief remark about ``adjoint'' versus ``coadjoint.''
The representations $\Ad \rho_\ell$ and $\Ad^* \rho_\ell = (\Ad \rho_\ell)^*$ are isomorphic
 if $\mathbf{G}$ is semisimple, because of the Killing form.  Consequently,  the associated motives $\Ad \Pi$ and $\Ad^* \Pi$ 
 should be isomorphic. However,   both to handle the reductive case and to be more canonical, we will distinguish between the two.

 \subsection{The conjecture} \label{sec:conj_formulation}

  It is expected (cf. (3.2) of \cite{LM}) that the adjoint $L$-function     $$ L(s, \Pi, \Ad^*) $$
  that is to say, the $L$-function attached to the motive $\Ad^* \Pi$, 
  is holomorphic at $s=1$ under our assumptions (in particular, 
that $\G$ has no central split torus). 
According to Beilinson's conjecture, the value of this $L$-function is related to a regulator on a certain motivic cohomology group
attached to $\Ad^* \Pi$. It is this motivic cohomology group that will play the starring role in our conjecture.
We defer to later sections more careful expositions of points of detail;
in particular,  what we need of motivic cohomology and Beilinson's conjectures is summarized in \S \ref{Beilinson}. 

 First, to  the real reductive group $G = \G_{\R}$ we shall attach in \S \ref{VoganZuckerman} a canonical $\C$-vector space $\aG$, such that $\dim(\aG) = \delta$;
it can be described in either of the following ways:
\begin{itemize}
\item[-]  $\aG$ is the  split component of a fundamental  Cartan subalgebra inside $\mathrm{Lie}(\G)_{\C}$.
\item[-] The dual $\aGs := \Hom_\C (\aG, \C)$ is the fixed points, on the Lie algebra $\mathrm{Lie}(\widehat{T})$ of the dual maximal torus, 
of $w_0 \sigma$, where $w_0$ is a long Weyl element and $\sigma$ is the (pinned) action of complex conjugation on $\widehat{G}$. \end{itemize}
We shall construct in  \S \ref{VoganZuckerman} an action of the exterior algebra $\wedge^* \aGs$   on the $\gKZ$-cohomology of a tempered representation of $\G(\R)$. 
 This gives rise to  a natural degree-shifting action of $\wedge^* \aGs$
 on $H^*(Y,  \C)_{\Pi}$,  with the property that the associated map 
 \begin{equation} \label{complexaction} H^q(Y,\C)_{\Pi} \otimes \wedge^i \aGs  \stackrel{\sim}{\longrightarrow} H^{q+i}(Y, \C)_{\Pi} \end{equation}
 is an isomorphism. 
For a more careful discussion see \S \ref{VoganZuckerman}.

Next,  standard conjectures
allow us to attach to a  Grothendieck motive $M$ over $\Q$ a motivic cohomology group
$H^i_{\mm} (M_{\Z}, \Q(j))$  (the subscript $\Z$ means that these are classes that ``extend to an integral model''; 
the group  $H^i_{\mm}$ should however be independent of integral model).   
Then $H^i_{\mm} (M_{\Z}, \Q(j))$ is a $\Q$-vector space,
conjecturally finite dimensional, and is equipped with a regulator map whose target is the Deligne cohomology 
$H^i_{\D} (M_\R, \R(j))$. 
We are interested in the case of $M= \Ad^* \Pi$, and write for brevity:
\begin{equation} \label{Ldef} L := H^1_{\m}((\coAd \Pi)_{\Z},  \Q(1)). \end{equation} 
 In this case 
(\S \ref{BRag}) the target of the archimedean regulator (tensored with $\C$)  is canonically identified with $\aG$;
 we get
therefore a map
 \begin{equation} \label{eqn:arch-reg}   
L \otimes \C \longrightarrow \mathfrak{a}_G 
\end{equation}
which is conjecturally an isomorphism.

Write $L^* = \Hom(L, \Q)$ for the  $\Q$-dual and $L^*_{\C} = \Hom(L, \C)$. {
 Dualizing \eqref{eqn:arch-reg}, the map  \begin{equation} \label{eqn:arch-reg-dual}  \aGs \longrightarrow L^*_{\C} \end{equation} 
is again conjecturally an isomorphism.  
 We are ready to formulate our central 
\begin{conj}  \label{mainconjecture} Notation as above: $H^*(Y, \C)_{\Pi}$ is the subspace of cohomology associated to the automorphic form $\Pi$,
$\aG$ is the $\C$-vector space  associated to $\GG$, and $L$ is, as in \eqref{Ldef}, 
  the motivic cohomology of the adjoint motive associated to $\Pi$.

Then the action of $\wedge^* \aGs$ on $H^*(Y, \C)_{\Pi}$ described above is compatible with rational forms, i.e., 
 if an element of $\aGs$ maps to $L^{*}$, then its action on cohomology preserves   $H^*(Y,\Q)_{\Pi} \subset H^*(Y, \C)_{\Pi}$.

\end{conj}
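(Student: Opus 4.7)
The plan is to construct an action of $L^{*}$ on $H^{*}(Y,\Q)_{\Pi}$ by arithmetic/algebraic means, independent of the analytic Vogan--Zuckerman construction of \S\ref{VoganZuckerman}, and then to verify that its complexification matches the $\aGs$-action of \eqref{complexaction} under the regulator \eqref{eqn:arch-reg-dual}. Because the Hecke isotypic component $H^{*}(Y,\C)_{\Pi}$ is a free rank-$k$ module over $\wedge^{*}\aGs$, any auxiliary arithmetic action whose complexification agrees on one degree with the known one automatically yields the rationality asserted in Conjecture \ref{mainconjecture}.

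For the existence step, the most natural candidate is a derived Hecke algebra: for each prime $p$ of good reduction there should be an action of a degree-raising $\Ext$-style Hecke algebra on $H^{*}(Y,\Z/p^{n})_{\Pi}$ that pairs, via the $p$-adic \'etale realization of $L$, with the motivic cohomology group \eqref{Ldef}. The hard task is to integrate these intrinsically $p$-adic actions into a single $\Q$-rational action realizing $L^{*}$. One route is to show, for a fixed motivic class $x \in L$, that the operators defined at distinct primes $p$ agree under $p$-adic comparison with \'etale cohomology, and then to reconstruct the $\Q$-rational operator by a Hasse-type principle. A complementary route, available when $Y$ carries the structure of a Shimura variety, is to realize classes in $L = H^{1}_{\m}((\coAd \Pi)_{\Z}, \Q(1))$ by units on arithmetic sub-Shimura-varieties and to convolve them against the fundamental class of $Y$, producing the desired $\Q$-rational degree-shifting operators directly from algebraic cycles.

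For the archimedean comparison step, one computes the Beilinson regulator of a motivic class $x \in L$ in the de Rham realization of $\coAd \Pi$, uses the identification of \S\ref{BRag} to land in $\aG$, and then matches the resulting vector with the harmonic $(\mathfrak{g},\Kinfty^{0})$-cocycle that Vogan--Zuckerman use to implement cup-product with an element of $\aGs$. This step is essentially linear-algebraic once compatible rational structures on $(\coAd \Pi)_{\dR}$ and on the $\Pi$-Whittaker model have been fixed; the substantive content is the compatibility of the archimedean regulator with the explicit cochain calculus underlying \S\ref{VoganZuckerman}, which in turn follows from unwinding the Deligne-cohomology description of the target of the regulator.

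The main obstacle, by a wide margin, is the first step. No presently available construction produces, from a rational motivic cohomology class of $\coAd \Pi$, a $\Q$-rational degree-shifting operator on $H^{*}(Y,\Q)_{\Pi}$: the derived Hecke algebra is intrinsically $p$-adic and its rationality is itself conjectural, and the Shimura-variety approach presumes a supply of algebraic-cycle representatives for adjoint motivic cohomology that is unavailable outside special families. A genuine proof of Conjecture \ref{mainconjecture} in the stated generality would therefore require either (a) a $p$-adically integral and Langlands-functorial refinement of derived Hecke algebras sufficient to extract a $\Q$-structure, or (b) substantial parts of the Beilinson conjectures for $\coAd \Pi$ realized through explicit cycles. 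Absent such inputs, the realistic targets are isolated cases --- low-dimensional Shimura varieties, Hilbert modular varieties, and the tori and edge-of-cohomology degrees already treated in the paper --- where either explicit cycles or direct period calculations render the action constructible by hand.
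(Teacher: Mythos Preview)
The statement you are attempting to prove is labeled as a \emph{Conjecture} in the paper, and the paper does not prove it. The authors write explicitly that they ``do not know how to define the action directly'' and that they ``cannot suggest any mechanism for this connection''; the body of the paper is devoted to extracting numerical predictions from the conjecture (Predictions \ref{pred1}--\ref{pred3}) and verifying some of them via period computations and analytic torsion. So there is no ``paper's own proof'' to compare against.

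Your proposal is therefore not a proof but a research outline, and you are candid about this: you correctly identify that the existence step --- producing a $\Q$-rational degree-shifting action from motivic cohomology --- is the entire difficulty, and that neither the derived Hecke algebra route nor the algebraic-cycle route is currently adequate in general. This diagnosis is accurate and matches the paper's own remarks: the authors mention that the $p$-adic analogue has been constructed in two different ways in \cite{GV} and \cite{DHA}, but that passing from these intrinsically $p$-adic actions to a $\Q$-rational one is exactly what is missing. Your archimedean comparison step is reasonable in spirit, though you should be aware that even this is not fully carried out in the paper --- the action of $\aGs$ is defined directly on $(\mathfrak{g},\Kinfty^0)$-cohomology in \S\ref{VoganZuckerman}, and the link to the regulator target is made in \S\ref{BRag}, but no ``matching'' argument of the kind you sketch is performed because there is nothing arithmetic on the other side to match with.

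In short: there is no gap in your reasoning so much as an honest acknowledgment that the conjecture is open. What you have written is a fair summary of the landscape, but it should not be presented as a proof attempt.
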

 
 In particular, the conjecture means that 
 \begin{quote}    {\em There is a natural, graded   action of $\wedge^* L^{*}$
on $H(Y, \mathbf{Q})_{\Pi}$, with respect to which the cohomology is freely generated in degree $q$. }
\end{quote}

As we mentioned earlier,  this is interesting because it suggests a  direct algebraic relationship between   motivic cohomology
and the cohomology of arithmetic groups. At present we cannot suggest any mechanism for this connection. The occurrence
of algebraic $K$-groups of rings of integers in the stable homology of $\GL_n$ is likely a degenerate case of it. 
	For the moment, we must settle for  trying to check certain numerical consequences.

Although it is not the concern of this paper, the conjecture  has  a $p$-adic counterpart, which
itself has a rich algebraic structure. 
As written, the conjecture postulates an action of $L^*$ on 
 $H^*(Y, \Q)_{\Pi}$; this action (assuming it exists) is pinned down
 because we explicitly construct the action of $L^*_{\C}$. 
 But the conjecture also implies that $L^*_{\Q_p} = L^* \otimes \Q_p$ acts
 on the cohomology with $p$-adic coefficients $H^*(Y, \Q_p)_{\Pi}$.   
 Conjecturally, the $p$-adic regulator gives an isomorphism
 \begin{equation}
 \label{eqn:p-adic-reg}
L \otimes \Q_{p}  \stackrel{\simeq}{\longrightarrow} H^1_f(\Q, \Ad^* \rho_{p}(1)),
\end{equation}
where the subscript $f$  denotes the ``Bloch-Kato Selmer group,''
\cite{BK}.    
This means that  there should be an action of $H^1_f(\Q, \Ad^* \rho_{p}(1))^{\vee} $  on $H^*(Y, \Q_p)_{\Pi}$ by degree $1$ graded endomorphisms.  
 The papers \cite{GV} and \cite{DHA} give two different ways of producing this action. One advantage of the $p$-adic analogue of the conjecture
 is that it is more amenable to computations, and 
   numerical evidence for  its validity will be given in  \cite{HV}. 
   
   Finally we  were informed by Michael Harris that 
   	 Goncharov has also suggested, in  private communication, the possibility of a connection between    the motivic cohomology group $L_{\Pi}$ and the cohomology of the arithmetic group.

\subsection{The case of tori} \label{sec:tori}

We briefly explicate our constructions in the case of tori. In this case the conjecture is easy,
but this case is helpful for reassurance and to pinpoint where there need to be duals in the above picture.  

 Let $\mathbf{T}$ be an anisotropic $\Q$-torus.   Let $\mathfrak{a}_T^*$ be the canonical $\C$-vector space attached 
 to $\mathbf{T}$, as in  the discussion preceding \eqref{complexaction}.  Then
 $\mathfrak{a}_T^*$ is canonically identified with the dual of
 $$\mathfrak{a}_T = \Lie(\mathbf{S}) \otimes \C,$$ where 
 $\mathbf{S}$  is the maximal $\mathbf{R}$-split subtorus of $\mathbf{T}_{\R}$.
This identification gives  a natural logarithm map
 $$ \log: \mathbf{T}(\R)  \rightarrow \mathfrak{a}_T$$
characterized by the fact that it is trivial on the maximal compact subgroup $\Kinfty$
  and coincides with the usual logarithm map on the connected component
  of $\mathbf{S}(\R)$.   
  
The associated symmetric space is
  $$Y = \mathbf{T}(\Q) \backslash  \mathbf{T}(\R) \times \mathbf{T}(\adele_f) / K\Kinfty^{\circ}$$
  Then $Y$ has the structure of a compact   abelian Lie group: each component is the quotient of $\mathbf{T}(\R)^{\circ}/\Kinfty^{\circ} \simeq  \mathfrak{a}_T$ by  the image of
  $$ \Delta  = \{t \in \mathbf{T}(\Q): t \in  \mathbf{T}(\R)^\circ \cdot  K\},$$
  which is a discrete   cocompact subgroup  of $\mathbf{T}(\R)$. 

   As in the general discussion above, there is a 
   natural action of $\wedge^* \mathfrak{a}_T^*$ on the cohomology of $Y$. In this case  the action of $\nu \in \wedge^* \mathfrak{a}_T^*  $ is by taking cup product with $\Omega(\nu)$. Here, 
      $$\Omega: \wedge^* \mathfrak{a}_T^* \longrightarrow \mbox{ invariant differential forms on $Y$}$$  comes
  from the identification of the tangent space of $\mathbf{T}(\R)/\Kinfty$ at the identity with $\mathfrak{a}_T$.
   Note that, for $\nu \in \mathfrak{a}_T^*$,  the cohomology class of $\Omega(\nu)$ is rational (i.e.,   lies in $H^1(Y, \Q)$)
if and only if $ \langle \log(\delta), \nu \rangle \in \Q$
for all $\delta \in \Delta$. 

On the other hand, as in our prior discussion, to any cohomological representation $\Pi$
is associated a motive $\Ad^* \Pi$ of dimension equal to $\dim(T)$.  
In fact, $\Ad^* \Pi$
is the Artin motive whose Galois realization is the (finite image) Galois representation on $X_*(\mathbf{T}) \otimes \Q$.  
Then $H^1_{\cM}(\Ad^* \Pi, \Q(1))  = \mathbf{T}(\Q) \otimes \Q$ and the  subspace of ``integral'' classes  is then identified with 
\begin{equation}  \label{int struct} H^1_{\cM}((\Ad^* \Pi)_{\Z}, \Q(1)) = \Delta \otimes \Q.\end{equation}
 The regulator map $H^1_{\cM}(\Ad^* \Pi_{\Z}, \Q(1)) \rightarrow \mathfrak{a}_T$  
is just the logarithm map.  

Then Conjecture \ref{mainconjecture} just says: if $\nu \in \mathfrak{a}_T^*$ takes $\Q$-values on $\log(\Delta)$, 
then cup product with $\Omega(\nu)$ preserves $H^*(Y, \Q)$.
But this is obvious, because  the assumption means that $\Omega(\nu)$ defines a class in $H^1(Y, \Q)$.

 \subsection{Numerical predictions and evidence for the conjecture}  \label{sec:edge}   We now turn to  describing  our evidence for the conjecture. 
 To do so, we must first  extract numerical consequences from the conjecture;  for this we put metrics on everything. 
It turns out there are plenty of consequences that can be examined even with minimal knowledge of motivic cohomology. 
 
 Throughout this section, we continue with the general setup of  \S \ref{sec:numerical_invariants}; in particular,
 all the cohomological automorphic representations that we consider are tempered cuspidal. 
 
  By a {\em metric} on a real vector space
 we mean a positive definite quadratic form; by a metric on a complex vector space we mean a positive definite Hermitian form. 
  If $V$ is a vector space with metric, there are induced metrics  on $\wedge^* V$ and on $V^*$; these arise
  by thinking of a metric as an isomorphism to the (conjugate) dual space, and then by transport of structure. 
A perfect pairing $V \times V' \rightarrow \mathbf{R}$ of metrized real vector spaces will be said to be a ``metric duality''
when there are  dual bases for $V, V'$ that are simultaneously orthonormal (equivalently: $V' \rightarrow V^*$ is isometric, for
 the induced metric on $V^*$). 
 
 If $V$ is a metrized real vector space and $V_{\Q} \subset V$ is a $\Q$-structure, 
 i.e., the $\Q$-span of an $\R$-basis for $V$, then we can speak of the volume of $V_{\Q}$,
\begin{equation} \label{voldef}  \vol V_{\Q} \in \R^*/\Q^*,\end{equation}
 which is, by definition,   the covolume of $\Z v_1 + \dots \Z v_n$
 for any $\Q$-basis $\{v_1, \dots, v_n\}$ for $V_{\Q}$, with respect to the volume form on $V_{\R}$ defined by the metric. 
Explicitly $$(\vol V_{\Q})^2 = \det(\langle v_i, v_j \rangle).$$
We will later allow ourselves to use the same notation even when the form $ \langle - , - \rangle$ is not positive definite;
thus $\vol V_{\Q}$ could be a purely imaginary complex number. 
   
Fix an invariant bilinear $\Q$-valued form on $\mathrm{Lie}(\mathbf{G})$, for which the Lie algebra of $\Kinfty$
is negative definite and the induced form on the quotient is positive definite. 
This gives rise to a $\Ginfty$-invariant metric on the symmetric space, and thus to a Riemannian metric on $Y$. 
Once this is  fixed,   $H^j(Y, \mathbf{R})_{\Pi}$ and $H^j(Y, \mathbf{C})_{\Pi}$ both  get metrics by means of  their identifications with harmonic forms. 
   (Scaling the metric $g \mapsto \lambda g$ leaves the notion of harmonic form unchanged; 
 but
 it scales the metric on $H^i$ by $\lambda^{d/2-i}$, where $d=\dim (Y)$.)

 The Poincar{\'e} duality pairing $H^j(Y, \mathbf{R}) \times H^{j^*} (Y, \mathbf{R}) \rightarrow \mathbf{R}$, 
 where $j +j^* = \dim(Y)$, induces a metric duality, in the sense just described.
  The same conclusions are true for 
 the induced pairing
\begin{equation} \label{PDpairing}H^j_{\Pi}(Y, \mathbf{R}) \times H^{j^*}_{\widetilde{\Pi}} (Y, \mathbf{R}) \rightarrow \mathbf{R}\end{equation} 
 between the $\Pi$ part and the $\widetilde{\Pi}$-part, where $\widetilde{\Pi}$ denotes the contragredient of $\Pi$;  since we are supposing that $\Pi$ arose from a $\Q$-valued character of the Hecke algebra,
  we have in fact $\widetilde{\Pi} \simeq \Pi$. 
 
 In \S \ref{motmetricss}, we explain how to 
 introduce on $\aGs  $ a metric  
for which the action of $\wedge^*  \aGs$ is ``isometric,''
i.e., for $\omega \in H^q(Y, \C)_{\Pi}$ and $\nu \in \wedge^t  \aGs$ we have
\begin{equation} \label{motmetric} \| \omega \cdot \nu\| = \|\omega\| \cdot \|\nu\|.\end{equation} 
This metric on $\aGs$ depends, of course, on the original choice of invariant form on $\mathrm{Lie}(\GG)$.
It also induces a metric, by duality, on $\aG$. 

Note that we also introduce an $\R$-structure on $\aGs$  -- the ``twisted real structure'', see Definition
\ref{twistedReal}-- which  is compatible with the real structure $L \otimes \R \subset L \otimes \C$,  and preserves the real structure $H^q(Y, \R) \subset H^q(Y, \C)$ --
see Lemma \ref{Beilinson regulator real structure}  and Proposition \ref{basic properties}.
Therefore, we get also corresponding statements for real cohomology.

 With these preliminaries, we now examine explicit period identities that follow from our conjecture:

\begin{prediction} \label{pred1}
Suppose that $\dim H^q(Y, \C)_{\Pi}=1$.
Let $\omega$ be a harmonic $q$-form  on $Y$ whose cohomology class generates
$H^q(Y, \Q)_{\Pi}$.  Then 
\begin{equation}\label{frodo0}
\langle \omega, \omega \rangle \sim  ( \vol \ L) \end{equation}
where the volume of $L$ is measured with respect to the metric induced
by the inclusion $L \subset \mathfrak{a}_G$; 
we have used the notation $A \sim B$ for $A/B \in \mathbf{Q}^*$. 
\end{prediction}

At first sight, \eqref{frodo0} looks like it would require a computation of the motivic cohomology group $L$ to test. However,   Beilinson's conjecture implies a formula for  $\vol(L)$ 
 in terms of the adjoint $L$-function and certain other Hodge-theoretic invariants (see \S \ref{volLcomp}).  Thus, although not formulated
 in a way that makes this evident, \eqref{frodo0} can be effectively tested without computation of motivic cohomology. 
  Note that  \eqref{frodo0} is equivalent to 
\begin{equation}\label{frodo}
 \frac{\langle \omega, \omega \rangle }{ \left| \int_{\gamma} \omega  \right|^2} \sim  ( \vol \ L) \end{equation}
where $\omega$ is now an arbitrary nonzero harmonic $q$-form belonging to $H^q(Y, \C)_{\Pi}$ and  $\gamma$ is  a generator for $H_q(Y, \Q)_{\Pi}$.

\proof   (that Conjecture \ref{mainconjecture} implies Prediction \ref{pred1}): 
 Let $\nu$ generate $\wedge^{\delta} L^*$ (the top exterior power).  The conjecture implies that $\omega' = \omega \cdot \nu$ 
gives a nonzero element of  $H^{q^*} (Y, \Q)_{\Pi}$, where $q+q^* = \dim(Y)$. 
Then  \eqref{PDpairing} and \eqref{motmetric} give
\begin{equation} \label{Judy Hopps} \|\omega\|_{L^2} \cdot \|\omega'\|_{L^2} \in \mathbf{Q}^*  \stackrel{\eqref{motmetric}}{\implies}
\langle \omega, \omega \rangle \cdot \|\nu\|  \in \mathbf{Q}^*.\end{equation}

Now $\|\nu\|$ is precisely  the volume of $L^*$ with respect to the given metric on $\aGs$; 
said differently, $\|\nu\|^{-1}$ is the volume of $L$ for the dual metric on $\mathfrak{a}_G$. \qed 

The first piece of evidence for the conjecture, informally stated, is a verification of Prediction 1, in the following sense (see Theorem \ref{MainPeriodTheorem} for precise statement):  
\begin{quote}  
Assume  Beilinson's conjecture, as formulated in \S \ref{Beilinson}.
Assume also the Ichino--Ikeda conjecture on period integrals and the ``working hypotheses'' on local period integrals,
all formulated in \S \ref{sec:periodintegrals}.\footnote{Note that  many cases of the Ichino--Ikeda conjecture are already  known:
we include in our formulation the $\GL_n \times \GL_{n+1}$ cases, which were established by Jacquet, Piatetski-Shapiro and Shalika. 
Also, the 
working hypotheses on local period integrals are primarily used to handle archimedean integrals. In view of recent work there is reason to hope
that they should be soon removed.}   

Let $(\GG,\HH)$ be as in the``cohomological GGP cases'' of \S \ref{Ggp-precise}:
either $(\PGL_{n+1} \times \PGL_n \supset \GL_n)$ over $\Q$ \footnote{In this case, we prove not \eqref{oinkers} but a slight modification thereof, since the hypothesis $\dim H^q (Y,\C)=1$ is not literally satisfied.}, or $(\PGL_{n+1} \times \PGL_n \supset \GL_n)$ over a quadratic imaginary field,
or $(\mathrm{SO}_{n+1} \times \mathrm{SO}_n \supset \mathrm{SO}_n)$ over a quadratic imaginary field. 

Then, for $\omega$ a cohomological form on $\GG$,  and
 $\gamma$ the homology class of  the cycle defined by $\HH$ we have
\begin{equation} \label{oinkers}  \frac{ \left|\int_{\gamma} \omega  \right|^2}{\langle \omega, \omega \rangle} \in \sqrt{\Q} (\vol L)^{-1},\end{equation}
In other words, \eqref{frodo} is always compatible with the period conjectures of Ichino--Ikeda, up to possibly a factor in $\sqrt{\Q}$.

  \end{quote}

 \begin{rem}
  The left-hand side of \eqref{oinkers} is nonzero if and only if 
  both:
  \begin{itemize}
  \item 
the central value of the Rankin--Selberg $L$-function for $\Pi$ is nonvanishing, where $\Pi$ is the automorphic representation underlying $\omega$.
\item  (in the $\SO$ cases only): there is abstractly a nonzero $\HH(\adele)$-invariant functional on $\Pi$  (this condition can be rephrased in terms of $\varepsilon$-factors, by \cite{Waldspurger}). 
 \end{itemize}
\end{rem}

Without getting into details let us say why we found the proof of this striking. 
The conjecture is phrased in terms of the motivic cohomology group $L$; this group  is closely related to the adjoint $L$-function $L(s, \Pi, \Ad)$ at the edge point $s=0$ or $s=1$. 
 By contrast, the Ichino--Ikeda conjecture involves  various Rankin-Selberg type $L$-functions,  
 and it is, at first, difficult to see what they have to do with $L$.
We are saved by the following surprising feature (for which we have no satisfactory explanation, beyond it coming out of the linear algebra):
Beilinson's conjecture for the central values of these Rankin-Selberg type $L$-functions (which in this case is due to Deligne) involves many of the same ``period invariants''  as Beilinson's conjecture for the adjoint $L$-functions at $s=1$,
leading to various surprising cancellations.  
   A further miracle is that all the factors of $\pi$ (the reader can  glance at the Table in \S 7 to get a sense of how many of them there are) 
all cancel with one another. Finally, there are various square classes that occur at several places in the argument, giving rise to the $\sqrt{\Q}$ factor.
To the extent that we tried to check it, these square classes indeed cancel, as we would expect; however,
we found that this added so much complexity to the calculations that we decided to omit it entirely. 

We should also like to acknowledge that there is a substantial body of work on the cohomological period in degree $q$, for example \cite{Schmidt, KazhdanMazurSchmidt, shahidiraghuram,
Raghuram, Grobner}.   
The focus of those works is the relationship between this period and Deligne's conjecture, and many of these
papers go much further than we do in verifying what we have simply called ``working hypotheses,''
and in evading the issues arising from possibly vanishing central value.  Our work adds nothing in this direction, 
but our focus is fundamentally different: it sheds light not on the interaction betwen this period and Deligne's conjecture,
but rather its interaction with the motivic cohomology group mentioned above. 
 
 \begin{rem} The fact that we obtain no information when the Rankin--Selberg $L$-function vanishes at the critical point may seem disturbing at first. However,   we do not regard it as onerous:  if one assumes standard expectations about the   the frequency of non-vanishing $L$-values, it should be possible to deduce 
\eqref{frodo} for {\em all} such $\Pi$ -- again, up to $\sqrt{\Q^*}$, and assuming Beilinson's conjectures.  

Consider, for example, the case of $\PGL_n$ over an imaginary quadratic field.
For any cohomological automorphic representation $\pi_2$ on $\PGL_2$,
the equality \eqref{frodo} can be verified using known facts about nonvanishing of $L$-functions. (Note that in this case 
the evaluation of the  left-hand side in terms of $L$-functions was already carried out by Waldspurger \cite{Waldspurger}.)  Now for a given form $\pi_3$ on $\PGL_3$ one
expects that there should be a cohomological form $\pi_2'$ on $\PGL_2$
for which $L(\frac{1}{2}, \pi_3 \times \pi_2') \neq 0$; in this case, our result \eqref{oinkers}  
above permits us to deduce the validity of \eqref{frodo} for $\pi_3 \times \pi_2'$, and thus for $\pi_3$ (and then also for $\pi_3 \times \pi_2$ for any $\pi_2$). We may then proceed inductively in this way to $\PGL_n$ for arbitrary $n$. 

Admittedly, such non-vanishing results seem to be beyond current techniques of proof;  nonetheless this reasoning suggests that the result above should be regarded as evidence in a substantial number of cases. 

  As for the ``working hypotheses'' on archimedean period integrals,  these do not seem entirely out of reach;
  a key breakthrough on nonvanishing has now been made by Sun \cite{Sun}.  We have formulated the hypotheses fairly precisely and  we hope that the  results of this paper will give   further impetus
  to studying and proving them. 
  \end{rem}

Next, suppose that $\dim H^q(Y,\C)_{\Pi} =d>1$. 
 Choose a basis $\omega_1, \dots, \omega_d$  of harmonic forms whose classes give a $\Q$-basis for $H^q(Y, \Q)_{\Pi}$.
 Then  similar reasoning to the above gives
\begin{equation} \label{period_matrix} \det \left( \langle \omega_i, \omega_j \rangle  \right)  \sim (\vol L)^d. \end{equation} 
 In particular, the determinant of the period matrix in lowest degree should be ``independent of inner form'' -- a phenomenon that   has been observed for Shimura varieties where it is closely tied to the Tate conjecture \cite{Shimura0}, \cite{Oda}, \cite{HarrisCrelle}, \cite{PrasannaThesis}.
 More precisely, if  $\mathbf{G}, \mathbf{G}'$ are inner forms of one another, 
 we may equip the associated manifolds $Y$ and $Y'$ with compatible metrics -- i.e., arising from invariant bilinear forms on $\Lie(\GG)$ and $\Lie(\GG')$
 which induce the same form on $\Lie(\GG) \otimes \overline{\Q} = \Lie(\GG') \otimes \overline{\Q}$. 
 Construct automorphic representations $\Pi$ and $\Pi'$ as in \S  \ref{sec:numerical_invariants} 
starting with (for almost all $v$) matching characters  $\chi_v, \chi_v'$ of the local Hecke algebras. We assume that all the representations in $\Pi$ and $\Pi'$ are tempered cuspidal, as before.

 \begin{prediction} 
Suppose, as discussed above,  that
 $\mathbf{G}, \mathbf{G}'$ are inner forms of one another,   and 
 $\Pi, \Pi'$ are almost-equivalent automorphic representations,  contributing to the cohomology of both $Y$ and $Y'$. 
Equip $Y, Y'$ with compatible metrics, as  explained above. 
Then $$\det \left( \langle \omega_i, \omega_j \rangle  \right)^{d'} \sim \det \left( \langle \omega'_i, \omega'_j \rangle  \right)^{d},$$
where $d = \dim H^q(Y, \Q)_{\Pi}$, $d'$ is similarly defined, and the $\omega, \omega'$ are as above a basis for harmonic forms
which give $\Q$-rational bases for cohomology. 
\end{prediction}
Again, this prediction is pleasant because it does not mention motivic cohomology.  
One can give some evidence
for it,  as well as give other conjectures of similar nature that involve comparisons between $Y$ and $Y'$: there are results of this form in \cite{CaVe}, for example. Rather we  move on to a more interesting consequence.
 
 The above examples mentioned only periods in the lowest cohomological degree ($q$) 
to which tempered representations contribute. 
The conjecture, however,  gives control on the cohomology groups $H^*(Y, \Q)_{\Pi}$
 in {\em intermediate} dimensions $q < j < q^*$.  In principle,
 it allows us to compute the entire ``period matrix'' of cohomology, i.e.,  the matrix of pairings
 $\langle \gamma_i, \omega_j \rangle$ between a $\Q$-basis $\gamma_i$ for homology and an orthogonal basis $\omega_j$ of harmonic forms, 
 {\em given} a complete knowledge of $L$. It is difficult, however, to test
 this directly, for two reasons:
 \begin{itemize}
 \item it is almost impossible to numerically compute with motivic cohomology, and
 \item it is hard
 to exhibit explicit cycles in those dimensions (at least, it is hard to exhibit cycles that are geometrically or group-theoretically natural). 
 \end{itemize}

Here is a case where we can  finesse both of these issues.
 Suppose that $L \supset F$ is a field extension. 
Start with an $F$-algebraic group $\mathbf{G}$; 
let $\mathbf{G}_F$ be the restriction of scalars of $\mathbf{G}$ from $F$ to $\Q$,
and let $\mathbf{G}_L$ be the restriction of scalars of $\mathbf{G} \times_F L$
to $\Q$. 
We write $\delta_F, q_F, \delta_L, q_L$ for the quantities
defined in  \eqref{dim-inv} but for $\GG_F$ and $\GG_L$ respectively.
A  (near-equivalence class of) cohomological automorphic representation(s) $\Pi_F$ for $\mathbf{G}_F$
  conjecturally determines
a base change lift $\Pi_L$ on $\mathbf{G}_L$. Let $L_{\Pi_F}$ and $L_{\Pi_L}$ 
be the motivic cohomology groups attached to $\Pi_F, \Pi_L$ respectively. We will assume
that the archimedean regulator is an isomorphism on these groups; in particular $\dim L_{\Pi_L} = \delta_L$
and $\dim L_{\Pi_F} = \delta_F$. 
  Now
there is a natural map (dualizing a norm map)  $L_{\Pi_F}^* \hookrightarrow L_{\Pi_L}^*$
and the induced map 
\begin{equation} \label{motivic inclusion} \wedge^{\delta_F} L_{\Pi,F}^* \subset \wedge^{\delta_F} L_{\Pi,L}^*\end{equation}
has image a $\Q$-line inside $\wedge^{\delta_F} L_{\Pi, L}^*$. 

To get a sense of what this implies, suppose that we can fix a level structure for $\GG_L$
such that the associated manifold $Y_L$ satisfies  $\dim H^{q_L}(Y_L, \C)_{\Pi_L} = 1$.   Then the $\Q$-line above should, according to the conjecture,
give rise to a ``distinguished'' $\Q$-line  $\Q \eta \subset H^{q_L+\delta_K}(Y_L, \Q)$ -- namely, we act
on the $\Q$-line $H^{q_L}(Y_L, \Q)_{\Pi_L}$ using the image of \eqref{motivic inclusion}.  
The conjecture also allows us to predict
various periods of the cohomology class $\eta$   in terms of $L$-functions. In some special cases when $L/F$ is quadratic
 (e.g., when $\mathbf{G} = \GL_n$) this is related to the theory of base change;
but when $[L:F]  > 2$
 this seems to be a new and ``exotic'' type of base change identity (indeed, in the classical theory, only quadratic base changes have a nice ``period'' interpretation).   We can generalize this in various evident ways, e.g.
 if $L/F$ is Galois we can isolate various subspaces of $L_{\Pi,L}$ indexed by representations of $\Gal(L/F)$,
 and make a corresponding story for each one. 
 
Let us turn this discussion into a more precise prediction in one case:

\begin{prediction} \label{pred3} 
Notation as above; 
suppose that $L/F$ is Galois, with Galois group $\Gal_{L/F}$,
 and split at all infinite primes. Choose a level structure   for $\mathbf{G}_F$
 and a $\Gal_{L/F}$-invariant level structure for $\mathbf{G}_L$, giving arithmetic manifolds $Y_F$ and $Y_L$ respectively.
 Fix compatible metrics on $Y_F$ and $Y_L$. 
 Suppose again that $$\dim  H^{q_F}(Y_F,\Q)_{\Pi_F} = \dim H^{q_L}(Y_L, \Q)_{\Pi_L} =1.$$
Then there exist  harmonic representatives  $\omega_F, \omega_L, \omega_L'$   for  nonzero
classes in 
\begin{equation} \label{Hq2} H^{q_F}(Y_F, \Q)_{\Pi_F}, H^{q_L}(Y_L, \Q)_{\Pi_L}, \ H^{q_L+\delta_F} (Y_L, \Q)_{\Pi_L}^{\Gal_{L/F}}.\end{equation}
such that
 \begin{equation}\label{frodo2}
\frac{\|\omega_{L}'\|  \|\omega_F\|^2}{\|\omega_L\|}  \in  \sqrt{[L:F]} \cdot \mathbf{Q}^* \end{equation}
\end{prediction}
In the case $\delta_F = 1$, the third space of \eqref{Hq2} is also one-dimensional and $\omega_F, \omega_F'$ and $\omega_L'$  are   all determined up to $\mathbf{Q}^*$;
  we can  achieve a similar situation in general by a slightly more careful discussion of $\omega_L'$.

As in \eqref{frodo}, we can translate this to a statement of periods and $L^2$ norms. 
The nice thing about \eqref{frodo2} is that, like the second prediction, it doesn't involve any motivic cohomology.

\proof
Let $\nu_F$ be a generator for $\wedge^{\delta_F} L_{\Pi,F}^*$.   
As in \eqref{Judy Hopps} we have $$ \langle \omega_F, \omega_F \rangle \cdot \|\nu_F\|_{\mathfrak{a}_F} \in \mathbf{Q}^*.$$
Let $\nu_L$ be the image of $\nu_F$ under \eqref{motivic inclusion} and set $\omega_L' =\omega_L \cdot \nu_L$.
Also $\|\nu\|_{\mathfrak{a}_L} = \sqrt{[L:F]} \times  \|\nu\|_{\mathfrak{a}_F}.$ 
Taking norms and using \eqref{motmetric} we get
the result. \qed

 The second piece of evidence for the conjecture  is a verification of Prediction \ref{pred3}, in the following setting (see \S \ref{pred3proof} and also Theorem \ref{SecondMainTheorem} for a more general statement):
\begin{quote}  
\eqref{frodo2} is valid up to $\sqrt{\Q^*}$ when $\mathbf{G}$ is an inner form of $\PGL_2$,  $F$ is a quadratic imaginary field,  
$L \supset F$ is a cyclic cubic extension,  and (for level structures precisely specified) 
$\Pi_F$ is the only non-trivial representation contributing to  $H^*(Y_F)$ and $\Pi_L$ is the only non-trivial representation contributing to $H^*(Y_L)$. 
\end{quote}
 Note that we do this without knowing how to produce any cycles on the nine-manifold $Y_L$  in dimension $q_L +\delta_F=4$!  
 Rather we proceed indirectly, using analytic torsion. 

In fact, in the text, we prove a stronger result (Theorem \ref{SecondMainTheorem}), which
relies for its phrasing on Beilinson's conjectures.

\subsection{Some problems and questions}

Here are a few problems that are suggested by the conjecture:  
\begin{itemize}
 
 \item[(i)] General local systems: it would be interesting to generalize our discussion beyond the case of the trivial local system.  While  the general picture should adapt to that setting,  the verifications described in 
\S \ref{sec:edge}  use the specific numerology of Hodge numbers associated to the trivial local system --  it is by no means apparent the same
miraculous cancellations should occur in general. 
 
 \item[(ii)] Non-tempered representations: our entire discussion in this paper concerns only tempered representations,
 but it seems very likely that the phenomenon continues in the non-tempered case. For example,
 that part of the cohomology of $Y$ associated with the {\em trivial} automorphic representation
 shows interesting connections with algebraic $K$-theory.   It seems important to formulate precisely
 the conjecture in the general case.

\item[(iii)]  Coherent cohomology:  a Hecke eigensystem  can appear in multiple cohomological degrees.
For example, this already happens for the modular curve, in the case of weight one.   It would be good to develop a version of the
theory of this paper that applies to that context. 

\item[(iv)]  We have formulated here a conjecture concerning rational cohomology; but, of course, it would be most desirable
to understand the integral story. It is plausible that this can be done by integrating the current discussion
with that of the derived deformation ring, developed in \cite{GV}.
 
\end{itemize}

\subsection{Notation} \label{sec:notn}

We gather here some notation that will be consistently used throughout the paper.

As in \S \ref{sec:edge}, we will often refer to the ``volume'' of a vector space: if $V_{\Q}$
is a rational vector space, equipped with a real-valued symmetric bilinear form $\langle -,  - \rangle$ on $V_{\R}$,  we define 
$\vol_{\Q} V \in \C^*/\Q^*$ by the rule 
  $$(\vol V_{\Q})^2 = \det(\langle v_i, v_j \rangle),$$
  for a $\Q$-basis $v_1, \dots, v_n$.  If the form $\langle -, - \rangle$ is indefinite, 
  the volume could be imaginary.

 $\GG$ will denote a reductive group over $\Q$; for all the global conjectures we will assume that $\GG$ has no central split torus.  
   $\widehat{G}$ denotes
the dual group to $\GG$,  a complex reductive Lie group.   It is equipped with a pinning, in particular a ``Borus'' $\hat{T} \subset \hat{B}$.  
We put $\LG = \widehat{G} \rtimes \Gal(\overline{\Q}/\Q)$, as usual.  Now $\widehat{G}$ and $\LG$ can be descended
to algebraic groups over $\Z$, using the Chevalley form of $\widehat{G}$;  we will, by a slight abuse of notation, refer to the $R$-points
of the resulting groups by $\widehat{G}(R)$ and $\LG(R)$. We will also write $\widehat{G}_R$ and $\LG_R$ for the corresponding algebraic groups over $\Spec \ R$.

We denote by $G=\GG(\R)$ the real points of $\GG$ and by $\Kinfty$ a maximal compact subgroup of $G$.
Set
$ \mathfrak{g}_{\Q} = \Lie(\GG)$ to be the $\Q$-Lie algebra, and set  
$$ \mathfrak{g}_{\R}=  \Lie(\Ginfty)=    \mathfrak{g}_{\Q} \otimes \R ,  \mathfrak{k}_{\R} := \mathrm{Lie}(\Kinfty),  $$
$$ \mathfrak{g} = \mathfrak{g}_{\R} \otimes \C, \mathfrak{k} = \mathfrak{k}_{\R} \otimes \C.$$
We denote by $\GG_{\R}$ the base-change of $\GG$ from $\Q$ to $\R$, and similarly define $\GG_{\C}$.

We set 
\begin{equation} \label{GGdef} [\GG]=\GG(\Q) \backslash \GG(\adele)\end{equation} to be  the associated adelic quotient.  We will usually use the letter $K$ to denote an open compact 
subgroup of $\GG(\Afinite)$.  For such a $K$, we have an attached ``arithmetic manifold,''
\begin{equation} \label{YKdef} Y(K)  = [\GG]/\Kinfty^{\circ} \cdot K,\end{equation}
which coincides with the definition given in the introduction. 

There are two numerical invariants attached to $\GG$ and $Y(K)$ which will occur often. 
Firstly, the difference $\delta = \mathrm{rank}(G) - \mathrm{rank}(\Kinfty)$ between the ranks of $G$ and its maximal compact subgroup.
Secondly, the minimal cohomological dimension $q$ in which a tempered $G$-representation has nonvanishing $(\gK)$-cohomology;
these are related via 
 $2q + \delta =  \dim Y(K)$.

 The notation $\widehat{\mathfrak{g}}$ denotes the complex Lie algebra that is the 
Lie algebra of $\widehat{G}$ and if $R$ is any ring we denote by $\widehat{\mathfrak{g}}_R$ the Lie algebra of $\widehat{G}$ as an $R$-group. 
Also, as above, $\coad$ denotes the linear dual of $\widehat{\mathfrak{g}}$, i.e.,
$$ \coad = \Hom_{\C}(\widehat{\mathfrak{g}}, \C),$$
and we similarly define $\coad_{\Q}$ to be the $\Q$-dual of $\widehat{\mathfrak{g}}_{\Q}$. 

We use the word ``cohomological'' in a slightly more narrow way than usual. A representation  of $\GG(\R)$ is cohomological, for us, if it has nontrivial $(\mathfrak{g}, \Kinfty^0)$-cohomology. 
In other words, we do not allow for the possibility of twisting by a finite dimensional representation;
any cohomological representation, in this sense, has the same infinitesimal character as the trivial representation.

$\Pi$ will  usually denote a near-equivalence class of cohomological automorphic representations on $\GG$,
or a  variant with a stronger equivalence relation;
  $\pi$  will usually be
an automorphic representation belonging to this class.

For any automorphic $L$-function $L(s)$ and any special value $s_0$, we denote by $L^*(s_0)$ the leading term of the Taylor expansion of $L(s)$ at $s=s_0$, i.e.
$L^*(s_0) = \lim_{s \rightarrow s_0} (s-s_0)^{-r} L(s)$, where $r$ is
the order of vanishing of  the meromorphic function $L(s)$ at $s=s_0$. 

We often use the notation $A \sim B$ to mean that $A = \alpha B$ for some $\alpha \in \mathbf{Q}^*$. 
We will often also encounter situations where $(A/B)^2 \in \Q^*$, in which case we write $A \sim_{\sqrt{\Q^*}} B$.

For fields $E' \supset E$, an $E$-structure on an $E'$-vector space 
 $V'$ is, by definition, an $E$-vector subspace $V \subset V'$ such that $V \otimes_{E} E' \stackrel{\sim}{\rightarrow} V'$. 
If $V$ is a complex vector space we denote by $\overline{V}$
the conjugate vector space with the same underlying space and conjugated scaling.
So there is a tautological antilinear map $V \mapsto \overline{V}$
that we denote by $v \mapsto \bar{v}$.

If $Q$ is a nondegenerate quadratic form on a finite-dimensional vector space $V$, and $Q^*$ a form on the dual space $V^*$,
we say that $Q$ and $Q^*$ are in duality if $Q$ induces $Q^*$ via the isomorphism $V \stackrel{\sim}{\rightarrow} V^*$
associated to $Q$; this is a symmetric relation. The Gram matrices of $Q$ and $Q^*$ with reference to dual bases are inverse;  $Q$ and $Q^*$ are called ``inverse'' quadratic forms by Bourbaki  \cite[Chapter 9]{BourbakiQuadratic}.}

\subsection{Acknowledgements}

We would like to thank   M. Lipnowski for interesting discussions, A. Raghuram for his helpful suggestions about ``Hodge--linear algebra,'' 
and M. Harris for interesting discussions and encouragement.

The first-named author (K.P.) was supported by NSF grants DMS 1160720, DMS 1600494 and a fellowship from the 
Simons Foundation (\# 305784) . The second-named author  (A.V.) was supported by an NSF grant and a Packard Foundation Fellowship.

 \tableofcontents

\section{Motivic cohomology and Beilinson's conjecture}
\label{Beilinson}

The first part  (\S \ref{ssB}) of this section is a recollection of Beilinson's conjecture
and the theory of motives. 
The second part (\S \ref{polarized volumes}) is less standard: we use a polarization
to put a metric on Deligne cohomology. 
The most important result is Lemma  \ref{lem:vol-indep}, which allows us to compute volumes
of certain motivic cohomology groups in terms of values of $L$-functions. 
\subsection{Beilinson's conjecture for motives} \label{ssB}

In this section we recall Beilinson's conjecture for 
motives. For simplicity, we restrict to the case of 
motives defined over $\Q$ and coefficients in $\Q$, which 
is the main case we require.  
The summary below follows for the most part \cite{jannsen-beilinson} \S 4, which 
the reader is referred to for more details. (Our notation however is 
somewhat different.)

\subsubsection{Chow motives}
For $k$ a  field, let $\var_k$ denote the category of smooth projective varieties over $k$ and $\m_{k,\rat}$ the category of Chow motives over $k$. An object in 
$\m_{k,\rat}$ consists of a triple $M=(X,p,r)$ where $X$ is a smooth projective variety over $k$ 
of dimension $d$ say, 
$p$ is an idempotent in $\CH^{d} (X \times X)_{\Q}$ and $r\in \Z$ is an integer.    
 Informally, the reader should think of $(X, p, r)$ as the first projecting $X$ according to $p$, and then Tate-twisting by $r$. 
(Here, the notation $\CH^j(Y)_{\Q}$ for any variety $Y$ denotes the $\Q$-vector space given by 
the group of algebraic cycles of codimension $j$ on $Y$ modulo 
rational equivalence, tensored with $\Q$. If we replace rational equivalence by 
homological or numerical equivalence, the corresponding $\Q$-vector spaces 
will be denoted $\CH^j_{\hom}(Y)_\Q$ and $\CH^j_{\num}(Y)_\Q$ respectively.)

The morphisms in $\m_{k,\rat}$ are descibed thus: for $N=(Y,q,s)$  another object of  $\m$ 
\begin{equation}
\label{eqn:mot-morphisms}
\Hom (M,N) = q \circ \CH^{\dim Y + r-s} (X\times Y)_\Q \circ p.
\end{equation}
 Note that this convention is opposite to Deligne \cite{Deligne},
 who uses ``cohomological'' motives; this amounts to the opposite of the above category. 

Let $\Delta_X$ be the diagonal on $X\times X$. We denote $(X,\Delta_X,r)$ by the symbol $hX(r)$, and if further $r=0$ we denote this 
simply by $hX$. The dual motive $M^\vee$ of $M$ is defined by
\[
M^\vee = (X, p^t,d-r),
\]
where $p\mapsto p^t$ is the involution induced by interchanging the two components of $X \times X$. 
(Caution: the realizations of $M^\vee$ are closely related to but not exactly the duals of the realizations of $M$. See \S \ref{sec:cohomology} below.)
The category $\m_k$ admits a symmetric monoidal tensor structure defined by 
$$
(X,p,r) \otimes (Y,q,s) = (X\times Y, p\times q, r+s).
$$
The commutativity and associativity constraints $M\otimes N \simeq N\otimes M$ and 
$(M \otimes N)\otimes P \simeq M\otimes (N\otimes P)$ are 
induced by the obvious isomorphisms $X\times Y \simeq Y \times X$ and 
$(X\times Y)\times Z \simeq X \times (Y\times Z)$. 
If $k\rightarrow k'$ is a field extension, there is a natural base-change
functor $\m_k \rightarrow \m_{k'}$, denoted either $M\mapsto M\otimes_{k} k'$ or $M\mapsto M_{k'}$.

\subsubsection{Cohomology} \label{sec:cohomology} 
 For any subring $A$ of $\R$, we use $A(j)$ to denote $(2\pi i)^j A \subset \mathbf{C}$. We will need various cohomology theories on  $\var_{\Q}$:  
Betti cohomology $H^i_\B (X_\C, \Q(j))$, algebraic de Rham cohomology $H^i_{\DR} (X,j)$, $\ell$-adic cohomology $H^i_{\et} (X_{\Qbar}, \Q_\ell(j))$, 
the Deligne cohomology $H^i_{\D} (X_\R, \R(j))$ and  
motivic cohomology $H_{\mm}^i (X, \Q(j))$. 
%

These are all {\em twisted Poincar\'{e} duality theories} in the sense of Bloch and Ogus \cite{BlochOgus}; see e.g. \cite[Examples 6.7, 6.9, 6.10]{jannsen} and 
\S 1,2 of \cite{DeningerScholl}.
Moreover, they all admit  a
cup-product in cohomology such that the cycle class map is compatible with the product   
structure. 

Any such theory $H^*$ may be extended to $\m_{\Q,\rat}$ as follows. First, for motives of the form $hX(r)$  
set
$$
H^i (hX(r),j): = H^{i+2r} (X, j+r).
$$ 
If $f\in \Hom (hX(r),hY(s))$, define
$$
f^*: H^i(hY(s),j) \rightarrow H^i(hX(r),j)
$$ by
$$ f^*(\alpha) =\pi_{X,*} ( \cl(f) \cup \pi_Y^*(\alpha) ),
$$
where $\pi_X$ and $\pi_Y$ denote the projections from $X\times Y$ onto $X$ and $Y$ respectively.  Then for $M=(X,p,r)$, define
\begin{equation}
\label{eqn:coh-of-motive}
H^i (M,j) = p^* \circ H^i(hX(r),j).
\end{equation}

If $H^i$ is a {\it geometric} cohomology theory (such as $H^i_B$, $H^i_{\DR}$ or $H^i_{\et}(M_{\Qbar})$),
then it satisfies usual Poincar\'{e} duality and we have canonical isomorphisms
\begin{equation}
\label{eqn:coh-dual-motive}
H^{-i} (M^\vee) \simeq (H^i (M))^\vee. 
\end{equation}

\subsubsection{Comparison isomorphisms and periods} \label{deltacc}
 We continue to suppose that $M$ is an object of $\m_{\Q, \rat}$. 

There are  comparison isomorphisms
\begin{equation} \label{eqn:compbdr0}  \comp_{\B,\DR}: H^i_{\B}(M_{\C}, \Q) \otimes \C \simeq H^i_{\DR}(M) \otimes \C. \end{equation}
\begin{equation}
\label{eqn:compbet}
\comp_{\B,\et}: H^i_\B (M_\C, \Q) \otimes \Q_\ell \simeq H^i_{\et} (M_{\Qbar},\Q_\ell).
\end{equation}

Let $c_\B$ and $c_{\DR}$ denote the involutions given by 
$1\otimes c$  on the left 
and right of \eqref{eqn:compbdr0} respectively, where $c$ denotes complex conjugation. Then via $\comp_{\B,\dR}$, we have (\cite[Proposition 1.4]{Deligne})
\begin{equation} \label{Fcc}
F_\infty \cdot c_\B = c_{\DR}
\end{equation}
where,  if $M = hX$ is the motive of a variety $X$,   
then  $F_\infty$  denotes the  involution on $H^i_\B (X_\C, \Q)$ induced by the action of complex conjugation on the 
topological space $X(\C)$; this definition passes to  $H^i_\B (M_\C, \Q)$ via \eqref{eqn:coh-of-motive}.  Note that $F_{\infty}$ is complex-linear, whereas $c_{\B}$ and $c_{\DR}$ are complex antilinear. 
We will often denote $c_{\B}$ by the usual complex conjugation sign, i.e.
$$\bar{v} = c_{\B}(v).$$

More generally, we can go through the same discussion with $\Q(j)$ coefficients: replacing $M$ by its Tate twist we obtain  the comparison isomorphisms
\begin{equation}
\label{eqn:compbdr}
\comp_{\B,\DR}: H^i_\B (M_\C, \Q(j)) \otimes \C \simeq H^i_{\DR} (M,j) \otimes \C
\end{equation}
  We denote by $\delta(M,i,j)$ the determinant of the comparison map 
$\comp_{\B,\DR}$ taken with respect to the natural $\Q$-structures
$H_{\B}= H^i_\B (M_\C, \Q(j))$ and $H_{\DR} = H^i_{\DR} (M,j)$. 
This may be viewed as an element in $\C^\times/\Q^\times$.  
The equation \eqref{Fcc} needs to be modified slightly; while $c_B$ and $c_{\dR}$  are still
 defined as the complex conjugations with reference to the
real structures defined by \eqref{eqn:compbdr}, one twists $F_{\infty}$ by $(-1)^j$
to take into account the complex conjugation on $\Q(j)$.

 The $\Q$-vector space $H_\B^i(M, \Q(j))$  is in a natural way the underlying vector space of a rational Hodge structure, 
pure of weight $w=i-2j$; 
as usual we denote by $F^* H_{\dR}$ the associated Hodge filtration
on $H_{\B} \otimes \C = H^i_{\dR}(M, j) \otimes \C$. 
Thus $H_{\B} \otimes \C=\oplus_{p+q=w} H^{pq}$ and 
$F_\infty : H^{pq} \simeq H^{qp}$ is a conjugate-linear isomorphism. 
We denote by $H_\B^\pm $ the
$\pm 1$ eigenspaces for the action of $F_\infty$.

We suppose now that $(M,i,j)$ satisfies the following additional condition:  
\begin{equation} \label{Deligne_period}   \mbox{ If $w$ is even, then $F_\infty$ acts on $H^{w/2,w/2}$ as a scalar $\varepsilon = \pm 1$. }
\end{equation}
Let 
$$
p^\pm = \begin{cases} 
\frac{w-1}{2}, & \text{ if $w$ is odd;} \\
\frac{w-1}{2} \mp \frac{1}{2} \varepsilon, & \text{ if $w$ is even.}
\end{cases}.
$$
Set $F^\pm = F^{p^\pm} H_\DR $ and $H_\DR^\pm = H_\DR /F^\mp$. 
Then 
$$
\dim H_\B^\pm = \dim H_{\DR}^\pm
$$
and
the {\it Deligne period} $c^\pm (M,i,j) $ is defined to be  the determinant of the composite map
$$
H_\B^\pm \otimes \C \rightarrow H_\B \otimes \C \xrightarrow{\comp_{\B,\DR}}
H_\DR \otimes \C  \rightarrow H_\DR^\pm \otimes \C
$$
with respect to the $\Q$-structures $H_\B^\pm$ and $H_\DR^\pm$, viewed as an element of $\C^*/\Q^*$.  Note that this defined
only under the assumption of \eqref{Deligne_period}.

  \subsubsection{Cohomology of $M_{\R}$} \label{MRcohomology}

Suppose $M = hX$, and let $A$ be a subring of the complex numbers, containing $\Q$ and stable by conjugation.   
 Complex conjugation induces an involution $\iota$ of $X(\C)$. 
This involution is covered by an involution of the constant sheaf $A$, which induces
complex conjugation on each fiber, and by an involution of the de Rham complex $\Omega^*_{X(\C)}$, 
sending a differential form $\omega$ to $\overline{\iota^* \omega}$.   
Accordingly we obtain  conjugate-linear involutions on de Rham cohomology tensored with $\C$, as well as on each step of the Hodge filtration; on Betti cohomology with $A$ coefficients, 
and (since the involutions are compatible under the map $A \rightarrow \Omega^0_{X(\C)}$) also  Deligne cohomology with $A$ coefficients. 

 In each case,  the fixed points  will be denoted, following Beilinson,
by the notation $H^*_{?}(X_{\R}, -)$.    Compare \cite[p. 2037]{beilinson}.
This notation extends, as before, to general motives $M$. 

Thus, for example,   
$$H^i_{\B}(M_{\R}, A) = H^i_{\B}(M_{\C}, A)^{F_{\infty} c_{\B}},$$
is the subspace fixed by $F_{\infty} c_{\B}$, where $F_{\infty}$
 is ``acting on the topological space $M_{\C}$'' (at least when $M=hX$) and $c_{B}$ is acting on the coefficients.

On the other hand,  
 $$H^i_{\dR}(M_{\R})  = H^i_{\dR}(M) \otimes \R$$
 is simply the (real) de Rham cohomology of the associated real algebraic variety (or motive). 
 Similarly, $F^n H^i_{\dR}(M_{\R})$ is the $n$th step of the Hodge filtration on the above space. 
 Observe, then, that $F^n H^i_{\dR}(M_{\R})$ has a natural $\Q$-structure.

 \subsubsection{The fundamental exact sequences and $\Q$-structures}

For $n\ge \frac{i}{2} +1$ there are canonical isomorphisms (see \cite[\S 3.2]{beilinson}) 
\begin{align*}
H^{i+1}_{\D} (M_\R, \R(n)) & \simeq H^i_{\B} (M_\R, \C) / H^i_{\B} (M_\R, \R(n)) + F^n H^i_{\DR} (M_\R) \\
& \simeq H^i_{\B} (M_\R, \R(n-1)) / F^n H^i_{\DR} (M_\R),
\end{align*} 
 In the second equation, we regard $F^n H^i_{\DR} (M_\R)$ 
as a subspace of $H^i_{\B}(M_{\R}, \R(n-1))$ via the composite
\begin{equation} \label{tildepidef} \tilde{\pi}_{n-1} : F^n H^i_{\DR} (M_\R) \hookrightarrow H^i_{\B}(M_{\R}, \C) \xrightarrow{\pi_{n-1}} H^i_{\B}(M_{\R}, \R(n-1))\end{equation}
where the latter map is the projection along $\C = \R(n) \oplus \R(n-1)$.

This gives rise to two fundamental exact sequences:
\begin{equation}
\label{eqn:fundamental1}
0 \rightarrow F^n H^i _{\dR} (M_\R) \stackrel{\tilde{\pi}_{n-1}}{ \rightarrow}  H^i_{\B} (M_\R, \R (n-1)) \rightarrow 
H^{i+1}_{\D} (M_\R, \R (n)) \rightarrow 0,
\end{equation}
and 
\begin{equation}
\label{eqn:fundamental2}
0 \rightarrow  H^i_{\B} (M_\R, \R (n)) \rightarrow  H^i_{\DR} (M_\R) / F^n H^i _{\dR} (M_\R)\rightarrow 
H^{i+1}_{\D} (M_\R, \R (n)) \rightarrow 0,
\end{equation}
These exact sequences can be used to put (different) $\Q$-structures on the $\R$-vector space 
$\det H^{i+1}_{\D} (M_\R, \R (n))$ using the canonical $\Q$-structures on the left two terms of 
each sequence. The first, denoted $\cR (M,i,n)$ will be the 
 $\Q$-structure obtained from \eqref{eqn:fundamental1}, namely 
using the $\Q$-structures $\det H^i_{\B}(M_\R, \Q(n-1))$ and 
 $\det (F^n H^i_{\DR} (M))$.
The second, denoted $\cDR (M,i,n)$ will be the $\Q$-structure obtained from \eqref{eqn:fundamental2}, namely 
using the $\Q$-structures $\det (H^i_{\DR} (M)/F^n)$ and 
$\det H^i_{\B}(M_\R, \Q(n))$. These $\Q$-structures are related by 
\begin{equation}
\label{eqn:relation-Q}
\cDR (M,i,n) = (2\pi \sqrt{-1})^{-d^- (M,i,n)} \cdot \delta (M,i,n) \cdot \cR (M,i,n),
\end{equation}
where $d^-(M,i,n) = \dim H^i_{\B} (M_\C, \Q(n))^-$.

\subsubsection{$L$-functions}
For $M$ in $\m_{\Q,\rat}$ and $i$ an integer, 
the $L$-function $L^i (M,s)$ is defined by 
\begin{equation}
\label{eqn:eulerproduct}
L^i (M,s) = \prod_{p} L_p^i (M,s),
\end{equation}
with  
$$
L_p^i (M,s) = \det (1- \Frob_p p^{-s} | H^i_{\et} (M_{\Qbar}, \Q_\ell)^{I_p} )^{-1},
$$  
where $\Frob_p$ denotes a {\it geometric} Frobenius at $p$, the superscript $I_p$ denotes taking invariants under an inertia subgroup at $p$ and $\ell$ is 
any prime not equal to $p$. Implicit 
in the definition is the conjecture that the factor $L_p^i (M,s)$ is independent of the 
choice of $\ell$. 
The Euler product \eqref{eqn:eulerproduct} converges on some right half plane in $\C$; 
conjecturally, one also expects that $L^i (M,s)$ admits 
a meromorphic continuation to all of $\C$ that is analytic as long as 
either $i$ is odd or the pair $(M,i)$ satisfies the following condition: 
 \begin{center}
$(\star) \quad \quad \quad $ $i =2j$ is even and 
$H^{2j}_{\et} (M_{\Qbar}, \Q_\ell (j))^{\Gal (\Qbar/\Q)}=0$.
 \end{center}
One also expects that $L^i (M,s)$ satisfies a functional equation of the form:   
$$
(L_\infty \cdot L)^i (M,s) = (\ve_\infty \cdot \ve)^i (M,s) \cdot (L_\infty\cdot L)^{-i} (M^\vee, 1-s).
$$ 
where $L_{\infty}$ is the archimedean $L$-factor, and $\ve_{\infty}, \ve$ are $\varepsilon$-factors;
for definitions, we refer to \cite{Tate}. 
 
\subsubsection{Regulators and Beilinson's conjecture} \label{ss:rb}
There are regulator maps
\begin{equation} \label{rDdef}
r_\D: H^i_{\mm} (M, \Q(j)) \otimes \R \rightarrow H^i_{\D} (M_\R, \R(j))
\end{equation} 
which give rise to a morphism of twisted Poincar\'{e} duality theories. 

 Scholl  has shown \cite[Theorem 1.1.6]{scholl} that there is a unique way  
to assign $\Q$-subspaces $H^{i+1}_{\m}(M_{\Z}, \Q(n) ) \subseteq H^{i+1}_{\m} (M, \Q(n)) $  to each Chow motive over $\Q$, in a
 way that respects morphisms, products,   and so that $H^{i+1}_{\m}(hX_{\Z}, \Q(n))$
 is given by the image of   motivic cohomology of a regular model $\mathcal{X}$, when one exists
    (for details, see {\em loc. cit.}). 
We now present a version of Beilinson's conjectures relating regulators to $L$-values. 
\begin{conj} \label{conj:Beilinson} (Beilinson)
Suppose that $n\ge \frac{i}{2} +1$ and that if 
$n = \frac{i}{2}+1$ then $(M,i)$ satisfies the condition  
$(\star)$ above.

(a) Then  
$$
r_{\D} : H^{i+1}_{\m} (M_\Z, \Q(n)) \otimes \R \rightarrow H^{i+1}_{\D} (M_\R, \R(n))
$$
is an isomorphism.

(b) Further, we have equivalently 
\begin{equation}
\label{eqn:bc1}
r_{\D} \left( \det H^{i+1}_{\m} (M_\Z, \Q(n)) \right) = L^{-i} (M^\vee,1-n)^* \cdot \cR(M,i,n),
\end{equation}
and 
\begin{equation}
\label{eqn:bc2}
r_{\D} \left( \det H^{i+1}_{\m} (M_\Z, \Q(n)) \right) = L^i (M,n) \cdot \cDR(M,i,n).
\end{equation}
We understand the meromorphic continuation of the $L$-function and its functional equation 
to be part of this conjecture.  

\end{conj}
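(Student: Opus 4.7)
The plan is to follow the case-by-case strategy that has succeeded in the known instances of this conjecture, since Conjecture \ref{conj:Beilinson} is famously open in general. The broad outline has three steps: (i) construct a large enough supply of explicit classes in $H^{i+1}_{\m}(M_{\Z},\Q(n))$; (ii) compute their Deligne regulators using the exact sequence \eqref{eqn:fundamental1}; and (iii) match the resulting transcendental integrals with either $L^{i}(M,n)$ or the leading term $L^{-i}(M^\vee,1-n)^{*}$ via an Eulerian integral representation of the $L$-function.

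For step (i) I would appeal to whichever construction is available in the case at hand: Beilinson's Eisenstein symbol on modular curves and cup products of modular units on their self-products; cyclotomic elements in algebraic $K$-theory for Dirichlet $L$-functions at positive integers (Borel--Beilinson); elliptic units and Beilinson--Flach classes for Hecke $L$-functions of imaginary quadratic fields and for Rankin products; motivic cohomology classes on Shimura varieties built from theta correspondences. Each of these produces, via $K$-theory or higher Chow groups, explicit classes whose regulators can be computed. For step (ii), I would pair $r_{\D}(\xi)$ against a basis of $F^{n}H^{i}_{\dR}(M)$; by the description of Deligne cohomology as the cokernel of the map $\tilde{\pi}_{n-1}$ in \eqref{tildepidef}, this becomes an explicit integral of an Eisenstein series, a theta kernel, or a polylogarithm, depending on the case. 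For step (iii) one uses the integral representation (Rankin--Selberg, Mellin, Shintani cone decomposition) to express $L^{i}(M,n)$ as essentially the same integral up to a factor in $\Q^{*}$; the identity \eqref{eqn:relation-Q} between the $\Q$-structures $\cR(M,i,n)$ and $\cDR(M,i,n)$ then guarantees that the two formulations \eqref{eqn:bc1} and \eqref{eqn:bc2} are equivalent, so that verifying either one suffices.

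The hard part, and the reason the conjecture remains open in general, is really step (i): outside of motives built from abelian varieties, modular forms, or automorphic forms whose $L$-functions admit an Eulerian integral representation, we have essentially no mechanism to produce classes in $H^{i+1}_{\m}(M_{\Z},\Q(n))$, and even its finite-dimensionality is conjectural. A secondary obstacle is that part (a) --- surjectivity of $r_{\D}$ --- is strictly stronger than the determinantal identity in (b), and extracting it requires either an Euler system bounding motivic cohomology from above, or independent knowledge of the order of vanishing of the $L$-function. For the purposes of the present paper one only ever invokes (b) as an identity of $\Q$-structures on $\det H^{i+1}_{\D}(M_\R,\R(n))$, applied to $M=\coAd \Pi$ with $i=0,\,n=1$; so in principle one might hope to extract the specific consequence \eqref{Ldef} needs (a formula for $\vol L$ in terms of $L^{*}(1,\Pi,\Ad^{*})$) from a more limited input than the full conjecture, but any honest proof must still confront the construction problem in step (i).
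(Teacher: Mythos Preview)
This statement is a \emph{conjecture}, not a theorem, and the paper does not prove it. Beilinson's conjecture is recalled in \S\ref{ssB} as background and is then \emph{assumed} throughout the paper (see e.g.\ the hypotheses of Theorem~\ref{MainPeriodTheorem} and the formulation of the main conjecture in \S\ref{mainconjsec}); there is no proof in the paper to compare your proposal against. You seem to recognize this yourself, since your write-up is not a proof but a survey of the strategy behind the known special cases (Eisenstein symbols, Beilinson--Flach elements, etc.) together with an honest acknowledgment that step~(i) is in general inaccessible.

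So there is no ``gap'' in your proposal beyond the one you already name: the conjecture is open, and the construction of enough motivic classes is the missing idea. For the purposes of this paper the relevant instance is $M=\Ad^{*}\Pi$ with $(i,n)=(0,1)$, and even there no proof is offered; the conjecture is simply taken as an input, and the paper's contribution is to extract and test numerical consequences of it (Predictions~\ref{pred1}--\ref{pred3}). If you were asked to supply a proof here, the correct answer is that none is expected---the statement is being formulated, not established.
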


\begin{rem}
The point $n=\frac{i}{2}+1$ (for $i$ even) is the near right-of-center point. At this point,
the formulation has to be typically modified to allow for Tate cycles. However this is 
unnecessary on account of assumption $(\star)$. We have omitted the description
at the central point (conjecture of Bloch and Beilinson) since that is not required in the 
rest of the paper. Indeed, the only point of interest for us is the near right-of-center point. 
\end{rem}

\begin{rem} \label{remark4}
In Beilinson's original formulation of this conjecture one postulates the existence of a Chow motive $M_0$ (Beilinson denotes this $M^0$) such that 
\begin{equation}
\label{eqn:coh-M0}
H^{-i} (M^\vee) = H^i (M_0,i)
\end{equation}
for all geometric cohomology theories $H^*$ and all $i$.
Then $$L^{-i} (M^\vee,1-s) = L^i (M_0, i+1-s),$$
so the value $L^{-i}(M^\vee, 1-n)^*$ in \eqref{eqn:bc1} can be replaced by
$L^i (M_0, i+1-n)^*$. 
See also \S \ref{sec:dualmotive} below. 
\end{rem}

\subsubsection{Pure motives}
 \label{sec:purity}
The category of Chow motives has the disadvantage that it is not Tannakian.
To construct a (conjectural) Tannakian category one needs to 
modify the morphisms and the commutativity constraint. 
For any field $k$, let $\m_{k,\hom}$ and $\m_{k,\num}$ denote the categories obtained 
from $\m_{k,\rat}$ by replacing the morphisms in \eqref{eqn:mot-morphisms} by cycles modulo 
homological and numerical equivalence respectively. Thus there 
are natural functors
$$
\m_{k,\rat} \rightarrow \m_{k,\hom} \rightarrow \m_{k,\num}.
$$
Jannsen \cite{jannsen-inven} has shown that $\m_{k,\num}$ is a 
semisimple abelian category and that numerical 
equivalence is the only adequate equivalence relation on
algebraic cycles for which this is the case. 

To outline what would be the most ideal state of affairs, we 
assume the following standard conjectures on algebraic cycles:
\begin{enumerate} 
\item (Kunneth standard conjecture) For any smooth projective variety $X$, the Kunneth components of the diagonal 
(with respect to some Weil cohomology theory) on  
$X\times X$ are algebraic.

\item Numerical and homological equivalence coincide on $\CH^*(X)_\Q$. 

\end{enumerate}
Then the second functor above is an equivalence of categories, 
so we can identify $ \m_{k,\hom} $ and $ \m_{k,\num}$; this will be the category 
of {\it pure motives} or {\it Grothendieck motives}. 

We say moreover that a motive $M$ is {\em pure of weight $w$} when the cohomology $H^j_{\B}(M, \C)$  is concentrated in degree $j=w$.  In this case, we write for short $H_{\B}(M,\C)$ for the graded vector space
$H^*_{\B}(M, \C)$, which is entirely concentrated in degree $w$. Note that in general, a pure motive is not necessarily pure of 
a fixed weight.

To make this Tannakian, one needs to 
modify the commutativity constraint as described in \cite{deligne-milne} \S 6, \cite{jannsen-inven}.  
With this new commutativity constraint, the category 
of pure motives is Tannakian ( \cite{jannsen-inven} Cor. 2); 
we denote it $\m_k$. If $k=\Q$, then $M\mapsto H^*_B(M_\C)$, 
$M\mapsto H^*_{\DR} (M)$ and  $M\mapsto H^*_{\et} (M_{\Qbar})$
are Tannakian fiber functors.

\subsubsection{Passage from Chow motives to pure motives}
 One would like to make sense of Beilinson's conjectures for  
pure motives over $\Q$. However, this requires an additional
set of conjectures. 

For any field $k$, Beilinson conjectures the existence of a descending filtration 
$F^\bullet $ on motivic cohomology $H^i_\m (X,\Q(j))$ for $X$ in $\var_k$ satisfying the 
properties $(\tilde{\mathrm{a}})$ through $(\tilde{\mathrm{e}})$ of \cite{jannsen-motives} Remark 4.5(b):
\begin{enumerate}
\item $F^0 H^i_\m (X, \Q(j))=H^i_\m (X, \Q(j))$. 
\item \label{filonchow} On $H^{2j}_{\m} (X,\Q(j)) = \CH^j(X)_\Q$, we have $F^1 = \CH^j(X)_{\hom,\Q}$.
\item \label{func} $F^\bullet$ is respected by pushforward and pullback for maps $f:X\rightarrow Y$.
\item \label{cup} $F^r H^{i_1}_{\m} (X,\Q(j_1)) \cdot F^s H^{i_2}_{\m} (X,\Q(j_2)) \subseteq F^{r+s} H^{i_1+i_2}_{\m} (X, \Q(j_1+j_2))$.
\item \label{vanishing} $F^r H^i_{\mm}  (X,\Q(j))=0$ for $r \gg 0$. For $k$ a number field, $F^2 H^i_{\mm} (X,\Q(j))=0$.
\item \label{ext-mm} There are functorial isomorphisms 
$$
\Gr^r_F  (H^i_\m (X, \Q(j)) = \Ext^r_{\MM_k} (1, h^{i-r}(X) (j)).
$$
Here $\MM_k$ is a conjectural abelian category of {\it mixed motives}
containing $\m_{k,\hom}$ as a full subcategory and $1$ denotes the trivial motive $h(\Spec \ k)$. 
\end{enumerate}

Let us note the following consequence of the above conjectures, a proof of which can be found in \cite[\S 7.3 Remark 3.bis]{murre}:
\begin{prop} (Beilinson)
\label{prop:chow-to-hom}
Assuming the conjectures above, the functor $\m_{k,\rat} \rightarrow \m_{k,\hom}$ is essentially surjective.
Further, any isomorphism $M_1 \simeq M_2$ in $\m_{k,\hom}$ can be lifted to an isomorphism $\M_1 \simeq \M_2$ in $\m_{k,\rat}$. 
\end{prop}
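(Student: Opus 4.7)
The functor $\m_{k,\rat} \to \m_{k,\hom}$ is the identity on triples $(X,p,r)$ and simply quotients the correspondence ring by homological equivalence; thus essential surjectivity of objects amounts to lifting an idempotent $\bar p \in \CH^d(X \times X)_{\hom,\Q}$ to an idempotent $p \in \CH^d(X \times X)_\Q$ (where $d = \dim X$), and lifting isomorphisms amounts to lifting a pair of mutually inverse morphisms. Both statements will follow from the fact that $F^1 \End_{\m_{k,\rat}}(hX)(r)$ is a nilpotent two-sided ideal in $\End_{\m_{k,\rat}}(hX)(r)$ under composition of correspondences. Indeed, the composition formula $\alpha \circ \beta = \pi_{13,*}(\pi_{12}^* \alpha \cdot \pi_{23}^* \beta)$, combined with property \ref{func} (compatibility of $F^\bullet$ with pushforward and pullback) and property \ref{cup} (multiplicativity $F^r \cdot F^s \subseteq F^{r+s}$), yields $F^r \circ F^s \subseteq F^{r+s}$; property \ref{vanishing} then gives $(F^1)^N = 0$ for $N$ sufficiently large. (Over a number field, property \ref{vanishing} even forces $F^2 = 0$, so $F^1$ is literally square-zero and a single Newton step suffices below.)

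For essential surjectivity, lift $\bar p$ arbitrarily to some $p_0 \in \CH^d(X \times X)_\Q$; then $\epsilon := p_0^2 - p_0$ lies in $F^1$. The classical Newton step $p_0 \mapsto 3p_0^2 - 2p_0^3$ replaces $\epsilon$ by an element of $(F^1)^2$, so iterating $O(\log_2 N)$ times produces a genuine idempotent $p \in \CH^d(X \times X)_\Q$ lifting $\bar p$. The object $(X,p,r)$ then lies in $\m_{k,\rat}$ and maps to $\bar M = (X,\bar p,r)$ in $\m_{k,\hom}$.

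For lifting isomorphisms $\bar f : M_1 \to M_2$, $\bar g : M_2 \to M_1$ with $\bar f \bar g = \id_{M_2}$ and $\bar g \bar f = \id_{M_1}$, choose arbitrary lifts $f_0, g_0$ to $\m_{k,\rat}$. The element $u := g_0 \circ f_0 \in \End(M_1)$ satisfies $u - \id_{M_1} \in F^1$; since $F^1$ is nilpotent, $u$ is a unit, with inverse given by the (terminating) geometric series in $\id - u$. Set $g := u^{-1} \circ g_0$, so that $g \circ f_0 = \id_{M_1}$. The element $v := f_0 \circ g \in \End(M_2)$ then satisfies
\[
v^2 = f_0 \circ (g \circ f_0) \circ g = f_0 \circ g = v,
\]
and also $v - \id_{M_2} \in F^1$ is nilpotent; thus $v$ is simultaneously idempotent and a unit, forcing $v = \id_{M_2}$. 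Hence $f_0$ and $g$ are mutually inverse in $\m_{k,\rat}$ and lift $\bar f, \bar g$.

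The main obstacle is extracting the nilpotence of $F^1$ on endomorphism rings from the filtration axioms; this is the only place where one uses any nontrivial input, and it is precisely what the combination of properties \ref{func}, \ref{cup}, and \ref{vanishing} supplies. After that, both parts of the proposition reduce to the familiar facts that idempotents and units lift along any surjection of rings whose kernel is a nil two-sided ideal.
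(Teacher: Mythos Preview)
Your proof is correct and follows the standard argument; the paper itself does not supply a proof but refers to Murre \cite[\S 7.3, Remark 3.bis]{murre}, and indeed the (commented-out) proof in the paper's source uses the same strategy of extracting nilpotence of $F^1 = \CH_{\hom}$ from properties \eqref{filonchow}, \eqref{func}, \eqref{cup}, \eqref{vanishing}, then lifting idempotents along a nil ideal. The only cosmetic difference is the choice of Newton step (the source uses $p \mapsto p + (1-2p)^{-1}(p^2 - p)$ rather than your $p \mapsto 3p^2 - 2p^3$), and your treatment of lifting isomorphisms via unit-lifting is the standard complement that the source omits.
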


The proposition implies that there is a bijection of isomorphism classes of objects in $\m_{k,\rat}$ and 
$\m_{k,\hom}$. Note that the isomorphism $\M_1\simeq \M_2$ above is not canonical. 
In particular the induced map $H^{i}_{\mm} (\M_1, \Q(j)) \simeq H^{i}_{\mm}(\M_2, \Q(j))$ is not canonical.
Nevertheless, it is easy to see that the conjectures above imply that the induced maps on the graded pieces
$$
\Gr^r H^{i}_{\mm} (\M_1, \Q(j)) \simeq \Gr^r H^{i}_{\mm}(\M_2, \Q(j))
$$ 
are independent of the choice of isomorphism and are thus canonical. This allows us to (conjecturally) define 
graded pieces of motivic cohomology for motives in $\m_{k,\hom}$. 

%
%

\smallskip

Now we specialize to the setting of Beilinson's conjectures on $L$-values. 
The key point is that even though these conjectures are formulated in 
terms of motivic cohomology, in each case it is only a certain graded piece that matters, 
so the conjectures make sense for Grothendieck motives as well. Indeed, let us now specialize to 
$k=\Q$, and 
let $X$ be a variety over $\Q$. Then: 
\begin{enumerate}
\item[(i)] For $n\ge \frac{i}{2}+1$, we see from 
\eqref{ext-mm} that
$$
\Gr^0 H^{i+1}_{\mm} (X, \Q(n)) = \Hom_{\mm_{\Q}} ( 1, h^{i+1}(X)(n)) =0
$$
since $h^{i+1}(X)(n)$ is pure of weight $i+1-2n \le -1$. Thus in this range we have 
$$
H^{i+1}_{\mm} (X, \Q(n)) = F^1 H^{i+1}_{\mm} (X, \Q(n)) = \Gr^1 H^{i+1}_{\mm} (X, \Q(n)),
$$ 
since $F^2 =0 $ by \eqref{vanishing}.

\item[(ii)] If $n= \frac{i}{2} +1$, the conjecture typically also involves   $$ 
\CH^{n-1}(X)_\Q/ \CH^{n-1} (X)_{\hom,\Q} = \Gr^0 H_{\mm}^{2n-2} (X, \Q(n-1)).
$$

\item[(iii)] If $n=\frac{i+1}{2}$, we are the center and the conjecture involves
$$
\CH^n(X)_{\hom,\Q} = F^1 H_{\mm}^{2n} (X, \Q(n)) = \Gr^1 H_{\mm}^{2n} (X, \Q(n)),
$$
since $F^2=0$ by \eqref{vanishing}.
\end{enumerate}

Finally,  one needs the analogue of Scholl's result quoted before 
Conjecture \ref{conj:Beilinson}, i.e., one wants to know that the
subspace $\Gr^r H^{i}_{\mm} (\M, \Q(j))$
arising from  $H^{i}(\M_{\Z}, \Q(j))$ 
doesn't depend on choice of $\M$.  We do not know of any work on this issue, but implicitly will assume it to be true (as one must, in order to make
any reasonable sense of Beilinson's conjecture starting with a Galois representation).

\subsubsection{The dual motive}
\label{sec:dualmotive}
One sees now that there are two different notions of the ``dual motive''. 
On the one hand, if $M = (X,p,r)$ is either a Chow motive or a Grothendieck motive,
we have defined $M^\vee = (X, p^t, d-r)$ with $d=\dim(X)$. Recall that this satisfies
\begin{equation}
\label{eqn:coh-dual1}
 H^{-i} (M^\vee) = H^i(M)^\vee
 \end{equation}
for any (geometric) cohomology theory. 
On the other hand, assuming the conjectural framework described above (so that 
$\m_{\hom}$ is a Tannakian category), one can attach to any $M$ in 
$\m_{\hom}$ a motive $M^*$ in $\m_{\hom}$ such that 
\begin{equation}
\label{eqn:coh-dual2}
 H^i (M^*) = H^i(M)^\vee
 \end{equation}
for all $i$. By Prop. \ref{prop:chow-to-hom}, one can lift $M^*$ to a Chow motive,
any two such lifts being isomorphic but not canonically so.
We note the following example: if $M=(X, \pi_j, 0)$ with $\pi_j$ the 
Kunneth projector onto $h^j(X)$, then 
\begin{align*}
M^\vee &= (X, \pi_{2d-j}, d), \\
M^* &= (X, \pi_{2d-j}, d-j).
\end{align*}
and we can take $M_0=M=(X,\pi_j,0)$ (see Remark \ref{remark4}). 

\begin{rem}
The case of most interest in this paper is when $M \in \m_{k,\hom}$ is (pure) of weight zero
so that $H^i(M)$ vanishes outside of $i=0$.  It follows then from \eqref{eqn:coh-dual1} and 
\eqref{eqn:coh-dual2} that $M^*=M^\vee$. Further, from \eqref{eqn:coh-M0} we see that 
we can choose 
$$
M_0 = M^* = M^\vee,
$$
so all notions of dual agree in this case. Let us restate Beilinson's conjecture in this case 
for $n=1$. Writing simply $L$ instead of $L^0$ and $\cR(M)$, $\cDR(M)$ for $\cR(M,0,1)$, $\cDR(M,0,1)$ respectively, the conjecture predicts, equivalently:
\begin{equation}
\label{eqn:bc-wt0-1}
r_{\D} \left( \det H^{1}_{\m} (M_\Z, \Q(1)) \right) = L(M^*,0)^* \cdot \cR(M)
\end{equation}
and 
\begin{equation}
\label{eqn:bc-wt0-2}
r_{\D} \left( \det H^{1}_{\m} (M_\Z, \Q(1)) \right) = L (M,1) \cdot \cDR(M).
\end{equation}
\end{rem}

\subsection{Polarizations, weak polarizations and volumes} \label{polarized volumes}

In this section, we examine the fundamental exact sequence \eqref{eqn:fundamental1} in the presence of 
a polarization on $M$. We also introduce the notion of a weak polarization, which for us will 
have all the properties of a polarization except that we replace the usual 
definiteness assumption by a non-degeneracy requirement. 

\subsubsection{Hodge structures}
We first discuss these in the context of rational Hodge structures. 
A $\Q$-Hodge structure of weight $m$ consists of a finite dimensional $\Q$-vector space $V$ and a decomposition
\begin{equation}
\label{eqn:hodgesplitting} V_\C = \oplus_{p+q=m} V^{p,q},
\end{equation}
such that $\overline{V^{p,q}}=V^{q,p}$. 
The Hodge filtration on $V_\C$ is given by $F^i V_\C := \oplus_{\stackrel{i\ge p}{p+q=m}} V^{p,q}$. The splitting \eqref{eqn:hodgesplitting}
can be recovered from the Hodge filtration since $V^{pq} = F^p V_\C  \cap \overline{F^q V_\C}$. 

 If $M \in \cM_{\Q}$, the Betti cohomology $H^m_{\B}(M_\C)$ carries a Hodge structure  of weight $m$.

The Hodge structure $\Q(m)$ of weight $-2m$ is defined by the cohomology of the motive $\Q(m)$, explicitly: 
\begin{equation} \label{TT} V = (2\pi \sqrt{-1} )^m \Q, \quad V^{-m,-m}=V_\C.\end{equation}
 
 In particular, $\Q(1)$ should be regarded as the Hodge structure of $H_1(\mathbb{G}_m)$ (or $H_2(\mathbb{P}^1)$, if one wants to only work with projective varieties).
 Indeed, if we identify
 $$H_{1, \B}(\mathbb{G}_m, \C) \simeq \C$$
 by integrating the form $\frac{dz}{z}$, 
the resulting identification carries the Betti $\Q$-structure  to $(2 \pi \sqrt{-1})\Q \subset \C$, and the de Rham $\Q$-structure
to $\Q \subset \C$.

 If $V$ is a $\Q$-Hodge structure then there is an action of $\mathbb{C}^*$ on $V_{\C}$, which acts
by the character 
\begin{equation} \label{CWeil} z \mapsto z^p \overline{z}^q\end{equation}  on $V^{pq}$. 
  This action preserves $V \otimes \R \subset V_{\C}$.
 
For the cohomology of $\Q$-motives this action extends to a larger group: let $W_\R$ and $W_\C$ denote the Weil groups of $\R$ and $\C$ respectively.
  Thus $W_\C=\C^*$ while $W_\R$ is the non-split extension
  $$
  1 \rightarrow \C^* \rightarrow W_\R \rightarrow \langle j \rangle \rightarrow 1
  $$
  where $j^2= -1$ and $j^{-1}z j = \bar{z}$ for $z\in \C^*$.  
For $M \in \cM_{\Q}$,  we extend the action of \eqref{CWeil} to the real Weil group 
via 
 $$ j = i^{-p-q} F_{\infty}$$
 see \cite[\S 4.4]{Tate} (we have used an opposite sign convention to match with \eqref{CWeil}).

\subsubsection{Polarizations on Hodge structures} \label{Hodge polarized}

A  {\it weak polarization} on a pure $\Q$-Hodge structure $V$ of weight $m$ will be a {\it non-degenerate} bilinear form  
$$ Q: V \times V \rightarrow \Q $$
satisfying (here we continue to write $Q$ for the scalar extension to a {\it bilinear } form $V_\C \times V_\C \rightarrow \C$)
\begin{enumerate}
\item[(i)] $Q(u,v) = (-1)^m Q(v,u)$. Thus $Q$ is $(-1)^m$-symmetric. 
\item[(ii)] $Q(V^{p,q}, V^{p',q'})=0$ unless $(p,q)=(q',p')$.
\end{enumerate}

We mention various equivalent formulations of these conditions. Firstly, since $Q$ is defined over $\Q$, we have   $ \overline{Q(u,v)} = Q(\bar{u},\bar{v}).$ 
From this it is easy to see that
(ii) may be replaced by (ii$^\prime$):
\begin{enumerate}
\item[(ii$^\prime$)] $F^i V_\C$ is orthogonal to $F^{i^*} V_\C$  where $i^*:=m-i+1$. 
\end{enumerate} 
Since $Q$ is non-degenerate and since $F^i V_\C$ and $F^{i^*} V_\C$ have complementary dimensions in $V_\C$, we can also replace (ii$^\prime$) 
by (ii$^{\prime \prime}$):
\begin{enumerate}
\item[(ii$^{\prime \prime}$)] The orthogonal complement of $F^i V_\C$ is $F^{i^*} V_\C$.
\end{enumerate}

Now define 
$$ S := (2 \pi \sqrt{-1} )^{-m} Q,$$
considered as a linear function  
$$ S: V \otimes V \rightarrow \Q (-m).$$
Then condition (ii$^{\prime \prime}$) above is exactly equivalent to saying that $S$ gives a morphism of $\Q$-Hodge structures.
 Thus we can equivalently define a weak polarization on $V$ to consist of a morphism of $\Q$-Hodge structures $S$ as above satisfying $ S(u\otimes v) = (-1)^m S(v \otimes u).$

\smallskip

A {\it polarization} on a $\Q$-Hodge structure $V$ is a weak polarization $Q$ that satisfies the following additional positivity condition:
\begin{enumerate}
\item[(iii)] If $u\in V^{p,q}$, $u \neq 0$, then $ i^{p-q} Q(u, \bar{u}) >0$. (That $i^{p-q} Q(u, \bar{u})$ lies in $\R$ follows from (i) and the fact that $Q$ is defined over $\Q$.)
\end{enumerate}

Let $C$ be the operator on $V_\C$  
given by the action  of $i \in \mathbb{C}^*$  (see \eqref{CWeil}). 
  Then we can rewrite (iii) above as $ Q(Cu, \bar{u}) >0.$
This statement holds for all $u\in V_\C$ (and not just on elements of fixed type $(p,q)$) on account of (ii). Thus condition (iii) is equivalent to:
\begin{enumerate}
\item[(iii$^\prime$)] The hermitian form $(u,v) \mapsto Q (Cu,\bar{v}) $ is positive definite. 
\end{enumerate}
Now $C$ restricts to an $\R$-linear operator on $V_\R$, and the condition (iii$^{\prime}$) is equivalent to \begin{enumerate}
\item[(iii$^{\prime \prime}$)] The $\R$-bilinear form 
$$ V_\R \times V_\R \rightarrow \R, \quad (u,v) \mapsto Q (Cu,v) $$ 
 is symmetric and positive definite.   
\end{enumerate}

\subsubsection{Polarizations on motives}

A {\it weak polarization} on a pure motive $M \in \cM_{\Q}$ of weight $m$ will be a   
morphism 
$$ s: M \otimes M \rightarrow \Q(-m)
$$ 
that is $(-1)^m$-symmetric and 
such that the induced map 
$$ M \rightarrow M^* (-m)$$ 
is an isomorphism. 
In particular, writing $V = H_{\B}(M_\C, \C)$ for the associated $\Q$-Hodge structure,  $s$ induces an isomorphism  
$ V  \stackrel{\sim}{\longleftarrow} V^*(-m),$
which gives a $(-1)^m$-symmetric  bilinear  form $$H_{\B}(s): V \otimes V \rightarrow \Q(-m),$$
commuting with the action of $\mathbb{C}^*$. 

Thus $H_{\B}(s)$ is  a weak polarization of Hodge structures, in the sense 
of \S \ref{Hodge polarized}. A {\it polarization} on $M$ is a weak polarization $s$ such that $H_{\B}(s)$ is  
a polarization on $V$.  

For the next statement, recall that $V = H_{\B}(M_\C,\C)$ is equipped with 
an involution $F_\infty$.

\begin{lemma}
The (complexification of the) weak polarization $H_{\B}(s): V \otimes V \rightarrow \Q(-m)$ is  equivariant for $c_{\B},F_\infty$  and the action of the Hodge $S^1$ on $V_{\C}$. 
\end{lemma}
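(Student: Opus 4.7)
\smallskip

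The plan is to observe that all three equivariances are formal consequences of $s$ being, by definition, a morphism in the category $\cM_\Q$ of pure motives over $\Q$, once one recognizes that $c_\B$, $F_\infty$, and the Hodge $S^1$-action are parts of the structure that any reasonable enrichment of the Betti realization functor $M \mapsto H_\B(M_\C)$ preserves. So I would not do any computation involving elements; instead I would argue functorially.

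More precisely, I would first handle $c_\B$-equivariance: the Betti realization of a motive over $\Q$ is defined as a $\Q$-vector space (the $c_\B$-fixed part of $H_\B(M_\C, \C)$), and $H_\B(s)$ arises by complexifying a $\Q$-linear map $H_\B(M_\C, \Q) \otimes H_\B(M_\C, \Q) \to \Q(-m)$; any complexified $\Q$-linear map commutes with the complex conjugation defined by the $\Q$-structure, which is precisely $c_\B$. For the Hodge $S^1$-equivariance, I would invoke that the Betti realization of a morphism in $\cM_\Q$ is a morphism of rational Hodge structures; equivalently $H_\B(s)$ preserves the Hodge bigrading after complexification, and the Hodge $S^1$-action is, by construction, diagonal with respect to this bigrading (cf.\ \eqref{CWeil}). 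For $F_\infty$-equivariance, I would use that the involution $F_\infty$ on the Betti realization is defined functorially in terms of the action of complex conjugation on the underlying topological space $M_\C$: any morphism of motives over $\Q$ commutes with this induced involution on source and target, where on the target $\Q(-m)$ the operator $F_\infty$ acts by the scalar $(-1)^m$ dictated by the normalization \eqref{TT}.

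There is essentially no obstacle; the only thing to be careful about is bookkeeping the $F_\infty$-action on $\Q(-m)$ and checking it is consistent with the convention used elsewhere in \S\ref{deltacc}. Combining the $F_\infty$- and Hodge $S^1$-equivariances then packages into equivariance for the full real Weil group $W_\R$ via the formula $j = i^{-p-q} F_\infty$ recalled above. Since $H_\B(s)$ extends $\C$-bilinearly, these statements for $V$ automatically imply the corresponding statements for $V_\C \otimes V_\C \to \C(-m)$, which is what is asserted in the lemma.
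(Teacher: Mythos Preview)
Your proposal is correct and takes essentially the same approach as the paper: both argue purely functorially, using that a morphism in $\cM_\Q$ induces on Betti cohomology a map commuting with $c_\B$, $F_\infty$, and the Hodge $S^1$. The only cosmetic difference is that the paper first rewrites the pairing as the adjoint morphism $V^*(-m)\to V$ and then invokes functoriality once, whereas you invoke functoriality directly for the bilinear form $V\otimes V\to \Q(-m)$; the content is the same.
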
 

\begin{proof} 
It is enough to show these assertions for the morphism $V^*(-m) \rightarrow V$. 
But given any morphism  $f: M \rightarrow M'$ of objects in $\m_{\Q}$
the induced morphism  on Betti cohomology commutes with $c_{\B}, F_{\infty}$ and $S^1$. 
\end{proof}

 In practice, instead of a weak polarization on $M$, we can work just with part of the linear algebraic data given by such a form.  

Namely, we give ourselves a nondegenerate symmetric bilinear form  \begin{equation} \label{Sdef}
S: V \times V \rightarrow \Q(-m) = (2 \pi i)^{-m} \Q
\end{equation}
on $V=H_B (M_\C, \Q)$  whose complexification $S_{\C}$ on $V_{\C}$
satisfies:
\begin{itemize}
\item[(a)] $S_{\C}$ is invariant by $F_\infty$  and $\mathbb{C}^*$, i.e., by the action of $W_{\R}$, and 
 \item[(b)]   $S_{\C}$ 
  restricts to a $\Q$-valued form on $H_{\dR}(M)$. 
  \end{itemize}
This gives a Hermitian form $\langle \cdot,\cdot \rangle$ on $V_\C$ defined by
\[
\langle x,y \rangle =S(x,\bar{y})
\]

\subsubsection{Metrics on Deligne cohomology}   \label{Deligne metrics}
We shall now explain how to use a polarization to  equip Deligne cohomology with a  quadratic form.  
In fact, we do not need a polarization, but simply the linear algebra-data associated to a weak polarization,
as in \eqref{Sdef} and discussion after it.

Recall that (for $M\in \cM_\Q)$ the Beilinson exact sequence: 
\begin{equation}
\label{eqn:beil-exact-seq-recalled}
0 \rightarrow F^{n} H^i_{\dR} (M_\R) \xrightarrow{\tilde{\pi}_{n-1}} H^i_{\B} (M_\R, \R (n-1)) \rightarrow H^{i+1}_{\D} (M_\R, \R (n) ) \rightarrow 0,
\end{equation}
where $i$ and $n$ are integers with $i\le 2n-1$; and the first map is as in \eqref{tildepidef}. 

Let $V$ be the $\Q$-Hodge structure $H^i_B (M_\C, \Q)$.
We suppose, as in the discussion above, 
we are given the linear algebraic data associated to a weak polarization, i.e.
$$S: V \times V \rightarrow \Q(-i)$$ 
and we define $Q = (2 \pi \sqrt{-1})^i S$, as before. 
The distinction between $S$ and $Q$ is that $S$ is rational valued on de Rham cohomology,  
and $Q$ is rational valued on Betti cohomology.

\begin{prop}  \label{definiteness proposition} 
Let $(\cdot, \cdot)$ denote the bilinear form $u,v \mapsto Q(u, \bar{v})$ on $H^i_B (M_\R, \R (n-1)) $. Then 
\begin{enumerate}
\item The form $(\cdot, \cdot)$ 
is  $\R$-valued.  
\item Suppose that $i$ is even. Then the form $(\cdot,\cdot)$ is symmetric and non-degenerate and so is its restriction to the subspace $\tilde{\pi}_{n-1} ( F^n H^i_{\dR} (M_\R) )$. 
In particular, it induces by orthogonal projection a non-degenerate form, also denoted $( \cdot, \cdot )$, on the quotient $H^{i+1}_{\D} (M_\R, \R (n) )$.
\item If $i=2n-2$ and $S$ arises from a polarization, the form $(\cdot, \cdot)$ on $H^{i+1}_{\D} (M_\R, \R (n) )$ is symmetric and positive-definite.
\end{enumerate}
\end{prop}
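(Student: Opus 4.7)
My approach is to parametrize $H^i_{\B}(M_\R, \R(n-1))$ explicitly as $(2\pi i)^{n-1} V_\R^{(-1)^{n-1}}$, where $V = H^i_{\B}(M_\C, \Q)$, $V_\R = V \otimes \R$, and $V_\R^\pm$ denote the $F_\infty$-eigenspaces, and then to reduce everything to properties of $Q$ with respect to the Hodge decomposition of $V_\C$. For (1), writing $u = (2\pi i)^{n-1} u_0$ and $v = (2\pi i)^{n-1} v_0$ with $u_0, v_0 \in V_\R^{(-1)^{n-1}}$, one has $\bar v = (-1)^{n-1} v$ (from $c_{\B}((2\pi i)^{n-1}) = (-1)^{n-1}(2\pi i)^{n-1}$), so $(u,v) = Q(u,\bar v) = (2\pi)^{2(n-1)} Q(u_0, v_0)$, which is real because $Q$ takes rational values on $V$.

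For (2), when $i$ is even the symmetry of $Q$ together with (1) makes $(\cdot,\cdot)$ symmetric. For non-degeneracy on $H^i_{\B}(M_\R, \R(n-1))$, use that $Q : V \otimes V \to \Q(-i)$ is a morphism of Hodge structures and that $F_\infty$ acts on $\Q(-i)$ as $(-1)^i$; hence $Q(F_\infty x, F_\infty y) = (-1)^i Q(x,y)$ and $F_\infty$ is a $Q$-isometry for $i$ even, so $V_\R^+$ and $V_\R^-$ are $Q$-orthogonal and $Q$ restricts non-degenerately to each. For non-degeneracy on $W' := \tilde\pi_{n-1}(F^n H^i_{\DR}(M_\R))$, first derive an explicit formula: since $H^i_{\DR}(M_\R)$ is cut out in $V_\C$ by $F_\infty c_{\B} v = v$ (via $c_{\DR} = F_\infty c_{\B}$), we have $c_{\B} v = F_\infty v$ on $F^n H^i_{\DR}(M_\R)$, and a direct check yields
\[
\tilde\pi_{n-1}(v) = \tfrac{1}{2}\bigl(v + (-1)^{n-1} F_\infty v\bigr).
\]
Complexifying, the image of $F^n V_\C$ decomposes as $\bigoplus_{p \geq n} (V^{p,i-p} \oplus V^{i-p,p})^{(-1)^{n-1}}$. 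Expanding $(u,v) = Q(u,\bar v)$ with $u,v$ in this image and applying the Hodge-orthogonality $Q(V^{pq}, V^{p'q'}) = 0$ unless $(p',q') = (q,p)$, the off-diagonal constraint $p' = i-p$ with $p, p' \geq n$ is ruled out by $i \leq 2n-2$; only diagonal $p' = p$ terms contribute, yielding $(u,v) = \tfrac{1}{2} Q(v_1, \bar v_2)$ with $v_1, v_2 \in V^{p,i-p}$. Since $v \mapsto \bar v$ is an antilinear bijection $V^{p,i-p} \to V^{i-p,p}$ paired non-degenerately by $Q$, the form is non-degenerate on each block, hence on $W' \otimes \C$, hence on $W'$.

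For (3), specialize to $i = 2n-2$, so the Hodge decomposition has a middle piece $V^{n-1,n-1}$ preserved by $F_\infty$; let $V^{n-1,n-1,(-1)^{n-1}}$ denote its $(-1)^{n-1}$-eigenspace. The same Hodge-type analysis shows that $\tilde\pi_{n-1}(F^n V_\C)$ and $V^{n-1,n-1,(-1)^{n-1}}$ are $(\cdot,\cdot)$-orthogonal inside $V_\C^{(-1)^{n-1}}$ and together span it. Passing to real forms, the projection identifies the real points of $V^{n-1,n-1,(-1)^{n-1}}$ with $H^{i+1}_{\D}(M_\R,\R(n))$, and the induced form is $(2\pi)^{2(n-1)}$ times the restriction of $Q$ to this real subspace. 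For $u_0$ real in $V^{n-1,n-1}$ we have $\bar u_0 = u_0$, so $Q(u_0, u_0) = Q(u_0, \bar u_0) > 0$ by the Hodge--Riemann positivity condition (iii), since $i^{p-q} = 1$ for $p = q = n-1$. The main obstacle throughout is disciplined bookkeeping of Tate twists, the $F_\infty$-action, and Betti conjugation; once $\tilde\pi_{n-1}$ is made explicit, all three parts reduce to Hodge-type orthogonality of $Q$ and Hodge--Riemann positivity.
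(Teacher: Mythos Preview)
Your proof is correct and follows essentially the same route as the paper's, though with a somewhat different packaging. You make the identification $H^i_{\B}(M_\R,\R(n-1)) = (2\pi i)^{n-1} V_\R^{(-1)^{n-1}}$ explicit from the outset and reduce $(\cdot,\cdot)$ to a scalar multiple of $Q$ on an $F_\infty$-eigenspace of $V_\R$; the paper instead keeps both $c_{\B}$ and $F_\infty$ in play, views $\mathrm{Re}\,Q(u,\bar v)$ as a nondegenerate real form on $V_\C$ preserved by the Klein four-group $\langle c_{\B}, F_\infty\rangle$, and deduces nondegeneracy on each common eigenspace. For the subspace $\tilde\pi_{n-1}(F^n H^i_{\dR}(M_\R))$ you complexify, write down the explicit projector $v \mapsto \tfrac12(v + (-1)^{n-1}F_\infty v)$, and block-diagonalize via Hodge orthogonality, whereas the paper applies the same Klein four-group eigenspace argument to each $V^{p,q}\oplus V^{q,p}$ separately. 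Both approaches hinge on exactly the same ingredients (the $F_\infty$-equivariance of $Q$ for $i$ even, Hodge orthogonality, and the Hodge--Riemann sign for $p=q$); yours is more computational, the paper's slightly more uniform. One small point: when you ``expand $(u,v)=Q(u,\bar v)$'' on the complexification you are implicitly using the Hermitian (sesquilinear) extension of the real form rather than the $\C$-bilinear one --- this is fine for detecting nondegeneracy, and your formula $(u,v)=\tfrac12 Q(v_1,\bar v_2)$ is indeed correct under that interpretation, but it is worth being explicit about which extension you mean.
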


\proof 

 $Q$  is real-valued on $H^i_{B}(M_{\C}, \R)$
and so $\overline{Q(u, \bar{v}) } =Q(\bar{u}, v). $
 Let $u, v \in H^i_B(M_\R, \R(n-1))$. Then  $\bar{u} =(-1)^{n-1} u$ and same for $v$; thus 
$$
\overline{ Q(u,\bar{v})}=      Q(\bar{u}, v) =    Q((-1)^{n-1}u, (-1)^{n-1} \bar{v}) =  Q(u,\bar{v}),
$$
from which we see that $(\cdot, \cdot)$ is $\R$-valued.

 Now suppose that $i$ is even. Then $Q$ is symmetric,
 and so
 $(v, u) = Q(\bar{v}, u) = Q(u, \bar{v}) = Q(\bar{u}, v) = (u,v)$.  Thus $(\cdot, \cdot)$ is symmetric. 

The Hermitian form $(u , v) \mapsto S(u, \bar{v})$ is nondegenerate, and so 
 $(u,v) \mapsto  \mathrm{Re} \ Q(u, \bar{v})$ is a nondegenerate real-valued quadratic form on $V_{\C}$
 considered as a real vector space.     
 
 Now the inclusion $\R(n-1) \hookrightarrow \C$ induces an identification  (\S \ref{MRcohomology})
   \begin{align}
   \label{eqn:hib}
H^i_B(M_\R, \R(n-1)) &= H^i_B(M_{\C}, \C)^{c_{\B}=(-1)^{n-1}, F_{\infty} = (-1)^{n-1}} \\  \nonumber  &= V_{\C}^{c_{\B}=(-1)^{n-1}, F_{\infty} = (-1)^{n-1}}.
\end{align}
The quadratic form $ \mathrm{Re} \ Q(u, \bar{v})$  is preserved by  $c_{\B}$, 
 and $Q(F_{\infty} u,\overline{F_{\infty} v} ) = F_{\infty} Q(u, \bar{v}) =  (-1)^i Q(u, \bar{v})$;
 since $i$ is even, we see that $\mathrm{Re} \  Q(u, \bar{v})$ is preserved by $F_{\infty}$. 
 Therefore,  the restriction of $\mathrm{Re}  \ Q(u, \bar{v})$  to  $H^i_B(M_\R, \R(n-1)) $ remains nondegenerate,
 since this subspace is an eigenspace for the action of the Klein four-group generated by $F_{\infty}, c_{\B}$,
 and this group preserves  $\mathrm{Re} \ Q(u, \bar{v})$.

The same analysis holds verbatim replacing $V_{\C}$ by $V^{p,q} \oplus V^{q,p}$,
and shows that $\mathrm{Re} \ Q(u, \bar{v})$ is nondegenerate on $\left(V^{p,q} \oplus V^{q,p} \right)^{c_{\B}=(-1)^{n-1}, F_{\infty} = (-1)^{n-1}}$. 
Since 
$$
\tilde{\pi}_{n-1} ( F^n H^i_{\dR} (M_\R) ) = \bigoplus_{\stackrel{p\geq n}{p+q=i}} (V^{p,q} \oplus V^{q,p})^{c_{\B}=(-1)^{n-1}, F_{\infty} = (-1)^{n-1}}
$$ 
 the non-degeneracy of  $(\cdot, \cdot)$ restricted to $\tilde{\pi}_{n-1} ( F^n H^i_{\dR} (M_\R) )$ follows.

Finally, for (3), we note that 
when $i=2n-2$, the orthogonal complement of $\tilde{\pi}_{n-1} ( F^{n} H^j_{\dR} (M_\R) )$ is just 
$$ (V^{n-1,n-1})^{c_{\B}=(-1)^{n-1}, F_{\infty} = (-1)^{n-1}} $$
and the restriction of $(\cdot,\cdot)$ to this subspace is positive definite if $S$ is a polarization. \qed 

\subsubsection{Motives of weight zero} \label{motive0sec}
The case of most interest to us is when $M$ is of weight $0$ and $n=1$, $i=0$ and we restrict to this 
case for the rest of this section.   

The exact sequence  
\eqref{eqn:beil-exact-seq-recalled}, specialized to $n=1$ and $i=0$ is: 
   \begin{equation} \label{key0}  0\rightarrow F^1 H_{\dR}(M)  \otimes_{\Q} \R \xrightarrow{\tilde{\pi}_0} H^0_{\B}(M_\R, \R) \rightarrow \underbrace{H^1_{\D}(M_\R, \R(1))}_{\stackrel{?}{=} H^1_{\m}(M_\Z,\Q(1)) \otimes \R.}  \rightarrow 0\end{equation} The map $\tilde{\pi}_0$ here is given by:
$$
\tilde{\pi}_0 (x) = \frac{1}{2} (x + \bar{x}).
$$
Note that the Weil group $W_{\R}$ acts naturally on $H_{\B}(M_\C, \R)$ and 
the fixed set can be described in equivalent ways:
\begin{align*}
 H_{\B}(M_\C, \R)^{W_{\R}}  &= \mbox{subspace of the $(0,0)$-Hodge part of 
 $H_B (M_\C,\C)$
 fixed by $F_\infty$ and $c_{\B}$}  \\
 &=\mbox{orthogonal complement of $\tilde{\pi}_0 \left(F^1 H_{\dR}(M) \otimes_{\Q} \R\right)$ inside  $H_{\B}(M_\R, \R) $,}
 \end{align*}
 for any weak polarization $s$ on $M$. 
 Thus \eqref{key0} induces an isomorphism 
\begin{equation} \label{WRfixed} H_{\B}(M_\C, \R)^{W_{\R}} \stackrel{\sim}{\longrightarrow} H^1_{\D}(M_\R, \R(1)).\end{equation}

Proposition \ref{definiteness proposition}  implies that, if we are given a weak polarization $s$ on $M$,
then the form $S$ induces on $H_{\B}(M_\C, \R)^{W_{\R}} \stackrel{\sim}{\longrightarrow} H^1_{\D}(M_\R, \R(1))$ 
 a non-degenerate quadratic form; if $s$ is actually a polarization, this 
quadratic form is in fact positive definite.

  \subsubsection{Volumes}  \label{volLcomp} 
  
  We continue to study the setting of a weight $0$ motive $M$. In what follows, we do not need the full
  structure of a polarization $s$:   all we need is the associated
  linear-algebraic data,   i.e., $S$ as in  equation \eqref{Sdef}, and thus we will just assume $M$ to be so equipped. 
  Recall that although $S$ is nondegenerate, no definiteness properties are imposed on it. 
   
We   can compute the volumes (in the sense of \eqref{voldef}) of the three 
   $\Q$-vector spaces appearing in \eqref{key0},  using the metric arising from $S$.

The restriction of $S_{\C}$ to  $H_B (M_\R,\R) = V_\C ^{F_\infty,c_{\B}}$ is just given by $(x,y) \mapsto S_{\C} (x,y)$.  
When we pull back this form to $F^1 H_{\dR} (M) \otimes \R$ via $\tilde{\pi}_0$, the result is  \begin{align*}
 (x,y) = \langle \frac{ x+ \bar{x}}{2}, \frac{y + \bar{y}}{2} \rangle &= \frac{1}{4} ( S_\C (x,\bar{y}) + S_\C (\bar{x},y)) = \frac{1}{4}( S_\C (x,\bar{y}) + \overline{S_\C (x,\bar{y})} ) \\&= \frac{1}{2} \Re S_\C (x,\bar{y}) = \frac{1}{2} S_\C (x,\bar{y}).
 \end{align*}
 Here we have used that $S_{\C}(x, \bar{y}) \in \R$ for $x, y \in F^1 H_{\dR}(M) \otimes \R$:  this is because $c_{\B}$ preserves $H_{\dR}(M) \otimes \R$ (since $c_{\B}$ and $c_{\dR}$ commute), and so $\bar{y} \in H_{\dR} (M) \otimes \R$ also.

   The next lemma describe some basic results concerning these volumes and their relations. In particular, up to factors of $\Q^*$, the squares
   of these volumes {\em do not depend} on the choice of $S$: 
    \begin{lemma}  
    \label{lem:vol-indep}

   With notation as above, the square of 
$ \vol_S  H_{\B}(M_\R, \Q)  $ lies in $\Q^*$, and the square of $\vol_S F^1 H_{\dR} (M) $ is, at least up to $\Q^*$, independent of the choice of the form $S$ (subject to $S$ satisfying the conditions (a) and (b)
after \eqref{Sdef}).  

If we moreover assume Beilinson's conjecture, as formulated in \eqref{eqn:bc-wt0-1}, we have:     \begin{equation} \label{samwise}  
     \vol_S  \ H^1_{\m}(M_\Z, \Q(1))  \sim_{\Q^{\times}}    L^*(M^*,0)  \cdot \frac{ \vol_S  H_B(M_\R, \Q)    }{ \vol_S    F^{1} H_{\dR} (M) }, 
   \end{equation}
where $L^*$ means highest non-vanishing Taylor coefficient; and again all volumes are computed with respect to the form $S$.) 
 \end{lemma}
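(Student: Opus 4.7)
I would treat the three assertions separately. For the rationality of $(\vol_S H_B(M_\R,\Q))^2$, note that $F_\infty$-invariance of $S$ forces the eigenspace decomposition $V = V^{F_\infty=+1} \oplus V^{F_\infty=-1}$ to be $S$-orthogonal (for $u \in V^\pm, v \in V^\mp$, $S(u,v) = S(F_\infty u, F_\infty v) = -S(u,v)$). Since $H_B(M_\R,\Q) = V^{F_\infty=+1}$ and $S$ is $\Q$-valued and nondegenerate on all of $V$, its restriction to $H_B(M_\R,\Q)$ is a nondegenerate $\Q$-valued symmetric bilinear form, and so its Gram determinant in any $\Q$-basis lies in $\Q^*$.

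For the $S$-independence (up to $\Q^*$) of $(\vol_S F^1 H_{\dR}(M))^2$, given two valid forms $S_1, S_2$ I would introduce the unique $T \in \End_\Q(V)$ with $S_2(u,v) = S_1(Tu,v)$. Because both $S_i$ are $W_\R$-invariant, $T$ commutes with the $W_\R$-action on $V_\C$ and so preserves the Hodge decomposition $V_\C = \bigoplus_p V^{p,-p}$; in particular $T$ preserves $F^1 V_\C$. Because both $S_i$ are $\Q$-valued and nondegenerate on $H_{\dR}(M)$ (condition (b)), the analogous recipe on the de Rham side produces a $\Q$-linear operator on $H_{\dR}(M)$; its $\C$-linear extension to $V_\C$ must coincide with $T$ by nondegeneracy of $S_1$. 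Hence $T$ preserves the $\Q$-subspace $F^1 H_{\dR}(M) = H_{\dR}(M) \cap F^1 V_\C$. A direct Gram-matrix calculation in any $\Q$-basis $\{e_i\}$ of $F^1 H_{\dR}(M)$ (using $\bar{e_j} = F_\infty e_j$, which follows from $c_B = F_\infty c_{\dR}$ in weight zero) then gives $G_{S_2} = T^t G_{S_1}$, so the squared volumes differ by $\det(T|_{F^1 H_{\dR}(M)}) \in \Q^*$.

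For the Beilinson-conditional identity \eqref{samwise}, I would invoke \eqref{eqn:bc-wt0-1} to obtain the equality of $\Q$-lines
\[
\det H^1_\m(M_\Z, \Q(1)) = L^*(M^*,0) \cdot \cR(M)
\]
inside $\det H^1_\D(M_\R, \R(1))$, where $\cR(M)$ is the induced $\Q$-structure on $\det H^1_\D$ coming from the short exact sequence \eqref{key0} and the $\Q$-structures on the two flanking terms. Proposition \ref{definiteness proposition} together with \eqref{WRfixed} assembles \eqref{key0} into an orthogonal short exact sequence of real inner product spaces (up to the rational scalar $\tfrac{1}{2}$ from $\tilde{\pi}_0(x) = \tfrac{1}{2}(x+\bar{x})$), so the classical multiplicativity of volumes yields
\[
\vol_S H_B(M_\R,\Q) \sim_{\Q^*} \vol_S F^1 H_{\dR}(M) \cdot \vol_S \cR(M),
\]
and combining this with the Beilinson $\Q$-line identity produces \eqref{samwise}.

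The main care point is the last step: verifying that the pullback form on $F^1 H_{\dR}(M_\R)$ via $\tilde{\pi}_0$, together with the restriction of $S$ to the orthogonal complement of $\tilde\pi_0(F^1 H_{\dR}(M_\R))$ inside $H_B(M_\R,\R)$, really assemble, under \eqref{WRfixed}, into an orthogonal decomposition matching \eqref{key0}. Once this metric compatibility is confirmed from Proposition \ref{definiteness proposition}, the multiplicativity of volumes in an orthogonal short exact sequence of metrized $\Q$-vector spaces is standard, and the residual rational factors (the $\tfrac{1}{2}$, the sign of $L^*(M^*,0)$, etc.) are absorbed into $\Q^*$.
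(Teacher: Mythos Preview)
Your proof is correct. Parts (1) and (3) follow essentially the same path as the paper (the paper is just terser: for (1) it says ``immediate, since $S$ is rational-valued on $H_B(M_\C,\Q)$'', and for (3) it rewrites Beilinson's identity as an equality of $\Q$-lines inside the tensor product of determinant lines and then ``computes volumes of both sides'').

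For part (2) you take a genuinely different route. You compare two admissible forms $S_1,S_2$ via the intertwining operator $T$ defined by $S_2(u,v)=S_1(Tu,v)$, check that $T$ preserves both the Hodge filtration (via $W_\R$-equivariance) and the de Rham $\Q$-structure (via the analogous de Rham-side construction), and conclude that the Gram determinants differ by $\det(T|_{F^1 H_{\dR}(M)})\in\Q^*$. The paper instead works intrinsically with a single $S$: it observes that $S$ gives a perfect $\Q$-pairing $F^1 H_{\dR}(M)\times H_{\dR}(M)/F^0\to\Q$, that $c_B$ induces an isomorphism $F^1 H_{\dR}(M)\otimes\C\simeq (H_{\dR}(M)/F^0)\otimes\C$, and then defines a scalar $\lambda\in\R^\times$ by $\varphi(\overline{v^+})=\lambda v^-$ for $\Q$-generators $v^\pm$ of the two determinant lines. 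One then gets $2^d(\vol_S F^1 H_{\dR}(M))^2=\lambda\cdot S(v^+,v^-)$, where $\lambda$ is visibly $S$-independent and $S(v^+,v^-)\in\Q^*$.

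What the paper's approach buys, and yours does not directly, is an \emph{intrinsic} expression for the squared volume: the paper remarks after the proof that one can \emph{define} $\vol F^1 H_{\dR}(M)$ up to $\sqrt{\Q^*}$ as $\sqrt{\lambda}$, without reference to any weak polarization at all. This is used later (e.g.\ in \S\ref{sec:mot-coeffs}). Your comparison argument establishes independence of $S$ but does not isolate such an $S$-free invariant, so if you need that consequence later you would want to recover the paper's construction of $\lambda$.
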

\proof    
The first assertion is immediate, since $S$ is rational-valued on $H_{\B}(M_{\C}, \Q)$.   We next prove the assertion concerning $\vol_S F^1 H_{\dR}(M)$.
The form $S$  descends to a perfect pairing
\[
S: F^1 H_{\dR} (M) \times H_{\dR} (M) /F^{0} H_{\dR} (M) \rightarrow \Q, 
\]
and hence a perfect pairing
\[
S: \det F^1 H_{\dR} (M) \times \det (H_{\dR} (M) /F^{0}) \rightarrow \Q.
\]
Note also that the complex conjugation $c_{\B}$ induces an isomorphism 
$$F^1 H_{\dR} (M) \otimes \C \simeq (H_{\dR} (M) /F^{0}  ) \otimes \C.$$
Choose generators $v^+$, $v^-$ for the $\Q$-vector spaces $\det F^1 H_{\dR} (M)$ and $\det (H_{\dR} (M)/F^{0})$. 
If $\dim F^1 H_{\dR} (M) =d$, the image of $\overline{v^+}$   under     the natural projection
$$\varphi: \bigwedge^d (H_{\dR} (M) \otimes \C) \rightarrow \bigwedge^d  \left(H_{\dR} (M) /F^{0}  \otimes \C \right) $$
is a generator for  the right-hand side,  so we have
\begin{equation} \label{lambdadef}
\varphi (\overline{v^+}) = \lambda \cdot v^-
\end{equation}
for some scalar $\lambda \in \C^\times$ (in fact, in $\R^\times$) which is  obviously independent of the choice of $S$.  
The volume of $F^1 H_{\dR} (M)$ is then given by
\[
2^d \cdot \left( \vol_S F^1 H_{\dR} (M)\right) ^2 =  S_\C (v^+, \overline{v^+}) = S_\C (v^+, \varphi (\overline{v^+}) ) = \lambda \cdot S(v^+, v^-).
\]
The result follows since $S(v^+, v^-) \in \Q^*$. 

We finally verify \eqref{samwise}: by \eqref{eqn:bc-wt0-1} we have: 
\begin{equation} \label{toothless} \det(H_{\B} (M_\R, \Q) ) \cdot  L^*(M^*, 0) \sim  \det  F^1 H_{\dR} (M) \cdot  \det (H^1_{\m} (M_\Z,\Q(1))),\end{equation}
   which we should regard as an equality inside
   $$\wedge^* H_{\B} (M_\R,\R) \simeq \wedge^* (F^1 H_{\dR} (M_\R)) \otimes \wedge^* H^1_{\D} (M_\R, \R (1)).$$
Computing volumes  of both sides of \eqref{toothless} with respect to the polarization we get \eqref{samwise}.
 \qed

  We remark that the Lemma allows us to define $\vol F^1 H_{\dR}(M)$ up to $\sqrt{\Q^*}$-- namely, take  $\sqrt{\lambda}$ where $\lambda$ is in \eqref{lambdadef} --
  even without a polarization.

\newcommand{\intprod}{\mathbin{\raisebox{\depth}{\scalebox{1}[-1]{$\lnot$}}}}

\section{Fundamental Cartan and tempered cohomological representations} \label{VoganZuckerman}

In this section, we will  associate a canonical $\C$-vector space 
$\aG$ to the real reductive group $\GG_{\R}$; its complex-linear dual will be denoted by $\aGs$.  These vector spaces depend on $\GG_{\R}$ only up to isogeny. 

Despite the notation, the group $\GG_{\R}$ does not need to be the extension of a reductive group over $\Q$; for this section alone,
it can be an arbitrary real reductive group.  We denote by $G$ the real points of $\GG_{\R}$. 
Similarly, in this section alone, we will allow $\LG$ to denote the dual group of the {\em real} algebraic group, rather than the $\Q$-algebraic group;  in other words,
$$ \LG = \widehat{G} \ltimes \Gal(\C/\R),$$
rather than the variant with $\Gal(\overline{\Q}/\Q)$. 

  We shall then construct an action
of $\wedge^* \aGs$ on the cohomology of any tempered, cohomological representation of $G$, over
which this cohomology is freely generated in degree $q$.     We will always have
 \begin{equation} \dim \aGs = \delta =\mathrm{rank}(G) - \mathrm{rank}(\Kinfty),\end{equation} 
 
The short version is that the vector space $\aGs$ is dual to the  Lie algebra of the split part of a fundamental Cartan algebra, but
we want to be a little more canonical (in particular, define it up to a unique isomorphism). 

We will give two definitions of $\aGs$. The first in \S \ref{FCA1} is analogous to the definition of 
 ``canonical maximal torus'' of a reductive group. The second definition in \S \ref{FVS}
uses the dual group.

There is a natural real structure on $\aG$, arising from either of the constructions. However, 
what will be more important to us is a  slightly less apparent real structure, the ``twisted real structure,''
which we define in Definition \ref{twistedReal}. 

In \S \ref{aGcohomology} we construct the action of $\wedge^* \aGs$ on the $(\gK)$-cohomology of a tempered representation;
in fact
we will work with $(\mathfrak{g}, \Kinfty^0)$-cohomology, where $\Kinfty^0$ is the  identity component of $\Kinfty$. 
The book \cite{BW} is a standard reference for $(\gK)$ cohomology.

 We follow in this section the convention of allowing $\mathfrak{g}$ etc. to denote the complexifications of the Lie algebras  
and reserving $\mathfrak{g}_{\R}$ or $\Lie(\GG_{\R})$ for the real Lie algebra. 
We write $\mathfrak{k}_{\R}$ for the Lie algebra of $\Kinfty$; let $\theta$ be the Cartan involution of $\mathfrak{g}_{\R}$ that fixes $\mathfrak{k}_{\R}$, 
and $\mathfrak{p}_{\R}$ the $-1$ eigenspace for $\theta$,  with complexification $\mathfrak{p}$.  Thus  $\mathfrak{g} = \mathfrak{k} \oplus \mathfrak{p}$. 
 Finally, let $\mathbf{Z}_G$ be the center of $\GG_{\R}$, with Lie algebra $\mathfrak{z}$ and real Lie algebra $\mathfrak{z}_{\R}$. 
 
 Moreover, let us fix 
\begin{equation} \label{Bdisgustion}  B_{\R}  = \mbox{ an  invariant, $\theta$-invariant,  $\R$-valued quadratic form on $\mathfrak{g}_{\R}$}, \end{equation} 
 with  
 the property that $B_{\R}(X, \theta(X))$ is negative definite.  (Invariant means that it is invariant by inner automorphisms, whereas $\theta$-invariant
 means $B_{\R}(\theta(X), \theta(Y)) =B_{\R}(X,Y)$.)  
 For example, if $\GG_{\R}$ is semisimple, the Killing form has these properties. 
 Note that such a form gives rise to a positive definite metric on $\mathfrak{g}_{\R}/\mathfrak{k}_{\R}$,
 and this normalizes a Riemannian metric on the locally symmetric space $Y(K)$.

\subsection{First construction of $\aGs$ via fundamental Cartan subalgebra} \label{FCA1} 

A {\em fundamental Cartan subalgebra} of $\mathfrak{g}_{\R}$ is a $\theta$-stable Cartan subalgebra whose compact part (the fixed points of $\theta$) is of maximal dimension  
among all $\theta$-stable Cartan subalgebras.
These are all conjugate, see \cite[2.3.4]{Wallach1}. Let $\delta$ be the dimension of the split part  ($-1$ eigenspace of $\theta$) of  a fundamental Cartan subalgebra. 
Then  $\delta = \mathrm{rank}(G) - \mathrm{rank}(\Kinfty)$. Informally, $\delta$ is the smallest
dimension of any family of tempered representations of $G$.  The integer $\delta$ depends only on the inner class of $\GG_{\R}$.
For almost simple groups, $\delta = 0$ unless $\GG_{\R}$ is ``a complex group'' (i.e. $\mathbf{G}_{\R} \simeq \mathrm{Res}_{\C/\R} \mathbf{G}^*$
where $\mathbf{G}^*$ is a simple complex reductive group) or  $\GG_{\R}$ is  (up to center and inner twisting) 
$ \SL_n, \mathrm{E}_6^{\mathrm{split}}$ or $ \mathrm{SO}_{p,q}$ where $p,q$ are odd.

Consider triples
$(\mathfrak{a}, \mathfrak{b}, \mathfrak{q})$ that arise thus:
Begin with a Cartan subgroup $B \subset \Kinfty^{\circ}$, with Lie algebra
$\mathfrak{b}_{\R} \subset \mathfrak{k}_{\R}$ and complexified Lie algebra 
$\mathfrak{b} \subset \mathfrak{k}$.  Form   its centralizer 
 $\mathfrak{t}_{\R} = \mathfrak{a}_{\R} \oplus \mathfrak{b}_{\R}$ inside $\mathfrak{g}_{\R}$,  where $\mathfrak{a}_{\R}$ is the $-1$ eigenspace for $\theta$;
 it is a fundamental Cartan subalgebra  with complexification $\mathfrak{t} = \mathfrak{a} \oplus \mathfrak{b}$. %
   Pick generic $x \in i \mathfrak{b}_{\R}$
and let $\mathfrak{q}$ be the sum  of all eigenspaces of $x$ on $\mathfrak{g}$ which have non-negative eigenvalue.   Thus $\mathfrak{q}$ is a Borel subalgebra
and its torus quotient is $\mathfrak{a} \oplus \mathfrak{b}$.

  \begin{prop}\label{independence}
    Suppose $(\mathfrak{a}, \mathfrak{b}, \mathfrak{q})$ and $(\mathfrak{a}', \mathfrak{b}', \mathfrak{q}')$ arise, as described above, 
    from $(\mathfrak{b}, x)$ and $(\mathfrak{b}', x')$. 
   
  Then there  then there exists  $g \in \GG_{\R}(\C)$  such that $\Ad(g)$ carries $(\mathfrak{a}, \mathfrak{b}, \mathfrak{q})$ to  $(\mathfrak{a}', \mathfrak{b}', \mathfrak{q}')$  and preserves the real structure on $\mathfrak{a}$
  (i.e., carries $\mathfrak{a}_{\R} \subset \mathfrak{a}$ to $\mathfrak{a}_{\R}' \subset \mathfrak{a}'$). 
Moreover, any two such $g, g'$ induce the {\em same} isomorphism $\mathfrak{a} \rightarrow \mathfrak{a}'$. 
 \end{prop}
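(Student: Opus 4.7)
The plan is to prove existence in two moves (match the compact Cartan, then match the Borel) and then deduce uniqueness by computing a certain stabilizer.

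For existence, I would first use the fact that all Cartan subalgebras of $\mathfrak{k}_{\R}$ are conjugate under $K_\infty^0$: pick $k \in K_\infty^0 \subset \mathbf{G}(\mathbf{R})$ with $\mathrm{Ad}(k)\mathfrak{b} = \mathfrak{b}'$. Since $\mathfrak{t}$ is intrinsically the centralizer of $\mathfrak{b}$ in $\mathfrak{g}$ and $\mathfrak{a}$ is intrinsically the $(-1)$-eigenspace of $\theta$ on $\mathfrak{t}$, applying $\mathrm{Ad}(k)$ reduces me to the case $\mathfrak{b} = \mathfrak{b}'$, $\mathfrak{t} = \mathfrak{t}'$, $\mathfrak{a} = \mathfrak{a}'$. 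Now $\mathfrak{q}$ and $\mathfrak{q}'$ are both $\theta$-stable Borels of $\mathfrak{g}$ containing $\mathfrak{t}$, arising from generic $x,x' \in i\mathfrak{b}_{\R}$ by the recipe of the proposition. Such Borels are parametrized by the connected components (``chambers'') of the complement of the root hyperplanes of $(\mathfrak{g},\mathfrak{t})$ in $i\mathfrak{b}_{\R}$, and the real Weyl group $N_{K_\infty^0}(\mathfrak{b})/Z_{K_\infty^0}(\mathfrak{b})$ permutes these chambers transitively. Hence I can find $n \in N_{K_\infty^0}(\mathfrak{b}) \subset \mathbf{G}(\mathbf{R})$ with $\mathrm{Ad}(n)\mathfrak{q} = \mathfrak{q}'$; since $n$ normalizes $\mathfrak{b}$ it also normalizes $\mathfrak{t}$ and hence preserves $\mathfrak{a}$, and because $n$ is real it preserves the real form $\mathfrak{a}_{\R} \subset \mathfrak{a}$. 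Composing, $g := nk$ does the job.

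For uniqueness, suppose $g,g' \in \mathbf{G}_{\R}(\mathbf{C})$ both carry $(\mathfrak{a},\mathfrak{b},\mathfrak{q})$ to $(\mathfrak{a}',\mathfrak{b}',\mathfrak{q}')$ and respect the real structure on $\mathfrak{a}$. Then $h := g'^{-1}g$ stabilizes the triple $(\mathfrak{a},\mathfrak{b},\mathfrak{q})$. In particular $\mathrm{Ad}(h)$ normalizes $\mathfrak{b}$, hence normalizes its centralizer $\mathfrak{t} = \mathfrak{a}\oplus\mathfrak{b}$ in $\mathfrak{g}$, and so induces an element $w_h$ of the Weyl group $W(\mathfrak{g},\mathfrak{t}) = N_{\mathbf{G}(\mathbf{C})}(\mathfrak{t})/Z_{\mathbf{G}(\mathbf{C})}(\mathfrak{t})$. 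Since $\mathrm{Ad}(h)$ also stabilizes the Borel $\mathfrak{q}$ containing $\mathfrak{t}$, and $W(\mathfrak{g},\mathfrak{t})$ acts simply transitively on Borels containing $\mathfrak{t}$, the element $w_h$ is trivial. Thus $\mathrm{Ad}(h)$ acts as the identity on $\mathfrak{t}$, and in particular on $\mathfrak{a}$, so $\mathrm{Ad}(g) = \mathrm{Ad}(g')$ on $\mathfrak{a}$.

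The main obstacle is the middle point of the existence argument: showing that the Weyl element relating two ``non-negative eigenspace'' Borels $\mathfrak{q},\mathfrak{q}'$ can be realized by a real element. The cleanest way is to notice that chambers in $i\mathfrak{b}_{\R}$ are already permuted transitively by the Weyl group of the compact group $N_{K_\infty^0}(\mathfrak{b})/Z_{K_\infty^0}(\mathfrak{b})$ acting on $\mathfrak{b}$; this sidesteps the subtler question of which elements of the full complex Weyl group $W(\mathfrak{g},\mathfrak{t})$ preserve the real form $\mathfrak{t}_{\R}$. With that observation in hand, both steps of existence use only elements of $\mathbf{G}(\mathbf{R})$, so compatibility with the real structure on $\mathfrak{a}$ is automatic.
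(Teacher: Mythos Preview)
Your uniqueness argument is fine and essentially matches the paper's (they observe that $g'^{-1}g$ lies in the Borel subgroup attached to $\mathfrak{q}$, which acts trivially on its torus quotient $\mathfrak{a}\oplus\mathfrak{b}$).

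The existence argument, however, has a genuine gap at exactly the point you flagged as the ``main obstacle.'' After reducing to $\mathfrak{b}=\mathfrak{b}'$, you claim that the compact Weyl group $W_K = N_{K_\infty^0}(\mathfrak{b})/Z_{K_\infty^0}(\mathfrak{b})$ acts transitively on the chambers in $i\mathfrak{b}_{\R}$ cut out by the roots of $\mathfrak{g}$ on $\mathfrak{b}$. This is false in general: $W_K$ acts simply transitively on chambers for $\Delta(\mathfrak{k}:\mathfrak{b})$, but the root system $\Delta(\mathfrak{g}:\mathfrak{b})$ is typically larger, and a single $\Delta(\mathfrak{k}:\mathfrak{b})$-chamber may contain $[W_G:W_K]>1$ chambers for $\Delta(\mathfrak{g}:\mathfrak{b})$. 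For instance, for $\SL_{2n}(\R)$ one has $\Delta(\mathfrak{k}:\mathfrak{b})=D_n$ inside $\Delta(\mathfrak{g}:\mathfrak{b})=C_n$ with index $2$; for $\SO_{2k+1,2l+1}$ the index is $\binom{k+l}{k}$. The paper even notes explicitly that the triples $(\mathfrak{a},\mathfrak{b},\mathfrak{q})$ and $(\mathfrak{a}',\mathfrak{b}',\mathfrak{q}')$ \emph{need not be conjugate under $\mathbf{G}_{\R}(\R)$}, so your attempt to use only real elements of $G$ cannot succeed.

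The paper's fix is to introduce a third Weyl group, $W_M$, where $M$ is the centralizer of $\mathfrak{a}$ in $\mathbf{G}_{\C}$. After using $K_\infty^0$ to arrange that $\mathfrak{q},\mathfrak{q}'$ give the same $\Delta(\mathfrak{k}:\mathfrak{b})$-chamber, one shows (Lemma~\ref{WMGK}, by a case-by-case check of root lines) that any two $\Delta(\mathfrak{g}:\mathfrak{b})$-chambers inside a fixed $\Delta(\mathfrak{k}:\mathfrak{b})$-chamber are related by $W_M$; and (Lemma~\ref{WMC}) that every element of $W_M$ has a representative in $M(\C)$ normalizing $\mathfrak{b}$. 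Such a representative centralizes $\mathfrak{a}$ by definition of $M$, hence acts as the identity on $\mathfrak{a}$ and in particular preserves $\mathfrak{a}_{\R}$. This element lies in $\mathbf{G}_{\R}(\C)$ but typically not in $\mathbf{G}(\R)$.
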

 
 Note that $(\mathfrak{a}, \mathfrak{b}, \mathfrak{q})$ and $(\mathfrak{a}', \mathfrak{b}', \mathfrak{q}')$ need not be conjugate under $\mathbf{G}_{\R}(\R)$. 
 
 \proof 
  The last (uniqueness) assertion at least is obvious: $g, g'$ differ by an element of the Borel subgroup corresponding to $\mathfrak{q}$,
  which acts as the identity on its torus quotient.
  
Now, let  $\Delta(\mathfrak{k}:\mathfrak{b})$ be the root system defined by the action of $\mathfrak{b}$  on $\mathfrak{k}$. 
For existence note that different choices of $\mathfrak{b}$, together with a positive system for $\Delta(\mathfrak{k}:\mathfrak{b})$,  are conjugate within $\Kinfty^{\circ}$; so 
 we can assume that $\mathfrak{b}=\mathfrak{b}'$ and that $\mathfrak{q}, \mathfrak{q}'$
  induce the same positive system on $\mathfrak{k}$.   Then also $\mathfrak{a} = \mathfrak{a}'$: 
  they are both the $-1$ eigenspace of the Cartan involution on the centralizer of $\mathfrak{b}$.

We will prove in the remainder of this subsection (Lemma \ref{WMC} and Lemma \ref{WMGK})  that there exists 
$w \in \mathbf{G}_{\R}(\C)$ 
  such that \begin{equation} \label{w wanted} w \mathfrak{q} =\mathfrak{q}', \ \ w \mathfrak{b} = \mathfrak{b},   \ \ w|\mathfrak{a} = \mathrm{Id}.\end{equation}
  Since $w$ is the identity on $\mathfrak{a}$, it in particular preserves the real structure. 
   \qed 
  
  \medskip

Therefore,  $\mathfrak{a}$ or $\mathfrak{a}_{\R}$ as above
 is well-defined up to unique isomorphism; we denote this common space  by $\aG$. More formally, 
\begin{equation} \label{aDef1} \aG := \varprojlim_{(\mathfrak{a}, \mathfrak{b}, \mathfrak{q})} \mathfrak{a},\end{equation} 
and we define $\aGs$ to be its $\C$-linear dual. 
 Visibly $\aG$ does not depend on the isogeny class of $\GG_{\R}$ - it depends only on the Lie algebra $\Lie(\GG_{\R})$. 
It is also equipped with a canonical real structure arising from $\mathfrak{a}_{\R} \subset \mathfrak{a}$.

We now complete the missing step in the above proof.   Write $\mathbf{M}$ for the centralizer of $\mathfrak{a}$ in $\GG_{\C}$; it is a Levi subgroup. 
Write $\mathfrak{m}$ for the (complex)  Lie algebra
of $\mathbf{M}$.

  It is proved in \cite[Proposition 18.2.3]{CSFD};  
 that the  set of roots  of $\mathfrak{b}$ on $\mathfrak{g}$ form a not necessarily reduced root system  
 inside the dual of $i \mathfrak{b}_{\R}/ i (\mathfrak{z}_{\R} \cap \mathfrak{b}_{\R})$; we regard
 the latter as an inner product space by using the form $B_{\R}$. (The reference cited uses the Killing form, but $B$
 has all the necessary properties for the argument.) 
 
 We will abuse notation slightly and simply say that these roots  form a root system 
 $$\Delta(\mathfrak{g}:\mathfrak{b}) \subset i  \mathfrak{b}_{\R}^*,$$  with the understanding that  their span is only the subspace of $i \mathfrak{b}_{\R}^*$ orthogonal to the central space
 $\mathfrak{z}_{\R} \cap \mathfrak{b}_{\R} \subset \mathfrak{b}_{\R}$. 
Then the roots  $\Delta(\mathfrak{k}:\mathfrak{b})$ on $\mathfrak{k}$ or the roots $\Delta(\mathfrak{m}:\mathfrak{b})$  on $\mathfrak{m}$ form subsystems
of $\Delta(\mathfrak{g}: \mathfrak{b})$. 
The Weyl groups of these root systems will be denoted $W_G, W_K, W_M$ respectively; these
are all regarded as subgroups of $\Aut(\mathfrak{b})$.

 \begin{lemma}  \label{WMC} Each $ w\in W_M$ has a representative
inside the normalizer of $\mathfrak{b}$ inside $M_{\C}$.
\end{lemma}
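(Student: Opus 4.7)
My plan is to deduce the lemma from the standard fact that for the connected reductive complex group $M_\C$ with Cartan subalgebra $\mathfrak{t} = \mathfrak{a} \oplus \mathfrak{b}$, every element of the Weyl group $W(\mathfrak{m}:\mathfrak{t})$ admits a representative in $N_{M_\C}(\mathfrak{t})$, together with the claim that the natural action of this Weyl group on $\mathfrak{t}$ actually preserves the subspace $\mathfrak{b}$.

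First I would check that $\mathfrak{t} = \mathfrak{a} \oplus \mathfrak{b}$ is a Cartan subalgebra of $\mathfrak{m}$: indeed it is a Cartan of $\mathfrak{g}$ contained in $\mathfrak{m}$, which is the centralizer of $\mathfrak{a}$ in $\mathfrak{g}$. Since $\mathfrak{a}$ is central in $\mathfrak{m}$, every root in $\Delta(\mathfrak{m}:\mathfrak{t})$ vanishes on $\mathfrak{a}$, so restriction to $\mathfrak{b}$ gives a bijection $\Delta(\mathfrak{m}:\mathfrak{t}) \stackrel{\sim}{\to} \Delta(\mathfrak{m}:\mathfrak{b})$, inducing a canonical isomorphism $\widetilde{W}_M := W(\mathfrak{m}:\mathfrak{t}) \stackrel{\sim}{\to} W_M$.

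The key step is to show that for every $\alpha \in \Delta(\mathfrak{m}:\mathfrak{t})$, the coroot $H_\alpha$ lies in $\mathfrak{b}$. For this, I would exploit $\theta$-stability: the Cartan involution $\theta$ preserves $\mathfrak{m}$ (it normalizes $\mathfrak{a}$, acting as $-1$) and $\mathfrak{t}$; since $\alpha$ vanishes on the $(-1)$-eigenspace $\mathfrak{a}$ of $\theta|_{\mathfrak{t}}$, we have $\alpha \circ \theta = \alpha$, so $\theta$ preserves each root space $\mathfrak{m}_\alpha$, acting by some sign $\epsilon_\alpha = \pm 1$. Choosing $E_{\pm\alpha} \in \mathfrak{m}_{\pm\alpha}$ with $H_\alpha = [E_\alpha, E_{-\alpha}]$, we get $\theta H_\alpha = \epsilon_\alpha \epsilon_{-\alpha} H_\alpha$. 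Decomposing $H_\alpha = (H_\alpha)_a + (H_\alpha)_b$ along $\mathfrak{t} = \mathfrak{a} \oplus \mathfrak{b}$ and comparing with $\theta H_\alpha = -(H_\alpha)_a + (H_\alpha)_b$, one of the components must vanish. The case $(H_\alpha)_b = 0$ would force $\alpha(H_\alpha) = 0$ (since $\alpha$ kills $\mathfrak{a}$), contradicting $\alpha(H_\alpha) = 2$; hence $(H_\alpha)_a = 0$ and $H_\alpha \in \mathfrak{b}$.

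With this in hand, the reflections $s_\alpha(x) = x - \alpha(x) H_\alpha$ generating $\widetilde{W}_M$ manifestly preserve the subspace $\mathfrak{b} \subset \mathfrak{t}$ (while fixing $\mathfrak{a}$ pointwise). To finish, given $w \in W_M$ lifted to $\widetilde{w} \in \widetilde{W}_M$, choose any representative $g \in N_{M_\C}(\mathfrak{t}) \subset M_\C$ for $\widetilde{w}$; then $\mathrm{Ad}(g)$ preserves $\mathfrak{t}$ and acts as $\widetilde{w}$ on it, hence by the previous sentence it also preserves $\mathfrak{b}$. Thus $g \in N_{M_\C}(\mathfrak{b})$ and induces $w$ on $\mathfrak{b}$. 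The main conceptual obstacle is the coroot-in-$\mathfrak{b}$ claim, which depends essentially on the $\theta$-stability that is built into our choice of fundamental Cartan; everything else is routine translation between the two Cartans.
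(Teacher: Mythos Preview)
Your proof is correct and follows the same overall strategy as the paper: lift $w$ to the Weyl group of the full Cartan $\mathfrak{t} = \mathfrak{a} \oplus \mathfrak{b}$ of $\mathfrak{m}$, take a representative in $N_{M_\C}(\mathfrak{t})$, and then argue that this representative preserves $\mathfrak{b}$. The difference is in that last step. The paper simply observes that $\mathfrak{a}$ and $\mathfrak{b}$ are orthogonal for the invariant form $B$ (being in different $\theta$-eigenspaces); since any element of $M_\C$ fixes the central subspace $\mathfrak{a}$ pointwise and preserves $B$, its action on $\mathfrak{t}$ automatically preserves the orthogonal complement $\mathfrak{b}$. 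This is quicker than your coroot computation, but your route has the merit of being self-contained: it uses only the Cartan involution, not the auxiliary nondegenerate form $B_\R$ fixed earlier in the section, and it yields as a byproduct the explicit fact that each coroot $H_\alpha$ lies in $\mathfrak{b}$.
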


\proof 
Observe that $\mathfrak{a}, \mathfrak{b}$ are orthogonal for  $B$.   
To see that, in turn, 
it is enough to see that $\mathfrak{a}_{\R}, \mathfrak{b}_{\R}$ are orthogonal to one another  under 
$B_{\R}$,  
which follows from the fact that they are in different eigenspaces for the Cartan involution.

We can assume that $G$ is semisimple,
and that $w$ is a root reflection $s_{\beta}$. Now $\beta$ is the restriction of some root $\beta^*$
of $\mathfrak{a} \oplus \mathfrak{b}$ on $\mathfrak{m}$, and so $w$ has 
  a representative  $\tilde{w}$ inside the normalizer
of $\mathfrak{a} \oplus \mathfrak{b}$ inside $M_{\C}$; now $\tilde{w}$ 
  preserves $\mathfrak{a}$, and therefore (by using $B$) it preserves  $\mathfrak{b}$ too. 
\qed

   \begin{table} 
 \begin{tabular}{|c|c|c|c|c|c|}
\hline \hline
$\mathbf{G}$ & $\Delta(\mathfrak{k}:\mathfrak{b})$ & $\Delta(\mathfrak{g}:\mathfrak{b})$   &    $[W_G:W_K]$      \\ 		%
\hline
$\SL_{2n}^{\mathrm{split}}$ & $\mathrm{D}_n$ & $\mathrm{C}_n$  & $2$   \\									%
$\SL_{2n}^{\mathrm{quat}}$ & $\mathrm{C}_n$ & $\mathrm{C}_n$    & $1$   \\									%
$\SL_{2n+1}$ & $\mathrm{B}_n$ & $\mathrm{BC}_n$  & $1$   \\											%
$\SO_{2k+1,2l+1}$  & $\mathrm{B}_k \times \mathrm{B}_{l}$ &   $\mathrm{B}_{k +l}$  & ${\ell +k \choose k} $   \\		%
$\mathrm{E}_6^{\mathrm{split}}$   &  $\mathrm{F}_4$  &   $\mathrm{C}_4$      & 3  \\ 
$\mathrm{E}_6^{IV} $ &    $\mathrm{F}_4$ & $\mathrm{F}_4$   &1    \\ 
$\mathrm{Res}_{\C/\R} \mathbf{H} $ & $ \Phi_{H}$  & $\Phi_H$   &   $1$    \\									%
 \hline \hline
\end{tabular}
   \caption{Root systems for simple real groups with $\delta  > 0$.}

  \end{table} 
 
 Recall that a ``chamber'' for a root system is a connected component of the complement of all hyperplanes orthogonal to the roots.
 We can regard the  Borel subalgebras $\mathfrak{q}$  
  discussed before Proposition \ref{independence} as being indexed by chambers for $\Delta(\mathfrak{g}: \mathfrak{b})$:
the parabolic $\mathfrak{q}$ associated to an element $x \in i \mathfrak{b}_{\R}$ 
 depends only on the chamber of $x$.

 The Weyl group acts simply transitively on chambers  and so a fixed chamber of $\Delta(\mathfrak{k}:\mathfrak{b})$
contains precisely $[W_G: W_K]$  chambers for $\Delta(\mathfrak{g}:\mathfrak{b})$;
the number of $G$-conjugacy classes on triples
$(\mathfrak{a}, \mathfrak{b}, \mathfrak{q})$ that arise as above is precisely this
index $[W_G: W_K]$.
The possibilities for
$W_K \subset W_G$ are listed in the table.

\begin{lemma} \label{WMGK}
Suppose that $\mathcal{C}, \mathcal{C}'$ are chambers for $\Delta(\mathfrak{g}:\mathfrak{b})$ 
that lie in a fixed   chamber for $\Delta(\mathfrak{k}:\mathfrak{b})$.
Then there is $w_M \in W_M$,    the Weyl group of $\Delta(\mathfrak{m}:\mathfrak{b})$, 
such that $w_M \mathcal{C} = \mathcal{C}'$. 
In particular, the map 
\begin{equation} \label{MGK} W_M \rightarrow W_G/W_K\end{equation} 
is surjective. 
\end{lemma}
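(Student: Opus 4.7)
My plan is to reduce to the combinatorial heart of the matter: the inclusion
\[
\Delta(\mathfrak{g}:\mathfrak{b}) \setminus \Delta(\mathfrak{k}:\mathfrak{b}) \;\subset\; \Delta(\mathfrak{m}:\mathfrak{b}),
\]
and then to run a standard gallery argument. I would establish the inclusion first by tracing through the standard types of roots for the fundamental Cartan $\mathfrak{t} = \mathfrak{a} \oplus \mathfrak{b}$. Every root $\tilde{\alpha}$ of $\mathfrak{t}$ on $\mathfrak{g}$ is either imaginary (vanishing on $\mathfrak{a}$) or complex, since no real roots occur for a fundamental Cartan. If $\tilde{\alpha}$ is complex, then $\{\tilde{\alpha}, \theta\tilde{\alpha}\}$ restricts to a single element $\alpha \in \Delta(\mathfrak{g}:\mathfrak{b})$, and the $\theta$-fixed line in $\mathfrak{g}_{\tilde{\alpha}} \oplus \mathfrak{g}_{\theta\tilde{\alpha}}$ lies in $\mathfrak{k}$, so $\alpha \in \Delta(\mathfrak{k}:\mathfrak{b})$. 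Hence the only roots in $\Delta(\mathfrak{g}:\mathfrak{b}) \setminus \Delta(\mathfrak{k}:\mathfrak{b})$ are noncompact imaginary ones; but any imaginary root vanishes on $\mathfrak{a}$, so its root space centralizes $\mathfrak{a}$ and lies in $\mathfrak{m}$, giving $\alpha \in \Delta(\mathfrak{m}:\mathfrak{b})$.

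Granting that inclusion, the first assertion follows by a gallery argument. Since $\mathcal{D}$ is convex (an intersection of open half-spaces cut out by $\Delta(\mathfrak{k}:\mathfrak{b})$-hyperplanes), I can connect $\mathcal{C}$ to $\mathcal{C}'$ by a gallery
\[
\mathcal{C} = \mathcal{C}_0, \mathcal{C}_1, \ldots, \mathcal{C}_n = \mathcal{C}'
\]
of adjacent $\Delta(\mathfrak{g}:\mathfrak{b})$-chambers all contained in $\mathcal{D}$. By the defining property of $\mathcal{D}$ as a $\Delta(\mathfrak{k}:\mathfrak{b})$-chamber, no $\Delta(\mathfrak{k}:\mathfrak{b})$-hyperplane passes through the interior of $\mathcal{D}$; thus the wall between $\mathcal{C}_i$ and $\mathcal{C}_{i+1}$ is the hyperplane of some root $\beta_i \in \Delta(\mathfrak{g}:\mathfrak{b}) \setminus \Delta(\mathfrak{k}:\mathfrak{b}) \subset \Delta(\mathfrak{m}:\mathfrak{b})$. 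The reflection $s_{\beta_i}$ therefore lies in $W_M$, and the product $w_M := s_{\beta_{n-1}} \cdots s_{\beta_0} \in W_M$ (with each $\beta_i$ the root of the wall between the two chambers at step $i$) sends $\mathcal{C}$ to $\mathcal{C}'$.

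Surjectivity of \eqref{MGK} is then formal: given $w \in W_G$, fix a reference chamber $\mathcal{C}_0 \subset \mathcal{D}$; the chamber $w(\mathcal{C}_0)$ lies in some $\Delta(\mathfrak{k}:\mathfrak{b})$-chamber $\mathcal{D}'$, and transitivity of $W_K$ on such chambers provides $u \in W_K$ with $u \mathcal{D}' = \mathcal{D}$. Then $uw(\mathcal{C}_0) \subset \mathcal{D}$, so by the first part there is $w_M \in W_M$ with $w_M(\mathcal{C}_0) = uw(\mathcal{C}_0)$; simple transitivity of $W_G$ on its chambers forces $uw = w_M$, so $w \in W_K W_M$ and its class in $W_G/W_K$ coincides with that of $w_M$.

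The only real obstacle is the initial inclusion $\Delta(\mathfrak{g}:\mathfrak{b}) \setminus \Delta(\mathfrak{k}:\mathfrak{b}) \subset \Delta(\mathfrak{m}:\mathfrak{b})$, which rests on the absence of real roots for a fundamental Cartan and the standard classification of compact/noncompact imaginary and complex root spaces. Everything after that is a routine application of the theory of Coxeter chambers and galleries.
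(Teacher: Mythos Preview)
Your approach is correct and genuinely different from the paper's. The paper establishes the weaker statement that every root \emph{line} of $\Delta(\mathfrak{g}:\mathfrak{b})$ is a root line of $\Delta(\mathfrak{k}:\mathfrak{b})$ or of $\Delta(\mathfrak{m}:\mathfrak{b})$, and does so by case analysis: it invokes the classification table of simple real groups with $\delta>0$, observes that in each case with $[W_G:W_K]>1$ there is a single $W_K$-orbit on the extra root lines, and then produces a single noncompact-imaginary simple root via the Vogan diagram to cover that orbit. Your argument instead proves the sharper inclusion $\Delta(\mathfrak{g}:\mathfrak{b})\setminus\Delta(\mathfrak{k}:\mathfrak{b})\subset\Delta(\mathfrak{m}:\mathfrak{b})$ uniformly, using only that a fundamental Cartan has no real roots and that the $\theta$-fixed line in $\mathfrak{g}_{\tilde\alpha}\oplus\mathfrak{g}_{\theta\tilde\alpha}$ for complex $\tilde\alpha$ lands in $\mathfrak{k}$. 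This is cleaner and bypasses the table entirely; the paper's route, on the other hand, dovetails with the explicit case data it needs elsewhere.

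One small slip in the surjectivity paragraph: from $uw=w_M$ you get $w=u^{-1}w_M\in W_K W_M$, which does not immediately yield $wW_K=w_M W_K$. Either rerun the argument with $w^{-1}$ in place of $w$, or note (as the paper does in passing) that $W_K$ preserves $\mathfrak{a}$ and hence $\mathfrak{m}$, so $W_K$ normalizes $W_M$ and $W_KW_M=W_MW_K$; then $w\in W_MW_K$ gives the desired surjectivity.
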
 

\proof It is sufficient to check that each root  line of $\Delta(\mathfrak{g}:\mathfrak{b})$
is either a root line of $\Delta(\mathfrak{k}:\mathfrak{b})$ or a root line of
$\Delta(\mathfrak{m}:\mathfrak{b})$ -- after all, the positive chamber for $\Delta(\mathfrak{k}:\mathfrak{b})$ is
subdivided by hyperplanes orthogonal to roots for $\Delta(\mathfrak{g}:\mathfrak{b})$. If each such hyperplane $\mathcal{H}$ 
is in fact orthogonal to a root $\alpha$ for $\Delta(\mathfrak{m}:\mathfrak{b})$, the corresponding reflection $s_{\alpha} \in W(\mathfrak{m}:\mathfrak{b})$
allows one to move between the two sides of this  $\mathcal{H}$.

The claim about root lines follows readily from the above table.  We need only consider the cases with $[W_G:W_K] > 1$. 
   $W_K$ preserves $\mathfrak{a}$ and so also $\mathfrak{m}$;
   and in each of these three cases, there is just one $W_K$-orbit on root lines for $\Delta(\mathfrak{g}:\mathfrak{b}) - \Delta(\mathfrak{k}:\mathfrak{b})$.
   It suffices, then, to produce 
 to produce a single root of $\Delta(\mathfrak{m}:\mathfrak{b})$ not belonging to $\Delta(\mathfrak{k}:\mathfrak{b})$.  
 For this we will use the theory of Vogan diagrams, referring to \cite[VI.8]{Knapp} for details;
the Vogan diagram attached to $\GG_{\R}$ is the Dynkin diagram for   $\Delta(\mathfrak{g}: \mathfrak{t})$, with reference to a suitable positive system, 
 with certain roots shaded.  

We recall that a simple root $\alpha_0$ is shaded in the Vogan diagram when it is noncompact imaginary   (i.e $\alpha_0$ restricts to zero on $\mathfrak{a}$, and whose root space belongs to $\mathfrak{p}$).
Because $\alpha_0$ restricts to zero on $\mathfrak{a}$, it  defines an element of $\Delta(\mathfrak{m}: \mathfrak{b})$;
we claim that $\alpha_0$ has the desired property (and a quick glance
shows that a shaded vertex exists in each case under consideration with $[W_G:W_K]>1$).

 Suppose that the restriction of $\alpha_0$ to $\mathfrak{b}$ belonged to $\Delta( \mathfrak{k}:\mathfrak{b})$;
so we may write $\alpha_0|_{\mathfrak{b}} =  \beta |_{\mathfrak{b}} $
for some root $\beta \in \Delta(\mathfrak{g}: \mathfrak{t})$ whose associated root space $\mathfrak{g}_{\beta}$
is not wholly contained in $\mathfrak{p}$. 
Expand $ \beta = \sum m_{\alpha} \alpha  +   \sum n_{\gamma} \gamma$
in the basis of simple roots, where the $\alpha$s are the $\theta$-fixed simple roots, and the $\gamma_j$s are the
remaining simple roots. 
The difference $\alpha_0 - \beta$ is zero on $\mathfrak{b}$; 
thus
$$ (m_{\alpha_0}-1) \alpha_0 + \sum_{\alpha \neq \alpha_0} m_{\alpha} \alpha + \sum n_{\gamma} \gamma$$
is negated by $\theta$. 
Thus $m_{\alpha_0} = 1$, the other $m$s are zero, and finally $n_{\gamma}=  - n_{\theta(\gamma)}$;
since the $m$s and $n$s all have the same sign, we must have $n_{\gamma}= 0$ for all $\gamma$. 
Then $\beta = \alpha_0$, but  this contradicts the supposition about the root space of $\alpha_0$.  \qed

\medskip

{\em Conclusion of the  proof of Proposition \ref{independence}:} We must show that there is an element $w \in \GG_{\R}(\C)$ with the properties
in equation \eqref{w wanted}.  Now $\mathfrak{q}$ and $\mathfrak{q}'$ correspond to  positive chambers $\mathcal{C}, \mathcal{C}'$
for the root system $\Delta(\mathfrak{g}: \mathfrak{b})$;  and  there exists $w \in W_G$
such that $w \mathcal{C} = \mathcal{C}'$. By the prior Lemmas,  $w$ can be represented
by an element $\tilde{w}$ such that $\tilde{w}$ belongs to $\mathbf{M}$ (so centralizes $\mathfrak{a})$ and such that $\tilde{w}$ normalizes $\mathfrak{b}$;
since $\tilde{w} \mathcal{C} =\mathcal{C}'$ we have also $\tilde{w} \mathfrak{q} =\mathfrak{q}'$.  \qed

\medskip

We conclude with one more Lemma that will be needed later:
  Given a chamber $\mathcal{C}$ in $i \mathfrak{b}_{\R}$ for $\Delta(\mathfrak{g}:\mathfrak{b})$ and $x \in \mathcal{C}$
we get a  Borel  subalgebra $\mathfrak{q} \supset \mathfrak{b} \oplus \mathfrak{a}$ from the positive roots of $x$.    
   The roots that are positive on $\mathcal{C}$ give  positive systems
on each of the root systems  that we just described, which we denote in all cases by $\Delta^+$. 

\begin{lemma} \label{twoWeyl}   With notation as described, let $n_K \in \Kinfty^{\circ}$ normalize $\mathfrak{b}$ and take the parabolic subalgebra $\mathfrak{q} \cap \mathfrak{k} \subset \mathfrak{k}$ to its opposite,
 with respect to the Cartan subalgebra $\mathfrak{b}$. 
Similarly, let $n_G \in \GG_{\R}(\C)$ normalize $\mathfrak{a} \oplus \mathfrak{b}$ and carry $\mathfrak{q}$ to its opposite.
Then $n_G$ and $n_K$ both preserve $\mathfrak{a}$, and coincide on it. 
\end{lemma}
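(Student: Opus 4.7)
My plan is to verify separately that each of $n_G, n_K$ preserves $\mathfrak{a}$, and then to compare the two induced actions. Throughout, let $\mathcal{C}^K$ denote the $W_K$-chamber in $i\mathfrak{b}_\R$ containing $\mathcal{C}$.

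For preservation: $n_K \in \Kinfty^\circ$ commutes with the Cartan involution $\theta$, and $\mathfrak{a}$ is the $-1$-eigenspace of $\theta$ on $\mathfrak{t} = \mathfrak{a} \oplus \mathfrak{b}$, so $n_K$ preserves $\mathfrak{a}$ automatically. For $n_G$, the key point is that $x \in i\mathfrak{b}_\R$ is fixed by $\theta$, so $\mathfrak{q}$ is $\theta$-stable as a sum of $x$-eigenspaces, and hence the positive system $\Delta^+(\mathfrak{g}:\mathfrak{t})$ attached to $\mathfrak{q}$ is preserved by the natural action of $\theta$ on roots. The Weyl group element $w_0^G \in W_G$ realized by $n_G$ is uniquely characterized by sending $\Delta^+$ to $-\Delta^+$, so $\theta w_0^G \theta^{-1} = w_0^G$. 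Thus $w_0^G$ commutes with $\theta$ on $\mathfrak{t}$ and preserves both $\mathfrak{a}$ and $\mathfrak{b}$.

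For equality on $\mathfrak{a}$: the induced actions $n_G|_\mathfrak{a}$ and $n_K|_\mathfrak{a}$ are well-defined, because the indeterminacy in $n_G$ (respectively $n_K$) is multiplication by an element of the complex torus $T \subset \mathbf{G}_\R(\C)$ centralizing $\mathfrak{t}$ (respectively $B \subset \Kinfty^\circ$ centralizing $\mathfrak{b}$), and in both cases the torus in question acts trivially on $\mathfrak{a}$. It therefore suffices to compare the Weyl group elements $w_0^G$ and $w_0^K$ via their induced automorphisms of $\mathfrak{a}$. My strategy is to factor $w_0^G = w_0^K \cdot w_M^*$ for some $w_M^* \in W_M$; once this is in hand, equality on $\mathfrak{a}$ follows immediately, since $W_M$ centralizes $\mathfrak{a}$ by the very definition of $\mathfrak{m}$ as the centralizer of $\mathfrak{a}$ in $\mathfrak{g}$.

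To obtain the factorization I argue via chambers. Since $w_0^G(\mathcal{C}) = -\mathcal{C}$ lies inside the $W_K$-chamber $-\mathcal{C}^K$, and $(w_0^K)^{-1}$ sends $-\mathcal{C}^K$ back to $\mathcal{C}^K$, the chamber $(w_0^K)^{-1} w_0^G(\mathcal{C})$ is a $W_G$-chamber contained in $\mathcal{C}^K$. By Lemma~\ref{WMGK}, some $w_M^* \in W_M$ sends $\mathcal{C}$ to this chamber, and then simple transitivity of $W_G$ on its chambers forces $(w_0^K)^{-1} w_0^G = w_M^*$, yielding the factorization. I do not anticipate any serious obstacle: once Lemma~\ref{WMGK} is invoked the argument is essentially formal, and the only mildly delicate step is the well-definedness of the actions on $\mathfrak{a}$, which reduces to the triviality of the torus action on $\mathfrak{a}$.
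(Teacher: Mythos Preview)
Your proof is correct and close in spirit to the paper's, though the order of operations differs. The paper does not argue directly that an arbitrary $n_G$ preserves $\mathfrak{a}$; instead it works entirely in the restricted Weyl groups $W_K \subset W_G \subset \operatorname{Aut}(\mathfrak{b})$, shows $w_K w_G \in W_M$ via Lemma~\ref{WMGK} (exactly your chamber argument), lifts this to $n_M \in M_{\mathbb{C}}$ via Lemma~\ref{WMC}, and then \emph{takes} $n_G := n_K \cdot n_M$ --- so preservation of $\mathfrak{a}$ and the equality $n_G|_{\mathfrak{a}} = n_K|_{\mathfrak{a}}$ come for free from $n_M \in M$. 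Your direct argument that $w_0^G$ commutes with $\theta$ (since $\theta$ preserves $\Delta^+(\mathfrak{g}:\mathfrak{t})$ and the long element is uniquely characterized by $\Delta^+ \mapsto -\Delta^+$) is a pleasant alternative for the preservation step that the paper does not use.

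One point to tighten: your $w_0^G$ naturally lives in the full Weyl group $W(\mathfrak{g}:\mathfrak{t})$, not in the paper's $W_G = W(\Delta(\mathfrak{g}:\mathfrak{b}))$, so the phrase ``simple transitivity of $W_G$ on its chambers'' is not quite justified as written --- you have not shown that $w_0^G|_{\mathfrak{b}}$ lies in $W_G$. The cleanest fix is to stay in $W(\mathfrak{g}:\mathfrak{t})$: since no root of $\Delta(\mathfrak{g}:\mathfrak{t})$ vanishes on $\mathfrak{b}$ (else $\mathfrak{q}$ would not be a Borel), a generic $x \in i\mathfrak{b}_{\R}$ is already regular in $\mathfrak{t}$, and each $\Delta(\mathfrak{g}:\mathfrak{b})$-chamber in $i\mathfrak{b}_{\R}$ lies in a unique $\Delta(\mathfrak{g}:\mathfrak{t})$-chamber. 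Thus $(w_0^K)^{-1} w_0^G$ and the lift of $w_M^*$ via $W(\mathfrak{m}:\mathfrak{t}) \hookrightarrow W(\mathfrak{g}:\mathfrak{t})$ send $x$ to the same $W(\mathfrak{g}:\mathfrak{t})$-chamber, and simple transitivity of the \emph{full} Weyl group gives the desired equality. With this interpretation your final step is rigorous, since $W(\mathfrak{m}:\mathfrak{t})$ genuinely centralizes $\mathfrak{a}$.
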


\proof 
It is clear that $n_K$ preserves $\mathfrak{a}$.  Let $w_K$ be the automorphism of $\mathfrak{b}$
induced by $n_K$ (equivalently $n_K^{-1}$). 
Let $w_G$ be an element in the Weyl group of $\Delta(\mathfrak{g}:\mathfrak{b})$
such that $w_G \mathcal{C} = -\mathcal{C}$. 

Then $w_K w_G \mathcal{C}$ and $\mathcal{C}$ both lie in the same positive chamber for  $\Delta(\mathfrak{k}:\mathfrak{b})$.
By Lemma \ref{WMGK}   there is $w_1 \in W_M$ such that $w_1\mathcal{C} = w_K w_G \mathcal{C}$.
It follows that $w_K w_G \in W_M$. By Lemma \ref{WMC}
there exists a representative $n_M \in \mathbf{G}_{\R}(\C)$  for $w_K w_G$ 
which normalizes $\mathfrak{a}$ and $\mathfrak{b}$. 

It follows, then, that $n := n_K\cdot n_M \in \mathbf{G}_{\R}(\C)$
normalizes $\mathfrak{a}$ and $\mathfrak{b}$; 
this element $n$ takes  the chamber $\mathcal{C}$ to $-\mathcal{C}$, and so 
it takes  $\mathfrak{q}$ to $\mathfrak{q}^{\mathrm{op}}$.  
We may therefore suppose $n=n_G$. 
 It follows that $n_G$ preserves $\mathfrak{a}$, and its action on $\mathfrak{a}$ coincides with $n_K$. 
 \qed 
 
\begin{definition}
The {\em long Weyl element} is the involution of 
  $\aG = \varprojlim_{(\mathfrak{a},\mathfrak{b},\mathfrak{q})} \mathfrak{a}$
induced by the common action of $n_G$ or $n_K$ from the prior Lemma. 
 \end{definition}

The long Weyl element preserves $\mathfrak{a}_{G,\R}$,
 since $w_K$ can be represented by an element of $\Kinfty$. 
  We use it to define a second real structure:

\begin{definition} \label{twistedReal}
The twisted real structure  $\mathfrak{a}_{G,\R}'$ on $\mathfrak{a}_G$
is the fixed points of the involution given by
$$ (X \mapsto \bar{X}) \otimes w,$$
where $X \mapsto \bar{X}$ is the antilinear involution defined by $\mathfrak{a}_{G,\R}$,
and $w$ is the long Weyl group element for $\mathfrak{a}_G$. 
\end{definition}

\subsection{Second construction of $\aGs$ via dual group}  \label{FVS} 
Let $\widehat{T} \subset \widehat{B}$ be the standard maximal torus and Borel in $\widehat{G}$. 
Let $\LW$ denote the normalizer of $\widehat{T}$ inside $\widehat{G} \rtimes \Gal(\C/\R)$, modulo $\widehat{T}$. 
There exists a unique lift  $w_0 \in \LW$ of the nontrivial element of $\Gal(\C/\R)$ 
with the property that $w_0$ sends $\widehat{B}$ to the opposite Borel (w.r.t. $\widehat{T}$). 
Moreover, we may choose a representative of $w_0$ that lies inside $\widehat{G}(\R) \rtimes \Gal(\C/\R)$, unique up to $\widehat{T}(\R)$;  thus 
the space $ \Lie(\widehat{T})^{w_0}$  carries a real structure arising from the real structure on $\widehat{T}$.

We will show that $\aGs$ can be identified with $\Lie(\widehat{T})^{w_0}$, 
in a fashion that carries the real structure $\mathfrak{a}_{G,\R}^{*}$
to the natural real structure on the latter space.

Observe, first of all, that a choice of $(\mathfrak{a}, \mathfrak{b}, \mathfrak{q})$ as before yields   a
torus $\mathbf{T} \subset \mathbf{G}_{\R}$ with Lie algebra $\mathfrak{a} \oplus \mathfrak{b}$,
and a Borel subgroup containing $\mathbf{T}$, with Lie algebra $\mathfrak{q}$; then we get identifications 
\begin{equation} \label{lambdaGa} \Lie(\widehat{T})  \simeq  X_*(\widehat{T}) \otimes \C = X^*(\mathbf{T}) \otimes \C =  (\mathfrak{a} \oplus \mathfrak{b})^* \end{equation} 
We have used the fact that, for any complex torus $\mathbf{S}$, we may identify $\Lie(\mathbf{S}) $ with $X_*(\mathbf{S}) \otimes \Lie(\mathbb{G}_m)$
and thus with $X_*(\mathbf{S}) \otimes \C$, choosing the basis for $\Lie(\mathbb{G}_m)$ that is dual to $\frac{dz}{z}$. 
 
 If we choose a different triple $(\mathfrak{a}', \mathfrak{b}', \mathfrak{q}')$ there exists $g \in \GG_{\R}(\C)$
 conjugating $(\mathfrak{a}, \mathfrak{b},\mathfrak{q})$ to $(\mathfrak{a}', \mathfrak{b}', \mathfrak{q}')$; the 
maps \eqref{lambdaGa} differ by $\Ad(g)$.  In particular, we get by virtue of Proposition \ref{independence}, a map
\begin{equation} \label{projlimmap} \Lie(\widehat{T}) \rightarrow  \varprojlim_{(\mathfrak{a}, \mathfrak{b}, \mathfrak{q})} \mathfrak{a}^* = \mathfrak{a}_G^*.\end{equation}

\begin{lemma} \label{realstruc}
The map \eqref{projlimmap}  carries $\Lie(\widehat{T})^{w_0}$ isomorphically onto $\aGs$,
and preserves real structures.  

Moreover, the long Weyl group element $w_{\widehat{G}}$  for $\widehat{T}$, carrying $\widehat{B}$ to its opposite,
preserves $\Lie(\widehat{T})^{w_0}$, and is carried under this identification
to the long Weyl element acting on $\aGs$ (see discussion after Lemma  \ref{twoWeyl}). 
\end{lemma}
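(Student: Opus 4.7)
The plan is to compute the action of $w_0$ on $\Lie(\widehat{T})$ explicitly under the identification $\Lie(\widehat{T}) = X_*(\widehat{T}) \otimes \C = X^*(\mathbf{T}) \otimes \C = \mathfrak{t}^* = \mathfrak{a}^* \oplus \mathfrak{b}^*$ attached to a triple $(\mathfrak{a}, \mathfrak{b}, \mathfrak{q})$; the projection \eqref{projlimmap} then becomes the obvious projection $\mathfrak{t}^* \to \mathfrak{a}^*$. I aim to show that $w_0$ acts on $\mathfrak{t}^*$ as the ``naive'' Galois involution $c_{\mathrm{nat}}$, namely the $\C$-linear extension of the Galois action on $X^*(\mathbf{T})$ coming from $\mathbf{T}$ as an $\R$-group; the fixed subspace $\mathfrak{a}^*$ then maps isomorphically under the projection, giving the first claim.

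First I would record that $c_{\mathrm{nat}}$ is $+1$ on $\mathfrak{a}^*$ and $-1$ on $\mathfrak{b}^*$: this follows from the almost-direct-product decomposition $\mathbf{T} = \mathbf{A} \cdot \mathbf{B}$ into split and anisotropic parts, using that Galois is trivial on $X^*(\mathbf{A})$ and acts by inversion on $X^*(\mathbf{B})$.  Next, the essential input, which uses the \emph{fundamental} property of the Cartan, is that $c_{\mathrm{nat}}$ sends positive roots of $\mathfrak{q}$ to negative roots: decomposing $\alpha = \alpha|_{\mathfrak{a}} + \alpha|_{\mathfrak{b}}$ with $\alpha|_{\mathfrak{a}}$ real on $\mathfrak{a}_{\R}$ and $\alpha|_{\mathfrak{b}}$ purely imaginary on $\mathfrak{b}_{\R}$, the positivity of $\alpha$ is controlled by the sign of $\alpha(x) = (\alpha|_{\mathfrak{b}})(x)$ for the regular $x \in i\mathfrak{b}_{\R}$ used to build $\mathfrak{q}$, and $(c_{\mathrm{nat}}\alpha)(x) = -(\alpha|_{\mathfrak{b}})(x) = -\alpha(x)$.

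To match $w_0$ with $c_{\mathrm{nat}}$, note that the $L$-group Galois action $c_{\mathrm{Lgrp}}$ on $X_*(\widehat{T}) = X^*(\mathbf{T})$ preserves the pinning of $\widehat{G}$ (in particular the set of positive roots), while $w_{\widehat{G}} \cdot c_{\mathrm{nat}}$ also preserves positivity by the previous step. Both are involutions realizing the canonical Galois action on the based root datum attached to the inner class of $\mathbf{G}$, so $c_{\mathrm{Lgrp}} = w_{\widehat{G}} \cdot c_{\mathrm{nat}}$. As linear maps on $\Lie(\widehat{T})$ one has $w_0 = w_{\widehat{G}} \circ c_{\mathrm{Lgrp}} = w_{\widehat{G}}^2 \circ c_{\mathrm{nat}} = c_{\mathrm{nat}}$, so $\Lie(\widehat{T})^{w_0} = \mathfrak{a}^*$ and maps isomorphically under \eqref{projlimmap}. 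The real-structure compatibility is then immediate: the natural $\R$-structure $X_*(\widehat{T}) \otimes \R$ on $\Lie(\widehat{T})$ intersects $\mathfrak{a}^*$ in $X^*(\mathbf{A}) \otimes \R$, which equals $\mathfrak{a}_{G,\R}^*$ because characters of the split torus $\mathbf{A}$ have real-valued derivatives on $\mathfrak{a}_{\R}$.

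For the final statement on the long Weyl element: since $c_{\mathrm{Lgrp}}$ preserves positive roots, conjugation by $c_{\mathrm{Lgrp}}$ fixes the unique longest Weyl element, so $c_{\mathrm{Lgrp}}$ and $w_{\widehat{G}}$ commute; rearranging shows $w_{\widehat{G}}$ commutes with $w_0$ and hence preserves $\Lie(\widehat{T})^{w_0} \simeq \mathfrak{a}^*$. Via the canonical isomorphism $W(\widehat{G}, \widehat{T}) \simeq W(\mathbf{G}, \mathbf{T})$, the restriction of $w_{\widehat{G}}$ to $\mathfrak{a}^*$ is the restriction of the long Weyl element of $\Delta(\mathbf{G}:\mathbf{T})$, which is exactly the long Weyl element on $\aGs$ from the definition following Lemma \ref{twoWeyl}. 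The main obstacle is the identification $c_{\mathrm{Lgrp}} = w_{\widehat{G}} \cdot c_{\mathrm{nat}}$; it holds essentially by design of the pinned $L$-group construction for real reductive groups, but making this rigorous amounts to checking that no residual Dynkin-diagram-automorphism discrepancy intervenes between these two Borel-preserving involutions on the based root datum.
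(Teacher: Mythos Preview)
Your proposal is correct and takes essentially the same approach as the paper. Both arguments identify the action of $w_0$ on $\Lie(\widehat{T}) \simeq \mathfrak{t}^*$ with the $\C$-linearized Galois involution (your $c_{\mathrm{nat}}$, the paper's $c$), using that complex conjugation swaps $\mathfrak{q}$ with $\mathfrak{q}^{\mathrm{op}}$ since $\mathfrak{q}$ was built from a regular element of $i\mathfrak{b}_{\R}$; the paper phrases this as ``$w_0$ acts by an automorphism in the outer class of $c$ that swaps $\mathfrak{q}$ and $\mathfrak{q}^{\mathrm{op}}$, hence agrees with $c$ on $X^*(\mathbf{T})$,'' which is exactly your identity $c_{\mathrm{Lgrp}} = w_{\widehat{G}} \cdot c_{\mathrm{nat}}$ and dispels the concern you raise at the end.
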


This justifies using $\Lie(\widehat{T})^{w_0}$ as an alternate definition of $\aGs$. 

In the following proof, we will refer to the ``standard'' antiholomorphic involution on $\widehat{T}$ or its Lie algebra.
The torus $\widehat{T}$ is, by definition,  a split torus; as such it has a unique split $\R$-form,
and we refer to the associated antiholomorphic involution as the ``standard'' one. 
 
\proof
Under the identification of \eqref{lambdaGa} the action of $w_0$ on $\Lie(\widehat{T})$ is carried to 
the action on $X^*(\mathbf{T}) \otimes \C = (\mathfrak{a} \oplus \mathfrak{b})^*$
of an automorphism $\gamma$ of $\mathfrak{g}$
that belongs to the same outer class as complex conjugation, and switches
$\mathfrak{q}$ and its opposite  $\mathfrak{q}^{\op}$ relative to $\mathfrak{a} \oplus \mathfrak{b}$. 
 However, by virtue of the construction of $\mathfrak{q}$ from an element $x \in i \mathfrak{b}_{\R}$, complex conjugation  switches $\mathfrak{q}$ and $\mathfrak{q}^{\op}$.
It follows that $\gamma$ corresponds precisely to the action of complex conjugation $c$ on $X^*(\mathbf{T}) \otimes \C$.
It readily follows that it acts by $-1$ on $\mathfrak{b}^*$ and $1$ on $\mathfrak{a}^*$. 
This shows that $\Lie(\widehat{T})^{w_0}$ is carried isomorphically onto $\aG$ by 
\eqref{projlimmap}.

 Now the antiholomorphic involution $\left(c \otimes (z \mapsto \bar{z}) \right)$
on $X^*(\mathbf{T}) \otimes \C  = (\mathfrak{a} \oplus \mathfrak{b})^*$ fixes precisely $\mathfrak{a}_{\R}^* \oplus \mathfrak{b}_{\R}^*$.  Transporting to $\Lie(\widehat{T})$ by means
of the above identification, we see that the real structure on $(\mathfrak{a}_{\R} \oplus \mathfrak{b}_{\R})^* \subset (\mathfrak{a}\oplus \mathfrak{b})^*$ corresponds
to the antiholomorphic involution $c'$ on $\Lie(\widehat{T})$ which is the composition of $w_0$ with the  standard antiholomorphic involution. In particular, restricted to the $w_0$-fixed part,  $c'$ reduces to the standard antiholomorphic involution. This proves 
the statement about real structures.

For the second claim, we note that $w_{\widehat{G}}$ and $w_0$ commute, so certainly $w_{\widehat{G}}$
preserves $\Lie(\widehat{T})^{w_0}$;  under the identifications of \eqref{lambdaGa} 
$w_{\widehat{G}}$ corresponds to an element of the Weyl group of $(\mathfrak{a} \oplus \mathfrak{b})$
which sends  $\mathfrak{q}$ to the opposite parabolic. This coincides with the long Weyl element for $\aG$ by Lemma \ref{twoWeyl}. 
 \qed

\subsection{The tempered cohomological parameter}  \label{TemperedCohomologicalParameter}
We will next  construct a canonical identification  \begin{equation} \label{aGtemp}  \aGs \simeq \mbox{Lie algebra of the centralizer of $\rho:W_{\R} \rightarrow \LG$}\end{equation} 
where $\rho$ is the parameter of any tempered cohomological representation for $G$;  correspondingly we get
\begin{equation} \label{frofro2} \mathfrak{a}_G \simeq \mbox{fixed points of $\Ad^* \rho: W_{\R} \rightarrow \GL(\widetilde{\mathfrak{g}})$ on $\widetilde{\mathfrak{g}}$.}\end{equation}
where $\Ad^*: \LG \rightarrow \GL(\widetilde{\mathfrak{g}})$ is the co-adjoint representation. 

To see this we must  discuss  the $L$-parameter of tempered cohomological representations:

Write as usual $W_{\R} = \mathbf{C}^* \rtimes \langle j \rangle$, where $j^2 =-1$, for the real Weil group. 
Let $\rho: W_{\R} \rightarrow \LG$
be a tempered Langlands parameter  
whose associated $L$-packet contains  a representation 
with nonvanishing $(\gKZ)$ cohomology. In particular, the infinitesimal character of this representation
coincides with that of the trivial representation. 
By examining infinitesimal characters,  we can conjugate $\rho$ in $\widehat{G}$
to a representation $\rho_0$ such that 
\begin{equation} \label{rho0def} \rho_0|_{\mathbf{C}^*} : \mathbf{C}^* \rightarrow \LG\end{equation}
is given by $\Sigma_G(\sqrt{z/\bar{z}})$, where $\Sigma_G$ is the canonical cocharacter $\mathbb{G}_m \rightarrow \widehat{G}$
given by the sum of all positive coroots.  
 The connected centralizer of $\rho|_{\mathbf{C}^*}$ 
is then $\widehat{T}$, so  
the image of $j$ in $\LG$ must normalize $\widehat{T}$  and sends $\widehat{B}$ to $\widehat{B}^{\mathrm{op}}$.
Therefore, $\rho_0(j)$ 
 defines the same class as $w_0$ inside $\LW$ (notation of \S \ref{FVS}) and therefore
\begin{equation} \label{ag3} \aGs = \mbox{ Lie algebra of the centralizer of $\rho_0$.} \end{equation}

Now $\rho = \Ad(g) \rho_0$ for some $g \in \widehat{G}$; since the centralizer of $\rho_0$ is contained in $\widehat{T}$,
$g$ is specified up to right translation by $\widehat{T}$, and consequently the induced map
$$ \mbox{Lie algebra of the centralizer of $\rho_0$} \stackrel{\sim}{\rightarrow} \mbox{Lie algebra of the centralizer of $\rho$}$$
is independent of the choice of $g$. Composing with \eqref{ag3}, we arrive at the desired
identification \eqref{aGtemp}.

\begin{rem}
 In general,  there are multiple possibilities for the conjugacy class of $\rho$, i.e.
 multiple $L$-packets of tempered cohomological representations; 
however, if $\GG_{\R}$ is simply connected or adjoint, $\rho$ is unique up to conjugacy: any two choices of $w_0$ differ by an element $t \in \widehat{T}$, which 
lies in the fixed space for $\tau: z \mapsto 1/z^{w_0}$ on $\widehat{T}$.   Thus we must verify that 
every element of the $\tau$-fixed space $\widehat{T}^{\tau}$ is of the form $x \cdot \tau(x)$ for some $x \in \widehat{T}$;  equivalently that $\widehat{T}^{\tau}$ is connected.  If $\widehat{G}$ is simply connected, coroots give an isomorphism $\mathbb{G}_m^r \simeq \widehat{T}$,
and the map $\alpha \mapsto - w_0 \alpha$ permutes the coroots;
we are reduced to  verifying  connectivity of fixed points in  the case of $\tau$ the swap on $\mathbb{G}_m^2$
or $\tau$ trivial on $\mathbb{G}_m$, which are obvious. 
\end{rem}

\subsection{The action of the exterior algebra $\wedge^* \aGs$
on the cohomology of a tempered representation}  \label{aGcohomology}

 In this section,
we will construct an action of $\wedge^* \aGs$ on  $H^*(\gKZ; \Pi)$, for any finite length, tempered,  cohomological representation $\Pi$ of $G$. 
In this situation, by ``cohomological,'' we mean that every constituent of $\Pi$ is cohomological -- note that $\Pi$ is tempered, and thus semisimple.  

This action will have the property that the induced map
\begin{equation} \label{qdef} H^q(\gKZ; \Pi) \otimes \wedge^j \aGs \longrightarrow H^{q+j}(\gKZ; \Pi)\end{equation}
is an isomorphism.   Here $q$ is the minimal dimension in which the $\gKZ$-cohomology is nonvanishing; explicitly, we have
$2q + \mathrm{dim}_{\C} \aG = \dim Y(K)$.  
The action of $\wedge^* \aGs$ 
  will commute with the natural action of $\Kinfty/\Kinfty^0$ on $H^*(\gKZ; \Pi)$.

We construct the action first in the simply connected case, and then reduce
the general case to that one.    %

 \subsubsection{The action for $\GG_{\R}$ simply connected} \label{Gsc types}
 Here $G$ is connected, as is its maximal compact; 
and the cohomological, tempered representations are indexed (with notation as in \S \ref{FCA1}) by choices of a positive chamber $\mathcal{C}$ for the root system $\Delta(\mathfrak{g}:\mathfrak{b})$: 

We have already explained that such a chamber $\mathcal{C}$ gives rise to a Borel subgroup $\mathfrak{q}$
and a notion of positive root for $\Delta(\mathfrak{g}: \mathfrak{b})$. 
   Vogan and Zuckerman \cite{VZ} attach to $\mathcal{C}$ a tempered cohomological representation $\pi(\mathcal{C})$
characterized  by the additional fact that it contains with multiplicity one the irreducible representation $V_{\mathcal{C}}$ of $\Kinfty=\Kinfty^0$
with highest weight 
$$ \mu_{\mathcal{C}} = \mbox{ the sum of roots associated to root spaces in  $\mathfrak{u} \cap \mathfrak{p}$},$$
where  $\mathfrak{u}$ is the unipotent radical of $\mathfrak{q}$.  (See \cite[Theorem 2.5]{VZ}).  Moreover, 
it is known that the $V_{\mathcal{C}}$ is the only irreducible representation of $\Kinfty$
that occurs both in $\pi(\mathcal{C})$ and in $\wedge^* \mathfrak{p}$ (proof and discussion around \cite[Corollary 3.7]{VZ}).

We write $V_{-\mathcal{C}}$ for the dual representation to $V_{\mathcal{C}}$;  
its lowest weight  is then equal to $-\mu_{\mathcal{C}}$. Let us fix a highest $v^+ \in V_{\mathcal{C}}$  and a lowest weight vector $v^-$ in $V_{-\mathcal{C}}$, with weights $\mu_{\mathcal{C}}$ and $-\mu_{\mathcal{C}}$. 
In what follows,  a vector of  ``weight $\mu$'' means that it transforms under the character $\mu$ of $\mathfrak{q} \cap \mathfrak{k}$:
and a vector ``of  weight $-\mu$'' transforms under that character of $\mathfrak{q}^{\mathrm{op}} \cap \mathfrak{k}$,
where $\mathfrak{q}^{\mathrm{op}}$ is the parabolic subgroup associated to $-\mathcal{C}$.  In other words, ``weight $\mu$'' is a requirement
on how the vector transforms by a Borel subalgebra, not merely a toral subalgebra.

Write $W[\mathcal{C}]$ for the $V_{\mathcal{C}}$-isotypical subspace of 
 an arbitrary $\Kinfty$-representation $W$,
 and $W[-\mathcal{C}]$ for the $\widetilde{V_{\mathcal{C}}}$-isotypical subspace. 
 Thus $W[\mathcal{C}] = V_{\mathcal{C}} \otimes \Hom(V_{\mathcal{C}}, W)$  and  the rules $f \mapsto f(v^{\pm})$ gives an isomorphism
\begin{equation} \label{highestweight}  \Hom(V_{\mathcal{C}}, W)= \mbox{ vectors in $W$ of weight $\mu_{\mathcal{C}}$ under $\mathfrak{q} \cap \mathfrak{k}$}.\end{equation}
 \begin{equation} \label{lowestweight}  \Hom(V_{-\mathcal{C}}, W)= \mbox{ vectors in $W$ of weight $-\mu_{\mathcal{C}}$ under $\mathfrak{q}^{\mathrm{op}} \cap \mathfrak{k}$}.\end{equation}
Let $\overline{\mathfrak{u}}$ be the unipotent radical of $\mathfrak{q}^{\mathrm{op}}$. 
From the  splitting 
\begin{equation} \label{splitting}  \mathfrak{a}	 \oplus  (\mathfrak{u} \cap \mathfrak{p} )\oplus  (\overline{\mathfrak{u}} \cap \mathfrak{p}) = \mathfrak{p}, \end{equation}
we get a tensor decomposition of 
 $ \wedge^* \mathfrak{p}$ and of $\wedge^* \mathfrak{p}^*$. 
 For the spaces of vectors of weights $\mu$ and $-\mu$ we get 
\begin{equation} \label{igly}   \left( \wedge^* \mathfrak{p} \right)^{\mu} =  \wedge^* \mathfrak{a} \otimes  \det  (\mathfrak{u}   \cap \mathfrak{p}), \ \ \ 
 \left(\wedge^* \mathfrak{p}^*\right)^{-\mu}= 
 \wedge^* \mathfrak{a}^* \otimes \det(\overline{\mathfrak{u}} \cap \mathfrak{p})^*.\end{equation}

 In particular, there is a natural inclusion $\mathfrak{a}^* \hookrightarrow \mathfrak{p}^*$ (from \eqref{splitting}), and then  the natural action of $\wedge^* \mathfrak{a}^*$
 on $\wedge^* \mathfrak{p}^*$ makes the space of weight $-\mu$ vectors in the latter
 a free, rank one module. Note that we may regard $\wedge^* \mathfrak{p}^*$ either as a left- or a right- module for $\wedge^* \mathfrak{a}^*$;
 the two actions differ by a sign $(-1)^{\deg}$ on $\mathfrak{a}^*$.  We will
 use either version of the action according to what is convenient. 
 
 Thus we have an action of $\wedge^* \mathfrak{a}^*$ on 
\begin{equation} \label{eqqq} \Hom(V_{-\mathcal{C}}, \wedge^* \mathfrak{p}^*)  \stackrel{\sim}{\longrightarrow} (\wedge^* \mathfrak{p}^*)^{-\mu}  \ \  (\mbox{via }f \mapsto f(v^-)), \end{equation}
  given (in the left-hand space) by the rule $Xf(v^-) =  X \wedge f(v^-)$

 There is also a contraction action of  $\wedge^* \mathfrak{a}^*$ on   $\wedge^* \mathfrak{p}$: 
 for $X \in \mathfrak{a}^*$, the rule
$Y \mapsto X \intprod Y$ is a derivation  of $\wedge^* \mathfrak{p}$ with degree $-1$, which in degree $1$ realizes the pairing
$\mathfrak{a}^* \times \mathfrak{p} \rightarrow \mathbb{C}$.   As a reference for contractions, see \cite[Chapter 3]{BourbakiExterior}. 
This action again 
 makes the space of weight $\mu$ vectors a free, rank one module. 

 The two actions are adjoint:
 \begin{equation} \label{adjugate2} \langle  X \wedge A, B \rangle = \langle A, X  \intprod B \rangle,  \ \ X \in  \mathfrak{a}^*, A \in \wedge^* \mathfrak{p}^*, B \in \wedge^* \mathfrak{p}, \end{equation}
where the pairing between $\wedge^* \mathfrak{p}$ and $\wedge^* \mathfrak{p}^*$  is the usual one (the above equation looks a bit peculiar -- it might seem preferable to replace $X \wedge A$ by $A \wedge X$ on the left -- 
but in order to do that we would have to use a different pairing, which we prefer not to do). 
 
From   \begin{equation} \label{basiciso0} H^*(\gKZ; \pi(\mathcal{C})) =  \underbrace{ \left( \wedge^*   \mathfrak{p}^* \otimes   \pi(\mathcal{C}) \right)^{\Kinfty}}_{= \Hom_{\Kinfty}(\wedge^* \mathfrak{p}, \pi(\mathcal{C}))}
\simeq \underbrace{ \Hom(V_{-\mathcal{C}}, \wedge^*   \mathfrak{p}^*  )}_{\simeq \left( \wedge^* \mathfrak{p}^* \right)^{-\mu}} \otimes \Hom(V_{\mathcal{C}}, \pi(\mathcal{C})),\end{equation}
we have also constructed an action of $\wedge^* \mathfrak{a}^*$ on the $(\gKZ)$ cohomology of $\pi(\mathcal{C})$. 
Again, it can be considered either as a left action or a right action, the two being related by means of a sign; we
will usually prefer to consider it as a right action. 

This action is characterized in the following way:
 for any $f \in \Hom_{\Kinfty}(\wedge^* \mathfrak{p}, \pi(\mathcal{C}))$, 
and any vector $v$ of weight $\mu_{\mathcal{C}}$ in $\wedge^* \mathfrak{p}$,  and for $X \in \wedge^* \mathfrak{a}^*$, we have  \begin{equation} \label{char}   Xf:  v \mapsto  f(X \intprod v).\end{equation}
The left action is related to this via $f \cdot X = (-1)^{\deg(f)} (X \cdot f)$.  

   To verify  \eqref{char}, note that the map $f$ factors through $V_{\mathcal{C}} \subset \pi(\mathcal{C})$. We may replace $\pi(\mathcal{C})$ by $V_{\mathcal{C}}$,
 regarding $f$ as a $\Kinfty$-map $\wedge^* \mathfrak{p} \rightarrow V_{\mathcal{C}}$, 
and write $f^t: V_{-\mathcal{C}} \rightarrow \wedge^* \mathfrak{p}^*$ for the transpose of $f$. Now, for $v^+ \in V_{\mathcal{C}}$ a vector of weight $\mu_{\mathcal{C}}$, the evaluation 
 $f(v^+)$ is determined by its pairing with a lowest weight vector $v^{-} \in V_{-\mathcal{C}}$. We have 
$$ \langle v^{-},  Xf(v^+) \rangle = \langle (Xf)^t v^{-}, v^+ \rangle = \langle  X \wedge f^t(v^-) , v^+ \rangle \stackrel{\eqref{adjugate2}}{=}   \langle f^t(v^-),  X \intprod v^+  \rangle =  \langle v^-, f(X \intprod v^+)  \rangle.$$

In summary, we have a well-defined action of $\wedge^* \aGs$ on the $(\gKZ)$ cohomology  
of any  tempered irreducible cohomological representation. 
(Strictly speaking,  we should verify that our definitions did not depend on the
choice of $(\mathfrak{b},\mathcal{C})$. If $k \in \Kinfty$ conjugates
$(\mathfrak{b}, \mathcal{C})$ to $(\mathfrak{b}', \mathcal{C}')$,
then it carries $(\mathfrak{q}, \mu_{\mathcal{C}})$ to $(\mathfrak{q}', \mu_{\mathcal{C}'})$;
there is   an isomorphism $\iota: \pi(\mathcal{C}) \rightarrow \pi(\mathcal{C}')$,
and the actions of $\Ad(k): \wedge^* \mathfrak{a} \simeq \wedge^* \mathfrak{a}'^*$
are compatible with the map on $(\gKZ)$ cohomology induced by $\iota$; thus we get an action of $\wedge^* \aGs$ as claimed.)

 Finally, it is convenient to extend the action to representations that are not irreducible, in the obvious fashion:  If $\Pi$ is any tempered representation of finite length, 
 we have
 $$ H^*(\gKZ; \Pi) = \bigoplus_{\alpha} \Hom(\pi_{\alpha},  \Pi) \otimes H^*(\gKZ;  \pi_{\alpha}),$$
 the sum being taken over (isomorphism classes of) tempered cohomological representations $\pi_{\alpha}$;
 we define $\wedge^* \aGs$ to act term-wise.

\begin{rem} It is also possible to construct this action using the realization of tempered cohomological representations
as parabolic induction from  a discrete series on $\mathbf{M}$. We omit the details. 
\end{rem}

 \subsubsection{Interaction with automorphisms}  \label{autersec} We continue to suppose that $\GG_{\R}$ is semisimple and simply connected. 
   Suppose that $\alpha$ is an automorphism of $\GG_{\R}$ that arises from the conjugation action
 of the adjoint form $\G^{\mathrm{ad}}$, preserving $\Kinfty$.   If $\Pi$ is a  tempered representation of finite length, then so is  
 its $\alpha$-twist ${}^{\alpha} \Pi$, defined by ${}^{\alpha}\Pi(\alpha(g)) = \Pi(g)$.

Also $\alpha$ induces an automorphism $Y \mapsto \alpha(Y)$ of $\mathfrak{p}$;  
the $\Kinfty$ representations $\mathfrak{p}$ and ${}^{\alpha} \mathfrak{p}$ 
are intertwined via the inverse map $Y \mapsto \alpha^{-1}(Y)$.%

  \begin{lemma}  Let $\Pi$ be tempered cohomological of finite length. The natural map 
\begin{equation} \label{above} \Hom_{\Kinfty}(\wedge^* \mathfrak{p}, \Pi)  \rightarrow \ \Hom_{\Kinfty}(\wedge^* \mathfrak{p},   {}^{\alpha} \Pi), \end{equation}
which sends $f$ to  the composite
$ \wedge^* \mathfrak{p} \simeq \wedge^*  \left( {}^{\alpha}\mathfrak{p} \right) \stackrel{{}^{\alpha} f}{\rightarrow}{}^{\alpha} \Pi$,
commutes   the $\wedge^* \aGs$ actions on both spaces. 
\end{lemma}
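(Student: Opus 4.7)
My plan is to reduce this to a direct computation on highest-weight vectors, using Proposition \ref{independence} to identify how a single element of $\aGs$ is represented with respect to two different fundamental Cartan triples.

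First, I would observe that since $\alpha$ arises from conjugation by an element of $\G^{\mathrm{ad}}$, it lifts to conjugation by an element of $\GG_\R(\C)$, and hence sends a fundamental Cartan triple $(\mathfrak{a}, \mathfrak{b}, \mathfrak{q})$ to another such triple $(\alpha\mathfrak{a}, \alpha\mathfrak{b}, \alpha\mathfrak{q})$. Proposition \ref{independence} then guarantees that the map $\alpha|_\mathfrak{a}\colon \mathfrak{a} \to \alpha\mathfrak{a}$ coincides with the canonical identification of both spaces with $\aG$. Consequently, an element $\bar X \in \aGs$ is represented by some $X \in \mathfrak{a}^*$ via the first triple and by $Y := X \circ \alpha^{-1} \in (\alpha\mathfrak{a})^*$ via the second.

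Next, I would reduce to an irreducible summand $\Pi = \pi(\mathcal{C})$; here ${}^{\alpha}\Pi \cong \pi(\alpha\mathcal{C})$, and its distinguished $\Kinfty$-type has highest weight $\alpha_*\mu_\mathcal{C}$ relative to $\alpha\mathfrak{q}\cap\mathfrak{k}$ (one checks this directly from the definition of the twisted action). I would use the two triples above to define the $\aGs$-action on $\Hom_{\Kinfty}(\wedge^*\mathfrak{p}, \Pi)$ and on $\Hom_{\Kinfty}(\wedge^*\mathfrak{p}, {}^\alpha\Pi)$ respectively. The core of the argument is then the naturality identity
\[
Y \intprod (\alpha v) = \alpha(X \intprod v), \qquad v \in \wedge^*\mathfrak{p},
\]
which follows at once from the derivation property of contraction together with the relation $Y(\alpha\, \cdot\,) = X(\,\cdot\,)$. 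Combining this identity with the characterization \eqref{char} and the explicit form of the map $f \mapsto {}^\alpha f$, both $F(\bar X \cdot f)(\alpha v)$ and $(\bar X \cdot F(f))(\alpha v)$ evaluate to $f(X \intprod v)$; since such vectors $\alpha v$ span the relevant weight subspace, this completes the verification.

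The only substantive obstacle is the bookkeeping required to align the three different incarnations of $\bar X$ (the abstract element, its realization $X$ via the first triple, and its realization $Y$ via the second). Proposition \ref{independence} is precisely what makes this alignment canonical, so once the identifications are set up, the rest of the proof is an unwinding of definitions.
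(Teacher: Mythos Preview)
Your proposal is correct and follows essentially the same approach as the paper: reduce to the irreducible case, use Proposition \ref{independence} to compare the two identifications of $\aGs$ with $\mathfrak{a}^*$ (respectively $(\alpha\mathfrak{a})^*$), and then verify compatibility via the characterization \eqref{char} and the naturality identity $Y\intprod(\alpha v)=\alpha(X\intprod v)$. The only cosmetic difference is that the paper first adjusts $\alpha$ by an element of $\Kinfty$ so that it preserves $\mathfrak{b}$ (and hence fixes $\mathfrak{a}$), which makes both triples share the same $\mathfrak{a}$ and slightly streamlines the bookkeeping you flag as the main obstacle.
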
 

\proof
This reduces to the irreducible case. So suppose that $\pi = \pi(\mathcal{C})$, where $\mathcal{C}$
is a chamber  $\mathcal{C} \subset i\mathfrak{b}_{\R}^*$, giving rise to data $(\mathfrak{a}, \mathfrak{b},\mathfrak{q})$. 
Adjusting $\alpha$ by an element of $\Kinfty$, we may suppose that $\alpha$ preserves $\Kinfty$ and  $\mathfrak{b}$
and the Borel subalgebra $\mathfrak{q} \cap \mathfrak{k} \subset \mathfrak{k}$.

  If  $W$ is an irreducible $\Kinfty$-representation of highest weight $\mu$, then ${}^{\alpha} W$ has highest weight $\mu \circ \alpha^{-1}$. 
Therefore the representation ${}^{\alpha} \pi(\mathcal{C})$ contains the $\Kinfty$-representation with highest weight $\mu_{\mathcal{C}} \circ \alpha^{-1}$,
which is associated to the chamber $\alpha(\mathcal{C})$ and the parabolic $\alpha(\mathfrak{q})$: 
$$ {}^{\alpha} \pi(\mathcal{C}) = \pi(\alpha(\mathcal{C})).$$
 
Now $\alpha$ sends $(\mathfrak{a}, \mathfrak{b} ,\mathfrak{q})$ to $(\mathfrak{a}, \mathfrak{b}, \alpha(\mathfrak{q}))$.
Although it belongs only to $\mathbf{G}^{\mathrm{ad}}(\R)$ it can be lifted to $\mathbf{G}_{\R}(\C)$, 
 and so the following diagram commutes:
 \begin{equation} \label{twoway} 
  \xymatrix{
\aG \ar[r]^{ \mathcal{C} } \ar[d]^{\alpha}  & \mathfrak{a}  \ar[d]^=\\
 \aG \ar[r]^{ \alpha(\mathcal{C})} & \mathfrak{a}. 
  }
\end{equation}
where the vertical arrows refer to the identification of $\mathfrak{a}$ with $\aG$
induced by the triples $(\mathfrak{a},\mathfrak{b},\mathfrak{q})$ (at top)
and $(\mathfrak{a}, \mathfrak{b}, \alpha(\mathfrak{q}))$ (at bottom).

Note that the map $Y \mapsto \alpha^{-1}(Y)$  takes
$(\wedge^* \mathfrak{p})^{\mu \alpha^{-1}} \rightarrow (\wedge^* \mathfrak{p})^{\mu}$
(where the weight spaces are computed for the usual actions, not the twisted ones).
The map  \eqref{above}  explicitly sends $f$ 
   to  $f': Y \in \wedge^* \mathfrak{p} \mapsto f(\alpha^{-1}(Y))$;
if $f$ on the left factors through highest weight $\mu$, then $f'$ on the right factors through highest weight $\mu \alpha^{-1}$.

For $v \in (\wedge^* \mathfrak{p})^{\mu}$ and $X \in \wedge^* \aGs$ we have  $\alpha(v) \in (\wedge^* \mathfrak{p})^{\mu \alpha^{-1}}$ and, for $f$ as above,  
$$ (Xf)':  \alpha v \mapsto (Xf)(v) =  f(X \intprod v), $$
$$ (\alpha(X) f'): \alpha v \mapsto  f'(\alpha(X) \intprod \alpha(v))   = f(X \intprod v)$$ 
 In view of diagram \eqref{twoway} this proves the statement. 
 
  \qed

\subsubsection{Interaction with duality and complex conjugation}

Suppose that $j+j' = d = \dim(Y(K))$.  Let $\Pi $ be a tempered  cohomological representation of finite length.  There is a natural pairing
\begin{equation} \label{Poincare} H^j(\gK, \Pi) \times H^{j'}(\gK, \widetilde{\Pi}) \longrightarrow \det\mathfrak{p}^*\end{equation}
corresponding to
$$( \wedge^j \mathfrak{p}^* \otimes \Pi)^{\Kinfty} \otimes 
( \wedge^{j'} \mathfrak{p}^* \otimes  \widetilde{\Pi})^{\Kinfty}  \longrightarrow  \det \mathfrak{p}^*,$$
amounting to cup product on the first factors and the duality pairing on the second factors. 

\begin{lemma} \label{adjointness}
The pairing  \eqref{Poincare} has the following adjointness:
$$ \langle  f_1 \cdot X, f_2 \rangle =  \langle f_1, (w X) \cdot  f_2 \rangle $$
for $X \in \wedge^* \aGs$, and $w$ the long Weyl group element (Lemma \ref{twoWeyl}).

\end{lemma}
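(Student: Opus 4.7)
The plan is first to reduce to the case where $\Pi = \pi(\mathcal{C})$ is irreducible, since both the pairing \eqref{Poincare} and the $\wedge^{*}\aGs$-action are natural in $\Pi$ and decompose over isotypic components. Assuming $\GG_{\R}$ is simply connected (the general case follows from the reduction made in the construction of the action), fix the triple $(\mathfrak{a}, \mathfrak{b}, \mathfrak{q})$ associated to $\mathcal{C}$. As $\Kinfty$-representations $V_{\mathcal{C}}^{*}$ has the same $\mathfrak{b}$-lowest weight $-\mu_{\mathcal{C}}$ as $V_{-\mathcal{C}}$, so $V_{\mathcal{C}}^{*} \cong V_{-\mathcal{C}}$; hence $\widetilde{\Pi} = \pi(-\mathcal{C})$, whose natural triple is $(\mathfrak{a}, \mathfrak{b}, \mathfrak{q}^{\mathrm{op}})$. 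Via \eqref{basiciso0}, $f_{1}$ corresponds to a vector $\omega_{1} \in \wedge^{*}\mathfrak{p}^{*}$ of weight $-\mu_{\mathcal{C}}$ under $\mathfrak{q}^{\mathrm{op}} \cap \mathfrak{k}$, and $f_{2}$ to a vector $\omega_{2}$ of weight $+\mu_{\mathcal{C}}$ under $\mathfrak{q} \cap \mathfrak{k}$, tensored with elements of $\Hom_{\Kinfty}(V_{\pm\mathcal{C}}, \pi(\pm\mathcal{C}))$ that are inert under $\aGs$. The Poincar\'e pairing \eqref{Poincare} unwraps, through these isomorphisms, to $(\omega_{1}, \omega_{2}) \mapsto \omega_{1} \wedge \omega_{2} \in \det \mathfrak{p}^{*}$, up to an overall scalar coming from the duality between the Hom-factors.

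The crucial observation is that the action of $X \in \aGs$ translates in each case to wedging with an element of $\mathfrak{a}^{*} \subset \mathfrak{p}^{*}$, but by \emph{different} elements in the two cases: for $\Pi$, wedging by the image of $X$ under the map $\aGs \to \mathfrak{a}^{*}$ dual to $\iota_{\mathcal{C}}: \aG \to \mathfrak{a}$; for $\widetilde\Pi$, the dual of $\iota_{-\mathcal{C}}$. By Proposition \ref{independence} applied to the two triples, $\iota_{-\mathcal{C}}$ and $\iota_{\mathcal{C}}$ differ precisely by $\Ad(n_{G})|_{\mathfrak{a}}$, which by Lemma \ref{twoWeyl} and the definition immediately following it is the long Weyl element $w$ on $\aG$, an involution. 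If we use $\iota_{\mathcal{C}}$ to identify $\aGs$ with $\mathfrak{a}^{*}$, the action on $H^{*}(\gK, \Pi)$ becomes wedging with $X$ and the action on $H^{*}(\gK, \widetilde\Pi)$ becomes wedging with $wX$.

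With this in hand the adjointness follows by direct computation. For $X$ of pure degree $k$ and $f_{1}$ of degree $j$, the right action $f_{1} \cdot X = (-1)^{jk} X \cdot f_{1}$ corresponds, in terms of the first-factor vector in $\wedge^{*} \mathfrak{p}^{*}$, to $\omega_{1} \wedge X$ (the sign being absorbed when commuting $X$ past $\omega_{1}$), while $(wX) \cdot f_{2}$, being the left action of $wX$ on $f_{2}$, corresponds to $w(wX) \wedge \omega_{2} = X \wedge \omega_{2}$ since $w^{2} = 1$. Hence both sides of the claimed equality evaluate to $\omega_{1} \wedge X \wedge \omega_{2}$ in $\det \mathfrak{p}^{*}$, and agree.

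The only real obstacle is the organizational one of carefully tracking the two identifications $\iota_{\pm\mathcal{C}}: \aG \to \mathfrak{a}$ and verifying that the canonical element $\Ad(n_{G})|_{\mathfrak{a}}$ relating them is the long Weyl element of \S\ref{FCA1}; this is essentially the content of Lemma \ref{twoWeyl}. The appearance of $w$ in the statement of the Lemma is precisely the correction needed for the ``wrong'' identification of $\aGs$ with $\mathfrak{a}^{*}$ when one passes from $\Pi$ to $\widetilde\Pi$. The sign bookkeeping distinguishing left from right actions is routine once one fixes the identification of $f \in \Hom_{\Kinfty}(\wedge^{*} \mathfrak{p}, \Pi)$ with its first-factor component $\omega \in \wedge^{*} \mathfrak{p}^{*}$.
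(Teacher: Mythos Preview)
Your proof is correct and follows essentially the same approach as the paper: reduce to the irreducible case $\Pi=\pi(\mathcal{C})$, identify $\widetilde{\Pi}=\pi(-\mathcal{C})$, unwind the Poincar\'e pairing to the wedge product $f_1(v^-)\wedge f_2(v^+)$ in $\det\mathfrak{p}^*$, and then observe that while the wedging identity $(f_1(v^-)\wedge X)\wedge f_2(v^+)=f_1(v^-)\wedge(X\wedge f_2(v^+))$ is trivial for $X\in\mathfrak{a}^*$, the two identifications of $\aGs$ with $\mathfrak{a}^*$ coming from $(\mathfrak{a},\mathfrak{b},\mathfrak{q})$ and $(\mathfrak{a},\mathfrak{b},\mathfrak{q}^{\mathrm{op}})$ differ by the long Weyl element via Lemma~\ref{twoWeyl}. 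The paper's write-up is slightly terser (it works directly with $X\in\mathfrak{a}^*$ and invokes the discrepancy of identifications at the end), but the argument is the same.
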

\proof 
This reduces to the irreducible case $\Pi = \pi(\mathcal{C})$; its contragredient  is $\pi(-\mathcal{C})$,
parameterized by the chamber $-\mathcal{C}$ associated to $(\mathfrak{a}, \mathfrak{b}, \mathfrak{q}^{\op})$. 
  We must verify that  $\aGs$ acts (up to sign) self-adjointly for the the cup product
$$ \wedge^j \mathfrak{p}^*[-\mathcal{C}] \otimes  \wedge^{d-j} \mathfrak{p}^*[\mathcal{C}]  \rightarrow \det \ \mathfrak{p}^*$$ 
or, what is the same, the map
$$  \Hom(V_{-\mathcal{C}}, \wedge^j \mathfrak{p}^*)  \otimes  \Hom(V_{\mathcal{C}}, \wedge^{d-j} \mathfrak{p}^*) \rightarrow 
\Hom(V_{-\mathcal{C}} \otimes V_{\mathcal{C}}, \det \ \mathfrak{p}^*) \rightarrow \det \ \mathfrak{p}^*$$ 
Suppose $f_1 \in \Hom(V_{-\mathcal{C}}, \wedge^j \mathfrak{p}^*)$ and $f_2 \in  \Hom(V_{\mathcal{C}}, \wedge^{d-j} \mathfrak{p}^*) $;
their image under the first map is given by $v_1 \otimes v_2 \mapsto f_1(v_1) \wedge f_2(v_2)$. 
 This map factors through the one-dimensional subspace of invariants on $V_{-\mathcal{C}} \otimes V_{\mathcal{C}}$;
 to evaluate it on a generator for that space, we may as well evaluate it on $v^- \otimes v^+$, which has nonzero projection to that space. 
In other words, we must prove the adjointness statement for $(f_1, f_2) \mapsto f_1(v^-) \wedge f_2(v^+)$. 
For $X \in \wedge^* \mathfrak{a}^*$ we have
$$  (f_1 \cdot X) (v^-) \wedge f_2(v^+) =     f_1(v^-) \wedge X \wedge  f_2(v^+) =     f_1(v^-) \wedge  (X f_2)(v^+),$$
where the sign is as in the statement of the Lemma.
However, the identifications of $\mathfrak{a}$ with $\aG$ arising from $(\mathfrak{a}, \mathfrak{b}, \mathfrak{q})$ and
$(\mathfrak{a}, \mathfrak{b}, \mathfrak{q}^{\op})$ differ by a long Weyl group element, as in Lemma \ref{twoWeyl}. 
\qed

\begin{lemma} \label{conjugation}
Let $\Pi$ be a tempered, finite length, cohomological representation, and observe that 
the natural real structure on $\mathfrak{p}$
induces a ``complex conjugation'' antilinear map $H^*(\gK, \Pi) \rightarrow H^*(\gK, \overline{\Pi})$,
where, as usual, $\overline{\Pi}$ denotes the representation with the same underlying vectors
but the scalar action modified by complex conjugation.

Then  the following diagram commutes:
  \begin{equation} \label{conj diagram}
 \xymatrix{
  H^*(\gK, \Pi) \otimes \wedge^* \aGs    \ar[r]   \ar[d]  &    H^*(\gK, \Pi)  \ar[d]  \\
H^*(\gK, \overline{\Pi}) \otimes \wedge^* \aGs  \ar[r]     &   H^*(\gK, \overline{\Pi})
 }
\end{equation}
where all vertical maps are complex conjugation; the complex conjugation on $\aGs$ is that corresponding to the twisted real structure. 
\end{lemma}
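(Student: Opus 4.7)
I reduce to the irreducible case and then compute directly in the Vogan--Zuckerman model of \S\ref{Gsc types}; I work in the simply connected case, the general case following by the reduction built into \S\ref{VoganZuckerman}. Since the $\wedge^{*}\aGs$-action and the conjugation map both decompose along $\Kinfty$-isotypic components, it suffices to treat $\Pi = \pi(\mathcal{C})$ for a single chamber $\mathcal{C}$. Since $\Kinfty$ is compact, $\overline{V_{\mathcal{C}}} \simeq V_{\mathcal{C}}^{*} = V_{-\mathcal{C}}$, which is the minimal $\Kinfty$-type of $\pi(-\mathcal{C})$; hence $\overline{\pi(\mathcal{C})} \simeq \pi(-\mathcal{C})$ as $(\mathfrak{g},\Kinfty)$-modules.

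Fix data $(\mathfrak{a},\mathfrak{b},\mathfrak{q})$ attached to $\mathcal{C}$. The element $x \in i\mathfrak{b}_{\R}$ defining $\mathfrak{q}$ is negated by complex conjugation, so the conjugate of $\mathfrak{q}$ equals $\mathfrak{q}^{\op}$; thus $(\mathfrak{a},\mathfrak{b},\mathfrak{q}^{\op})$ is the data attached to $-\mathcal{C}$. In the splitting $\mathfrak{p} = \mathfrak{a} \oplus (\mathfrak{u}\cap\mathfrak{p}) \oplus (\overline{\mathfrak{u}}\cap\mathfrak{p})$, conjugation preserves $\mathfrak{a}$ (acting there through the natural real structure $\mathfrak{a}_{\R}$) and swaps the other two summands. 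In the model
\[
H^{*}(\gK,\pi(\mathcal{C})) \;\simeq\; (\wedge^{*}\mathfrak{p}^{*})^{-\mu_{\mathcal{C}}} \otimes \Hom_{\Kinfty}(V_{\mathcal{C}},\pi(\mathcal{C})),
\]
cochain-level conjugation therefore sends $(\wedge^{*}\mathfrak{p}^{*})^{-\mu_{\mathcal{C}}}$ antilinearly onto $(\wedge^{*}\mathfrak{p}^{*})^{-\mu_{-\mathcal{C}}}$ and satisfies $\overline{Z \wedge v} = \bar{Z} \wedge \bar{v}$ for $Z \in \mathfrak{a}^{*}$.

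The $\wedge^{*}\aGs$-action on $H^{*}(\gK,\pi(\mathcal{C}))$ is wedging by $\phi_{\mathcal{C}}(X) \in \mathfrak{a}^{*}$, and on $H^{*}(\gK,\pi(-\mathcal{C}))$ by $\phi_{-\mathcal{C}}(X)$, where $\phi_{\mathcal{C}}, \phi_{-\mathcal{C}}: \aGs \xrightarrow{\sim} \mathfrak{a}^{*}$ come from the two triples; by Lemma~\ref{twoWeyl} they differ by the long Weyl element $w$, i.e.\ $\phi_{-\mathcal{C}} = w \circ \phi_{\mathcal{C}}$. Combining, for $X \in \aGs$ and $\omega \in H^{*}(\gK,\pi(\mathcal{C}))$,
\[
\overline{X\cdot\omega} \;=\; \phi_{\mathcal{C}}(\bar{X}) \wedge \bar{\omega} \;=\; \phi_{-\mathcal{C}}(w\bar{X}) \wedge \bar{\omega} \;=\; (w\bar{X})\cdot\bar{\omega},
\]
where $\bar{X}$ denotes the natural conjugation on $\aGs$ and the middle equality uses $w^{2} = 1$. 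Since $X \mapsto w\bar{X}$ is exactly the involution whose fixed set is the twisted real structure (Definition~\ref{twistedReal}), the diagram commutes.

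The main technical nuance is the careful bookkeeping between $\phi_{\mathcal{C}}$ and $\phi_{-\mathcal{C}}$: the long Weyl twist arises because the $\aGs$-action implicitly depends on a choice of chamber, and under complex conjugation the relevant chamber flips from $\mathcal{C}$ to $-\mathcal{C}$. This is precisely what the twisted real structure is designed to absorb.
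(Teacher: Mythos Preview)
Your proof is correct and follows essentially the same approach as the paper's: reduce to the irreducible case $\pi(\mathcal{C})$, observe $\overline{\pi(\mathcal{C})}\simeq\pi(-\mathcal{C})$, track conjugation on the weight spaces $(\wedge^{*}\mathfrak{p}^{*})^{-\mu}$, and then note that the two identifications $\aGs\simeq\mathfrak{a}^{*}$ coming from $(\mathfrak{a},\mathfrak{b},\mathfrak{q})$ and $(\mathfrak{a},\mathfrak{b},\mathfrak{q}^{\mathrm{op}})$ differ by the long Weyl element, which is exactly the twist in Definition~\ref{twistedReal}. The paper organizes the same computation via the diagram~\eqref{copacabana} in terms of $\Hom_{K}(V_{\pm\mathcal{C}},\wedge^{*}\mathfrak{p})$ rather than your direct computation with $\phi_{\mathcal{C}},\phi_{-\mathcal{C}}$, but the content is identical.
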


 \proof Again, this reduces to the irreducible case $\Pi = \pi(\mathcal{C})$. 
Fixing an invariant Hermitian form on $V_{\mathcal{C}}$, we may identify $V_{-\mathcal{C}}$ with $\overline{V_{\mathcal{C}}}$, in such a way
 that $\overline{v^+} = v^-$. 
 
 The following diagram commutes: 
   \begin{equation} \label{copacabana}
 \xymatrix{
  \Hom_K(V_{-\mathcal{C}},  \wedge^* \mathfrak{p})   \ar[r]^{\qquad S \mapsto S v^-}  \ar[d]^{S \mapsto \bar{S}} &   \left( \wedge^* \mathfrak{p}\right)^{-\mu}  \ar[d]^{\mbox{conjugation}} \\
\Hom_{K}( V_{\mathcal{C}},  \wedge^* \mathfrak{p})   \ar[r]^{\qquad R \mapsto R   \overline{v^-}}    &  \left( \wedge^* \mathfrak{p}\right)^{\mu} 
 }
\end{equation}
where we define $\bar{S}$ by $\bar{S}(\bar{v}) = \overline{S(v)}$.
There is an induced complex conjugation 
$$\underbrace{ \wedge^* \mathfrak{p}[-\mathcal{C}]}_{  \Hom_K(V_{-\mathcal{C}},  \wedge^* \mathfrak{p})  \otimes V_{-\mathcal{C}} } \rightarrow
\underbrace{ \wedge^* \mathfrak{p}[\mathcal{C}]}_{  \Hom_K(V_{\mathcal{C}},  \wedge^* \mathfrak{p})  \otimes V_{\mathcal{C}}}$$
where we tensor $S \mapsto \bar{S}$  with the conjugation on $V_{-\mathcal{C}}$, 
and then the  following diagram is also commutative:  
   \begin{equation} 
 \xymatrix{
 \wedge^* \mathfrak{p} [-\mathcal{C}]   \otimes \wedge^* \mathfrak{a}    \ar[r]   \ar[d]^{(S \mapsto \bar{S}) \otimes \mathrm{conj.}} &    \wedge^* \mathfrak{p} [-\mathcal{C}]  \ar[d]^{\mbox{conjugation}}  \\
\wedge^* \mathfrak{p}[\mathcal{C}] \otimes \wedge^* \mathfrak{a}  \ar[r]     &   \wedge^* \mathfrak{p}[\mathcal{C}] }
\end{equation}
where the conjugation on $\mathfrak{a}^*$ is that which fixes $\mathfrak{a}_{\R}^*$. 

This gives rise to \eqref{conj diagram} -- however, just as in the previous Lemma, 
 the identifications of $\mathfrak{a}$ with $\aG$ induced by $(\mathfrak{a}, \mathfrak{b}, \mathfrak{q})$
and $(\mathfrak{a}, \mathfrak{b}, \mathfrak{q}^{\op})$ again differ by the long Weyl element,
and so in \eqref{conj diagram} we take the conjugation on $\aG$ as being with reference to the {\em twisted} real structure. \qed

\subsubsection{Construction for general $\GG_{\R}$}
 
Let $\GG_{\R}$ now be an arbitrary reductive group over $\R$.

 Let $\GG'$ be the simply connected cover of the derived group of $\GG_{\R}$,
and let $\ZZ_G$ be the center of $\GG_{\R}$.   Thus there is an isogeny $\GG' \times \ZZ_G \rightarrow \GG_{\R}$. Let $\mathfrak{g}', \mathfrak{k}', \mathfrak{a}_{G'}$
be the various Lie algebras for $\GG'$.  Let $\mathfrak{a}_{Z}$
be the $\mathfrak{a}$-space for $\ZZ_G$; it is naturally identified
with the Lie algebra of a maximal split subtorus. We have 
$$ \mathfrak{a}_{G} = \mathfrak{a}_{G'} \oplus \mathfrak{a}_Z.$$

 For any representation
$\Pi$ of $G$  let $\Pi'$ be its pullback to $\GG'$; this is a finite length tempered representation.   There is a natural identification
$$H^*(\gKZ; \Pi) = \wedge^* \mathfrak{a}_Z \otimes   H^*(\mathfrak{g}',  \Kinfty';  \Pi')$$
Our  foregoing discussion 
has given  an action of $\wedge^* \mathfrak{a}_{G'}$ on the second factor;
and so we get an action of 
 $$\wedge^* \mathfrak{a}_{G'} \otimes \wedge^* \mathfrak{a}_Z = \wedge^* (\mathfrak{a}_{G'} \oplus \mathfrak{a}_{Z})^* = \wedge^* \aGs$$
on $H^*(\gKZ; \Pi)$.   Lemma \ref{adjointness} and Lemma \ref{conjugation} continue to hold in this setting.

Observe that the group $ \Kinfty/\Kinfty^0 \stackrel{\sim}{\rightarrow} \pi_0 \mathbf{G}(\R)$
acts naturally on   $H^*(\gKZ; \Pi)$.    By the discussion of \S \ref{autersec}, this action 
 of $\aGs$ will commute with the action of $\Kinfty/\Kinfty^0$. 

  \subsection{Metrization}\label{motmetricss} \  As remarked near \eqref{motmetric} it is very convenient to put a Euclidean metric on $\aGs$ in such a way
  that the induced action on cohomology is isometric.

 Let the bilinear form $B_{\R}$ be as in \eqref{Bdisgustion}. 
With notation as in  \S \ref{FCA1}, 
  $B_{\R}$ induces a invariant quadratic form on $\mathfrak{a}_{\R} \oplus \mathfrak{b}_{\R}$,
   so also on $\mathfrak{a}_{\R}$ and $\mathfrak{a}_{\R}^*$. In particular, we get a $\C$-valued positive definite hermitian form on $\mathfrak{a}_G^*$.
Then:

   \begin{lemma} \label{isometry property}  
 Let $X \in \wedge^* \aGs$.  Let $\Pi$ be a finite length   cohomological tempered representation. Let $T \in H^q(\gKZ, \Pi)$, where $q$ is the minimal cohomological degree as in \eqref{qdef}; 
 equip $H^*(\gKZ, \Pi)$ with the natural hermitian metric (arising from a fixed inner product on $\Pi$, and the bilinear form $B_{\R}$). 
 Then 
 $$ \| T \cdot X \|  =\|T\|  \|X\|.$$
 \end{lemma}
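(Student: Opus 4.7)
I reduce to the case of $\GG_{\R}$ semisimple simply connected and $\Pi = \pi(\mathcal{C})$ irreducible. The tensor decomposition $H^*(\gKZ; \Pi) = \wedge^* \mathfrak{a}_Z^* \otimes H^*(\mathfrak{g}',\Kinfty^{0'};\Pi')$ compatibly factors both the hermitian metric and the $\wedge^* \aGs$-action under $\aGs = \mathfrak{a}_{G'}^* \oplus \mathfrak{a}_Z^*$; and both structures further respect the decomposition of $\Pi'$ into irreducible tempered cohomological summands.

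For $\Pi = \pi(\mathcal{C})$, the identification \eqref{basiciso0} gives
\[
H^*(\gKZ; \pi(\mathcal{C})) \;\simeq\; (\wedge^* \mathfrak{p}^*)^{-\mu_{\mathcal{C}}} \,\otimes\, \Hom_{\Kinfty}(V_{\mathcal{C}}, \pi(\mathcal{C})),
\]
the second factor being one-dimensional; the $\wedge^* \aGs$-action is wedge product on the first factor, as in \eqref{eqqq}. The natural hermitian metric on $H^*(\gKZ;\pi(\mathcal{C}))$ is induced by the tensor of the $B_{\R}$-hermitian form on $\wedge^* \mathfrak{p}^*$ with the given inner product on $\pi(\mathcal{C})$, and it restricts to a tensor-product form in the above decomposition. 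Hence the lemma reduces to the following assertion: for $X \in \wedge^j \aGs$ and $\omega_0$ in the bottom-degree piece $\det(\overline{\mathfrak{u}} \cap \mathfrak{p})^*$ of $(\wedge^* \mathfrak{p}^*)^{-\mu_{\mathcal{C}}}$ (cf.\ \eqref{igly}), one has $\|X \wedge \omega_0\| = \|X\|\cdot\|\omega_0\|$ in the hermitian form on $\wedge^* \mathfrak{p}^*$.

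The key linear-algebra input is standard: if $A,B$ are orthogonal subspaces of a hermitian space $V$, the natural map $\wedge^* A \otimes \wedge^* B \to \wedge^*(A \oplus B) \subset \wedge^* V$ is an isometry with the tensor-product form on the left (immediate from orthonormal bases). Applied with $A = \aGs$ and $B = (\overline{\mathfrak{u}} \cap \mathfrak{p})^*$, both viewed inside $\mathfrak{p}^*$, the claim follows once I verify: (i) $\aGs$ and $(\overline{\mathfrak{u}} \cap \mathfrak{p})^*$ are orthogonal in $\mathfrak{p}^*$ under the hermitian form dual to $H(X,Y) = -B(X, \theta\bar Y)$ on $\mathfrak{p}$; and (ii) the restriction of this hermitian form to $\aGs \subset \mathfrak{p}^*$ agrees with the metric on $\aGs$ specified above the lemma.

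Both checks are routine. For (i), the summands in $\mathfrak{p} = \mathfrak{a} \oplus (\mathfrak{u}\cap\mathfrak{p}) \oplus (\overline{\mathfrak{u}}\cap\mathfrak{p})$ are sums of distinct $\mathfrak{t}$-weight spaces in $\mathfrak{g}$ (with $\mathfrak{a}$ the zero-weight part), and distinct $\mathfrak{t}$-weight spaces are $H$-orthogonal. For (ii), since $\theta$ acts as $-1$ on $\mathfrak{a}_{\R}$, $-B(X,\theta\bar Y)|_{\mathfrak{a}}$ is the hermitian extension of $B_{\R}|_{\mathfrak{a}_{\R}}$, which is precisely the metric fixed on $\aGs \simeq \mathfrak{a}^*$ via duality prior to the lemma. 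The only delicacy in fleshing this out is careful bookkeeping around the dualities and normalizations of weight vectors; there is no serious conceptual obstacle.
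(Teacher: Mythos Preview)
Your proof is correct and takes essentially the same approach as the paper: reduce to the simply connected irreducible case, identify the cohomology with $(\wedge^* \mathfrak{p}^*)^{-\mu} \simeq \wedge^* \mathfrak{a}^* \otimes \det(\bar{\mathfrak{u}}\cap\mathfrak{p})^*$ via \eqref{eqqq} and \eqref{igly}, and conclude from the orthogonality of $\mathfrak{a}$ and $(\mathfrak{u}\oplus\bar{\mathfrak{u}})\cap\mathfrak{p}$ under the form coming from $B$. One small caveat: your assertion that \emph{distinct} $\mathfrak{t}$-weight spaces are $H$-orthogonal is slightly too strong (the correct condition is $\alpha \neq -\bar\beta$, not $\alpha\neq\beta$), but since $\mathfrak{a}$ sits in weight zero the specific orthogonality you actually need does follow, and this is exactly what the paper invokes.
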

 \proof

   This reduces to the  case where $\GG_{\R}$ is simply connected, and then again to the case when $\Pi = \Pi(\mathcal{C})$ is irreducible. 
  There 
 it reduces to a similar claim about the weight space $(\wedge^* \mathfrak{p}^*)^{-\mu}$, 
 since (with notations as previous) the map
$\Hom(V_{-\mathcal{C}}, \wedge^* \mathfrak{p}^*) \stackrel{\sim}{\rightarrow}  \left( \wedge^* \mathfrak{p}^* \right)^{-\mu}   $  of \eqref{eqqq}
is isometric (up to a constant scalar, which depends on the choice of highest weight vector)  for the natural Hermitian forms on both sides. 
But the  corresponding claim about  $(\wedge^* \mathfrak{p}^*)^{-\mu}$ is clear from  \eqref{igly}, noting that the factors $\mathfrak{a}$ and
 $ (\mathfrak{u}   \oplus \bar{\mathfrak{u}}) \cap \mathfrak{p}$ are orthogonal to one another under $B$.   \qed 
    
    The following explicit computation will be useful later: 
    \begin{lemma} \label{trace-doo-doo}
    Suppose $\G_{\R}$ is one of $ \GL_n,   \Res_{\C/\R} \GL_n$,
    and endow $\mathfrak{g}$ with the invariant quadratic form $B=\tr(X^2) \mbox{ or }\tr_{\C/\R} \tr(X^2)$,
    where $\tr$ is taken with reference to the standard representation. 
    Then, with reference to the identification \eqref{projlimmap},
    the form on $\mathfrak{a}_{G,\R}^*$  induced by the dual of  $B$ is the restriction of the  trace form on $\widehat{\mathfrak{g}}$ (by which we mean
    the sum of the trace forms on the two factors, in the case of $\Res_{\C/\R} \GL_n$).
    A similar result holds when $\GG_{\R}$ is one of $\mathrm{SO}_n$ and $\Res_{\C/\R} \SO_n$, 
    except that the form on $\mathfrak{a}_{G,\R}^*$ is the restriction of $\frac{1}{4} \cdot \mbox{(trace form)}$.     
    \end{lemma}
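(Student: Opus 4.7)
The proof is a direct basis-level computation. First I would reduce the claim on $\aGR$ to an equivalent claim on the full duals $\mathfrak{t}^*$ and $\Lie(\widehat{T})$. Since $B$ is $\theta$-invariant, the decomposition $\mathfrak{t} = \mathfrak{a} \oplus \mathfrak{b}$ is $B$-orthogonal, so the dual form $B^*$ on $\mathfrak{t}^*$ restricts on the annihilator $\mathfrak{a}^*$ to the dual of $B|_{\mathfrak{a}}$. By Lemma \ref{realstruc}, the map \eqref{projlimmap} carries $\Lie(\widehat{T})^{w_0}$ isomorphically onto $\mathfrak{a}^*$ inside the ambient isomorphism $\Lie(\widehat{T}) \simeq \mathfrak{t}^*$ provided by \eqref{lambdaGa}. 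It therefore suffices to show that under \eqref{lambdaGa}, the dual of $B|_{\mathfrak{t}}$ coincides with $c \cdot \tr|_{\Lie(\widehat{T})}$, where $c = 1$ for the $\GL$ cases and $c = \tfrac{1}{4}$ for the $\SO$ cases.

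For $\GL_n$, after complexification the fundamental Cartan becomes the standard diagonal in $\mathfrak{gl}_n$, with basis $\{D_i\}$ satisfying $B(D_i, D_j) = \delta_{ij}$; hence $B^*$ has Gram matrix $I$ in the dual basis $\{\chi_i\}$. On the dual side, $\widehat{\mathfrak{g}} = \mathfrak{gl}_n$ has a diagonal Cartan basis $\{E_i\}$ with $\tr(E_i E_j) = \delta_{ij}$. The Langlands identification $\chi_i \leftrightarrow E_i$, pinned down by matching simple roots $\chi_i - \chi_{i+1}$ to simple coroots $E_i - E_{i+1}$, makes the two forms agree. For $\SO_n$, taking the orthogonal form with anti-diagonal Gram matrix gives a Cartan basis $D_i = E_{ii} - E_{n+1-i,n+1-i}$ with $\tr(D_i D_j) = 2\delta_{ij}$, so $B^*$ has Gram matrix $\tfrac{1}{2} I$ in the dual basis. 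On the dual side, $\widehat{\mathfrak{g}}$ is $\mathfrak{sp}_{n-1}$ or $\mathfrak{so}_n$ according to the parity of $n$; in either case the analogous Cartan basis $\{E_i\}$ satisfies $\tr(E_i E_j) = 2\delta_{ij}$. Under the Langlands identification $\chi_i \leftrightarrow E_i$, the ratio of the two forms is then $4$, giving $B^* = \tfrac{1}{4} \cdot \tr$.

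For the restriction-of-scalars cases, the decomposition $\mathfrak{g} = \mathfrak{g}_{0,\C} \oplus \mathfrak{g}_{0,\C}$ arising from the two embeddings $\C \hookrightarrow \C$ identifies $B = \tr_{\C/\R} \tr(X^2)$ with the direct sum of the two standard trace forms on the factors, and similarly $\widehat{\mathfrak{g}} = \widehat{\mathfrak{g}_0} \oplus \widehat{\mathfrak{g}_0}$ with its trace form splitting the same way. Each summand contributes the ratio from the non-restricted case, so the overall constant $c$ is preserved: $c = 1$ for $\Res_{\C/\R} \GL_n$ and $c = \tfrac{1}{4}$ for $\Res_{\C/\R} \SO_n$. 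The main subtlety is tracking how the Langlands identification interacts with the $w_0$-action in the restriction-of-scalars setting, but since both forms in question are invariant under their respective Weyl group actions, any valid Langlands matching on the ambient spaces descends correctly to the desired identity on the $w_0$-fixed subspace.
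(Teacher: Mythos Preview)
Your proof is correct and follows the same overall strategy as the paper's: both reduce via the $B$-orthogonality of $\mathfrak{a}$ and $\mathfrak{b}$ to comparing the dual of $B|_{\mathfrak{t}}$ with the trace form on $\Lie(\widehat{T})$ under the identification \eqref{lambdaGa}, and then compute case by case. The only real difference is in how the $\SO_n$ computation is carried out. The paper frames things slightly more conceptually (any invariant form $Q$ on $\mathfrak{h}$ induces a canonical Weyl-invariant form $\widehat{Q}$ on $\widehat{\mathfrak{h}}$, and one just computes $\widehat{Q}$), and then verifies the constant $\tfrac14$ by checking the cocharacter pairing in the rank-one cases $\SO(2)$ and $\SO(3)$, asserting the general case is similar. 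You instead write down explicit diagonal bases $D_i, E_i$ for general $n$ and compare Gram matrices directly. Your route has the advantage of making the general-$n$ verification fully explicit; the paper's has the advantage of isolating why the constant is universal (Weyl invariance plus a single rank-one check). Your closing remark about the ``main subtlety'' with $w_0$ in the restriction-of-scalars case is unnecessary: once the identity holds on all of $\Lie(\widehat{T}) \simeq \mathfrak{t}^*$, it restricts for free to the $w_0$-fixed subspace.
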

    \proof 
    Write $\widehat{\tr}$ for the trace form on $\widehat{\mathfrak{g}}$, in each case.  
   As explained in  \eqref{lambdaGa} 
 the choice of $(\mathfrak{a}, \mathfrak{b}, \mathfrak{q})$ induces a natural perfect pairing of $\C$-vector spaces
    $$   (\mathfrak{a} \oplus \mathfrak{b})     \otimes \underbrace{  \Lie(\widehat{T})}_{\supset \Lie(\widehat{T})^{w_0}}
     \rightarrow \C,$$
     wherein $\Lie(\widehat{T}^{w_0})$ is identified with the dual of $\mathfrak{a}$. 
     We want to show that, under this pairing, the form $\tr|_{\mathfrak{a}}$
   is in duality with the form $\widehat{\tr}|_{\Lie(\widehat{T})^{w_0}}$. Since $\mathfrak{a}$ and $\mathfrak{b}$
   are orthogonal with respect to $\mathrm{tr}$, it is enough to check that 
   the form $\tr$ on $\mathfrak{a} \oplus \mathfrak{b}$ 
    and $\widehat{\tr}$ on $\Lie(\widehat{T})$ are in duality.

It is convenient to discuss this in slightly more generality: Note that, if $\HH$ is a reductive group over $\C$, the choice of a nondegenerate invariant quadratic form $Q$
on $\mathfrak{h} = \mathrm{Lie}(\HH)$ induces a nondegenerate invariant quadratic form $\widehat{Q}$  on the dual Lie algebra $\widehat{\mathfrak{h}}$.
Indeed, choose a torus and Borel   $(\mathbf{T}_H \subset \mathbf{B}_H)$ in $\HH$;
then $Q$ restricts to a Weyl-invariant form on the Lie algebra of $\mathbf{T}_H$, and the identification 
 $$ \Lie(\mathbf{T}_H) \simeq \Lie(\widehat{T})^*,$$
induced by $(\mathbf{T}_H \subset \mathbf{B}_H)$  allows us to transport $Q$ to a Weyl-invariant form on $\Lie(\widehat{T})$. 
 This does not depend on the choice of pair $(\mathbf{T}_H \subset \mathbf{B}_H)$, 
 because of invariance of $Q$.  Finally the resulting Weyl-invariant form on $\Lie(\widehat{T})$ extends uniquely to an invariant form on $\widehat{\mathfrak{h}}$. 

In this language, the question is precisely to compute $\widehat{Q}$,
where $\HH=\GG_{\C}$ and $Q$   is the complexification of $\tr$, i.e. a form on the Lie algebra of $\G_{\C}$.

    \begin{itemize}
    \item[(i)] $\GG_{\R}=\GL_n$.   Here it is clear that $\widehat{\tr} = \tr$. 
    
    \item[(ii)]  $\GG_{\R}=\Res_{\C/\R} \GL_n$.  Here again $\widehat{\tr}=\tr$:
    
     The associated complex group is
$\GL_n \times \GL_n$, and the form there is $\tr(X_1)^2+ \tr(X_2)^2$. 
The dual form on $\mathfrak{gl}_n \times \mathfrak{gl}_n$
 is thus, again, the trace form on $\GL_n \times \GL_n$.

    \item[(iii)] $\GG_{\R} = \SO_{n}$:
    In this case we have
\begin{equation} \label{oink0} \widehat{\tr} = \frac{1}{4} \left( \tr \mbox{ on the dual group.} \right). \end{equation}
    We will analyze the cases of $\mathrm{SO}(2)$ and $\mathrm{SO}(3)$, with the general cases being similar: 
    
    \begin{itemize}
    \item[(a)] Consider $\mathrm{SO}(2)$, which we realize as the stabilizer
    of the quadratic form $q(x,y) = xy$.  
    The maximal torus is the image of the generating co-character $\chi:  t \mapsto \left( \begin{array}{cc} t & 0 \\0 & t^{-1} \end{array}\right)$,
and (with the standard identifications)  $\langle \chi, \chi' \rangle = 1$ where
$\chi'$ is the co-character $\chi:  t \mapsto \left( \begin{array}{cc} t & 0 \\0 & t^{-1} \end{array}\right)$
of the dual $\mathrm{SO}(2)$.   Denote simply by $d\chi$ the image
of the standard generator of $\Lie(\mathbb{G}_m)$ under $\chi$. Then $\langle d\chi, d\chi\rangle_{\tr} =2$, 
and so $\langle d\chi', d\chi'  \rangle_{\widehat{\tr}}= \frac{1}{2}$. 
  
\item[(b)]  Consider $\mathrm{SO}(3)$, which we realize as the stabilizer of  the quadratic form $q(x,y) = xy+z^2$,
and a maximal torus is  the image of the co-character $\chi: \diag(t, t^{-1}, 1)$. 
This is dual to the same character $\chi'$ as above (now considered
as a character of $\SL_2$). 
 We reason just as in (a). 
\end{itemize}

\item[(iv)] $\Res_{\C/\R} \SO_n$.  Here  again \eqref{oink0} holds.
 To see this, note that the associated complex group is $\SO_n \times \SO_n$, and the form
there is given by $\tr(X_1^2) + \tr(X_2^2)$; then the result follows from (iii). 
\end{itemize}

 \qed

\newcommand{\cG}{{}^c G} 
\section{The motive of a cohomological automorphic representation: conjectures and descent of the coefficient field} \label{Pimotive}

 We briefly formulate a version of the standard conjectures relating cohomological automorphic forms and motives, taking some care about coefficient fields. 
 A more systematic discussion of the general conjectures is presented in Appendix \ref{Pimotive}; for the moment we present only what is needed for  
 the main text. 

 \subsection{The example of a fake elliptic curve} \label{sec: fake fake}
To recall why some care is necessary, let us consider the example of  
 a {\it fake elliptic curve} over a number field $F$: this is, by definition,
 an abelian surface $A$  over 
 $F$ which admits
 an action of an (indefinite) quaternion algebra $D \hookrightarrow \End_F(A) \otimes \Q$. 
 
 In any realization $H^1 (A)$ admits a natural right $D$-action, and thus,  for any rational prime $\ell$, 
 one gets a Galois representation
 $$
\rho_{A,\ell}: \Gal(\Qbar/F) \rightarrow \GL_{D} (H^1(A_{\Qbar}, \Q_\ell)) \simeq (D \otimes \Q_\ell)^\times,
 $$ 
 where the latter identification depends on a choice of a basis for $H^1(A_{\Qbar}, \Q_\ell)$ over 
 $D \otimes \Q_\ell$. 
 If $\ell$ is not ramified in $D$, a choice of splitting $D \otimes \Q_{\ell} \simeq M_2(\Q_{\ell})$
converts this to a genuine two-dimensional representation
$$ \rho_{A, \ell}:  \Gal(\Qbar/F) \longrightarrow \GL_2(\Q_{\ell})$$

This is expected to correspond to 
 an automorphic form $\pi$ on $\PGL_2(\A_F)$ with Hecke eigenvalues in $\Q$, characterized by the fact that we have an equality
 $$
\tr (\rho_{A,\ell} (\Frob_v) )= a_v (\pi)
$$
for all but finitely many $v$; here $\tr$ denotes the   trace and $a_v (\pi)$ is the Hecke eigenvalue of $\pi$ at $v$.

The correspondence between $\pi$ and $A$, in this case,  has two deficiencies. The first is that the dual group of 
$\PGL_2$ is $\SL_2$ but the target of the Galois representation is $(D \otimes \Q_\ell)^\times$. 
The second is that the automorphic form $\pi$ has $\Q$ coefficients; but
there is no natural way, in general,  to squeeze a  
   motive of rank two with $\Q$-coefficients out of $A$. One could get a rank two motive after 
  extending coefficients to some splitting field of $D$, but this is somewhat unsatisfactory. 
  
  However, although one cannot directly construct a rank two motive attached to $\rho_{A, \ell}$,
  it is possible to construct a rank three motive that is attached to the composition $\Ad \rho_{A, \ell}$
  with the adjoint representation $\PGL_2 \rightarrow  \GL_3$.  Namely, construct the motive 
  \begin{equation}
M = \End^0_{D} (h^1 (A)),
\end{equation}
where $\End_{D}$ denotes endomorphisms that commute with the natural (right) $D$-action on $h^1(A)$ and 
the superscript $0$ denotes endomorphisms with trace zero. 
This is a motive over $F$ of rank three with $\Q$-coefficients, which can be explicitly realized 
  as a sub-motive of $h^1(A) \otimes h^1(A)^*$.

Write $\fg$ for the Lie algebra of $\SL_1(D)$; this is a three-dimensional Lie algebra over $\Q$, and is  an inner form of $\mathfrak{sl}_2$.
We have natural conjugacy classes of identifications  
$$H^1_{\et}(M, \Q_{\ell}) \simeq \fg \otimes \Q_{\ell},$$
$$ H^1_{\B}(M_{v,\C},\Q) \simeq \fg ,$$
 for any infinite place $v$ of $F$. 

We expect that this phenomenon is quite general. 
Below we formulate, in a general setting, the properties that such an ``adjoint motive'' $M$
attached to a cohomological automorphic form should have.

\subsection{The  conjectures} \label{conj-sec}
It will be useful to formulate our conjectures over a general number field; thus let $F$ be   a number field, let $\GG_F$ be a reductive group over $F$, and 
let $\pi$ be  an automorphic cohomological tempered representation for $\GG_F$. 
(Recall from \S \ref{sec:notn} that ``cohomological,'' for this paper, means cohomological with reference to the trivial local system.) 
The definitions that follow will depend only on the near-equivalence class of $\pi$.

We suppose that $\pi$ has coefficient field equal to $\Q$, i.e.,  the representation $\pi_v$ has a $\Q$-structure for almost all $v$. 
One can attach to $\pi$ the associated archimedean parameter
$$W_{F_v} \longrightarrow \LG,$$
for any archimedean place $v$. 
The Langlands program also  predicts that $\pi$ should   give rise to   a Galois representation valued in a slight modification of $\LG$ (see \cite{BG}). 
Composing these  representation with the adjoint representation of the dual group on its  Lie algebra $\widehat{\mathfrak{g}}$, we arrive at
representations
\begin{equation} \label{rhol}  \Ad \rho_{\ell}: G_F \longrightarrow \Aut(\widehat{\mathfrak{g}}_{\overline{\Q_{\ell}}}).\end{equation}
\begin{equation} \label{rhov}  \Ad \rho_v: W_{F_v} \longrightarrow \Aut(\widehat{\mathfrak{g}}_{\C}).\end{equation}
of the Galois group and each archimedean Weil group. 
 With these representations in hand,  we can formulate the appropriate notion of ``adjoint motive attached to (the near equivalence class of) $\pi$,'' namely,

 \begin{definition} \label{AdjointMotiveDefinition}
An {\em adjoint motive associated to $\pi$} is a weight zero Grothendieck motive $M$ over $F$ with $\Q$ coefficients,  equipped with an  injection of $\Q$-vector spaces
$$\iota_v:  H_B(M_{v,\C},\Q) \longrightarrow  \widehat{\mathfrak{g}}_{\overline{\Q}}$$
for every infinite place $v$, 
such that:
\begin{quote}The image of $H_B(M_{v,\C},\Q)$ is the fixed set of an inner twisting of the standard Galois action on $\widehat{\mathfrak{g}}_{\overline{\Q}}$.  Said differently,
  $\iota_v$ identifies $H_{\B}(M_{v,\C})$ with an inner form  $\widehat{\mathfrak{g}}_{\Q,*}$ of $\mathfrak{g}$:
\begin{equation} \label{iotav} \iota_v: H_{\B}(M_{v,\C},\Q) \stackrel{\sim}{\longrightarrow} \widehat{\mathfrak{g}}_{\Q, *} \subset \widehat{\mathfrak{g}}_{\overline{\Q}}.\end{equation}
 \end{quote}
(This inner form may depend on $v$.)
Moreover, for any such $v$, and for any rational prime $\ell$, we require:
   \begin{itemize}

\item[1.]  The isomorphism   \begin{equation} \label{etmap} H_{\et}(M_{\bar{F}}, \overline{\Q}_{\ell}) \simeq H_B(M_{v,\C},\Q) \otimes_{\Q}  \overline{\Q_{\ell}} \stackrel{\iota_v}{\rightarrow} \widehat{\mathfrak{g}}_{\Q,*} \otimes \overline{\Q_{\ell}}
\simeq  \widehat{\mathfrak{g}}_{\overline{\Q_{\ell}}} 
\end{equation}
 identifies the Galois action on the {\'e}tale cohomology of $M$ with  a representation  in the conjugacy class of $\Ad \rho_{\ell}$ (see \eqref{rhol}).  
 \item[2.]  The isomorphism
  \begin{equation} \label{drmap} H_{\dR} (M) \otimes_\Q   \C   \simeq  H_B(M_{v,\C},\Q) \otimes_{\Q} \C \stackrel{\iota_v}{\rightarrow}
 \widehat{\mathfrak{g}}_{\Q,*} \otimes \C \simeq  \widehat{\mathfrak{g}}_{\C}  \end{equation}
identifies the  action of the Weil group $W_{F_v}$ on the de Rham cohomology of $M$ with a representation 
in the conjugacy class of $\Ad \rho_v$ (see \eqref{rhov}). 
   
 \item[3.]   Each  $\Q$-valued  bilinear form on $\widehat{\mathfrak{g}}_{\Q}$, invariant by the action of $\LG_{\Q}$\footnote{Explicitly, this means it is invariant both by inner automorphisms of $\GG$
 and by the pinned outer automorphisms arising from the Galois action on the root datum.},  induces a  weak polarization $M \times M \rightarrow \Q$ with the property that, for each $v$,    its Betti realization $H_B(M_{v,\C}) \times H_B(M_{v,\C}) \rightarrow \Q$ is identified, under $\iota_v$, with the given bilinear form.\footnote{Observe that a $\Q$-valued invariant bilinear form on $\mathfrak{g}_{\Q}$
 induces also a $\Q$-valued bilinear form on $\mathfrak{g}_{\Q, *}$, characterized by the fact that their linear extensions to $\mathfrak{g}_{\overline{\Q}}$ agree.}
  \end{itemize} 
\end{definition}

We are not entirely sure if {\em every} cohomological $\pi$ should have an attached adjoint motive, because of some slight subtleties about
descent of the coefficient field from $\overline{\Q}$ to $\Q$. However, it seems very likely that the overwhelming majority
should admit such attached adjoint motives, and we will analyze our conjectures carefully
only in this case.  (One can handle the general case at the cost of a slight loss of precision, simply extending coefficients from $\Q$ to a large enough number field.)  

 In Appendix \ref{Pimotive Appendix} we explain more carefully what the correct conjectures
for motives attached to automorphic representations should look like and why, if we suppose that the Galois representation has centralizer that is as small as possible, 
 these conjectures imply the existence of an adjoint motive associated to $\pi$.  

 \section{Formulation of the main conjecture}
Here  we combine the ideas of the prior two sections to precisely formulate the main conjecture. 
 (We have already formulated it in the introduction, but we take the opportunity to write out a version with the assumptions
 and conjectures  identified as clearly as possible.)

We briefly summarize  our setup.   We return to the setting of \S \ref{sec:numerical_invariants} 
so that $\GG$ is a reductive $\Q$-group without central split torus.
 Now let $\mathcal{H}$ be the Hecke algebra for $K$ at good places, i.e.,
the tensor product of local Hecke algebras at places $v$ at which $K$ is hyperspecial.  We fix a character $\chi: \mathcal{H} \rightarrow \Q$, and 
 let $$\Pi = \{\pi_1, \dots, \pi_r\}$$   be the associated set of cohomological automorphic representations which contribute to cohomology at level $K$,  defined more precisely as in \S  \ref{sec:numerical_invariants}. 
The set  $\Pi$ determines $\chi$
and  we suppress  mention of $\chi$ from our notation. 

Just as in our introductory discussion in
 \S \ref{sec:numerical_invariants}  
we make the following 
 \begin{quote}{\em Assumption:} 
 Every $\pi_i$
 is cuspidal and  tempered at $\infty$.  \end{quote}
and define 
\begin{equation} \label{Pipart} H^*(Y(K), \Q)_{\Pi} = \{ \alpha \in H^*(Y(K), \Q): T \alpha = \chi(T) \alpha  \mbox{ for all $T \in \mathcal{H}$}\} 
\end{equation}
and similarly $H^*(Y(K), \C)_{\Pi}$, etc. 

  Let $\Ad \Pi$  
 be an adjoint motive associated to $\Pi$, in the sense of Definition \ref{AdjointMotiveDefinition}.
We have attached to  $\GG$ a canonical $\C$-vector space $\aGs$ in \S \ref{VoganZuckerman}. 
Also $\aGs$  comes with a real structure, the ``twisted real structure'' of 
Definition \ref{twistedReal}.

We shall first explain (\S \ref{BRag})
why the Beilinson regulator on the motivic cohomology of $\Ad \Pi$, with $\Q(1)$ coefficients, 
takes values in (a space canonically identified with) $\aGs$, and indeed in the twisted real structure on this space. 
Then,  after a brief review of cohomological automorphic representations (\S \ref{CAFreview}) 
we will be able to define an action of $\aGs$ on the cohomology  $H^*(Y(K), \C)_{\Pi}$  and 
then we formulate precisely our conjecture in \S \ref{mainconjsec}.
  Finally, Proposition \ref{basic properties} verifies various basic properties about the action of $\aGs$ (e.g., it is self-adjoint relative to Poincar{\'e} duality and it preserves real structures). 
 
 \subsection{The Beilinson regulator.} \label{BRag}   
 The motive $\Ad \Pi$ has weight zero.  The Beilinson regulator gives 
\begin{equation} \label{BRegaG} H^1_{\m}(\Ad \Pi, \Q(1)) \stackrel{\eqref{WRfixed}}{\rightarrow} H_{\B}((\Ad \Pi)_\C, \R)^{W_{\R}} \hookrightarrow H_{\B}((\Ad \Pi)_\C, \C)^{W_{\R}}\stackrel{ \eqref{drmap}}{\longrightarrow}  \widehat{\mathfrak{g}}^{W_{\R}} \stackrel{\eqref{aGtemp}}{\longrightarrow} \aGs, \end{equation}
where the last two arrows are isomorphisms of complex vector spaces.
  Proceeding similarly for the dual motive, we get a   map 
 \begin{equation} \label{BRegaGdual} H^1_{\m}(\Ad^* \Pi, \Q(1)) \rightarrow \mathfrak{a}_G, \end{equation}  and, just
 as in the introduction, we call $L$ the image of \eqref{BRegaGdual}; thus if
 we accept Beilinson's conjecture, 
  $L$ is a $\Q$-structure on $\mathfrak{a}_G$.

We want to understand how \eqref{BRegaG} interacts with the real structure on $\aGs$. Recall 
that we have defined a second ``twisted'' real structure on $\aGs$, in Definition \ref{twistedReal}.

 \begin{lemma} \label{Beilinson regulator real structure}
 The map 
  $H_{\B}(\Ad \Pi)_{\C},  \R)^{W_{\R}} \rightarrow \aGs$  has image equal to the twisted real structure on $\aGs$.
 In particular, the Beilinson regulator  carries $H^1_{\m}(\Ad \Pi, \Q(1))$ into the twisted real structure on $\aGs$.  
 \end{lemma}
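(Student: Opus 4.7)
First I would reduce to the image statement: since the Beilinson regulator, together with the identification $H^1_\D((\Ad\Pi)_\R,\R(1)) \simeq H_\B((\Ad\Pi)_\C,\R)^{W_\R}$ of \eqref{WRfixed} (applied to the weight-zero motive $\Ad\Pi$), factors through $H_\B((\Ad\Pi)_\C,\R)^{W_\R}$, the ``in particular'' conclusion is immediate from the first assertion. So I focus on identifying the image of $H_\B((\Ad\Pi)_\C,\R)^{W_\R}$ in $\aGs$ under the chain of isomorphisms in \eqref{BRegaG}.

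The first analytic step is the observation that $F_\infty$ acts trivially on the $W_\R$-fixed subspace $V^{W_\R} \subset V_\C := H_\B((\Ad\Pi)_\C,\C)$. Indeed, on the Hodge $(p,-p)$-piece of a weight-zero motive, $j \in W_\R$ acts as $i^{-p+p}F_\infty = F_\infty$, and $V^{W_\R}$ lies inside $V^{0,0}$ (the $\C^*$-fixed part), so being $j$-fixed forces $F_\infty = 1$. Combined with the relation $F_\infty c_\B = c_{\dR}$ of \eqref{Fcc} (for the untwisted weight-zero case), this gives $c_\B = c_{\dR}$ on $V^{W_\R}$. Thus the Betti and de Rham real structures coincide on this subspace, and it suffices to analyze either under the identification $V^{W_\R} \stackrel{\iota_\infty}{\simeq} \widehat{\mathfrak{g}}^{W_\R} \simeq \aGs$.

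Next I would compute this real structure through the dual-group description of $\aGs$. By Condition 2 of Definition \ref{AdjointMotiveDefinition}, the $W_\R$-action on $H_{\dR}(\Ad\Pi)\otimes\C$ (transported via $\iota_\infty$ and the Betti--de Rham comparison) is identified with $\Ad\rho_\infty$. Conjugating $\rho_\infty$ into the standard form $\rho_0$ of \S\ref{TemperedCohomologicalParameter}, the centralizer of $\Ad\rho_0|_{\C^*}$ is $\Lie(\widehat T)$, and $\rho_0(j)$ represents $w_0 \in \LW$; under the identification $\widehat{\mathfrak{g}}^{W_\R} = \Lie(\widehat T)^{w_0}$ of \S\ref{FVS}, the Galois complex conjugation acting on the Betti $\Q$-structure pulls back (via the Artin comparison $F_\infty = $ Galois complex conjugation on étale cohomology, combined with local-global compatibility) to the antilinear involution on $\widehat{\mathfrak{g}}_\C$ obtained by composing the action of $\rho_\infty(j)$ with complex conjugation for the split real form of $\widehat G$. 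Restricted to $\Lie(\widehat T)^{w_0}$, the inner component of $\rho_\infty(j)$ factors through the long Weyl element $w_{\widehat G}$ of $\widehat G$, and the resulting antilinear involution becomes $X \mapsto w_{\widehat G}(\bar X)$.

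Finally I would invoke Lemma \ref{realstruc}: under $\aGs \simeq \Lie(\widehat T)^{w_0}$, the split complex conjugation on $\Lie(\widehat T)$ corresponds to the natural real structure $\aGR$, and $w_{\widehat G}|_{\Lie(\widehat T)^{w_0}}$ corresponds to the long Weyl element $w$ acting on $\aGs$. Consequently the Betti involution on $\aGs$ becomes $X \mapsto w(\bar X)$, whose fixed set is by Definition \ref{twistedReal} the twisted real structure, as claimed.

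The main obstacle is the third paragraph, which requires carefully reconciling the various descriptions of complex conjugation: the pinned Galois action on $\widehat G$ (present whenever $\GG$ fails to be split at $\infty$), the inner-twist cocycle describing $\widehat{\mathfrak{g}}_{\Q,*}$ as a $\Q$-form, and the $L$-group element $\rho_\infty(j)$. One must verify that the contribution of the pinned action cancels appropriately against the ``outer'' part of $\rho_\infty(j)$, leaving only $w_{\widehat G}$ as the effective action on $\Lie(\widehat T)^{w_0}$; this is what produces the extra twist by $w$ (rather than the naive identification with $\aGR$) in the final answer.
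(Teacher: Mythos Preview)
Your overall strategy is right: one must show that the antilinear involution $c_\B$, transported to $\aGs$ via \eqref{drmap} and \eqref{aGtemp}, agrees with $X \mapsto \Ad(w_{\widehat G})\overline{X}$, whose fixed set is the twisted real structure. Your paragraph~2 observation that $F_\infty=1$ on $V^{W_\R}$ (hence $c_\B=c_{\dR}$ there) is correct but never actually used downstream, and the paper does not invoke it.

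The gap is in paragraph~3. You assert that $c_\B$ corresponds to ``$\rho_\infty(j)$ composed with complex conjugation for the split real form of $\widehat G$,'' justified by Artin comparison and local--global compatibility. This does not hold up. The Artin comparison identifies $F_\infty$ with Galois complex conjugation on \'etale cohomology, but $F_\infty$ is $\C$-\emph{linear}, and you have just shown it is the identity on $V^{W_\R}$; it carries no information about the antilinear $c_\B$ there. More fundamentally, the inner-twist cocycle defining $\widehat{\mathfrak g}_{\Q,*}$ is a global datum given as part of Definition~\ref{AdjointMotiveDefinition}; there is no mechanism relating it to the archimedean parameter $\rho_\infty(j)$, and no form of local--global compatibility produces one.

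The paper's argument sidesteps the need to identify the inner twist. It uses only that, by Definition~\ref{AdjointMotiveDefinition}, $c_\B$ is \emph{some} inner twist of the standard antiholomorphic involution on $\widehat{\mathfrak g}$. Setting $\iota(X)=\Ad(w_{\widehat G})\overline X$, one checks that both $c_\B$ and $\iota$ commute with $\rho_0(S^1)$ (the former because $S^1$ preserves real Betti cohomology, the latter directly). Hence $\iota\, c_\B$ is an inner automorphism of $\widehat{\mathfrak g}$ commuting with $\rho_0(S^1)$, so it is conjugation by an element of $\widehat T$. This forces $c_\B=\iota$ on $\widehat{\mathfrak t}$, and in particular on $\widehat{\mathfrak g}^{W_\R}\subset\widehat{\mathfrak t}$. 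The cancellation you flag as the ``main obstacle'' is thus not a case check but an automatic consequence of this centralizer argument.
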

 
\proof 
We may as well suppose that  \eqref{drmap}
 identifies the $W_{\R}$-action with the action  $\rho_0: W_{\R} \rightarrow \Aut(\widehat{\mathfrak{g}})$
arising from $\rho_0$ normalized as in \eqref{rho0def}.  
Also, \eqref{drmap}  allows us to think of the ``Betti'' conjugation    $c_{\B}$ on $H_{\dR}(\Ad \Pi) \otimes \C = H_{\B}((\Ad \Pi)_{\C}) \otimes \C$ as acting 
 on $\widehat{\mathfrak{g}}$.  
 From \eqref{iotav} the fixed points of $c_{\B}$ are given
by   $\widehat{\mathfrak{g}}_{\Q, *} \otimes \R$ and so $c_{\B}$ is an inner twist of the standard antiholomorphic involution. (By ``standard antiholomorphic involution'' we mean the involution
of $\widehat{\mathfrak{g}}$ with respect to the Chevalley real form.)
 Since $\rho_0(S^1)$ preserves real Betti cohomology, 
$c_{\B}$ commutes with $\rho_0(S^1)$.

 Define an antilinear self-map $\iota$ on $\widehat{\mathfrak{g}}$ via
 $$ \iota(X) = \Ad(w_{\widehat{G}}) \overline{X},$$
 where $\overline{X}$ refers to the standard antilinear conjugation, and $w_{\widehat{G}}$
is an element of $\widehat{G}(\R)$ that normalizes $\widehat{T}$ and takes $\widehat{B}$ to $\widehat{B}^{\mathrm{op}}$. 
  Then 
$\iota$  also commutes with the action of $\rho_0(S^1)$. 

The composition $\iota c_{\B}$ is now an inner automorphism of $\widehat{\mathfrak{g}}$
which commutes with $\rho_0(S^1)$ and thus is given by conjugation by an element of $\widehat{T}$. Thus $\iota$ and $c_{\B}$
act in the same way on the Lie algebra $\widehat{\mathfrak{t}}$ of $\widehat{T}$. 

The image of $H_{\B}((\Ad \Pi)_{\C}, \R)^{W_{\R}} \stackrel{\eqref{drmap}}{\longrightarrow}  \widehat{\mathfrak{g}}^{W_{\R}} $
is just the fixed points of $c_{\B}$.    However, we have just seen that $c_{\B}$ and $\iota$ act the same way on $\widehat{\mathfrak{g}}^{W_{\R}} \subset \widehat{\mathfrak{t}}$. 
The fixed points of $\iota$ on $\widehat{\mathfrak{g}}^{W_{\R}} \simeq \aGs$  give (by Lemma \ref{realstruc} and Definition \ref{twistedReal})
the twisted real structure.  \qed

 \subsection{Trace forms}    \label{traceforms} 
  Endow $\widehat{\mathfrak{g}}_{\Q}$ with any nondegenerate $\LG_{\Q}$-invariant $\Q$-valued quadratic form  $\widehat{B}$;   it gives by scalar extension a complex valued  quadratic form on $\widehat{\mathfrak{g}}$.
  The pullback of this form under
 $$H_{\B}((\Ad \Pi)_{\C}, \Q) \simeq \widehat{\mathfrak{g}}_*  $$
 defines (part (3) of Definition  \ref{AdjointMotiveDefinition}) a weak polarization $Q$ on $\Ad \Pi$: since $\widehat{\mathfrak{g}}_*$
 is an inner form, the restriction of $\widehat{B}$ is actually $\Q$-valued on it.

We may form the corresponding Hermitian form $Q(x, c_{\B} y)$ on   
 $H_{\B}((\Ad \Pi)_{\C}, \C)$; when restricted to 
 $H_{\B}((\Ad \Pi)_{\C}, \C)^{W_{\R}} \simeq \aGs$, 
this is given by
\begin{equation} \label{Form0} (X, Y) \in \aGs \times \aGs \mapsto   \widehat{B}(X, \overline{\Ad(w_{\widehat{G}}) Y}),\end{equation} 
where the conjugation is that with reference to $\mathfrak{g}_{\R}$,
and $w_{\widehat{G}}$ is as in Lemma \ref{Beilinson regulator real structure}.

This form is real-valued  when restricted to the twisted real structure, since (writing just $wX$ for $\Ad(w) X$, etc.): 
\begin{equation} \label{realvalued}  \overline{ \widehat{B}(X,  \overline{w_{\widehat{G}}Y}) } = \widehat{B}(\overline{X},  w _{\widehat{G}} Y)  =  \widehat{B}(\overline{w_{\widehat{G}}^{-1} X}, Y) =  \widehat{B}(\overline{w_{\widehat{G}} X},  Y).\end{equation}
and $\overline{w_{\widehat{G}}X} = X, \overline{w_{\widehat{G}} Y} =Y$ on the twisted real structure. 

We warn the reader that, although real-valued,  the form \eqref{Form0} {\em need not} be positive definite on the twisted real structure. This corresponds to the 
fact that the form $\widehat{B}$ gives a weak polarization on $\Ad \Pi$ but not necessarily a polarization.

\subsection{Review of cohomological automorphic forms} \label{CAFreview}

   For any cohomological automorphic representation $\pi$ for $\mathbf{G}$,  denote by $\Omega$ the natural map   
   \begin{equation} \label{omegadef} \Omega: \Hom_{\Kinfty^{\circ}}(\wedge^p \mathfrak{g}/\mathfrak{k}, \pi^K) \rightarrow   \underbrace{\mbox{ $p$-forms on $Y(K)$}}_{\Omega^p(Y(K))}\end{equation}

 Indeed, $\Omega^p(Y(K))$ can be considered as functions on
$\G(F) \backslash \left(  \G(\adele) \times \wedge^p \mathfrak{g}/\mathfrak{k} \right) / \Kinfty^{\circ}  K$ that
are linear on each $\wedge^p \mathfrak{g}/\mathfrak{k}$-fiber. 
Explicitly, for $X \in \mathfrak{g}/\mathfrak{k}$ and $g \in \G(\adele)$, 
we can produce  a tangent vector $[g,X]$ to $\G(F) g \Kinfty^\circ K \in Y(K)$ --
namely, the derivative of the curve $\G(F) g e^{tX} \Kinfty^\circ K$ at $t=0$.
  This construction extends
 to $X_1 \wedge \dots \wedge X_p \in \wedge^p \mathfrak{g}/\mathfrak{k}$ by setting
 \[
 [g,X_1 \wedge \cdots \wedge X_p]=[g,X_1] \wedge \cdots \wedge [g,X_p],
 \]
 which belongs to the $p$th exterior power of the tangent space at the point $gK$. 
The map $\Omega$ is normalized by the requirement
that, for $f \in  \Hom_{\Kinfty^\circ}(\wedge^p \mathfrak{g}/\mathfrak{k}, \pi^K) $ and $X_i \in   \mathfrak{g}/\mathfrak{k}$, we have
\begin{equation} \label{OmegaVeryExplicit}
\Omega(f) ( [g, X_1 \wedge \cdots \wedge X_p])  = f(X_1 \wedge \cdots \wedge X_p) (g).
\end{equation}

The map $\Omega$ defines a map on cohomology 
\begin{equation} \label{Omegabal}   \Omega: H^p(\gKZ;  \pi^K) \rightarrow H^*(Y(K), \C) \end{equation}
This map is injective if $Y(K)$ is compact, or if $\pi$ is cuspidal, by \cite[5.5]{Borel2}; in particular, 
if we have fixed a Hermitian metric on $\mathfrak{g}/\mathfrak{k}$ we also get a Hermitian metric on the image,
by taking $L^2$-norms of differential forms. 

We will work only with unitary cohomological $\pi$; in that case, 
all the differentials in the complex computing $H^*(\gKZ;  \pi)$  vanish (Kuga) and we have
$H^*(\gKZ;   \pi) =  \Hom_{\Kinfty^0} (\wedge^* \mathfrak{g}/ \mathfrak{k}, \pi)$. 
We will freely make use of this identification.  It allows us to put a metric
on $H^*(\gKZ; \pi)$ for which \eqref{Omegabal} is isometric. 

Moreover, this story is compatible, in the natural way, with complex conjugation: if $T \in H^p(\gKZ;  \pi^K)$, we have $\Omega(\bar{T}) = \overline{\Omega(T)}$, 
where $\bar{T} \in H^*(\gKZ;  \overline{\pi})$ is defined  so that $\bar{T}(\bar{v}) = \overline{T(v)}$ and the embedding
$\overline{\pi} \hookrightarrow \left(\mbox{ functions on $[\G]$}\right)$ is obtained by conjugating the corresponding embedding for $\pi$.
 If $\pi$ and $\overline{\pi}$ 
are the same (i.e., they coincide as subrepresentations of functions on $[\G]$, and so we have an identification $\pi \simeq \overline{\pi}$)
 we shall say that $T$ is {\em real} if $T = \bar{T}$; 
in that case $\Omega(T)$  is a real differential form and defines a class in $H^*(Y(K), \R)$.

  \subsection{Formulation of main conjecture} \label{mainconjsec}

\medskip

In the setting at hand, the  map 
  $\Omega$ induces  (see \cite{Borel2}) an isomorphism
\begin{equation} \label{OmegaIso} \bigoplus_{i=1}^r  H^*(\gKZ;  \pi_i^K)  \stackrel{\Omega}{\longrightarrow} H^*(Y(K), \C)_{\Pi}.\end{equation}
 We have previously defined (\S \ref{aGcohomology}) an action of $\wedge^* \aGs$ on each $H^*(\gKZ; \pi_{i}^K)$, and we may transfer
 this action via $\Omega$ to get an action of  $\wedge^* \aGs$ on $H^*(Y(K), \C)_{\Pi}$. 
Then:

 \begin{quote}
{\bf Main conjecture: } {\em (motivic classes preserve rational automorphic cohomology).}  Assume part (a) of Beilinson's conjecture (Conjecture \ref{conj:Beilinson}), the assumptions on $\Pi$ formulated above, and the existence
 of an adjoint motive attached to $\Pi$, in the sense of Definition 
 \ref{AdjointMotiveDefinition}; observe that then the image of $H^1_{\m}(\Q, \Ad^* \Pi(1))$
 under \eqref{BRegaGdual}  gives a $\Q$-structure on $\mathfrak{a}_G$. 
 {\em Then the induced $\Q$-structure on $\wedge^* \aGs$ preserves $$H^*(Y(K), \Q)_{\Pi}  \subset H^*(Y(K), \C)_{\Pi}$$ for the action just defined. }
 \end{quote}

 \subsection{Properties of the $\aGs$ action} \label{section:basic properties}

\begin{prop} \label{basic properties}
The action of $\wedge^* \aGs$  on $H^*(Y(K), \C)_{\Pi}$ just defined has the following properties:
\begin{itemize}
\item[(i)]  Fix a bilinear form $B_{\R}$ on $G_{\R}$, as in \S \ref{motmetricss}; it gives rise to a hermitian metric on $\aGs$
and a Riemannian metric on $Y(K)$ by that discussion.   Then if $T \in H^q(Y(K), \C)_{\Pi}$ is in minimal cohomological degree
we have $\|X T \| = \|X \| \|T\|$ for $X \in \wedge^* \aGs$; the  hermitian metric on $H^*(Y(K), \C)_{\Pi}$
is that obtained  by its identification with harmonic forms. 
\item[(ii)]   The action of $\wedge^* \aGs$ on $H^*(Y(K), \C)_{\Pi}$ satisfies the same adjointness property
as that formulated in Lemma \ref{adjointness}, with  respect to the Poincar{\'e} duality pairing. 
\item[(iii)]  Suppose that the character  $\chi$  of the Hecke algebra is real-valued. 
Then the  twisted real structure on $\aGs$ preserves  real
cohomology $H^*(Y(K), \R)_{\Pi}$.
\end{itemize}
 \end{prop}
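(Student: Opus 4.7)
The plan is that all three parts reduce, via the isomorphism $\Omega$ of \eqref{OmegaIso}, to the corresponding local statements about $(\mathfrak{g},\Kinfty^0)$-cohomology of tempered representations that were proved in \S \ref{aGcohomology}.

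For (i), I would first check that $\Omega$ of \eqref{Omegabal} is an isometry when $H^*(\gKZ;\pi_i^K)$ is given the metric obtained from the fixed inner product on $\pi_i^K$ together with the metric on $\wedge^*\mathfrak{g}/\mathfrak{k}$ induced by $B_{\R}$, and $H^*(Y(K),\C)_{\Pi}$ is given the harmonic $L^2$ metric; this is essentially the standard matrix coefficient computation implicit in \eqref{OmegaVeryExplicit}, using that the Peterson inner product restricts to the fixed unitary inner product on each $\pi_i$. Once this is in place, the assertion follows term by term from Lemma \ref{isometry property} applied to each $\pi_i$, since the action of $\wedge^*\aGs$ on the right of \eqref{OmegaIso} is defined by transport of structure from the left.

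For (ii), the point is that the Poincaré duality pairing $H^j(Y(K),\C)_{\Pi} \times H^{j^*}(Y(K),\C)_{\widetilde{\Pi}} \to \C$ is, under $\Omega$, identified (up to a fixed rational scalar depending on the choice of orientation) with the direct sum of the pairings of \eqref{Poincare} on the summands $H^*(\gKZ;\pi_i^K) \otimes H^*(\gKZ;\widetilde{\pi}_i^K)$ --- this is just the standard description of cup product on $(\gKZ)$-cohomology in terms of the wedge product on $\wedge^*\mathfrak{p}^*$ together with the duality pairing on the representation. Lemma \ref{adjointness} then gives exactly the desired adjointness, with the long Weyl element $w$ entering in precisely the same way.

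For (iii), I would first observe that if $\chi$ is real-valued then complex conjugation on functions on $[\GG]$ permutes the $\pi_i$ amongst themselves, so induces an antilinear involution on $H^*(Y(K),\C)_{\Pi}$ whose fixed points are $H^*(Y(K),\R)_{\Pi}$. Under $\Omega$ this involution is compatible, summand by summand, with the conjugation maps $H^*(\gKZ;\pi_i^K) \to H^*(\gKZ;\overline{\pi_i}^K)$ considered in Lemma \ref{conjugation}, by the compatibility $\Omega(\bar T) = \overline{\Omega(T)}$ recorded in \S \ref{CAFreview}. Lemma \ref{conjugation} then says exactly that the $\wedge^*\aGs$-action intertwines this conjugation provided one uses the twisted real structure on $\aGs$; so the twisted-real vectors in $\aGs$ preserve the fixed set $H^*(Y(K),\R)_{\Pi}$.

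The only step that requires genuine care, as opposed to bookkeeping, is the compatibility in (i) between the $L^2$ metric on $Y(K)$ and the abstract unitary metric on $\pi_i^K$; all other components are direct transcriptions of the local lemmas of \S \ref{aGcohomology} via $\Omega$.
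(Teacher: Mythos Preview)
Your proposal is correct and follows essentially the same route as the paper: all three parts are reduced via the isometry $\Omega$ of \eqref{OmegaIso} to the local $(\mathfrak{g},\Kinfty^0)$-cohomology statements, namely Lemma~\ref{isometry property} for (i), Lemma~\ref{adjointness} for (ii), and Lemma~\ref{conjugation} together with the compatibility $\Omega(\bar T)=\overline{\Omega(T)}$ for (iii). The only cosmetic difference is that for (ii) the paper passes through the identification $\widetilde{\Pi}\simeq\overline{\Pi}$ (via the integration pairing on $[\GG]$) before invoking Lemma~\ref{adjointness}, whereas you phrase it directly in terms of $\widetilde{\pi}_i$; the content is the same.
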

 
 \proof 
 
The map \eqref{OmegaIso} is isometric, so property (i) is now immediate from Lemma \ref{isometry property}. 

 It will be convenient, just for the remainder of the proof, to  abuse notation and write $\Pi$ for the direct sum $\bigoplus_{i=1}^{r} \pi_i$.

 For property (ii): Regard $\overline{\Pi}$ as embedded in functions on $\GG(\Q) \backslash \GG(\adele)$, by conjugating the elements of $\Pi$.   We  note, first of all, that for $T \in  H^*(\gKZ;  \Pi^K)$ and $T' \in H^*(\gKZ, \overline{\Pi}^K)$
with  $\deg(T) + \deg(T') = \dim(Y(K))$ the pairing
$ \int_{Y(K)}  \Omega(T) \wedge \Omega(T')$  is proportional to the natural pairing
 $ H^*(\gKZ;  \Pi^K) \otimes  H^*(\gKZ, \overline{\Pi}^K) \rightarrow (\det \mathfrak{p})^*$, 
 where we integrate $\Pi$ against $\overline{\Pi}$. (The coefficient of proportionality has to do with choices of measure, and will not matter for us.)
This integration pairing identifies  $\widetilde{\Pi}$ with $\overline{\Pi}$,  thus giving $\widetilde{\Pi}$ an embedding into the space of functions on $[\G]$; 
and so the pairing 
$$ \int_{Y(K)}  \Omega(T) \wedge \Omega(T'), \ T  \in H^*(\gKZ;  \Pi^K), T' \in H^*(\gK, \widetilde{\Pi}^K)$$ 
is proportional to the natural pairing on $H^*(\gKZ;  \Pi^K) \times H^*(\gKZ;  \widetilde{\Pi}^K)$.  Then the conclusion follows from Lemma \ref{adjointness}.

   For (iii)
note that, by the discussion at the end of \S \ref{CAFreview},  the  following diagram commutes
    \begin{equation}
 \xymatrix{
 H^*(\gK, \Pi^K)   \ar[r] \ar[d]^{\mathrm{conjugation}} &  H^*(Y(K) , \C)_{\Pi}  \ar[d]^{\mathrm{conjugation}} \\
H^*(\gK, \overline{\Pi^K})    \ar[r]    & H^*(Y(K), \C)_{\overline{\Pi}}
 }
\end{equation}
  Our claim now follows from  Lemma \ref{conjugation}.  \qed
  
  To conclude, we discuss adjointness a little more. The Langlands parameter of $\widetilde{\Pi}$ is obtained from $\Pi$  
  by composition with the Chevalley involution, which we shall denote by $\mathrm{C}_0$:
  this is a pinned involution  of $\widehat{G}$ that acts, on $\widehat{T}$, as the composition of inversion
  and the long Weyl group element.  
The general conjectures (see Appendix \ref{Pimotive Appendix}) predict that there exists an  identification of motives $\mathfrak{d}: \Ad \Pi \simeq \Ad \widetilde{\Pi}$ 
which fits into a commutative diagram         \begin{equation} \label{FOXP1}
 \xymatrix{
H_{\B}(\Ad \Pi,\C)  \ar[r]^{\qquad \eqref{iotav}}   \ar[d] &     \widehat{\mathfrak{g}} \ar[d]^{\mathrm{C}} \\
H_{\B}(\Ad \widetilde{\Pi},\C) \ar[r]^{\qquad \eqref{iotav}}     &   \widehat{\mathfrak{g}}.
}\end{equation}
where $\mathrm{C}$ is the composite of $\mathrm{C}_0$ with an inner automorphism. 

\begin{lemma} \label{adjointness2} 
Assume part (a) of Beilinson's  Conjecture \ref{conj:Beilinson}, 
 and moreover that there is an identification $\mathfrak{d}: \Ad \Pi \simeq \Ad \widetilde{\Pi}$ of motives 
which fits into the diagram \eqref{FOXP1}. 

The actions of $H^1_{\m}(\Ad^* \Pi, \Q(1))^{\vee}$ on 
$H^*(Y(K),\C)_{\Pi}$, induced by  \eqref{BRegaGdual},
and the similar action of 
 $H^1_{\m}(\Ad^* \widetilde{\Pi}, \Q(1))^{\vee}$
on $H^*(Y(K),\C)_{\widetilde{\Pi}}$, are adjoint to one another, up to sign, with respect to the Poincar{\'e} duality pairing
and the identification   of motivic cohomologies induced by $\mathfrak{d}$:

$$ \langle f_1 \cdot X, f_2 \rangle = - \langle f_1,  \mathfrak{d}(X) f_2 \rangle, \ \ X \in H^1_{\m}(\Ad^* \Pi, \Q(1))^{\vee}. $$

\end{lemma}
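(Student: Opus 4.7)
The strategy is to deduce the lemma from Proposition \ref{basic properties}(ii) after computing explicitly the self-map of $\aGs$ induced by $\mathfrak{d}$.

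Recall that an element $X \in H^1_{\m}(\Ad^* \Pi, \Q(1))^{\vee}$ acts on $H^*(Y(K),\C)_{\Pi}$ by first transporting it, via the dual of \eqref{BRegaGdual} (which under Beilinson's conjecture \ref{conj:Beilinson}(a) is surjective), to a preimage $\tilde X \in \aGs$, and then applying the action of $\aGs$ constructed in \S \ref{aGcohomology} and transported to cohomology via \eqref{OmegaIso}. The same description applies for $\widetilde\Pi$. Since $\mathfrak{d}$ is an identification of motives $\Ad \Pi \simeq \Ad \widetilde\Pi$, it induces a map on motivic cohomology and, by duality and the Beilinson regulators, a self-map $\mathfrak{d}_* : \aGs \to \aGs$. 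The task is to identify $\mathfrak{d}_*$ and check that it intertwines the two actions in the expected way.

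By the commutative diagram \eqref{FOXP1}, $\mathfrak{d}_*$ agrees with the restriction of $\mathrm{C}$ to $\widehat{\mathfrak{g}}^{W_{\R}} \simeq \aGs$. Any inner automorphism that preserves this centralizer acts on it through the Weyl group of the connected centralizer of the tempered cohomological parameter, which by the construction of \S \ref{TemperedCohomologicalParameter} is just the torus $\widehat T^{w_0,\circ}$; its Weyl group is trivial. So only the pinned Chevalley component $\mathrm{C}_0$ contributes to $\mathfrak{d}_*$. The Chevalley involution acts on $\Lie(\widehat T)$ as $-w_{\widehat G}$ (inversion composed with the long Weyl element), and by Lemma \ref{realstruc} this restricts on $\Lie(\widehat T)^{w_0} = \aGs$ to $-w$, where $w$ is the long Weyl element of the discussion after Lemma \ref{twoWeyl}. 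Thus $\mathfrak{d}_*(\tilde X) = -w\tilde X$ for $\tilde X \in \aGs$; note that since $H^1_{\m}(\Ad^* \Pi, \Q(1))^{\vee}$ corresponds to the degree-one piece of $\aGs$, this is the only relevant degree. Combining with Proposition \ref{basic properties}(ii) yields
\[
\langle f_1 \cdot \tilde X, f_2 \rangle \;=\; \langle f_1, (w\tilde X) \cdot f_2 \rangle \;=\; -\langle f_1, \mathfrak{d}_*(\tilde X) \cdot f_2 \rangle,
\]
which is the identity claimed in the lemma.

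The main obstacle is the verification that the inner-automorphism component of $\mathrm{C}$ in \eqref{FOXP1} contributes trivially to $\mathfrak{d}_*$. This rests on the canonical nature of the identifications \eqref{iotav} and \eqref{aGtemp}: any such inner ambiguity must respect the Langlands parameter, and hence fixes $\aGs$ pointwise (modulo the trivial Weyl group of the toral centralizer). A secondary subtlety is the left-vs-right convention for the $\wedge^* \aGs$-action on $(\gKZ)$-cohomology, which is already built into the sign convention of Lemma \ref{adjointness} and thus does not produce any further correction in degree one.
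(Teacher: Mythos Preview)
Your proposal is correct and follows essentially the same route as the paper: reduce to Proposition \ref{basic properties}(ii) after showing that the map on $\aGs$ induced by $\mathfrak{d}$ is $-w$. The paper argues the latter by first normalizing so that both $W_\R$-actions (for $\Pi$ and $\widetilde\Pi$) are the fixed $\rho_0$ of \eqref{rho0def}; then $\mathrm{C}$, being an intertwiner, must equal $\mathrm{C}_0$ conjugated by an element of $\widehat T$, and conjugation by $\widehat T$ is trivial on $\Lie(\widehat T)\supset\aGs$. Your version reaches the same conclusion via a ``trivial Weyl group'' remark; this is morally the same observation (the connected centralizer of $\rho_0$ is a torus, so its inner action on its own Lie algebra is trivial), though the paper's phrasing---directly identifying the inner factor as lying in $\widehat T$---is a bit cleaner and avoids having to say precisely which Weyl group is meant.
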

\proof  Conjugating the horizontal arrows  in \eqref{FOXP1}  we may suppose that the induced actions of $W_{\R}$ on $\widehat{\mathfrak{g}}$, top and bottom, 
both arise from the maps $\rho_0$ normalized as in \eqref{rho0def}; since $\mathrm{C}$ intertwines
these, it must be a conjugate of $\mathrm{C}_0$ by $\widehat{T}$. 

Thus we get:  
         \begin{equation}
 \xymatrix{
H^1_{\m}(\Ad^* \Pi, \Q(1)) \ar[r]    \ar[d]^{\mathfrak{d}} &     \widetilde{\mathfrak{g}}^{W_{\R}} \ar[d]^{\mathrm{C}} \ar[r]^{\eqref{frofro2}} &  \aG \ar[d]^{-w}  \\
H^1_{\m}(\Ad^* \widetilde{\Pi}, \Q(1))) \ar[r]     &   \widetilde{\mathfrak{g}}^{W_{\R}} \ar[r]^{\eqref{frofro2}}  &  \aG.
}\end{equation}
where  $w$ is the long Weyl element on $\aGs$, and we used Lemma \ref{realstruc} (or the same statements transposed to the dual Lie algebra).  Our conclusion now follows from the prior adjointness results (part (ii) of the Proposition). 
 \qed

This discussion has also shown:
\begin{lemma}
If  $\Pi \simeq \tilde{\Pi}$, 
then the image of $H^1_{\m}(\Ad^* \Pi, \Q(1))$ inside $\aG$
is stable by $w$. 
\end{lemma}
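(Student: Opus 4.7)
The plan is to apply Lemma \ref{adjointness2} in the special case $\Pi \simeq \widetilde \Pi$: in this case the isomorphism of motives $\mathfrak{d}: \Ad \Pi \simeq \Ad \widetilde \Pi$ of that lemma, composed with the tautological identification $\Ad \widetilde \Pi \simeq \Ad \Pi$ coming from $\widetilde \Pi \simeq \Pi$, becomes an \emph{automorphism} of the motive $\Ad \Pi$. Functoriality of motivic cohomology and of the Beilinson regulator forces this automorphism to preserve the image $L \subset \aG$ of $H^1_{\mm}(\Ad^* \Pi, \Q(1))$; but by Lemma \ref{adjointness2} this automorphism acts on $\aG$ as $-w$, so $L = -wL$, and hence $wL = L$ (using that $L$ is a $\Q$-subspace, stable under negation).

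Concretely, let $\phi : \Ad \widetilde \Pi \simeq \Ad \Pi$ be the identification induced by transport of structure in Definition \ref{AdjointMotiveDefinition} from $\widetilde \Pi \simeq \Pi$, so that $\psi := \phi \circ \mathfrak{d}$ is an automorphism of $\Ad \Pi$. By functoriality $\psi$ preserves $L$, and its action on $\aG$ is the composition of the actions of $\mathfrak d$ and of $\phi$; by the commutative diagram in the proof of Lemma \ref{adjointness2}, $\mathfrak d$ corresponds to $-w$ on $\aG$. For $\phi$: via the identifications \eqref{iotav}, $\phi$ corresponds to an automorphism of $\widehat{\mathfrak g}$ intertwining the $W_{\R}$-actions coming from $\widetilde \Pi$ and $\Pi$, and hence---since $\Pi \simeq \widetilde \Pi$ allows one to choose both parameters equal to a common $\rho_0$ normalized as in \eqref{rho0def}---to an element of the centralizer of $\rho_0$ in $\widehat{G}$. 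The connected component of this centralizer is $\widehat{T}^{w_0}$ (as recalled in \S \ref{TemperedCohomologicalParameter}), whose Lie algebra is $\aGs$ itself; since $\widehat T^{w_0}$ is abelian it acts trivially on its own Lie algebra by the adjoint action, and dually trivially on $\aG$. Thus $\phi$ corresponds to the identity on $\aG$, and $\psi$ acts on $\aG$ as $-w$.

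The one subtlety I expect to be the main obstacle is the analysis of $\phi$ at the level of $\aG$: one must rule out that the component group of the centralizer of $\rho_0$ in $\widehat{G}$ contributes a non-trivial automorphism of $\aG$ (the connected component argument above handles only the identity component). The cleanest way to address this would be to observe that the subspace $L \subset \aG$ is an intrinsic invariant of the motive $\Ad \Pi$, independent of the auxiliary choices of identification data in Definition \ref{AdjointMotiveDefinition}, so that any ambiguity in $\phi$ by an element of the centralizer is immaterial. Once that is in place, the identity $L = -wL$ yields $wL = L$ immediately, completing the proof.
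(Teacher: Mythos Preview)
Your proposal is correct and follows the same idea as the paper, but is more elaborate than needed. The paper states the lemma as an immediate consequence (``This discussion has also shown'') of the commutative diagram in the proof of Lemma~\ref{adjointness2}. When $\Pi \simeq \widetilde{\Pi}$, one simply takes the \emph{same} adjoint motive and the same identification $\iota_v$ for both $\Pi$ and $\widetilde{\Pi}$; then the top and bottom horizontal composites in that diagram are literally the same regulator map $r\colon H^1_{\m}(\Ad^*\Pi,\Q(1)) \to \aG$, with common image $L$. Commutativity reads $-w\circ r = r\circ \mathfrak{d}$, and since $\mathfrak{d}$ is an isomorphism on motivic cohomology, taking images yields $-w(L)=L$, hence $w(L)=L$.

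By introducing the auxiliary identification $\phi$ you manufacture the component-group difficulty you then have to argue away. The paper's phrasing sidesteps this entirely: there is no $\phi$, because the two rows involve the same object with the same data, so nothing needs to be re-identified. Your proposed fix---that $L\subset \aG$ is intrinsic to the motive, independent of the auxiliary choices in Definition~\ref{AdjointMotiveDefinition}---would also work, but is unnecessary once one realizes the two rows can be taken identical from the outset.
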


\newcommand{\LieGZ}{\mathfrak{g}_{\Z}}
\newcommand{\LieGQ}{\mathfrak{g}_{\Q}}
 \newcommand{\cmpct}{\mathrm{cmpt}}
 \newcommand{\Uinfty}{\mathrm{U}_{\infty}}
 \newcommand{\cpct}{\mathrm{cmpt}}
 \newcommand{\che}{\mathrm{Chev}}
 \newcommand{\proj}{\mathrm{proj}}

\section{Period integrals}  \label{mainsec:periodintegrals} 

In this section, we briefly recall the relation between cohomological period integrals  
and the type of period integrals that are more standard in the theory of automorphic forms. 
More precisely, we will study the integrals of cohomology classes on  $Y(K)$
over the sub-locally symmetric space $Z(U)$   defined by a $\Q$-subgroup $\HH \subset \GG$.

The mathematical content of the section  simply amounts to being careful about factors of $\pi$, normalizations of metrics, volumes, and so forth.  Similar results have been derived by several other authors
 in related contexts; for example, see \cite[\S 3]{Schmidt}.

 The pairs $(\GG, \HH)$  that we study are  a subset of those arising from the Gan-Gross-Prasad conjecture; we specify them in
\S \ref{Ggp-precise}. 
 There is no reason not to consider other examples of periods, but these are convenient for several reasons:
  \begin{itemize}
 \item It is an easily accessible source of examples, but sufficiently broad to involve various classical groups; 
\item There are uniform conjectural statements (after Ichino--Ikeda);
 \item Although we invoke simply the uniform conjectural statements, there are in fact many partial results towards them known. \footnote{For example, in the $\PGL$ cases it seems that all the hypotheses of \S \ref{sec:periodintegrals} are known except (iv), the exact
 evaluation of archimedean integrals on the cohomological vector.}
   \end{itemize}

Recall our notation $A \sim B$
whenever $A/B \in \mathbf{Q}^*$.

 \subsection{}  \label{sec:GH} 
Let $\HH \subset \GG$ be a reductive $\Q$-subgroup, and let $\Pi$
be as in \S \ref{sec:numerical_invariants}: a (near-equivalence class of) cohomological automorphic representation(s) for $\GG$ at level $K$,  satisfying 
the assumptions formulated there.  In particular, we may define,
as in  \eqref{Pipart},  the $\Pi$-subspace $H^*(Y(K), \Q)_{\Pi} \subset H^*(Y(K), \Q)$ of rational cohomology. 

 We write $\Hinfty, \Ginfty$ for the $\R$-points, $\Kinfty$ for a maximal compact subgroup of $\Ginfty$ 
 and $\Uinfty$ for a maximal compact subgroup of $ \Hinfty$. 
We write (e.g.) $d_H$ for the dimension of $\Hinfty$ and $r_H$ for its rank
 (for us this means always the geometric rank, i.e.  the rank of the $\C$-algebraic group $\HH_{\C}$). 
  We introduce notation for the various Lie algebras: 
 $$\mathfrak{g} = \mathrm{Lie}(\mathbf{G}_{\C}), \  \mathfrak{k} = \mathrm{Lie}(\Kinfty)_{\C},  \  \mathfrak{p} = \mathfrak{g}/\mathfrak{k},  \ p_G = \dim(\mathfrak{p}), $$
 $$  \mathfrak{h} = \mathrm{Lie}(\mathbf{H}_{\C}), \  \mathfrak{u} = \mathrm{Lie}(\Uinfty)_{\C}, \  \mathfrak{p}_H = \mathfrak{h}/\mathfrak{u}, \  p_H = \dim(\mathfrak{p}_H).$$ 
These are complex vector spaces, but they are all endowed with natural real forms; as before we denote (e.g) by $\mathfrak{h}_{\R}$ the natural real form of $\mathfrak{h}$, and so forth. 
 
Let $U \subset \HH(\Afinite)$ be a compact open subgroup,  and define the analog of $Y(K)$ (see \eqref{YKdef}) but with $\GG$ replaced by $\HH$ and $K$ replaced by $U$:
 $$ Z(U) = \HH(F) \backslash \HH(\adele) / \Uinfty^\circ U.$$
 
 Fixing an $\Hinfty$-invariant orientation on $\Hinfty/\Uinfty^{\circ}$,
 we get an $\HH(\adele)$-invariant orientation of $\HH(\adele)/\Uinfty^{\circ}U$ and thus
 an orientation of $Z(U)$. (If $Z(U)$ is an orbifold, choose a deeper level structure $U' \subset U$
 such that $Z(U')$ is a manifold; then $Z(U')$  admits a $U/U'$-invariant orientation.)
This discussion gives a fundamental class 
 $$[Z(U)] \in H_{p_H}^{\bm}(Z(U), \Q)$$
 where we work with $\Q$ coefficients, rather than $\Z$ coefficients, to take into account the possibility of orbifold structure.  

  Let $g = (g_{\infty}, g_f) \in \G(\adele) = \G(\R) \times \G(\adele_f)$ be such that   
    $\Ad(g^{-1}) \Uinfty^\circ U \subset    \Kinfty^\circ K $.      Then the map induced by right multiplication by $g$, call it 
\begin{equation} \label{iota_defn}\iota: Z(U)  \stackrel{\times g}{\longrightarrow} Y(K),\end{equation}
is a proper map;  the image of $Z(U)$ is a  $p_H$-dimensional cycle on $Y(K)$
and defines a Borel--Moore homology class 
 $$ \iota_* [Z(U)] \in   H_{p_H}^{\bm}(Y(K), \Q).$$ 
Our goal will be to compute the pairing of this with
classes in $H^*(Y(K), \Q)_{\Pi}$, and interpret the result in terms of ``automorphic periods.'' 

 \begin{rem}  \label{Cusp non cusp} Now the class $\iota_* [Z(U)]$ can only be paired with compactly supported classes,   
but since the automorphic representations we deal with are all {\em cuspidal}, the associated cohomology classes  
lift, in a canonical way, to  compactly supported cohomology, by \cite[Theorem 5.2]{Borel2}; 
if $\omega$ is a cuspidal harmonic form, the integral of $\iota^* \omega$ over $Z(U)$
coincides with the pairing of this compactly supported class with $\iota_* [Z(U)]$.
In other words,  in the setting of \S \ref{sec:numerical_invariants}, 
the map $$H^*_c(Y(K), \C) \rightarrow H^*(Y(K), \C)$$
induces an isomorphism when localized at the ideal of the Hecke algebra corresponding to $\Pi$. 
   In what follows we will then pair $\iota_* [Z(U)]$ with cuspidal cohomology classes without further comment. \end{rem} 

 \subsection{} \label{Ggp-precise}

We will study the  following cases:  \begin{itemize}
 
 \item[1.]   Let $E = \Q(\sqrt{-D_E})$  be an imaginary quadratic field.    For $(V,q)$ a quadratic space over $E$, with $\dim(V) \geq 2$,  set $(V',q') = (V,q) \oplus (E, x^2)$,
and put
$$ \HH_E= \mathrm{SO}(V) \subset \G_E = \mathrm{SO}(V') \times \mathrm{SO}(V),$$ 
 with respect to the diagonal embedding.  Put $\HH=\Res_{E/\Q} \HH_E, \GG =\Res_{E/\Q}  \G_E$.
  
 \item[2.]   Let $E = \Q(\sqrt{-D_E})$  be an imaginary quadratic field.  
 For $V$ a finite-dimensional $E$-vector space, set $V' =  V \oplus E$
 and put 
 $$ \HH_E= \mathrm{GL}(V) \subset \G_E = \mathrm{PGL}(V') \times \mathrm{PGL}(V),$$ 
Define $\HH, \G$ by restriction of scalars, as before. 

\item[3.] Let $E =\Q$. 
For $V$ a finite-dimensional $\Q$-vector space, set $V' =  V \oplus \Q$
 and put 
 $$ \HH= \mathrm{GL}(V) \subset \G = \mathrm{PGL}(V') \times \mathrm{PGL}(V),$$ 
 (in this case, we set $\HH_E=\HH, \G_E=\G$). 
 \end{itemize}

 These cases correspond to cases of the Gross--Prasad conjecture where 
 the cycle $Z(U)$ has dimension $p_H$ equal to the  {\em minimal  tempered cohomological degree for $Y(K)$, i.e.} \begin{equation} \label{roobar}  p_H =  \frac{1}{2} \left( d_G - d_K - (r_G - r_K) \right) \iff p_G - 2 p_H= r_G-r_K.\end{equation} 
 For example, this dimensional condition rules out the pairs 
 $  U_{p,q} \times U_{p+1, q} \supset U_{p,q}$ except in the case $q=1$, and similarly rules out   $\SO_{p,q} \times \SO_{p+1, q} \supset \SO_{p,q}$
 unless $q=1$; that is why we did not discuss these cases. 
 
   The numerical data in the cases we will consider is presented in Table \ref{inner5}. We shall also need
   the following lemma, which assures us that the archimedean component of $g$  (as defined before \eqref{iota_defn}) is almost determined:
\begin{lemma} \label{BoatyMcBoatFace} In all examples of \S \ref{Ggp-precise}, 
the fixed point set of (the left action of) $\Uinfty$ on $\Ginfty/\Kinfty$ is a single orbit of the centralizer of $\Hinfty$ in $\Ginfty$; in particular, the condition 
  $\Ad(g_{\infty}^{-1}) \Uinfty \subset \Kinfty$ determines $g_{\infty}$ up to right translation by $\Kinfty$
  and left translation by this centralizer.

   \end{lemma}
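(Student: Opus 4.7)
The strategy is to reduce the lemma to a dimension check, carried out case by case. First I would choose Cartan involutions $\theta$ on $G_\infty$ and $\theta_H$ on $H_\infty$ compatibly -- that is, so that $\theta|_{H_\infty}=\theta_H$ and $U_\infty = H_\infty^{\theta_H} = H_\infty\cap K_\infty$ is the maximal compact of $H_\infty$. This is possible in each of the three families of \S\ref{Ggp-precise}, since $H_\infty$ is cut out by conditions on the same form(s) that define $G_\infty$. The centralizer $C := Z_{G_\infty}(H_\infty)$ is then reductive and $\theta$-stable, and $C\cap K_\infty$ is a maximal compact subgroup of $C$.

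Next, I would observe two structural facts. (a) The orbit $C \cdot [K_\infty] \subset G_\infty/K_\infty$ is the symmetric space of $C$, embedded as a closed totally geodesic submanifold whose tangent space at the basepoint is $\mathfrak{p}_\R \cap Z_{\mathfrak{g}_\R}(\mathfrak{h}_\R)$; moreover $C$ preserves the fixed locus
\[
F \ := \ \{\, gK_\infty \in G_\infty/K_\infty : g^{-1}U_\infty g\subset K_\infty\,\},
\]
since $C$ commutes with $U_\infty\subset H_\infty$. (b) The fixed locus $F$ is itself a closed totally geodesic submanifold with tangent space $\mathfrak{p}_\R^{U_\infty}$ at the basepoint, and it is \emph{connected}: $G_\infty/K_\infty$ is a CAT$(0)$ space, so the fixed set of any compact isometric group action is a closed convex subset and in particular connected. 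Since $C\cdot[K_\infty]\subseteq F$ and both are closed connected totally geodesic submanifolds passing through a common point, they coincide as soon as their tangent spaces at that point have the same dimension.

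The main -- and only nontrivial -- step is therefore the dimensional equality
\[
\dim\mathfrak{p}_\R^{U_\infty} \ = \ \dim\bigl(\mathfrak{p}_\R\cap Z_{\mathfrak{g}_\R}(\mathfrak{h}_\R)\bigr),
\]
which I would verify by direct computation in each case. Writing $V' = V\oplus Ee$, an element of $\mathfrak{p}_\R$ in the first factor of $\mathfrak{g}_\R$ corresponds to a trace-free Hermitian or symmetric form on $V'$; the $U_\infty$-invariant ones are precisely the block-diagonal forms (a scalar on $V$ and a scalar on $Ee$), producing a one-dimensional space in cases $2$ and $3$ and (generically) zero in case $1$, where the $\mathfrak{so}_n$-invariants in the relevant antisymmetric matrices vanish. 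On the second factor $U_\infty$ is the full maximal compact of $H_\infty$ acting on its own symmetric space, and $\mathfrak{p}_\R^{U_\infty}$ in that factor vanishes for $n\ge 3$. In parallel, $C$ acts by rescaling the line $Ee$ in the first factor and is essentially trivial in the second, yielding the same count in every case. The main obstacle, beyond the bookkeeping, is handling the low-rank anomaly of case $1$ with $n=2$ (where $SO_2$ is abelian and both sides of the equality acquire an extra dimension), which must be checked separately but is elementary.
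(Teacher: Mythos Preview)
Your approach is correct and takes a genuinely different route from the paper. The paper argues directly in each case: it identifies $G_\infty/K_\infty$ concretely (as scaling classes of positive definite forms in the $\PGL$ cases, and as $\SO_{n+1}(\C)/\SO_{n+1}$ in the orthogonal case), writes down all $U_\infty$-fixed points explicitly, and then observes that they form a single orbit of the centralizer. There is no use of CAT(0) geometry or tangent-space arguments; the paper simply solves the relevant invariance equations by hand, including the anomalous $n=2$ orthogonal case where it exhibits the extra one-parameter family of fixed real structures and the element of $\SO_2(\C)$ conjugating them.

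Your argument trades that explicit analysis for a clean structural reduction: connectedness and total geodesy of both $F$ and $C\cdot[K_\infty]$ via nonpositive curvature, then a tangent-space dimension count $\dim\mathfrak{p}_\R^{U_\infty}=\dim(\mathfrak{p}_\R\cap\mathrm{Lie}(C))$. This is arguably more uniform, since the case analysis is confined to elementary linear algebra on $\mathfrak{p}_\R$. One small point to tidy: you write the tangent space of the $C$-orbit as $\mathfrak{p}_\R\cap Z_{\mathfrak{g}_\R}(\mathfrak{h}_\R)$, but strictly it is $\mathfrak{p}_\R\cap\mathfrak{g}_\R^{H_\infty}$, which can be smaller when $H_\infty$ is disconnected (e.g.\ $\GL_n(\R)$). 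In the present cases the two coincide, but this deserves a line. The paper's approach buys concreteness and avoids appealing to CAT(0) facts; yours buys a tidy separation between the geometric mechanism and the numerical check.
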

   
Since right translation of $g_{\infty}$ by $\Kinfty$ does not affect the embedding $Z(U) \rightarrow Y(K)$, this lemma means that we may suppose that  $\Ad(g_{\infty}^{-1}) \Hinfty \subset \Ginfty$
arises from the ``standard'' inclusion of the real group of type $\mathbf{H}$ into the group of type $\mathbf{G}$.
By explicit computations with the standard realizations, we see this inclusion is compatible
with Cartan involutions. In other words, if $\theta$ is the Cartan involution of $\Ginfty$ that fixes $\Kinfty$,
then $\Ad(g_{\infty}^{-1}) \Hinfty$ is stable by $\theta$ and  $\theta$  
induces a Cartan involution of $\Ad(g_{\infty}^{-1}) \Hinfty$, fixing $\Uinfty$.
  \proof   
 In what follows, $\mathrm{O}_n$ and  $\mathrm{U}_n$ 
  mean these compact groups in their standard realizations as stabilizers of the forms $\sum x_i^2$ on $\R^n$
  and $\sum |z_i|^2$ on $\C^n$. The embeddings $\mathrm{O}_n \hookrightarrow \mathrm{O}_{n+1}$ etc. are the standard ones also.
  
  Consider, first, case (3) in the numbering at the start of 
  \S  \ref{Ggp-precise}:
  We must compute the fixed points of $\mathrm{O}_{n-1}$ acting on
  pairs of a scaling class of a positive definite quadratic form on $\R^{n-1}$,
  and a scaling class of a positive definite quadratic form on $\R^n$. There is a unique
  fixed point on scaling classes of positive definite forms on $\R^{n-1}$. Thus, we are left to compute 
the fixed points of $\mathrm{O}_{n-1}$  acting on scaling classes of quadratic forms on $\R^n$:
 A positive definite quadratic form $q$ on $\R^n$ whose scaling class is fixed by $\mathrm{O}_{n-1}$
is actually fixed by $\mathrm{O}_{n-1}$ (it is clearly fixed up to sign, and then definiteness makes it fixed). 
By considering  the action of $-\mathrm{Id} \in \mathrm{O}_{n-1}$ we see that   $q = \sum_{i=1}^{n-1} x_i^2 + (a x_n)^2$.
Such forms constitute a single orbit of the centralizer of $\GL_{n-1}(\R)$ within $\PGL_n(\R)$,  which implies the claimed result.
 
 The remaining cases follow similarly from the computation of the following sets:
 \begin{enumerate}
 \item[Case 2:]
The fixed points $\mathrm{U}_{n-1}$ acting on scaling classes of positive definite Hermitian forms on $\C^n$:

As above, any such form is $\sum_{i=1}^{n-1} |z_i|^2 + a |z_{n}|^2$;
again, these form a single orbit of the centralizer of $\GL_{n-1}(\C)$ within $\PGL_n(\C)$, as desired.

 \item[Case 1:]The fixed points of $\mathrm{SO}_n$ acting on $\mathrm{SO}_{n+1}(\C)/\mathrm{SO}_{n+1}(\R)$.

Suppose $\mathrm{SO}_n \subset g \mathrm{SO}_{n+1} g^{-1}$ for $g \in \mathrm{SO}_{n+1}(\C)$;
then $\mathrm{SO}_n$ fixes the subspace $g \R^{n+1} \subset \C^{n+1}$;
this subspace gives a real structure on $\C^{n+1}$ 
and of course $\sum_{i=1}^{n+1}  x_i^2$ will be positive definite on this subspace. 

For $n \geq 3$, the only $\R$-structures of $\C^{n+1}$
that are fixed by $\mathrm{SO}_n$ are of the form  $ \alpha . \R^n \oplus \beta . \R, \ \ (\alpha, \beta \in  \C^*),$ and moreover if $\sum x_i^2$ is real and positive definite on this space,
this means it is simply the standard structure $\R^{n+1}$.
It follows that $g \R^{n+1} = \R^{n+1}$, and so $g \in \mathrm{SO}_{n+1}$ as desired.   Thus the fixed set mentioned above reduces to a single point.

For $n =2$, there are other real structures fixed by $\mathrm{SO}_n$, namely
$$ \{ x +i \varphi(x): x \in \R^2 \} \oplus \R,$$
where $\varphi \in M_2(\R)$ commutes with $\mathrm{SO}_2$.  However, for
$\sum x_i^2$ to be real-valued on this space we should have $\varphi+\varphi^T= 0$; 
the real structure is therefore  of the form 
$$ \{ (x+i A y, y-i A x): (x,y) \in \R^2 \} \oplus  \R,$$ 
for some $A \in \R$; definiteness of $\sum x_i^2$ means that $A^2 < 1$. This is the image of the standard real structure by the matrix $\frac{1}{\sqrt{1-A^2}}  {\small  \left(\begin{array}{cc} 1 & i A \\ -iA & 1 \end{array}\right)}$,
which lies in $\SO_2(\C) \subset \SO_3(\C)$ and (obviously) centralizes the commutative group $\SO_2(\C)$.

\end{enumerate}

\qed

   \begin{table} 
 \begin{tabular}{|c|c|c|c|c|c|}
\hline \hline
$\Ginfty$ & $ \Hinfty$ & $d_{G/K}$ &  $ d_{H/U}$ &  $d_{G/K}-2  d_{H/U}$  \\ 
\hline
$\PGL_n(\C) \times \PGL_{n+1}(\C)$ & $\GL_n(\C)$ & $ 2n^2+2n-1$  & $n^2$ &  $2n-1$   \\
$\PGL_n(\R) \times \PGL_{n+1}(\R)$ & $\GL_n(\R)$ &   $n^2+2n-1$  & $\frac{n^2+n}{2}$ &  $n-1$    \\
$\SO_n(\C) \times \SO_{n+1}(\C)$ & $\mathrm{SO}_n(\C)$ &  $n^2$ & $\frac{n^2-n}{2}$  & $n$   \\
\hline \hline
\end{tabular}
\caption{The cases of the Gross-Prasad family that we will study}  \label{inner5}
 \end{table}

 \subsection{Tamagawa measure versus Riemannian measure.}  \label{measurenormalizations}
 
On $[\G]$ there are two measures, one arising from Riemannian structure
and one the Tamagawa measure.     Our eventual goal is to compare them. For the moment,
we explain carefully how to construct both of them:

For  Riemannian measure,   we first fix once and for all 
the ``standard'' representation of  
$\mathbf{G}$, or rather of an isogenous group $\mathbf{G}'$. 
Let $\eta: \mathbf{G}' \rightarrow \GL(W)$
be the following $\Q$-rational faithful representation:
in all cases, we take $W$ to be $\Res_{E/\Q} (V' \oplus V)$,
and we take $\mathbf{G}'$ to be the restriction of scalars
of $\SL(V') \times \SL(V)$ in cases (2) and (3), and $\mathbf{G}'  =\mathbf{G}$
in case (1).

Define the form $B$ 
on $\mathfrak{g}_{\Q}$ via
\begin{equation} \label{Bdef} B(X, Y) = \mathrm{trace}( d\eta(X) . d\eta(Y) ).\end{equation}
 This defines a 
 $\GG$-invariant  $\Q$-valued quadratic form  $B$ on the Lie algebra.
 Note that (the real-linear extension of) $B$ is invariant by the Cartan involution $\theta$ on $\mathfrak{g}_{\R}$, 
 by explicit computation.
   Moreover $B$ is nondegenerate and negative definite
 on the associated splitting $\mathfrak{k}_{\R} + i \mathfrak{p}_{\R}$, because the standard representation $\eta$ just introduced carries
 the associated maximal compact of $\mathbf{G}'(\C)$ into a unitary group.  It follows that
$B$ is    negative definite on $\mathfrak{k}_{\R}$ and positive definite
on   $\mathfrak{p}_{\R}$. 
In particular, $B$ defines a Riemannian structure on $Y(K)$.

  We shall also equip $\mathfrak{h}_{\Q} \subset \mathfrak{g}_{\Q}$ with the restriction of the form $B$,
  i.e., with the form arising similarly with the representation $\eta|\mathfrak{h}$.  
  When extended to $\mathfrak{h}_{\R}$ this coincides with the pullback of $B$ under
  $\Ad(g_{\infty}^{-1}): \mathfrak{h}_{\R} \rightarrow \mathfrak{g}_{\R}$, since the form $B$ was invariant; 
therefore the restricted form is preserved by a Cartan involution  fixing $\Uinfty$ (see remark after Lemma \ref{BoatyMcBoatFace}), and similarly defines a Riemannian structure on $Z(U)$.

For Tamagawa measure, what one actually needs is a  
measure on $\mathfrak{g}_{\adele_{\Q}}$, where $\adele_{\Q}$ is the adele ring of $\Q$.  
Choose a volume form on $\LieGQ$: 
\begin{equation}\label{omegaGdef} 
\omega_G  \in \det (\LieGQ^*). 
\end{equation} 
Let $\boldsymbol{\psi}$ be the standard additive character of $\adele_{\Q}/\Q$,
whose restriction to $\R$ is given by $x \mapsto e^{2\pi i x}$. 
We choose the $\boldsymbol{\psi}_v$-autodual measure on $\Q_v$ for every place $v$;
from that and   $\omega_G$   we obtain a measure   on $\mathfrak{g}_v = \mathfrak{g} \otimes \Q_v$ for every place $v$,
and so also a measure $\mu_v$ on $\GG(\Q_v)$. 

 By abuse of notation we refer to all the measures $\mu_v$ as ``local Tamagawa measures."
 They depend on $\omega_G$, but only up to $\Q^*$, and 
  their product $\prod_{v} \mu_v$ is independent of $\omega_G$.  

We proceed similarly for $\HH$, fixing a volume form $\omega_H \in \det(\mathfrak{h}^*)$, 
which gives rise to local Tamagawa measures on $\HH(\Q_v)$
and a global Tamagawa measure on $\HH(\adele)$. 

 \subsection{Lattices inside Lie algebras}
 We choose an integral lattice inside $\mathfrak{g}$ and $\mathfrak{k}$:
 
For $\mathfrak{g}$, we simply choose
 a lattice $\mathfrak{g}_{\Z} \subset \mathfrak{g}_{\Q}$ of volume $1$ for $\omega_G$, i.e.
 $\langle \omega_G, \det \ \mathfrak{g}_{\Z} \rangle = 1$.
 
 For $\mathfrak{k}$, Macdonald \cite{Macdonald} has specified a class of lattices $\mathfrak{k}_{\Z}^{\cpct} \subset \mathfrak{k}$
deriving from a Chevalley basis of the complexified group $\mathfrak{k}_{\C}$; one takes 
the toral generators together with $X_{\alpha} +X_{-\alpha}$ and $i(X_{\alpha} - X_{-\alpha})$,
where $\alpha$ varies over all roots.

With these definitions, we define discriminants of $\mathfrak{g}, \mathfrak{k}, \mathfrak{p}$ thus: 
   \begin{align} \label{coo}  \disc \ \mathfrak{g} &:=  \langle \det \mathfrak{g}_{\Z}, \det \ \mathfrak{g}_{\Z} \rangle_{\Killing} 
\\  \disc \medskip \mathfrak{k}  &:=  \langle \det  \mathfrak{k}^{\cpct}_{\Z},  \det \mathfrak{k}^{\cpct}_{\Z} \rangle_{-\Killing}. 
 \\ \disc \mathfrak{p} &:= \left| \frac{\disc \mathfrak{g}}{\disc \mathfrak{k}}\right| .\end{align}

  Note that
\begin{equation} \label{coo2} \disc \ \mathfrak{g}=  \langle \omega_G, \omega_G \rangle_{\Killing}^{-1},\end{equation}
and that the signs of the discriminants of $\mathfrak{g}, \mathfrak{k}, \mathfrak{p}$ are given by $(-1)^{d_K}, 1,1$ respectively.
Also
all these definitions carry over to $\HH$: in particular, we define $\disc \ \mathfrak{p}_H$ in a similar way. 

We need: 
\begin{lemma} The discriminants of $\mathfrak{g}, \mathfrak{k}, \mathfrak{p}$ all belong to $\Q^*$.
\end{lemma}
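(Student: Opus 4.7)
The rationality of $\disc\mathfrak{g}$ is immediate: since $\eta$ is a $\Q$-rational representation of $\GG'$ on $W$, the form $B(X,Y)=\tr(d\eta(X)d\eta(Y))$ takes rational values on the $\Q$-Lie algebra $\mathfrak{g}_\Q$, so its Gram matrix on any $\Q$-basis of the lattice $\mathfrak{g}_\Z \subset \mathfrak{g}_\Q$ has rational entries and nonzero determinant.

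For $\disc\mathfrak{k}$ the argument has two steps. First, extend $B$ $\C$-bilinearly to $\mathfrak{g}_\C = \mathfrak{g}_\Q \otimes_\Q \C$; its restriction to $\mathfrak{k}_\C = \mathfrak{k}_\R \otimes_\R \C \subset \mathfrak{g}_\C$ is the trace form of the finite-dimensional representation $d\eta|_{\mathfrak{k}_\C}$ of the complex reductive Lie algebra $\mathfrak{k}_\C$. The plan is to show that $B$ takes rational values on any Chevalley basis $\{H_i, X_\alpha\}$ of $\mathfrak{k}_\C$. Granting this, Macdonald's basis of $\mathfrak{k}_\R$ is by construction a $\Q(i)$-linear combination of $\{H_i, X_\alpha\}$---the toral generators are pure-imaginary multiples of the $H_i$ in order to lie in the compact form, while the root-space generators are the stated $\Q(i)$-combinations of $X_\alpha$ and $X_{-\alpha}$---so the Gram matrix of $B$ on Macdonald's basis has entries in $\Q(i)$. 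But Macdonald's basis lies in $\mathfrak{k}_\R \subset \mathfrak{g}_\R$, on which $B$ is $\R$-valued (since $B$ is $\Q$-rational on $\mathfrak{g}_\Q$ and $\mathfrak{g}_\R = \mathfrak{g}_\Q \otimes_\Q \R$), so these entries in fact lie in $\Q(i) \cap \R = \Q$, yielding $\disc\mathfrak{k} \in \Q^*$.

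The remaining point---rationality of trace forms on a Chevalley basis---is classical: any finite-dimensional representation of a semisimple complex Lie algebra stabilizes a $\Z$-lattice (Kostant's admissible-lattice theorem) relative to which a Chevalley basis acts by integer matrices, so traces of products of such matrices are integers. The central factor of the reductive algebra $\mathfrak{k}_\C$ is handled separately: it acts semisimply on the representation with integer weights on an admissible basis, so the trace form remains $\Q$-valued on its toral generators. Finally $\disc\mathfrak{p} = |\disc\mathfrak{g}/\disc\mathfrak{k}| \in \Q^*$ follows at once. The main obstacle is packaged entirely into the Kostant-lattice argument; once this is granted, the rest is bookkeeping about complex conjugation and the reality of $B|_{\mathfrak{k}_\R}$.
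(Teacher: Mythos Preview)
Your argument is correct and follows the same strategy as the paper: reduce to showing that $B$ is $\Q$-valued on a Chevalley lattice for $\mathfrak{k}_\C$, which holds because any representation of the Chevalley form can be defined over $\Q$ (the paper invokes this directly; you package it via Kostant's admissible-lattice theorem). The only difference in the bookkeeping is the passage from the Chevalley lattice to Macdonald's: the paper computes directly that $\det\mathfrak{k}_\Z^{\cpct}\in\Q^*\cdot i^{(d_K-r_K)/2}\det\mathfrak{k}_\Z^{\che}$ and squares, while you argue Gram-entry-wise via $\Q(i)\cap\R=\Q$.

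One small caveat: in the paper's description, Macdonald's lattice uses the Chevalley toral generators $H_i$ themselves rather than $iH_i$, so the lattice does not sit entirely inside $\mathfrak{k}_\R$, and your reality-of-$B$ step does not literally cover the toral--toral entries. This is harmless, since $B(H_i,H_j)\in\Q$ already follows from the Chevalley-rationality step, and the toral part is $B$-orthogonal to the root-space part.
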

\proof
For $\disc(\mathfrak{g})$ this follows from the fact that $B$ is $\Q$-valued. It is enough to prove the result for $\mathfrak{k}$. There we observe that 
$$  \det \mathfrak{k}_{\Z}\in \Q^* \cdot  i^{\frac{d_K-r_K}{2}} \det \mathfrak{k}^{\che}_{\Z},$$
where $\mathfrak{k}^{\che}_{\Z}$ is a Chevalley  lattice for the complexified group $\mathfrak{k}_{\C}$.  
The representation $\eta$ defining the bilinear form $B$ gives a representation $\eta_{\C}$ of the Chevalley group
underlying $\mathfrak{k}_{\C}$;  this representation, like all representations of the complexified Chevalley group, can be defined over $\Q$
and so the trace form takes  rational values on $\mathfrak{k}^{\che}_{\Z}$, as desired.  \qed 
 
 Note that the same reasoning applies to $\HH$; thus the discriminants of $\mathfrak{h}, \mathfrak{u}, \mathfrak{p}_H$
 all lie in $\Q^*$ too.

 \subsection{Factorization of measures on $\Ginfty$.}
 
  First let us compute the Riemannian volume of $\Kinfty$. Macdonald \cite{Macdonald} shows that, for any invariant differential form  $\nu$ on $\Kinfty^{\circ}$,  regarded also
  as a volume form on the Lie algebra in the obvious way, we have
\begin{equation} \label{nyK} \mbox{ $\nu$-volume of $\Kinfty^{\circ}$} = \prod_{i} \frac{2 \pi^{m_i+1}}{m_i!} \nu(\det \mathfrak{k}_{\Z}^{\cmpct}) \sim  \Delta_K  \cdot \nu(\det \mathfrak{k}_{\Z}^{\cmpct})  \end{equation}
 where   $\Delta_K = \pi^{(d_K+r_K)/2}$; here the $m_i$ are the exponents of the compact Lie group $\Kinfty^{\circ}$,
so that  $\sum m_i = (d_K-r_K)/2$. 
 Therefore, 
 $$\vol(\Kinfty^{\circ})  :=  \mbox{Riemannian volume of $\Kinfty^{\circ}$ w.r.t. $-B|_{\mathfrak{k}}$ }   \sim  \Delta_K \cdot \sqrt{ \disc(\mathfrak{k})}.$$

  We can factor
  $ \det(\mathfrak{g}_{\R}^*) \simeq \det(\mathfrak{k}_{\R}^*) \otimes \det(\mathfrak{p}_{\R}^*)$, and with reference to such a factorization, 
  $$\omega_G  \stackrel{\eqref{coo2}}{=}  \frac{1}{\sqrt{  |\disc  \ \mathfrak{g}| }} \cdot \omega_K \otimes \omega_P,$$
  where $\omega_K \in \det \ \mathfrak{k}_{\R}^*$ is determined  (up to sign) by the requirement that
  $\langle\omega_K, \omega_K \rangle_{-B} =1$, and similarly $\omega_P \in \det \ \mathfrak{p}_{\R}^*$
  is determined by requiring that $\langle \omega_P, \omega_P \rangle_B=1$. 
  We can regard $\omega_K$ and $\omega_P$ as differential forms on $\Kinfty$ and $\Ginfty/\Kinfty$,
  extending them from the identity  tangent space by invariance; 
the measures on $\Kinfty$ and $\Ginfty/\Kinfty$
  defined by the differential forms $\omega_K$ and $\omega_P$ 
  coincide with the Riemannian measures (associated to $-B|_{\mathfrak{k}}$ and $B|_{\mathfrak{p}}$ respectively). 
 
   This implies that 
\begin{eqnarray}  \label{browncow}  {\small \frac{  \mbox{local Tamagawa measure  on $\Ginfty$ pushed down to $\Ginfty/\Kinfty^{\circ}$} }{\mbox{Riemannian measure on $\Ginfty/\Kinfty^{\circ}$ w.r.t. $\Killing|_{\mathfrak{p}}$} } }
 &=&    \frac{ \vol(\Kinfty^{\circ})}{ \sqrt{ |\disc \mathfrak{g}|  }} \\  \nonumber & \sim&  \Delta_K \cdot  \sqrt{\disc \mathfrak{p}}  \end{eqnarray} 

 \subsection{Tamagawa factors} \label{ss:Tamagawa}

Let $\mu_f$ denote the volume of $K \subset \G(\Afinite)$ with respect to Tamagawa measure (more precisely: the product of local Tamagawa measures as in  \S \ref{measurenormalizations}, over finite places). 
Evaluating $\mu_f$ is a standard computation, and is particularly straightforward in the split cases where we use it: There is an $L$-function $\Delta_G$ attached to $\GG$, with the property that its local factor at almost all places is given by $\frac{p^{\dim \GG}}{\# \GG(\mathbf{F}_p)}$; for example, if $\GG = \mathrm{SL}_n$, 
then $\Delta_G= \zeta(2) \dots \zeta(n)$.   Then $\mu_f \sim \Delta_G^{-1}$.  
We shall later use  the notation
$$ \Delta_{G,v} = \mbox{ local factor of $\Delta_G$ at the place $v$.}$$

 Let us introduce
\begin{equation} \label{DeltaGKdef}  \Delta_{G/K} = \Delta_G/\Delta_K,\end{equation}
where $\Delta_K = \pi^{(d_K+r_K)/2}$ as before. We can define similarly $\Delta_{H/U}$. 
 
 Now examine the Riemannian measure on $Y(K)$. We write
\begin{equation} \label{disconn} Y(K) = \coprod_{i} \Gamma_i \backslash \Ginfty/\Kinfty^{\circ},\end{equation}
where $I = \G(\Q) \backslash  \G(\Afinite)/K $ and, for $ i \in I$ with representative $g_i$, we have $\Gamma_i = \G(\Q) \cap g_i K g_i^{-1}$. 
  If we choose a fundamental domain $F_i \subset \G(\R)$, right invariant by $\Kinfty^{\circ}$,  for the action of $\Gamma_i$, then 
$\coprod_i F_i g_i K$ is a fundamental domain in $\G(\adele)$ for the action of $\G(\Q)$,
and $F_i g_i K$ maps  bijectively to  $Y(K)_i$, the $i$th component of $Y(K)$ under \eqref{disconn}.  The global Tamagawa measure of $F_i g_i K$ equals
$\mu_f$ multiplied by the local Tamagawa measure of $F_i$; on the other hand, the Riemannian measure
of $Y(K)_i$ is the Riemannian measure of $F_i/\Kinfty^{\circ}$, and so  by \eqref{browncow} we have

\begin{equation} \label{duck0} \frac{ \mbox{projection of Tamagawa measure to $Y(K)$} }{\mbox{Riemannian measure on $Y(K)$}} \sim {\Delta_{G/K}^{-1}}{\sqrt{\disc \ \mathfrak{p}}}. \end{equation}
 Similarly,
\begin{equation} \label{duck} \frac{ \mbox{projection of Tamagawa measure to $Z(U)$} }{\mbox{Riemannian measure on $Z(U)$}} \sim {\Delta_{H/U}^{-1}}{\sqrt{\disc \ \mathfrak{p}_H}} \end{equation}%

 \begin{prop}  \label{proptodo}  
 Fix  $\nu_H \in \det (\p_{H,\R})$  with $\langle \nu_{H}, \nu_H \rangle_{\Killing} = 1$;
 consider $\nu_H$ as an element of $\wedge^{p_H} \p$ via 
 $$ \det(\p_H) = \wedge^{p_H} \p_H \stackrel{\Ad(g_{\infty}^{-1})}{\longrightarrow} \wedge^{p_H} \p.$$
If $T  \in \Hom_{\Kinfty^{\circ}}(\wedge^{p_H} \mathfrak{g}/\mathfrak{k}, \pi^K) $
 induces the differential form $\Omega(T)$ on $Y(K)$, as in \eqref{omegadef}, then \begin{equation}  \frac{  \left|  \int_{Z(U)} \iota^* \Omega(T)   \right|^2 }{\langle \Omega(T) , \Omega(T) \rangle_{\mathrm{R}}} \sim   \left(\disc \ \mathfrak{p}\right)^{1/2}  \frac{\Delta_{H/U}^2}{\Delta_{G/K}}
  \cdot \frac{ \left| \int_{[\HH]} g_f T(\nu_H) dh \right|^2}{  \langle T(\nu_H), T(\nu_H) \rangle } \label{RT} \end{equation}
(where we regard the statement as vacuous if $T(\nu_H) = 0$).  Here $g_f T(\nu_H)$ is the 
translate of $T(\nu_H) \in \pi$ by $g_f$, and $\langle T(\nu_H), T(\nu_H)\rangle$ is the $L^2$-norm $\int_{[\G]} |T(\nu_H)|^2$ with respect to Tamagawa measure. 

On the left-hand side the $L^2$-norm  of $\Omega(T)$ is  taken with respect to Riemannian measure\footnote{The reason we use Riemannian measure at all is that it interfaces
well with the action of $\aG$ (e.g. Proposition \ref{basic properties}
part (i)). }  on $Y(K)$ 
induced by $\Killing$ (thus the subscript $R$), whereas on the right-hand side everything is computed with respect to Tamagawa measure. 
   \end{prop}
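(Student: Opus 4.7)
\medskip

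\noindent\emph{Proof proposal.} The statement is essentially bookkeeping: it compares a period integral computed with respect to the Riemannian structure on $Y(K)$ and $Z(U)$ to the analogous automorphic period computed with Tamagawa measure, and the only input is the explicit description of $\Omega$, together with the measure-comparison formulas \eqref{duck0} and \eqref{duck}. I would split the argument in three.

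\emph{Step 1: Unwinding the cycle integral.} Using \eqref{OmegaVeryExplicit}, the pullback $\iota^{*}\Omega(T)$ at $h\in Z(U)$ evaluated on $[h,Y_{1}\wedge\cdots\wedge Y_{p_{H}}]$ with $Y_{i}\in \mathfrak{p}_{H,\R}$ equals $T\bigl(\Ad(g_{\infty}^{-1})(Y_{1}\wedge\cdots\wedge Y_{p_{H}})\bigr)(hg)$. By Lemma \ref{BoatyMcBoatFace} and the remark following it, $\Ad(g_{\infty}^{-1})\mathfrak{p}_{H,\R}\subset\mathfrak{p}_{\R}$, so evaluating on the unit-volume element $\nu_{H}^{0}\in\det\mathfrak{p}_{H,\R}$ yields $T(\nu_{H})(hg)$. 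Integrating and converting Riemannian to Tamagawa measure on $Z(U)$ via \eqref{duck} (and passing from $Z(U)=[\HH]/U_{\infty}^{\circ}U$ to $[\HH]$, which only contributes a rational factor involving $\vol_{T}(U_{\infty}^{\circ}U)\sim 1$), one obtains
\[
\int_{Z(U)} \iota^{*}\Omega(T)\;\sim\; \frac{\Delta_{H/U}}{\sqrt{\disc\mathfrak{p}_{H}}}\,\int_{[\HH]} (g\cdot T(\nu_H))(h)\,dh_{T}.
\]
A short check shows that replacing the full translate $R_{g}T(\nu_{H})$ by the finite-adelic translate $R_{g_{f}}T(\nu_{H})$ only affects the square of the modulus by a rational factor; this uses the $\Kinfty^{\circ}$-equivariance of $T$ together with the fact that $g_{\infty}$ is determined up to $\Kinfty^\circ$ by Lemma \ref{BoatyMcBoatFace}.

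\emph{Step 2: The $L^{2}$-norm of $\Omega(T)$.} Pointwise, with respect to $\Killing|_{\mathfrak{p}}$, one has $|\Omega(T)|^{2}(g)=\sum_{I}|T(e_{I})(g)|^{2}$ for an orthonormal basis $\{e_{I}\}$ of $\wedge^{p_{H}}\mathfrak{p}$. Integrating over $Y(K)$ and converting to Tamagawa measure via \eqref{duck0},
\[
\langle \Omega(T),\Omega(T)\rangle_{R}\;\sim\; \frac{\Delta_{G/K}}{\sqrt{\disc\mathfrak{p}}}\,\sum_{I}\|T(e_{I})\|_{T}^{2},
\]
where the norms on the right are the $L^{2}$-norms in $\pi$ with respect to Tamagawa measure on $[\G]$.

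\emph{Step 3: From $\sum_{I}\|T(e_I)\|^{2}$ to $\|T(\nu_H)\|^{2}$.} Since $T$ is $\Kinfty^{\circ}$-equivariant and $\pi$ carries a $\Ginfty$-invariant hermitian form, Schur's lemma forces $v\mapsto \|T(v)\|^{2}/\langle v,v\rangle$ to be constant on each $\Kinfty^{\circ}$-isotypic component of $\wedge^{p_{H}}\mathfrak{p}$ on which $T$ is non-zero; using the Vogan--Zuckerman description of the cohomological $\Kinfty^{\circ}$-types appearing in $\pi$ (and the fact that $\nu_{H}$, being the ``diagonal'' image of $\det\mathfrak{p}_{H}$, projects non-trivially onto the relevant type), one obtains
\[
\sum_{I}\|T(e_{I})\|_{T}^{2}\;\sim\;\frac{\|T(\nu_{H})\|_{T}^{2}}{\langle\nu_{H},\nu_{H}\rangle}\;=\;\|T(\nu_{H})\|_{T}^{2},
\]
since $\langle\nu_{H},\nu_{H}\rangle=1$ by construction. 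Assembling Steps 1--3 and simplifying yields the displayed ratio, with the factor $\Delta_{H/U}^{2}/\Delta_{G/K}$ arising from squaring the Step 1 conversion factor and dividing by the Step 2 one, and the factor $(\disc\mathfrak{p})^{1/2}$ arising from $\disc\mathfrak{p}_{H}$ cancelling against $\sqrt{\disc\mathfrak{p}}$ in the denominator after squaring.

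\emph{Main obstacle.} Steps 1 and 2 are purely measure-theoretic accounting and should be routine. The only substantive point is Step 3: one must verify that the ratio $\sum_{I}\|T(e_{I})\|^{2}/\|T(\nu_{H})\|^{2}$ is rational, which amounts to knowing that $\nu_{H}$ has a nontrivial, ``rationally normalized'' component in the Vogan--Zuckerman $\Kinfty^{\circ}$-type of $\pi$. This is precisely the sort of archimedean branching assertion that, together with Lemma \ref{BoatyMcBoatFace} and the Cartan-involution compatibility of the embedding $\mathfrak{h}\hookrightarrow\Ad(g_{\infty}^{-1})^{-1}\mathfrak{g}$, will need to be checked case by case in the list of \S \ref{Ggp-precise}.
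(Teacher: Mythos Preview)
Your proposal is correct and follows essentially the same route as the paper. Steps 1 and 2 match the paper's computation almost line for line: the cycle integral is converted to Tamagawa measure via \eqref{duck}, and the $L^2$-norm of $\Omega(T)$ is expanded over an orthogonal basis of $\wedge^{p_H}\mathfrak{p}$ and converted via \eqref{duck0}. Your Step~3 is exactly the content of the paper's Lemma~\ref{oink}, which is stated and proved immediately after the Proposition; the paper's argument there is close in spirit to yours (rationality of the projection onto the relevant Vogan--Zuckerman $\Kinfty$-type), though it phrases things via a $\Q$-rational model for $(\GG_\R\supset\Kinfty,\HH_\R\supset\Uinfty)$ rather than invoking Schur's lemma directly. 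Two cosmetic remarks: your bookkeeping sentence about ``$\disc\mathfrak{p}_H$ cancelling against $\sqrt{\disc\mathfrak{p}}$'' is slightly off --- in fact $\disc\mathfrak{p}_H\in\Q^*$ simply drops out, and the surviving $(\disc\mathfrak{p})^{1/2}$ comes entirely from the $L^2$-norm conversion in Step~2; and the passage from $R_g T(\nu_H)$ to $R_{g_f}T(\nu_H)$ is handled in the paper not by a separate rationality check but by having already absorbed the $g_\infty$-dependence into the definition of $\nu_H$ via $\Ad(g_\infty^{-1})$.
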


    \proof 
      
We follow the convention that a subscript $R$ will denote a computation with respect
to the Riemannian measure induced by $B$.  
Although this measure is defined on  the locally symmetric space $Y(K)$,  we will also refer to a ``Riemannian'' measure on $[\GG]$;
 this is simply a Haar measure that is normalized to project to the Riemannian measure  under $[\GG] \rightarrow Y(K)$.

When we integrate $\Omega(T)$ over the cycle representing $\iota_* [Z(U)]$ we get
\begin{eqnarray*}       \int_{Z(U)} \iota^* \Omega(T)  =  \int_{Z(U)} T(\nu_H)  d_{\mathrm{R}}h   \stackrel{\eqref{duck}}{ \sim }
 \Delta_{H/U} \sqrt{ \disc  \ \mathfrak{p}_H}   \int_{[\mathbf{H}] g_f}  T(\nu_H)  dh. \end{eqnarray*}
here $d_{\mathrm{R}}$ is Riemannian  measure on $Z(U)$ and $dh$ is Tamagawa measure.
 
 Next we compute the norm of $\Omega(T)$ with respect to Riemannian volume
 and compare it to the Tamagawa-normalized $L^2$ norm of $T(\nu_H)$. 
Let $\mathcal{B}$ be a $\Killing$-orthogonal basis for $\wedge^{p_H} \mathfrak{p}_{\R}$.
For each $x \in \mathcal{B}$,  if we evaluate $\Omega(T)$ at $x$ (considered
at the tangent space of the identity coset) we get, by definition, $T(x)$ evaluated at the identity. 
More generally the sum
$$ \sum_{x \in \mathcal{B}} |T(x)|^2$$
defines a function on $[\G]/K$ that is $\Kinfty^{\circ}$-invariant, and therefore descends to $Y(K)$;
its value at a point of $Y(K)$ is the norm of $\Omega(T)$ at that point. 
Integrating over $Y(K)$ with respect to Riemannian norm, we see 
\begin{eqnarray} \langle \Omega(T), \Omega(T) \rangle_R &= & \int_{g \in Y(K)} \left( \sum_{x \in \mathcal{B}}  |T(x)|^2 \right) d_R g \\  \nonumber
&=&  \sum_{x \in \mathcal{B}} \langle T(x), T(x) \rangle_R  %
\\ &=& \frac{ \sum_{x \in \mathcal{B}} \langle T(x), T(x) \rangle_R}{ \langle T(\nu_H), T(\nu_H) \rangle_R}  \langle T(\nu_H), T(\nu_H) \rangle_R
\\ &= &  \frac{ \|T\|^2}{ \langle T(\nu_H), T(\nu_H) \rangle}  \langle T(\nu_H), T(\nu_H) \rangle_R.  \end{eqnarray}
 
Here we define \begin{equation} \label{Tnorm} \|T\|^2 = \sum_{x \in \mathcal{B}} \langle T(x), T(x) \rangle_{L^2([\G])},\end{equation} and the $L^2$-norm  is now  computed with respect to {\em Tamagawa} measure on $[\G]$. 
 After translating \eqref{duck0} between Riemannian and Tamagawa measure for $\langle T(\nu_H), T(\nu_H) \rangle_R$, 
the result follows from Lemma \ref{oink} below. 
\qed

  \begin{lemma}  \label{oink}
 Suppose that $(\GG, \HH)$ are as in \S \ref{Ggp-precise}
 and $B$ is the trace form defined in \eqref{Bdef}.   Let
 $T  \in \Hom_{\Kinfty^{\circ}}(\wedge^{p_H} \mathfrak{g}/\mathfrak{k}, \pi^K) $
 lie in a $\Kinfty/\Kinfty^{\circ}$ eigenspace, 
 and let $\nu_H$ be as in Proposition \ref{proptodo}, and the norm $\|T\|$ be as in \eqref{Tnorm}. Then 
 $$   \frac{ \langle T(\nu_H), T(\nu_H) \rangle}{ \|T\|^2 \langle \nu_H, \nu_H \rangle_{\Killing}}  \in  \Q,$$
 \end{lemma}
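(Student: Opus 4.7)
The claim is a rationality assertion about a ratio of two $K_\infty^\circ$-invariant Hermitian forms on $\wedge^{p_H}\mathfrak{p}$, and I would deduce it from a multiplicity-one statement combined with Schur's lemma.

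First, the map $T$ is $K_\infty^\circ$-equivariant and $\langle-,-\rangle_{L^2}$ is $G$-invariant, so $Q_T(x,y):=\langle T(x),T(y)\rangle_{L^2([G])}$ is a positive semidefinite $K_\infty^\circ$-invariant Hermitian form on $\wedge^{p_H}\mathfrak{p}$. The form $\langle -,-\rangle_B$, extended Hermitianly from $B$ using the real structure on $\mathfrak{p}_{\mathbf{R}}$, is likewise $K_\infty^\circ$-invariant, since $B$ is $\theta$-invariant. Now, $\pi$ is tempered cohomological, so it is of the form $\pi(\mathcal{C})$ for some chamber $\mathcal{C}$; and since $(\GG,\HH)$ is one of the GGP pairs of \S\ref{Ggp-precise}, the dimensional relation \eqref{roobar} says $p_H = q$, the minimal cohomological degree. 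The computation \eqref{igly} then shows that the weight $\mu_\mathcal{C}$-space in $\wedge^{p_H}\mathfrak{p}$ is one-dimensional, spanned by $\det(\mathfrak{u}\cap\mathfrak{p})$. Hence $V_\mathcal{C}$ occurs with multiplicity exactly one in $\wedge^{p_H}\mathfrak{p}$, and the map $T$ factors as
\[
T: \wedge^{p_H}\mathfrak{p}\;\xrightarrow{\ P\ }\; W\;\xrightarrow{\sim}\; V_\mathcal{C}\;\hookrightarrow\; \pi^K,
\]
where $W\subset\wedge^{p_H}\mathfrak{p}$ is the $V_\mathcal{C}$-isotype and $P$ is the $\langle-,-\rangle_B$-orthogonal projection onto $W$.

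By Schur's lemma applied to $W\cong V_\mathcal{C}$, the two $K_\infty^\circ$-invariant Hermitian forms $Q_T|_W$ and $\langle-,-\rangle_B|_W$ are proportional, so there is a constant $c\ge 0$ with
\[
Q_T(x,y) = c\,\langle Px, Py\rangle_B, \qquad x,y\in\wedge^{p_H}\mathfrak{p}.
\]
In particular, taking $\mathcal{B}$ to be $\langle-,-\rangle_B$-orthonormal I get $\|T\|^2 = \sum_{e\in\mathcal{B}} Q_T(e,e) = c\cdot\tr(P) = c\dim V_\mathcal{C}$, while $Q_T(\nu_H,\nu_H)=c\langle P\nu_H,P\nu_H\rangle_B$. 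Substituting,
\[
\frac{\langle T(\nu_H),T(\nu_H)\rangle}{\|T\|^2\,\langle\nu_H,\nu_H\rangle_B}
\;=\;\frac{1}{\dim V_\mathcal{C}}\cdot\frac{\langle P\nu_H,P\nu_H\rangle_B}{\langle\nu_H,\nu_H\rangle_B}.
\]
The prefactor $1/\dim V_\mathcal{C}$ is manifestly rational (indeed an integer reciprocal), so the entire question reduces to showing that the second ratio is rational.

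The crux is to identify $\nu_H$ inside $W$.  Using Lemma \ref{BoatyMcBoatFace} we arrange that the embedding of $\mathbf{H}$ into $\mathbf{G}$ is $\theta$-stable, so $\mathfrak{p}_H\subset\mathfrak{p}$. The plan is to then choose the Vogan–Zuckerman datum $(\mathfrak{a},\mathfrak{b},\mathfrak{q})$ so that $\mathfrak{b}$ contains a fundamental Cartan of $\mathfrak{u}=\mathrm{Lie}(U_\infty)$ (this is possible for each GGP family of \S\ref{Ggp-precise} by direct inspection of the classical embeddings), and then to verify that the complexification of $\mathfrak{p}_H$ lies, at top exterior power, in the weight $\mu_\mathcal{C}$-space; since that space is one-dimensional, $\nu_H$ is necessarily a complex scalar multiple of $\det(\mathfrak{u}\cap\mathfrak{p})$, i.e.\ $\nu_H\in W$. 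Then $P\nu_H=\nu_H$ and the second factor is $1$, yielding the precise value $1/\dim V_\mathcal{C}$ for the ratio.

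The main obstacle will be this last step: aligning the Vogan–Zuckerman chamber $\mathcal{C}$ for $\mathbf{G}$ with the real subspace $\mathfrak{p}_H\subset\mathfrak{p}$ arising from $\mathbf{H}$. This is a concrete weight computation for the fundamental Cartan of $\mathfrak{h}$ inside $\mathfrak{g}$, case by case over the three GGP families, and the hypothesis that $T$ lies in a $K_\infty/K_\infty^\circ$-eigenspace is what lets one pin down the $K_\infty$-type extension of $V_\mathcal{C}$ consistently so that this identification is unambiguous. If in a given case the alignment is imperfect and $\nu_H$ fails to lie in $W$, the fallback is to note that the ratio $\langle P\nu_H,P\nu_H\rangle_B/\langle\nu_H,\nu_H\rangle_B$ is nonetheless a squared cosine computed inside a rationally-defined isotypic decomposition (using that $B$ is $\mathbf{Q}$-valued on $\mathfrak{g}_\mathbf{Q}$ and that $W$ is cut out by an idempotent with algebraic coefficients), and is therefore rational; one would need the $K_\infty/K_\infty^\circ$-eigenspace hypothesis to descend the projector to a $\mathbf{Q}$-rational one. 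Either way, the rationality is fundamentally a consequence of the two multiplicity-one statements above.
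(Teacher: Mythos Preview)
Your Schur's-lemma reduction to the formula
\[
\frac{1}{\dim V_{\mathcal{C}}}\cdot\frac{\langle P\nu_H,P\nu_H\rangle_B}{\langle\nu_H,\nu_H\rangle_B}
\]
is exactly what the paper does, and is correct. The divergence is in how you finish.

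Your primary plan will not work. You want $\det(\mathfrak{p}_{H,\C})$ to sit in the weight-$\mu_{\mathcal{C}}$ line, i.e.\ to be a scalar multiple of $\det(\mathfrak{u}\cap\mathfrak{p})$; since both spaces have the same dimension $p_H$, this forces $\mathfrak{p}_{H,\C}=\mathfrak{u}\cap\mathfrak{p}$ as subspaces of $\mathfrak{p}$. But $\mathfrak{p}_{H,\C}$ is the complexification of a real subspace and hence stable under complex conjugation, whereas $\overline{\mathfrak{u}\cap\mathfrak{p}}=\bar{\mathfrak{u}}\cap\mathfrak{p}\neq\mathfrak{u}\cap\mathfrak{p}$. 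So $\nu_H$ is not a highest-weight vector, and there is no reason it should lie entirely in the $V_{\mathcal{C}}$-isotype either; the paper makes no such claim.

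Your fallback is the right direction, and is what the paper actually does, but you have not identified the key point. ``Algebraic coefficients'' for the idempotent is not enough; you need it to be $\Q$-rational. The paper's argument is: reduce (via scaling invariance and Lemma~\ref{BoatyMcBoatFace}) to the case where $\GG,\HH$ are specific $\Q$-split forms with the standard inclusion, so that $B$, the maximal compacts $\Uinfty,\Kinfty$, and hence the line $\R\nu_H\subset\wedge^{p_H}\mathfrak{p}_\R$, are all defined over $\Q$. Then the crucial observation is that the $\Kinfty$-type $\delta=V_{\mathcal{C}}$ has highest weight equal to the sum of positive roots, which is fixed by every pinned outer automorphism of $\Kinfty$, and its central character is trivial; hence the isomorphism class of $\delta$ is Galois-stable and the projector $\mathrm{proj}_\delta$ on $\wedge^{p_H}\mathfrak{p}$ is defined over $\Q$. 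Applying it to a $\Q$-rational $\nu_H'$ gives a $\Q$-rational image, and the ratio is rational. The $\Kinfty/\Kinfty^\circ$-eigenspace hypothesis is not what makes the projector descend; it is used only in the real $\PGL_n\times\PGL_{n+1}$ case to single out which (unique) $\Kinfty$-type $\delta'$ is common to $\wedge^{p_H}\mathfrak{p}$ and $\pi_\infty\otimes\chi$, after which the same outer-automorphism argument applies to $\delta'$.
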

 
Note that $\langle \nu_H, \nu_H \rangle_{\Killing} = 1$, by the way it was defined in the statement of Proposition
\ref{proptodo}, 
 but we prefer to write the expression above because
it is scaling invariant.

\proof 

Observe the ratio under consideration is invariant under rescaling the norm either on source or target of $T$, or rescaling $T$,
or rescaling $\nu_H$. 
The  validity of the statement  depends only on the data
$$ (\GG(\R) \supset \Kinfty, \HH(\R) \supset \Uinfty,  g_{\infty},  \pi_{\infty}, T)$$
together with the scaling class of the form induced by $B$ on $\mathfrak{p}_{\R}$
and $\mathfrak{p}_H$.  By Lemma \ref{BoatyMcBoatFace}, 
  it suffices to treat the case when $g_{\infty} =e$, the identity element, and  
$\GG(\R) \supset \Kinfty, \HH(\R) \supset \Uinfty$ is one of the following: 
\begin{align}
 \label{first} \PGL_n(\R) \times \PGL_{n+1}(\R)  &\supset  \mathrm{PO}_n \times \mathrm{PO}_{n+1} ,    &\GL_n(\R) &\supset \mathrm{O}_n . \\ 
 \PGL_n(\C) \times \PGL_{n+1}(\C)  &\supset \mathrm{PU}_n  ,  &\GL_n(\C) &\supset \mathrm{U}_n. \\
  \SO_{n+1} (\C) \times \SO_n(\C) &\supset \SO_{n+1} \times \SO_n , &\SO_n(\C) &\supset \SO_n. \end{align}
 In all cases, $\mathrm{O}$ and $\mathrm{U}$ refer to the standard orthogonal form $\sum x_i^2$ and the standard Hermitian form $\sum |z_i|^2$.

 Note, in particular, there is no dependence on the $\Q$-form of the groups $\GG, \HH$, and we 
 may freely assume that $\GG,\HH$ are the $\Q$-split forms in the first case, and (the restriction of scalars of the) $\Q(i)$-split forms in the second and third cases. 
With these $\Q$-structures,  the inclusion of $\HH$ into $\GG$ is $\Q$-rational, the form $B$ remains $\Q$-rational on the $\Q$-Lie algebra, and moreover the maximal compacts $\Uinfty,\Kinfty$ described above are actually defined over $\Q$. 
 Therefore, $\mathfrak{p}_{\R}$ and also  $\wedge^{p_H} \mathfrak{p}_{\R}$  inherits a $\Q$-structure, and the line $\R \nu_H \subset \wedge^{p_H} \mathfrak{p}_{\R} $ is thus defined over $\Q$.
 We may freely replace $\nu_H$, then, by a $\Q$-rational element $\nu_H' \in \R.\nu_H$. 

First let us consider the latter two cases:   $\GG_{\R}$ is a ``complex group'' and  so $\Kinfty = \Kinfty^0$. 
 In this case (see \S \ref{Gsc types} or the original paper \cite{VZ}) $T$ factors through  a certain $\Kinfty$-type $\delta \subset \pi_{\infty}$, which occurs with multiplicity one inside
 $\wedge^{p_H} \mathfrak{p}$.   In particular, $\langle T(v), T(v) \rangle$
 is proportional simply to $\langle \proj_{\delta} (v), \proj_{\delta}(v) \rangle_B$. The ratio in question is therefore
 simply 
 $$(\dim \delta)^{-1}   \frac{\langle \proj_{\delta} \nu_H' , \proj_{\delta} \nu_H' \rangle_B}{\langle \nu_H', \nu_H' \rangle_{\Killing}}$$
 It suffices to see that $\proj_{\delta} \nu_H'$ is $\Q$-rational. 
However, the isomorphism class of $\delta$ is fixed by outer automorphisms of $\Kinfty$:
the highest weight of $\delta$ is the sum of positive roots, and the character of $\delta$ on the center of $\Kinfty$ is trivial. 
It follows that $\proj_{\delta}$, as a self-map of $\wedge^{p_H} \mathfrak{p}$, is actually defined over  $\Q$. 

In the remaining case \eqref{first},   fix a character $\chi: \Kinfty \rightarrow \{\pm 1\}$. 
 The subspace
 $$\Hom(\wedge^{p_H} \mathfrak{p}, \pi_{\infty})^{(\Kinfty,\chi)} \subset \Hom(\wedge^{p_H} \mathfrak{p}, \pi_{\infty})$$
transforming under $(\Kinfty, \chi)$, remains $1$-dimensional (if nonzero). 
  This space consists precisely of the $\Kinfty$-homomorphisms
 $$ \wedge^{p_H} \mathfrak{p} \longrightarrow \pi_{\infty}|_{\Kinfty} \otimes \chi.$$
In this case there is a unique irreducible $\Kinfty$-representation $\delta' \subset \wedge^{p_H} \mathfrak{p}$
 which is common to $\wedge^{p_H} \mathfrak{p}$ and $\pi_{\infty}|_{\Kinfty} \otimes \chi$.  This $\delta'$ 
splits into two irreducibles when restricted to $\Kinfty^0 =  \mathrm{PSO}_{n+1} \times \mathrm{PSO}_n(\R)$;
 these two irreducibles are switched by $\Kinfty/\Kinfty^0$, which is just the outer automorphism group of $\Kinfty^0$,
 and each irreducible occurs with multiplicity one inside $\pi_{\infty}$ (one in each irreducible factor of $\pi_{\infty}|_{\SL_{n+1} \times \SL_n}$). 
 It follows that the projection from $\wedge^{p_H} \mathfrak{p}$ to the $\delta'$-isotypical component
 is actually defined over $\Q$, and we can proceed just as before.   
\qed 

\subsection{Working hypotheses on period integrals} \label{sec:periodintegrals}
We now simplify \eqref{RT} a little bit further using the Ichino-Ikeda conjecture \cite{II}. 
Note that the original conjectures of Ichino and Ikeda were formulated only for orthogonal groups, but in fact
the analogue of their conjecture is known to be valid in the $\GL$ case (see, e.g. \cite[Theorem 18.4.1]{SV}, although the result is well-known to experts). 

At this point it is convenient, in cases (1) and (2) from  \S \ref{Ggp-precise}, 
to work with the $E$-groups $\HH_E, \G_E$ instead
of their restriction of scalars to $\Q$.  Recall that we regard $E=\Q$ in the remaining case. 

To normalize Tamagawa measures, 
we must choose a measure on $E_v$ for each place; we choose these measures so that the volume of $\adele_E/E$
is $1$ and so that the measure of the integer ring of $E_v$ is $\Q$-rational for every finite place $v$, and $1$ for almost every place $v$.  Note that this implies
that, for $v$ the archimedean place of $E$,
\begin{equation} \label{Lebesgue} \mbox{ measure on $E_v$} \sim D_E^{1/2}  \cdot \mbox{Lebesgue.}\end{equation}

Fix now $E$-rational invariant differential forms of top degree
on $\HH_E$ and $\G_E$ and use this to define Tamagawa measures $dh$ and $dg$ on $\HH_E(\adele_E) = \HH(\adele_{\Q})$ and $\G_E(\adele_E)=\G(\adele_{\Q})$,
thus on $[\HH] = [\HH_E]$ and $[\G_E]=[\GG]$; these global Tamagawa measures coincide with the ones made using $\Q$-rational differential forms.

We factorize $dh$ and $dg$ as $\prod dh_v$  and $\prod dg_v$ where $dh_v,dg_v $s are local Tamagawa measures,
and the factorization is over {\em places of $E$} rather than places of $\Q$.  As before, the $dh_v, dg_v$ depend on the choices  of differential form, but they only depend up to $\Q^*$,
since $|e|_v \in \Q^*$ for each $e \in E$ and each place $v$.  

We make the following assumptions.  (i) is the Ichino--Ikeda conjecture. As will be clear from the comments,  it should be possible to establish (i)-(iii) at least in many of the cases under consideration. However,  for lack of appropriate references (at least known to us),  we  formulate them as assumptions.

\begin{itemize}
\item[(i)] (Global integral): 
Suppose that, with reference to a factorization $\pi= \otimes \pi_v$, we factorize $T(\nu_H)$ as $\otimes_{v} \varphi_v$
and factorize also the inner product. Then 
$$ \frac{\left| \int_{[\HH]} T(\nu_H) dh \right|^2}{ \int_{[\GG]}  |T(\nu_H)|^2 dg} \sim   \prod_{v} 
 \frac{\int_{\HH(F_v)} \langle h_v \varphi_v, \varphi_v \rangle dh_v}{\langle \varphi_v, \varphi_v \rangle}.
$$
where the right-hand side is regularized as a global $L$-value according to (ii) below.

This is the conjecture of Ichino--Ikeda \cite{II}. Its validity in the $\PGL$ case is folklore, see e.g. \cite[Theorem 18.4.1]{SV}. 

\item[(ii)]  (Local integrals at almost all nonarchimedean places):  
At almost all nonarchimedean places $v$, with local Tamagawa measures $dh_v$, we have 
\begin{equation} \label{locint}   \underbrace{\frac{\int_{\HH(F_v)} \langle h_v \varphi_v, \varphi_v \rangle dh_v}{\langle \varphi_v, \varphi_v \rangle} }_{:= \mathrm{LHS}_v} = \underbrace{ \frac{\Delta_{G,v}}{\Delta_{H,v}^2} \frac{L(\frac{1}{2}, \pi_v; \rho)}{L(1, \Ad, \pi_v)} }_{:= \mathrm{RHS}_v}
\cdot   \end{equation} 
where the representation $\rho$ of the $L$-group of $\GG$ is that corresponding to the Rankin-Selberg $L$-function
in the $\mathrm{SO}$ cases, and that corresponding to the square of the Rankin-Selberg $L$-function in the $\PGL$ cases. 
Also $\Delta_{H,v}$ and $\Delta_{G,v}$
are the local factors described in \S \ref{ss:Tamagawa}. 

This is known in the $\SO$ cases by \cite[Theorem 1.2]{II} (note that our measure normalization differs from theirs),
and it is likely that one can verify it in the $\PGL$ case also but we do not know a reference.

   \item[(iii)]  (Local integrals at the remaining nonarchimedean places)
At {\em all} non archimedean  places,  we have  
\begin{equation} \label{iiicond} \frac{\mathrm{LHS}_v}{\mathrm{RHS}_v} \ \in \mathbf{Q},\end{equation}
where $\mathrm{LHS}_v, \mathrm{RHS}_v$ are the left- and right- hand sides of \eqref{locint}. \footnote{
Since here we are working at possibly ramified places, we should be clear on the definition of local $L$-factor that is used.  What is needed, for our argument,is the local $L$-factor associated to the global Galois parameter    for $\pi$,
whose existence is being assumed in our context, and  the $L$-factors occuring in \eqref{iiicond} should be understood in this sense.   Thus the desired result follows  if one knows at $v$
that the characteristic polynomial of Frobenius on inertial invariants has rational coefficients.   In the $\GL$ case it should likely also follow
from establishing a form of local-global compatibility for the global Galois parameter, together  with the fact that  the local Langlands correspondence preserves
Rankin-Selberg $L$-factors.}

We believe this is not too difficult to show:  the left-hand  side and right-hand side
should {\em individually} belong to $\Q$.
 However, we do not know a reference, and to make the argument  carefully would take us too far afield.

\item[(iv)]  (Rationality,  archimedean places)
For $v$ the unique archimedean place of $E$,  let 
  $T$ and $\nu_H$ be as in Lemma \ref{oink}.  
    If we are in the $\PGL_n \times \PGL_{n+1}$ over $\Q$ case, assume that $T$ transforms under the character of $\Kinfty/\Kinfty^{\circ} \simeq \{\pm 1\}$
    given by $x \mapsto x^{n+1}$.  
   
   Put $\varphi_v = T(\nu_H)  \in \pi_{\infty}$. 
With this choice of test vector, we have
\begin{equation} \label{arch eval} \frac{\mathrm{LHS}_v}{\mathrm{RHS}_v} \sim D_E^{d_H/2}\end{equation}
where $D_E$ was the  absolute value of the discriminant of $E$.

  Aside from the factor $D_E^{d_H/2}$, this simply states the belief that ``in good situations, the archimedean integral
 behaves like the nonarchimedean integrals.''  
 This   belief must be applied with  caution, see e.g \cite{Stade}, but seems reasonable in the instances at hand. 
 The factor   $D_E^{d_H/2}$ is necessary to  make the conjecture independent of $E$, because of \eqref{Lebesgue}. Note the very fact that $\mathrm{LHS}_v$ is {\em nonzero} is not known in all cases;
it has been proven by B. Sun by a remarkable positivity argument in the $\GL_n$ cases \cite{Sun}.

%
%
%
%

 \end{itemize}

\subsection{Summary}
 
Combining Proposition \ref{proptodo}
with the working hypotheses of  \S \ref{sec:periodintegrals}, we have proved: 
  
\begin{theorem} \label{period_theorem}
Let $\iota: Z(U) \rightarrow Y(K)$ be, as in \S \ref{sec:GH}, a map of arithmetic manifolds associated
to the inclusion $\HH \subset \GG$ and the element $g =(g_{\infty}, g_f) \in \G(\adele)$, as in \S \ref{Ggp-precise}.   Let $\pi$ be a cohomological automorphic representation for $\GG$
with $\pi = \overline{\pi}$. 
Let $T  \in \Hom_{\Kinfty^{\circ}}(\wedge^q \mathfrak{g}/\mathfrak{k}, \pi^K) $ be real and a $\Kinfty$-eigenfunction; in the $\PGL_n \times \PGL_{n+1}/_{\Q}$ 
case we assume its eigencharacter is as  noted above \eqref{arch eval}.  Let $\Omega(T)$ be the associated
differential form on $Y(K)$ (as in \eqref{omegadef}).  

Assuming the working hypothesis on period integrals (\S \ref{sec:periodintegrals}). 
Then
\begin{equation} \label{bigmonster} \frac{ \left( \int_{Z(U)} \iota^* \Omega(T)  \right)^2  }{\langle\Omega(T), \Omega(T)\rangle_{R}} \in \Q  c_f c_{\infty} 
     \left(  \frac{L(\frac{1}{2}, \pi; \rho)}{L(1, \Ad, \pi)} \right) 
 \end{equation}
  where $\rho$ is the representation of  the $L$-group occurring in \eqref{locint},  $c_f^2 \in \Q^*$, $c_{\infty}$ is a  half-integral power of $\pi$,   and the subscript $R$ means that we compute the $L^2$-norm with respect to a Riemannian  measure normalized as in \S \ref{measurenormalizations}. Explicitly: 
\begin{eqnarray*} c_f  & = &
   \left(    \disc \mathfrak{p} \cdot  D_E^{d_H}  \right)^{1/2}  \left( \in \sqrt{\Q^*} \right),  \\
   c_{\infty} & =& 
 \left(  \frac{\Delta_K}{\Delta_U^2} \right)   \left( \frac{ \Delta_{G,\infty}}{\Delta_{H,\infty}^2} \right)  \left(  \frac{L_{\infty}(\frac{1}{2}, \pi; \rho)}{L_{\infty}(1, \Ad, \pi)} \right)  
  \end{eqnarray*}

  Moreover, if $L(\frac{1}{2}, \pi; \rho) \neq 0$ and there exists a nonzero $\HH(\adele)$-invariant functional on the space of $\pi$, 
  it is possible to choose the data $(g_{\infty}, g_f)$ and level structure $U$ 
 in such a way that the left-hand side of \eqref{bigmonster}
  is also nonzero. 
 \end{theorem}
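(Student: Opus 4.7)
The plan is to combine Proposition \ref{proptodo} with the four working hypotheses (i)--(iv) of Section \ref{sec:periodintegrals}, tracking constants place-by-place, and then to deduce the non-vanishing statement from the hypotheses together with Sun's archimedean non-vanishing theorem. First I would apply Proposition \ref{proptodo} to rewrite the left-hand side of \eqref{bigmonster} as the automorphic period ratio $|\int_{[\HH]} g_f T(\nu_H)\, dh|^2 / \langle T(\nu_H), T(\nu_H)\rangle$, picking up the prefactor $(\disc \mathfrak{p})^{1/2}\,\Delta_{H/U}^2/\Delta_{G/K}$ up to $\Q^*$. Since $T$ was assumed to be real and a $\Kinfty$-eigenfunction with the eigencharacter specified in hypothesis (iv), Lemma \ref{oink} ensures that this reduction is compatible with the rationality of the norm ratio used in Proposition \ref{proptodo}.

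Next I would invoke hypothesis (i), the Ichino--Ikeda conjecture, to factorize the automorphic ratio as $\prod_v \mathrm{LHS}_v$ with respect to the chosen factorization $T(\nu_H) = \otimes_v \varphi_v$. At all but finitely many finite places I would apply hypothesis (ii) to identify $\mathrm{LHS}_v$ with $\frac{\Delta_{G,v}}{\Delta_{H,v}^2}\cdot\frac{L_v(1/2,\pi;\rho)}{L_v(1,\mathrm{Ad},\pi)}$; at the finitely many exceptional finite places, hypothesis (iii) says the ratio $\mathrm{LHS}_v/\mathrm{RHS}_v$ lies in $\Q^*$, so the same identity holds up to a rational scalar. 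At the archimedean place of $E$, hypothesis (iv) contributes the factor $D_E^{d_H/2}$. Assembling, the finite-place contribution is $\Delta_G^{\mathrm{fin}} / (\Delta_H^{\mathrm{fin}})^2 \cdot L^{\mathrm{fin}}(1/2,\pi;\rho)/L^{\mathrm{fin}}(1,\mathrm{Ad},\pi)$ up to $\Q^*$. Combining with the prefactor from Proposition \ref{proptodo} and separating archimedean from finite contributions (using $\Delta_{G/K} = \Delta_G/\Delta_K$, $\Delta_{H/U} = \Delta_H/\Delta_U$), the finite pieces collapse into the global ratio $L(1/2,\pi;\rho)/L(1,\mathrm{Ad},\pi)$ and a rational factor, while the archimedean $\Delta$'s and archimedean $L$-factors bundle exactly into the $c_\infty$ of the statement; the algebraic square-class factor $(\disc \mathfrak{p})^{1/2} \cdot D_E^{d_H/2}$ becomes the $c_f$.

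For the non-vanishing statement, I would argue that each local factor $\mathrm{LHS}_v$ can be made nonzero by appropriate choice of test vector. At almost all finite places, $\mathrm{LHS}_v$ equals a nonzero ratio of local $L$-factors by (ii). At the remaining finite places, the hypothesis that there exists a nonzero $\HH(\adele)$-invariant functional on $\pi$ implies (by the general non-vanishing of local $\HH(F_v)$-invariant functionals in this situation) that the local integral $\mathrm{LHS}_v$ is not identically zero on $\pi_v$; hence one may choose an open compact subgroup $K_v$ and a $K_v$-fixed vector $\varphi_v$ with $\mathrm{LHS}_v \neq 0$. These local choices determine $g_f$ and the level $U$. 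At the archimedean place, for $\varphi_\infty = T(\nu_H)$ with $T$ in the prescribed $\Kinfty/\Kinfty^\circ$-eigenspace, Sun's theorem \cite{Sun} supplies the non-vanishing of $\mathrm{LHS}_\infty$ in the $\GL$ cases (and an analogous statement is expected in the $\SO$ cases). Combined with the assumed non-vanishing of $L(1/2,\pi;\rho)$, Ichino--Ikeda gives that the global period $\int_{[\HH]} g_f T(\nu_H)\, dh$ is nonzero, whence so is the left-hand side of \eqref{bigmonster}.

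The main obstacle is the archimedean step: one cannot freely vary $\varphi_\infty$, because $\varphi_\infty = T(\nu_H)$ is forced by the cohomological nature of the problem. It is exactly this rigidity that makes hypothesis (iv) subtle and makes Sun's positivity argument \cite{Sun} essential. The bookkeeping of the rational constants $c_f$ and the archimedean constants $c_\infty$, by contrast, is routine once one matches the $\Delta_{G,v}, \Delta_{H,v}$ appearing in (ii) with the measure normalizations used in Proposition \ref{proptodo}; the only delicate point there is the factor $D_E^{d_H/2}$ in (iv), which is precisely what is needed for the right-hand side to be independent of the field $E$ in view of \eqref{Lebesgue}.
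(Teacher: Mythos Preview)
Your proposal is correct and follows essentially the same approach as the paper: combine Proposition~\ref{proptodo} with the working hypotheses (i)--(iv) of \S\ref{sec:periodintegrals}, and use the reality of $T$ to drop absolute values.

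Two small points of precision in the non-vanishing argument. First, at the bad finite places you speak of choosing a $K_v$-fixed vector $\varphi_v$, but in fact $\varphi_v$ is the $v$-component of $T(\nu_H)$ and is not freely chosen; the actual freedom is in $g_v$. The paper phrases this as: for any nonzero $\varphi_v \in \pi_v$, one can find $g_v \in \GG_E(E_v)$ with $\int_{\HH_E(E_v)} \langle h_v g_v \varphi_v, g_v \varphi_v \rangle\, dh_v \neq 0$, citing \cite{Waldspurger} and \cite{SV}. This is what fixes $g_f$ (and forces shrinking $U$). Second, you invoke Sun's theorem separately for archimedean non-vanishing, but note that hypothesis (iv) already asserts $\mathrm{LHS}_\infty/\mathrm{RHS}_\infty \sim D_E^{d_H/2}$, which is nonzero; so under the working hypotheses the archimedean non-vanishing is automatic and Sun's result is evidence for (iv) rather than an additional input.
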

 
 \proof This follows by putting together  Proposition \ref{proptodo}  with the  statements of \S  \ref{sec:periodintegrals}.
 The assumption that $T$ was real  means that   $\Omega(T)$ is a real differential form, and that $T(\nu_H)$ is a real-valued function on $[\G]$; this allows us to drop absolute value signs. 
The last sentence of the Theorem follows because for each finite place $v$ of $E$ and any nonzero $\varphi_v \in \pi_v$, 
it is (under the assumption quoted) possible to choose $g_v \in \mathbf{G}_E( E_v)$ with the property that $\int_{\mathbf{H}_E(E_v)} \langle h_v  g_v \varphi_v, g_v \varphi_v \rangle \neq 0$
(see \cite{Waldspurger} or \cite{SV}).

 \section{Compatibility with the   Ichino--Ikeda conjecture}

We now study more carefully the compatibility of our conjecture with the Ichino--Ikeda conjectures on periods.  We work in the following situation:

Let $\HH \subset \GG$ be as discussed in \S \ref{Ggp-precise}.  
Each case involves a field $E$, which is either imaginary quadratic $\Q(\sqrt{-D_E})$ or $E=\Q$.
We will generically use $\iota$ to denote an embedding $E \hookrightarrow\C$.  

As in \S \ref{Ggp-precise}, we use (e.g.)   $\GG_E$ for $\GG$ regarded as an $E$-group
and $\GG$ for it as a $\Q$-group and similar notations for Lie algebras: in particular $\mathfrak{g}_E$
is the Lie algebra of $\GG_E$, an $E$-vector space, whereas
$\mathfrak{g}_{\Q} = \mathrm{Res}_{E/\Q} \mathfrak{g}_E$ is the $\Q$-Lie algebra that is the Lie algebra of $\GG$.  

We use other notation as in section    \S \ref{sec:GH} and   \ref{Ggp-precise} ;
in particular we have a map of arithmetic manifolds $$\iota: Z(U) \rightarrow Y(K)$$ associated to $\HH, \GG$
and the element $g=(g_{\infty}, g_f) \in \G(\adele)$. 
The Borel--Moore cycle $\iota_* [Z(U)]$ defined by $\HH$ lies in  the minimal
cohomological dimension for tempered representations for $G$ (see \eqref{roobar} and Table \ref{inner5}), which we shall now call $q$:
$$ q =p_H = \mbox{minimal cohomological dimension for tempered representations.}$$

\subsection{Setup} \label{7setup}
We suppose that $\Pi$ is, as in \S \ref{sec:numerical_invariants},  the collection of cohomological representations for $\GG$ at level $K$
arising from a fixed character of the Hecke algebra with values in $\Q$.  These representations are all nearly equivalent, so it makes sense to talk about the $L$-function of $\Pi$ and so on. 
Just as in  \S \ref{sec:numerical_invariants} we want every representation in  $\Pi$ to be cuspidal and tempered at $\infty$, but we will impose
a more precise condition below.

\subsubsection{Condition on $\Pi$}
 In the cases {\em besides} $\PGL_n \times \PGL_{n+1}$ over $\Q$, 
we  assume that the level structure $K$ has multiplicity one for $\Pi$, in the sense that  \begin{equation} \label{multone}\dim H^q(Y(K), \Q)_{\Pi} = 1.\end{equation}
In particular, in this case,  there is just one automorphic representation in $\Pi$ contributing to  this cohomology, $\Pi = \{\pi\}$ say;
  in particular $\pi = \overline{\pi}$. We ask that $\pi$ be  tempered cuspidal (just as in our prior discussion in \S  \ref{sec:numerical_invariants}).  
 
 In this case, we obtain from $\pi$ a harmonic differential form
  $$\omega \in \Omega^q(Y(K))$$ whose cohomology class  generates $H^q(Y(K), \R)_{\Pi}$.
 
In the remaining case $\GG = \PGL_n \times \PGL_{n+1}/_{\Q}$, it impossible to satisy \eqref{multone} because of disconnectedness issues.  We ask instead that\footnote{  For example, for the group $\PGL_2$, a tempered cohomological dimension contributes two dimensions to cohomology -- an antiholomorphic form and a holomorphic form;
these are interchanged by the action of $\mathrm{O}_2$, and so \eqref{multone} holds.}
\begin{equation} \label{multone2} H^q(Y(K), \Q)_{\Pi}^{\pm } = 1, \end{equation}
 where $\pm$ denotes eigenspaces under
  $\Kinfty/\Kinfty^{\circ} \simeq \{\pm 1\}$. This again means
  there is just one automorphic representation $\Pi=\{\pi\}$ contributing to this cohomology (see discussion of cohomological representations for $\PGL_n(\R)$
  in \cite[\S 3]{Mahnkopf} or \cite[\S 5]{shahidiraghuram}); 
 we again require that $\pi=\overline{\pi}$ is tempered cuspidal.

 In this case, we obtain from $\pi$  harmonic differential forms
  $$\omega ^{\pm} \in \Omega^q(Y(K))$$ whose cohomology classes generate $H^q(Y(K), \R)_{\Pi}^{\pm}$.

\subsubsection{The cycle $Z(U)$ and its twisted versions} \label{psidef}
 We have available in all cases the class $\iota_* [Z(U)]$ in Borel--Moore homology. However in the case $\GG = \PGL_n \times \PGL_{n+1}$
 we want to twist it:  Fix an auxiliary quadratic character $\psi$ of   $\mathbf{A}_{\Q}^*/\Q^*$ which, at $\infty$, gives the sign character of
$\mathbf{R}^*$.

  The function $\psi \circ \det$ now gives rise to a locally constant function on $[\HH]$,
and thus a Borel--Moore cycle 
$$[Z(U)]_{\psi} \in H^{\bm}_{p_H}(Z(U), \Q)$$
of top dimension on $Z(U)$. To be precise first choose $U' \subset U$
on which $\psi$ is constant, so that $\psi$ gives a locally constant function on $Z(U')$, then push forward  the resulting cycle and multiply by
$\frac{1}{[U:U']}$; however, this will equal zero unless $\psi$ was trivial on $U$ to start with.   It will be convenient to write for $\varepsilon \in \pm 1$
\begin{equation} \label{ZUpmdef}  [Z(U)]^{\varepsilon} = \begin{cases} [Z(U)],  &  \varepsilon = (-1)^{n+1};  \\ [Z(U)]_{\psi}, & \varepsilon = (-1)^{n}. \end{cases}\end{equation}
 
\subsection{Motivic cohomology; traces and metrics}

We assume that there exists an adjoint motive $\Ad \Pi$ attached to $\Pi$, in the sense of Definition \ref{AdjointMotiveDefinition}.
By its very definition, it is equipped with an isomorphism
\begin{equation} \label{basic} H_B(\Ad \Pi, \C) \simeq \widehat{\mathfrak{g}}_{\Q,*} \otimes \C =  \widehat{\mathfrak{g}}. \end{equation}
where  $\widehat{\mathfrak{g}}_{\Q,*}$ is as in Definition \ref{AdjointMotiveDefinition}. 
Now we may define the motivic cohomology group 
\begin{equation} \label{Ldefagain} L_{\Pi} = H^1_{\m}(\coAd \Pi,  \Q(1)),\end{equation}
as in \eqref{Ldef}.
 As described in \S \ref{BRag}, 
 the regulator on $L_{\Pi}$ takes the shape 
\begin{equation} \label{Lreg} L_{\Pi}   \longrightarrow  \aG \end{equation}
and indeed $L_{\Pi}$ lands inside the twisted real structure on $\aG$.

\subsection{The volume of $L_{\Pi}$} 
There are two natural metrics which can be used to compute  
the volume of $L_{\Pi}$. One of these metrics arises from a bilinear form on the Lie algebra of $\mathbf{G}$,
and the other one arises from a bilinear form on the Lie algebra of the dual group. We will 
need to pass between the volumes with respect to these metrics in our later computations, and
so we explain now why they  both give the same volume, up to ignorable factors. 
 
As  per  \S \ref{traceforms} we can equip $\Ad \Pi$ with a weak polarization
whose Betti incarnation is the standard trace form 
 on $\widehat{\mathfrak{g}}$ itself. Note that $\widehat{\mathfrak{g}}$ is a sum of classical Lie algebras; by ``standard trace form,'' we mean that we take
 the form $\mathrm{tr}(X^2)$ on each factor, where  we use the standard representation of that factor. This is visibly $\Q$-valued on $\widehat{\mathfrak{g}}_{\Q}$. 
 We refer to this as the ``trace weak polarization'' and denote it by $\widehat{\tr}$.    
 
 This induces a quadratic form (denoted  $\widehat{\tr}^*$)  on $\widetilde{\mathfrak{g}}$, by duality, 
which corresponds to a weak polarization on $\Ad^* \Pi$.  
As in   \S \ref{Deligne metrics} we may use this to induce
a quadratic real-valued form  on $H^1_{\mathcal{D}}( (\Ad^* \Pi)_{\R}, \R(1))$, which we
extend to a Hermitian form on  $$H^1_{\mathcal{D}}( (\Ad^* \Pi)_{\R}, \R(1)) \otimes \C.$$  As in  \S \ref{motive0sec} this $\C$-vector space is identified
with $\widetilde{\mathfrak{g}}^{W_{\R}}$, and thus with $\aG$.  (Here, and in the remainder
of this proof, we understand $W_{\R}$ to act on $\mathfrak{g}$ by means of 
the tempered cohomological parameter, normalized  as in  \S \ref{TemperedCohomologicalParameter}.)

Explicitly, this induced  Hermitian form on $\aG$ is given by  
\begin{equation} \label{inducedmetric}  (X, Y) \in \aG \times \aG \mapsto \widehat{\tr}^*(X \cdot \overline{wY})\end{equation}
where $w$ is the long Weyl group element; we used the computation of the Betti conjugation
in the proof of   Lemma \ref{Beilinson regulator real structure}.

By its construction the Hermitian form \eqref{inducedmetric} is a real-valued quadratic form when it is restricted
to the twisted real structure $\mathfrak{a}_{G,\R}'$.   {\em This quadratic form need not be positive definite,}  since we started only with a ``weak'' polarization, but this makes little difference to us. 
The volume of $L_{\Pi}$
 with reference to $\widehat{\tr}^*$ may be analyzed by means  of Lemma    \ref{lem:vol-indep} (the failure
 of positive definiteness means that the volume may be purely imaginary: the square of the volume is, by definition, the determinant of the Gram matrix). 
 We denote this volume by $\vol_{\widehat{\tr}}(L_{\Pi})$.

On the other hand there is a different Hermitian form on $\aGs$, which {\em is} positive definite,
and whose interaction with the norm on harmonic forms is easy to understand. 
Namely, we have equipped (\S \ref{measurenormalizations}) $\mathfrak{g}_{\Q}$
 with a $\Q$-rational bilinear form, the trace form for a standard representation; 
this form endows $Y(K)$ with a Riemannian metric.  Then, by (i) of Proposition  \ref{basic properties},  $\aGs$  acts ``isometrically'' (in the sense specified there) for the dual  of the form given by
  \begin{equation} \label{Form2} (X, Y) \in  \aG \times \aG   \mapsto B(X, \overline{Y})\end{equation}
This form is  also real-valued on the twisted real structure $(\mathfrak{a}_{G,\R}^*)'$, and moreover it defines a positive definite quadratic form there.   It is positive definite
because $B(X, \bar{X}) > 0$ for  $X \in \mathfrak{p}$, and this 
  contains (a representative for) $\aG$.  To see that it is real-valued,   observe that   \begin{equation} \label{realvalued2}  \overline{ B(X,  \overline{w Y}) }=  B(\overline{X},  w  Y)  = B(\overline{w^{-1} X}, Y) =  B(\overline{w  X},  Y).\end{equation}
so $B(X,  Y) \in \R$ if $X, Y$ belong to the twisted real structure; 
but if $Y$ belongs to the twisted real structure, so does $\bar{Y}$.

By Lemma \ref{trace-doo-doo}, the quadratic forms given by restriction of $\widehat{\tr}$ to $\mathfrak{a}_{G,\R}^* \subset \widehat{\mathfrak{g}}$,
and the restriction of $B$ to $\mathfrak{a}_{G,\R}$, are in duality with one another (after possibly multiplying $\widehat{\tr}$ by $\frac{1}{4}$); thus also their complex-linear extensions 
 $\widehat{\tr}$ on $\aGs \subset \widehat{\mathfrak{g}}$
and $B$ on $\mathfrak{a}_G$ are dual to one another (up to the same possible rescaling). 
Noting that  $\widehat{\tr}^*$ on $\widetilde{\mathfrak{g}}^{W_{\R}}$ and
$\widehat{\tr}$ on $\mathfrak{g}^{W_{\R}}$ are also dual quadratic forms,
it follows that (as quadratic forms on $\aG$) we have an equality $B = \widehat{\tr}^*$ (up to the same possible rescaling).

We will be interested in  $$\vol_{\tr}(L_{\Pi}) := \mbox{ volume of $L_{\Pi}$ with respect to  \eqref{Form2}.}$$
 Choosing a $\Q$-basis $x_i$ for $L_{\Pi}$ and with notation as above,   we compute: 

\begin{align} \label{tr-tr} \vol_{\tr}(L_{\Pi})^2 &\stackrel{\eqref{Form2}}{=} \det B(x_i, \overline{x_j})  \stackrel{\mathrm{Lem.} \ \ref{trace-doo-doo}}{=}  4^k \widehat{\tr}^*(x_i, \overline{x_j})  =  4^k \det(w) \widehat{\tr}^*(x_i, \overline{w x_j})  \\ &=
(4^k \det(w)) \vol_{\widehat{\tr}}(L_{\Pi})^2 \end{align}
 for some $k \in \Z$.   Clearly $\det(w) = \pm 1$; it is possible that $\det(w) = -1$, but in any case our final results
will have factors of $\sqrt{\Q^*}$ which allow us to neglect this factor.

 \medskip

\subsection{}

We may  state our theorem: 

\begin{theorem} \label{MainPeriodTheorem} 
Notation as before, so that $(\HH, \GG)$ is as in  \S \ref{Ggp-precise}, 
the embedding $Z(U) \rightarrow Y(K)$ is set up as in \S \ref{sec:GH}, 
and the cuspidal cohomological automorphic representation $\Pi$ is as in \S \ref{7setup}.

 Make the following assumptions:
 
 \begin{itemize}
  \item[(a)] Beilinson's conjectures on special values of $L$-functions (both parts (a) and (b) of Conjecture \ref{conj:Beilinson}). 
 \item[(b)] Existence of an adjoint motive attached to $\Pi$  (as in Definition \ref{AdjointMotiveDefinition}), arising from a    $\widehat{G}$-motive attached to $\Pi$ (Conjecture \ref{Conj Pimotive} in Appendix
 \ref{Pimotive Appendix}).\footnote{The latter conjecture is, roughly speaking,  a generalization of requiring the existence of an adjoint motive,   but replacing the adjoint representation of the dual group 
 by all representations at once. However Conjecture \ref{Conj Pimotive} is a little less precise about coefficient fields than the existence of an adjoint motive as in Definition \ref{AdjointMotiveDefinition}.}
    \item[(c)] Working hypotheses on period integrals (\S \ref{sec:periodintegrals}). 
 \end{itemize}
 
Then, with $\omega, \omega^{\pm}$ as in \S \ref{7setup},  
and cycles
$[Z(U)]^{\pm}$ as in \S \ref{psidef}, 
we have
   \begin{equation} \label{Frodo2}  \frac{  \langle \omega, \iota_* [Z(U)] \rangle^2 }{\langle \omega, \omega \rangle}  = \sqrt{q} \left(  \vol_{\tr} \ L_{\Pi}\right)^{-1}, \ \  q \in\Q. \end{equation} 
  \begin{equation} \label{Frodo3}\frac{ \langle \omega^{+}, \iota_* [Z(U)]^+ \rangle^2 } {\langle \omega^+,  \omega^+ \rangle}    \frac{ \langle \omega^{-}, \iota_* [Z(U)]^- \rangle^2 } {\langle \omega^-, \omega^- \rangle}    = \sqrt{q}    \left(  \vol_{\tr} \ L_{\Pi}\right)^{-2}, \ \  q \in\Q. \end{equation} 
where $[Z(U)]^{\pm}$  is as in \eqref{ZUpmdef}; the pairing $\langle \omega, \iota_* [Z(U)] \rangle$ is to be interpreted 
as in Remark \ref{Cusp non cusp}.

  Moreover, in case \eqref{Frodo2}:  if the central value of the Rankin-Selberg $L$-function attached to $\Pi$ is nonvanishing 
  and there exists a nonzero $\HH(\adele)$-invariant functional on the space of $\Pi$, 
  it is possible to choose the data $(g_{\infty}, g_f)$ and level structure $U$ 
in such a way that  $q \neq 0$.  A similar assertion holds for \eqref{Frodo3}, where we require the same conditions both for $\Pi$ and its twist $\Pi^{\psi}$ 
(see \eqref{Pipsidef}).

 \end{theorem}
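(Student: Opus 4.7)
\proof (Proposal). The plan is to combine Theorem \ref{period_theorem} with Beilinson's conjecture applied to two different $L$-values, and then verify that the resulting ``Hodge--linear algebra'' of period determinants collapses (up to $\sqrt{\Q^*}$) to give $\vol_{\tr} L_{\Pi}$.

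First, apply Theorem \ref{period_theorem} with $T$ the harmonic form underlying $\omega$ (or $\omega^{\pm}$); this rewrites the left-hand side of \eqref{Frodo2}--\eqref{Frodo3} as
\[
\frac{\langle \omega, \iota_*[Z(U)]\rangle^2}{\langle \omega,\omega\rangle}
\; \sim_{\sqrt{\Q^*}} \; c_f \, c_\infty \, \frac{L(\tfrac12,\Pi;\rho)}{L(1,\Ad,\Pi)},
\]
where $c_f$ is a square class and $c_\infty$ is the explicit archimedean ratio of $\Gamma$-factors and discriminants written down in that theorem. In the $\PGL_n\times\PGL_{n+1}/\Q$ case, the product on the left-hand side of \eqref{Frodo3} replaces the single Rankin--Selberg value by the product of the two central values for $\Pi$ and its twist $\Pi^{\psi}$, which by functional-equation considerations is $\sim_{\sqrt{\Q^*}}$ the square of $L(\tfrac12,\Pi;\rho)$. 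Thus in either case the identity to prove takes the shape
\[
c_f\, c_\infty \, \frac{L(\tfrac12,\Pi;\rho)^m}{L(1,\Ad,\Pi)^m}
\;\sim_{\sqrt{\Q^*}}\; \bigl(\vol_{\tr} L_{\Pi}\bigr)^{-m},\qquad m\in\{1,2\}.
\]

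Next, apply Beilinson's conjecture (part (b) of Conjecture \ref{conj:Beilinson}) to the motive $\Ad^*\Pi$ at $s=1$. Combined with Lemma \ref{lem:vol-indep} (taking $S=\widehat{\tr}^*$, the weak polarization coming from the trace form on $\widehat{\mathfrak{g}}$) and the equality \eqref{tr-tr} relating $\vol_{\widehat{\tr}}$ and $\vol_{\tr}$, this yields
\[
\vol_{\tr}L_{\Pi} \;\sim_{\sqrt{\Q^*}}\; L^*(\Ad\,\Pi,0)\cdot \frac{\vol_{\widehat{\tr}} H_{\B}(\Ad\Pi_{\R},\Q)}{\vol_{\widehat{\tr}} F^1 H_{\dR}(\Ad\Pi)},
\]
which, via the functional equation, rewrites as $L(1,\Ad,\Pi)$ times a ratio of Hodge-theoretic periods depending only on $\Ad\Pi$ and the trace form.

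For the central value $L(\tfrac12,\Pi;\rho)$, invoke the existence of the $\widehat{G}$-motive attached to $\Pi$ (assumption (b)); this furnishes a motive $M(\rho)$ whose $L$-function is $L(s,\Pi;\rho)$, and Beilinson's conjecture at a critical point reduces to Deligne's conjecture and predicts $L(\tfrac12,\Pi;\rho) \sim_{\Q^*} c^{\pm}(M(\rho))$. The miracle to verify is that the resulting Deligne period, divided by the Hodge-theoretic factor from the previous paragraph and multiplied by the archimedean constant $c_\infty$, is a square-rational multiple of $c_f^{-1}$. This comparison will proceed branch by branch across the three families in \S \ref{Ggp-precise}, using: (i) the explicit Hodge decomposition of $\Ad\Pi = \Ad^*\Pi$ attached to the archimedean parameter $\rho_0$ of \eqref{rho0def}, (ii) the Hodge numbers of $M(\rho)$ determined by the branching of $\rho$ restricted to $W_{\R}$, and (iii) the factorization of $c_\infty$ into products of local $\Gamma_{\R}$ and $\Gamma_{\C}$ factors recorded in Table \ref{inner5}.

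The hardest step -- and the one where we expect all the substance of the theorem to concentrate -- is the bookkeeping in this last comparison. One must show that all the powers of $\pi$ appearing in $c_\infty$, in Deligne's period $c^{\pm}(M(\rho))$, and in the quotient $\vol_{\widehat{\tr}} H_{\B}/\vol_{\widehat{\tr}} F^1 H_{\dR}$ for $\Ad\Pi$, cancel exactly; that the remaining rational-period invariants -- in particular those attached to the Hodge pieces $V^{p,q}$ of $\Ad\Pi$ for $p>q$, which appear both in Deligne's period for $M(\rho)\otimes M(\rho)$ and in the adjoint computation -- match up as claimed; and that the residual square classes combine into a single factor in $\sqrt{\Q^*}$. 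The compatibility of these two Beilinson-type predictions is not a formal consequence of anything we have set up: it is the nontrivial content of the theorem, and the reason our main conjecture is consistent with Ichino--Ikeda. Finally, the last assertion on nonvanishing follows from the corresponding nonvanishing clause in Theorem \ref{period_theorem} applied to the constructed test vector.\qed
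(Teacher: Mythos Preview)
Your overall architecture matches the paper's: reduce the period side via Theorem \ref{period_theorem} to an $L$-value ratio times $c_f c_\infty$, rewrite $\vol_{\tr} L_\Pi$ via Lemma \ref{lem:vol-indep} and Beilinson, use the functional equation to turn $L^*(0,\Ad)$ into $L(1,\Ad)$, and then reduce everything to a Hodge--linear algebra identity relating a Deligne period to $\vol F^1 H_{\dR}(\Ad\Pi)$. The paper isolates this cleanly as \eqref{ghastly} and then splits it into the archimedean cancellation $\gamma_1'\gamma_2' \sim (2\pi i)^{-m}$ (read off from Table~\ref{table:gamma1}) and the identity \eqref{eqn:GGPcomp}, whose proof occupies all of \S\ref{GGP2}. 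You correctly identify that this last comparison is where the substance lies, and it is fine to defer it.

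There is, however, a genuine error in your treatment of the $\PGL_n\times\PGL_{n+1}/\Q$ case. You claim that the product $L(\tfrac12,\Pi;\rho)\,L(\tfrac12,\Pi^\psi;\rho)$ is, ``by functional-equation considerations,'' $\sim_{\sqrt{\Q^*}} L(\tfrac12,\Pi;\rho)^2$. This is false: $L(\tfrac12,\Pi^\psi;\rho)$ and $L(\tfrac12,\Pi;\rho)$ are genuinely independent central values and no functional equation collapses one to the other. Consequently your uniform reduction to $L(\tfrac12,\Pi;\rho)^m / L(1,\Ad,\Pi)^m \sim (\vol_{\tr} L_\Pi)^{-m}$ is not valid for $m=2$. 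The paper handles this by \emph{doubling}: it sets $\pi = \Pi \boxtimes \Pi^\psi$ on $(\PGL_n\times\PGL_{n+1})^2$, so that $\Ad\pi \simeq \Ad\Pi \oplus \Ad\Pi$, $L_\pi = L_\Pi \oplus L_\Pi$, and the Rankin--Selberg $L$-value for $\pi$ is the product of the two distinct central values. With that device both cases become formally identical, and the later period computation in \S\ref{PGLnQ} keeps $\Pi$ and $\Pi^\psi$ (equivalently $M$ and $M^\psi$) separate all the way through to \eqref{eqn:ggp-pglnR}; indeed, the need for both sign-twists of the Deligne period $c^\pm$ is precisely why the twist $\psi$ was introduced in the first place. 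You should replace your functional-equation shortcut by this doubling trick.
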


    Note that \eqref{Frodo2} and \eqref{Frodo3} conform exactly to the prediction of the conjecture
 -- see \eqref{frodo} and \eqref{period_matrix}.     
   In an early draft of this paper, we attempted to eliminate the factor of $\sqrt{\Q^*}$ as far as possible,
   and indeed found that (to the extent we computed) the square classes all appear to cancel -- often in a rather interesting way.
   However, this makes the computation exceedingly  wearisome, and to spare both ourselves and our readers
   such pain,  we have omitted it from the present version of the paper.

   \proof 
   We will now give the proof of the Theorem, relying however on several computations that will be carried out in the next section.
   To treat the two cases uniformally, it will be convenient to use the following shorthand {\em for this proof only}:
      
   \begin{itemize}
   \item[-] For all cases  {\em except} $\PGL_n \times \PGL_{n+1}$ over $\Q$, we put $\pi = \Pi$.  The reader
   is advised to concentrate on this case, the modifications for the other case being straightforward but notationally complicated. 
   
\item[-] In the remaining case of $\PGL_n \times \PGL_{n+1}$ over $\Q$, we ``double'' everything. 
First of all, factor $\Pi = \Sigma_{\PGL_n} \boxtimes \Sigma_{\PGL_{n+1}}$ as an external tensor product
of an automorphic representation on $\PGL_n$ and an automorphic representation on $\PGL_{n+1}$, as we may. 

Now  define a new automorphic representation on $\PGL_n \times \PGL_{n+1}$ 
\begin{equation} \label{Pipsidef}  \Pi^{\psi} = \begin{cases}  (\Sigma_{\PGL_n}  \cdot \psi   ) \boxtimes \Sigma_{\PGL_{n+1}},   & n \in 2\Z. \\  \Sigma_{\PGL_n}  \boxtimes (\Sigma_{\PGL_{n+1}} \cdot \psi),  & \mbox{else}. \end{cases}   \end{equation}
  be the twist of $\Pi$  by the quadratic character $\psi$, i.e., we twist by $\psi \circ \det$ only on the {\em even-dimensional} factor
  so that the resulting automorphic representation remains on $\PGL$.

Now, put 
$$ \pi = \Pi \boxtimes \Pi^{\psi}$$  considered as an automorphic representation of $(\PGL_n \times \PGL_{n+1})^2$.
Observe that the adjoint motive attached to $\Pi^{\psi}$ is identified with the adjoint motive attached to $\Pi$; 
thus  $L_{\pi} = L_{\Pi} \oplus L_{\Pi}$. 

Finally  replace all the groups $\Ginfty, \Kinfty, H, \Uinfty$ by a product of two copies: thus  $\Ginfty =  (\PGL_n(\R) \times \PGL_{n+1}(\R))^2, H_{\infty} = \GL_n(\R) \times \GL_n(\R)$ and so on. 
   \end{itemize}

 We have proved in Theorem \ref{period_theorem} 
 that 
\begin{equation} \label{recall}  \mbox{left-hand side of \eqref{Frodo2}} \sim  \sqrt{q} c_f c_{\infty} 
 \frac{L (\frac{1}{2}, \pi; \rho)}{L (1, \Ad, \pi)},\end{equation} where $\rho$ 
 is the representation of the dual group of $\GG$ described in that theorem.  Note in particular that
 $c_f \in \sqrt{\Q^*}.$
 In the $(\PGL_n \times \PGL_{n+1})^2$ case,  the same result holds, replacing \eqref{Frodo2} by \eqref{Frodo3}, and  now taking $\rho$ to be the sum of  two copies of the  tensor product representations of the two factors.

So let us look at the right-hand side of \eqref{Frodo2} or \eqref{Frodo3}, according to which case we are in. 
 Lemma \ref{lem:vol-indep}, applied with $\tr$ the trace weak polarization  
  and $p$ an {\em arbitrary} weak polarization on $\Ad \pi$, implies      \begin{eqnarray*}
   \volume_{\widehat{\tr}}  \ H^1_{\cM}(\Ad^* \pi, \Q(1))  &\sim_{\Q^{\times}} &   L^*(0, \Ad  \pi)  \frac{ \vol_{\widehat{\tr}} \ H_{\B}(\left( \Ad^* \pi\right)_{\R}, \Q)     }{ \vol_{\widehat{\tr}}  \  F^{1} H_{\dR} (\Ad^* \pi) }
  \\ &  \sim_{\sqrt{\Q^*}}&
 \frac{   L^*(0, \Ad \  \pi)   }{ \vol_p  \  F^{1} H_{\dR} (\Ad \pi) } \end{eqnarray*}
 where we also used, at the last step, the fact that $\vol_{\widehat{\tr}}F^{1} H_{\dR} (\Ad^* \pi) \sim_{\sqrt{\Q^*}} \vol_{p} F^1 H_{\dR}(\Ad \pi)$,
 beause $\Ad \pi$ and $\Ad^* \pi$ are abstractly isomorphic and $\vol_{S} F^1$ is independent, up to $\sqrt{\Q^*}$, of the choice of weak polarization $S$ (again, Lemma \ref{lem:vol-indep}). 
 Using \eqref{tr-tr} and \eqref{recall},   we see that proving \eqref{Frodo2} or \eqref{Frodo3}  is equivalent to verifying 
\begin{equation} c_{\infty} 
 \frac{L (\frac{1}{2}, \pi; \rho)}{L (1, \Ad, \pi)}  \frac{   L^*(0, \Ad \  \pi)   }{ \vol_p  \  F^{1} H_{\dR} (\Ad \pi) }  \sim_{\sqrt{\Q^*}} 1. \end{equation} 
The functional equation means that
$L^*(0,\Ad,\pi) = \sqrt{\Delta_{\Ad}}\frac{L_{\infty}(1, \Ad, \pi)}{L_{\infty}(0, \Ad, \pi)}L(1, \Ad, \pi),$ where  $\Delta_{\Ad} \in \Q^*$ the conductor of the adjoint $L$-function; so, substituting the expression for $c_{\infty}$
from Theorem  \ref{period_theorem}, we must check
 \begin{equation} \label{ghastly}   
  \underbrace{\frac{L_{\infty}(\frac{1}{2}, \pi; \rho) }{ L_{\infty}(0, \Ad, \pi)}}_{\gamma_1'}  
 \underbrace{\left(  \frac{\Delta_K}{\Delta_U^2} \right)   \left( \frac{ \Delta_{G,\infty}}{\Delta_{H,\infty}^2} \right)   }_{\gamma_2'} 
  \cdot  \frac{  L(\frac{1}{2}, \pi; \rho) }{ \vol_p(F^1 H_{\dR})}
\in \sqrt{\Q}.
 \end{equation} 

Now  computing case-by-case (see  Table \ref{table:gamma1} below):   
\begin{equation}
\label{eqn:gamma2p}
 \gamma'_1 \sim_{\Q^*} (2\pi i )^{-m}, \gamma'_2 \sim_{\Q^*} 1,
\end{equation} 
 where 
  \begin{equation} \label{mdef}
  m = \begin{cases}  n(n+1), \qquad \text{ if } G = \PGL_n \times \PGL_{n+1};  \\  n(n+1), \quad \text{ if } G=\Res_{E/\Q} (\PGL_{n} \times \PGL_{n+1}); \\ 
  2n^2, \quad \text{ if } G=\Res_{E/\Q} (\SO_{2n} \times \SO_{2n+1});  \\ 2n(n+1) \quad \text{ if } G=\Res_{E/\Q} (\SO_{2n+1} \times \SO_{2n+2}).\end{cases}
  \end{equation} 
 
Moreover,   (assuming Deligne's conjecture \cite{Deligne}, which  is   a special case of Beilinson's conjecture):    \begin{equation} \label{eqn:GGPcomp}     \frac{L(\frac{1}{2}, \pi; \rho)}{\vol_p(F^1 H_{\dR}(\Ad \pi))}  \in \sqrt{\Q} \cdot (2 \pi i)^m. \end{equation} 
 with $m$ the same integer as above.   Equation \eqref{eqn:GGPcomp}  requires an argument, and is in fact  quite surprising: the numerator is related to the Rankin-Selberg $L$-function and the denominator to the adjoint $L$-function,
 and so it is not apparent they should cancel.   This is the surprising cancellation that we have referred to in the introduction, and  we prove it in the next section. 
 
 The final assertion of Theorem  \ref{MainPeriodTheorem}  follows immediately from the corresponding assertion in  Theorem \ref{period_theorem}.

\qed

{\small 	    \begin{table}[ht]
\caption{Collates data  about the various cases; repeatedly uses 
 $
 \sum_{i=1}^{m} i(m+1-i) =\frac{1}{6} m(m+1)(m+2).
 $ 
``same'' means ``same as the other entry in the same row.'' ``sym'' means ``extend by symmetry.''}
\label{table:gamma1}

\bigskip
 \begin{tabular}{|c||c|c|c|}
 \hline
$G_{\infty} $   	 						&   $\SO_{2n}  \times \SO_{2n+1} /_{\C}$ 									&  $\SO_{2n+1}  \times \SO_{2n+2} /_{\C}$  			\\ \hline 
$H_\infty$ 							& $\SO_{2n} /_{\C}$ 													& $\SO_{2n+1} /_{\C}$ 							\\ \hline 
 $(d_K+r_K)$   				 			& $4n^2+2n$ 														& $4n^2+6n+2$  								\\ \hline 
 $ (d_U+r_U)$  				 			& $2n^2$  														& $2n^2+2n$ 									\\ \hline
  $\Delta_K/\Delta_U^2$   	 				& $ (\sqrt{\pi})^{(2n)} $ 												& $(\sqrt{\pi})^{2n+2}$ 	 						\\ \hline
    $\Delta_{G,\infty}$  					 & $\prod_{i=1}^{n-1} \Gamma_{\C}(2i)^2\Gamma_{\C}(n)\Gamma_{\C}(2n)$ 		&$ \prod_{i=1}^{n} \Gamma_{\C}(2i)^2 \Gamma_{\C}(n+1) $ \\ \hline
$\Delta_{H, \infty}$   		 				 &  $\prod_{i=1}^{n-1} \Gamma_{\C}(2i) \Gamma_{\C}(n)$						 &     $\prod_{i=1}^n \Gamma_{\C}(2i)$ 				\\ \hline
  $\Delta_{G,\infty}/\Delta_{H,\infty}^2$   			& $\Gamma_{\C}(2n)/\Gamma_{\C}(n) = \pi^{-n}$  						&  $\Gamma_{\C}(n+1) = \pi^{-n-1}$ 					\\ \hline
  $L(1/2,\pi_\infty,\rho)$    					& $\sim_{\Q^*}\pi^{-\frac{1}{3} (2n-1)(2n)(2n+1) - n(n+1) }$ 					 & $\sim_{\Q^*} \pi^{-\frac{1}{3} (2n)(2n+1)(2n+2) - n(n+1)}$  \\ \hline
 $L^* (0,\pi_\infty, \Ad)$  					& $\sim_{\Q^*} \pi^{-\frac{8}{3}(n-1)n(n+1)+n^2-3n}$							& $\pi^{-\frac{4}{3} n(n+1)(2n+1) +n(n+1)}$			 \\ \hline
 $\frac{L(1/2,\pi_\infty,\rho)}{L^* (0,\pi_\infty, \Ad)}$  & $\pi^{-2n^2} $													& $\pi^{-2n(n+1)}$								\\ \hline
  $M$ 									& {\tiny $ (2n-2,0)^1, \ldots, (n,n-2)^1, (n-1,n-1)^2, \mbox{sym.}	$}			&  \tiny{$(2n,0), \ldots, (n+1,n-1), (n,n)^2,  \mbox{sym.}$ 	}	\\ \hline
 $N$ 									& {\tiny $(2n-1, 0), (2n-2, 1), \ldots, (0,2n-1)$ 	}							&\tiny{$  (2n-1, 0), (2n-2, 1), \ldots, (0,2n-1)$ 	}					\\ \hline
 $ M \otimes N$ 							&  {\tiny $(4n-3,0)^1, (4n-4, 1)^2 \ldots, (3n-1,n-2)^{n-1}$ }  				&  {\tiny $  (4n-1,0)^1, (4n-2, 1)^2,  \ldots, (3n,n-1)^n$ }				\\  
  										& {\tiny $ (3n-2,n-1)^{n+1}, \ldots , (2n-1, 2n-2)^{2n}, \mathrm{sym}. $} 		& {\tiny $ (3n-1,n)^{n+2}, \ldots , (2n, 2n-1)^{2n+1}, \mbox{sym.} $}		\\ \hline
{\tiny $L(s, \Res_{E/\Q} M \otimes N)$ }				& {\tiny $\left( \prod_{i=1}^{n-1} \Gamma_\C (s-i+1)^i \cdot \prod_{i=n}^{2n-1} \Gamma_\C (s-i+1)^{i+1}\right)^2$}
																								&	 {\tiny $\left( \prod_{i=1}^{n} \Gamma_\C (s-i+1)^i \cdot \prod_{i=n+1}^{2n} \Gamma_\C (s-i+1)^{i+1} \right)^2$		}																																							\\ \hline		
 $ \Ad(M)$ 								&   \mbox{see text} 											 & 	 \mbox{see text}												\\ \hline
 $ \Ad(N)$ 								&  	{\tiny $ (2n-1,1-2n)^1, (2n-2,2-2n)^1,  (2n-3), 3-2n)^2, $}		&\mbox{same}													\\
 										&	{\tiny $ ((2n-4), -(2n-4))^2, \dots, (1,-1)^n, (0,0)^n, \mbox{sym} )$}		 & 																\\ \hline
\hline
\end{tabular} 
  \label{inner3}

\bigskip
 \begin{tabular}{|c||c|c|}
 \hline
$G_{\infty} $   								& $\PGL_n  \times \PGL_{n+1} /_{\C}$  							&   $\left(\PGL_n \times \PGL_{n+1}/_{\R}\right)^2$ 								\\ \hline 
$H_\infty$ 								&  $\GL_n /_{\C}$  											& $(\GL_n/_{\R})^2$ 												\\ \hline 
 $(d_K+r_K)$   								& $2n^2+4n-2 $ 											& $2n^2+2n$													 	 \\ \hline 
 $ (d_U+r_U)$  			 					&  $n^2+n$  												&	$n(n-1) + 2[n/2]$  														\\ \hline
     $\Delta_K/\Delta_U^2$   					&  $(\sqrt{\pi})^{2n-2}$  										&  $\pi^{2n-2[n/2]}$											\\ \hline
  $\Delta_{G,\infty}$  							&  $ (\prod_{i=2}^{n} \Gamma_{\C}(i))^2  \Gamma_{\C}(n+1)$ 			& $ (\prod_{i=2}^{n} \Gamma_{\R}(i))^4  \Gamma_{\R}(n+1)^2$					\\ \hline
      $\Delta_{H, \infty}$   						&  $(\prod_{i=1}^{n} \Gamma_{\C}(i)) $ 							 &  $ \prod_{i=1}^{n} \Gamma_{\R}(i))^2$													   \\ \hline
 $\Delta_{G,\infty}/\Delta_{H,\infty}^2$ 			& $\Gamma_{\C}(n+1)/ \Gamma_{\C}(1)^2\sim  \pi^{1-n}$ 				&  $\Gamma_{\R}(n+1)^2/\Gamma_{\R}(1)^4 \sim \pi^{2([n/2]-n)}$						 							\\ \hline 
 $L(1/2,\pi_\infty,\rho)$ 						&  $\sim_{\Q^*}\pi^{-\frac{2}{3} n(n+1)(n+2)}$  						&  $\mbox{same}$						  				 	   \\ \hline
 $L^* (0,\pi_\infty, \Ad)$ 						& $\sim_{\Q^*} \pi^{-\frac{1}{3} n(n+1)(2n+1)}$						& $\mbox{same}$											\\ \hline
 $\frac{L(1/2,\pi_\infty,\rho)}{L^* (0,\pi_\infty, \Ad)}$ 	& $\sim_{\Q^*} \pi^{-n(n+1)}$ 									& $\mbox{same}$															\\ \hline 
 $M$ 									& $ (n-1,0), (n-2,1) , \ldots , (0,n-1)$ 									& $\mbox{same}$	 																\\ \hline
 $N$ 									&$ (n,0), (n-1,1) , \ldots , (0,n)$ 	 							& $\mbox{same}$	  																\\\hline
 $ M \otimes N$ 							& $ (2n-1,0)^1, (2n-2,1)^2, \ldots, (n,n-1)^{n},  \mbox{sym.}	$			 & 	$\mbox{same}$												\\ \hline
 $L(s, \Res_{E/\Q} M \otimes N)$ \footnote{$L(s, M \otimes N) L(s, M' \otimes N)$ in the $E=\Q$ case}
 										& $(\Gamma_\C (s)^1 \Gamma_\C (s-1)^2 \cdots \Gamma_\C (s-n+1)^{n})^2$&$\mbox{same}$													 \\ \hline
 $ \Ad(M)$ 								&  $ (n-1,1-n)^1, \ldots, (1,-1)^{n-1}, (0,0)^{n-1},   \mbox{sym.}	$				 & $\mbox{same}$													\\ \hline
 $ \Ad(N)$ 								&  $ (n,-n)^1, \ldots, (1,-1)^{n}, (0,0)^{n},    \mbox{sym.} $		& $\mbox{same}$													\\ \hline					
  $L(s, \Ad, \Pi)$							& $ \left(\prod_{i=1}^j \Gamma_\C (s+i)^{j+1-i} \cdot \prod_{i=1}^{j+1} \Gamma_\C (s+i)^{j+2-i} \right)^2$ 
 																								& $\mbox{same}$											 \\ \hline

\end{tabular} 	
 \end{table} 
  }

\newcommand{\yuk}{\mathrm{yuk}}
\newcommand{\bg}{\mathrm{big}}
\newcommand{\sml}{\mathrm{small}}
 \section{Hodge linear algebra related to the Ichino--Ikeda conjecture} \label{GGP2}

	In this section, we will prove   
	most brutally     \eqref{eqn:GGPcomp}
	from the prior section.  To recapitulate,  and unpack some notation,  
     this  asserts
    that, for an automorphic cohomological representation $\Pi$ of $\GG$ as in Theorem \ref{MainPeriodTheorem}, we have
     \begin{equation} \label{eqn:recapitulate}   \sqrt{\Q} (2 \pi i)^m   \ni \begin{cases}   
       \frac{L(\frac{1}{2}, \Pi)}{\vol_p(F^1 H_{\dR}(\Ad \Pi))},  \GG = \SO_n \times \SO_{n+1}/E \mbox{ or}   \\  
        \frac{L(\frac{1}{2}, \Pi)^2 }{\vol_p(F^1 H_{\dR}(\Ad \Pi))},  \GG = \PGL_n \times \PGL_{n+1}/E \mbox{ or} \\
       \frac{L(\frac{1}{2}, \Pi)^2}{\vol_p(F^1 H_{\dR}(\Ad \Pi))}  \frac{L(\frac{1}{2}, \Pi^{\psi})^2}{\vol_p(F^1 H_{\dR}(\Ad \Pi))},  \GG = \PGL_n \times \PGL_{n+1}/\Q \end{cases}  \end{equation} 
wher $m$ is in \eqref{mdef}, and, in the last equation, $\psi$ is a quadratic character as in \S \ref{psidef},
and $\Pi^{\psi}$ is as in \eqref{Pipsidef}.   In all cases the $L$-function above is now the Rankin--Selberg $L$-function. 

 This    will follow (as explained below) from
 \eqref{eqn:ggp-pglnR}, \eqref{eqn:ggp-pglnC}, \eqref{eqn:ggp-so2n}, \eqref{eqn:ggp-so2n+1}
in the four cases.

 We note that Yoshida \cite{Yoshida} has given an elegant ``invariant-theoretic'' framework for doing computations of the type that we carry out here.   However we will follow a fairly direct approach, along the lines
 taken by M. Harris \cite{HarrisAdjointMotive}. In any case the main point is similar: the  period invariants described
 in \S  \ref{generalities} behave quite well under functorial operations.

	\subsection{Preliminaries}
  In all the cases,  the group $\GG$ is  the product of two classical groups  
   $$\GG=\Res_{E/\Q} (G_1 \times G_2),$$
   where $E$ is either $\Q$ or a quadratic imaginary extension of $\Q$, and $G_1, G_2$ are reductive $E$-groups. 
   
   There is a choice of whether we take $G_1$ to be the larger or smaller group.   In the case of $\PGL_n \times \PGL_{n+1}$,  we 
  take $G_1 = \PGL_n, G_2 = \PGL_{n+1}$.  In the cases involving $\SO_n \times \SO_{n+1}$
  we take $G_1$ to be the even orthogonal group, $G_2$ to be the odd orthogonal group and $E$ the imaginary quadratic field. 
 Then we may factor $\Pi$ into automorphic representations $\pi_i$ on $G_i$:
  $$\Pi =  (\pi_1 \boxtimes \pi_2)$$ 
 
  {\em  We will often use the abbreviation $j=n-1$ in the $\PGL_n \times \PGL_{n+1}$ cases.}  
  
 First of all, let us describe how to construct a $\Q$-motive whose $L$-function agrees  with the $L$-function appearing in \eqref{eqn:recapitulate}: 
   
The dual groups of the algebraic $E$-groups $G_1$ and $G_2$  are classical groups, and as such their $c$-groups have a ``standard'' representation:
 standard on the dual group factor, and we fix the $\mathbb{G}_m$ factor
 so that the weight of the associated motive is given by  $n-1$ in the $\PGL_n$ cases and $k-2$ in the $\SO_k$ cases. 
   The reader is referred to Appendix \ref{sec:archpar} for more detail on these standard representations.

Conjecture  \ref{Conj Pimotive} of \S \ref{Pimotive Appendix} states that  attached to $\pi_1, \pi_2$ are 
systems of motives indexed by representations of the $c$-group;
in particular, attached to the ``standard representations'' just mentioned,
we get motives $M$  (for $\pi_1$) and $N$ (for $\pi_2$).  

Here a subtlety arises, similar to  that discussed in \S \ref{sec: fake fake}:  the morphisms from the motivic Galois group to the $c$-group
(from Conjecture \ref{Conj Pimotive})  \label{c group inner twist}
are not necessarily defined over $\Q$.  Thus, in general, we can construct the motives $M, N$ only with $\overline{\Q}$-coefficients, and they cannot
   be descended to $\Q$-motives. {\em For the moment, however, we suppose they can be descended to $\Q$-coefficients,} and 
   write $M$ and $N$ for the motives  with $\Q$-coefficients thus attached to
  $\pi_1$ 
 and $\pi_2$ respectively. 
This italicized assumption is not necessary: the argument can be adapted to the general case by using an auxiliary coefficient field;   for expositional ease we postpone this argument to   Sec. \ref{sec:mot-coeffs}. 

Proceeding under the italicized assumption for the moment, then, we obtain $E$-motives $M$ and $N$ with $\Q$-coefficients, whose $L$-functions coincide with the $L$-functions of the standard representations of $\pi_1$ and $\pi_2$, 
shifted by a factor of one-half of the weight of the motive.  
By computing the determinant of the standard representations, we verify 
\begin{equation} \label{detcomp1} \det(M) \simeq \Q(-n(n-1)/2) \mbox{ and}  \det(N) \simeq \Q(-n(n+1)/2)\end{equation}
in the $\PGL$ cases, and 
\begin{equation} \label{detcomp2}   \det(M) \simeq \Q(-2n(n-1))^{\chi}  \ \ (\SO_{2n}) \mbox{ and }   \det(N) \simeq \Q(-n(2n-1)) \ (\SO_{2n+1}).\end{equation}
where  $\chi$ is the quadratic character of $E$ that arises from the action on the Dynkin diagram of $\SO_{2n}$. 
These equalities will be used to evaluate period determinants attached to $M$ and $N$. 

Moreover, if we write  
  $$\bM = \Res_{E/\Q} (M\otimes N)$$
  then we have an equality of $L$-functions:
 \begin{equation}
 \label{eqn:lfunc-equal}
 L(s+\frac{1}{2},\Pi) = L(s+r,\bM) =L(s,\bM (r)),
 \end{equation}
where 
\begin{align*}
r = \begin{cases} & n,  \qquad \text{ if } G = \PGL_n \times \PGL_{n+1}  \\ 
& n,  \quad \text{ if } G=\Res_{E/\Q} (\PGL_{n} \times \PGL_{n+1}) \\ 
& 2n-1,  \quad \text{ if } G=\Res_{E/\Q} (\SO_{2n} \times \SO_{2n+1}) \\ 
& 2n,  \quad \text{ if } G=\Res_{E/\Q} (\SO_{2n+1} \times \SO_{2n+2}).
\end{cases}
\end{align*}

In the case $\PGL_n \times \PGL_{n+1}$ over $\Q$  it is also useful to note that
$$ L(s+\frac{1}{2}, \Pi^{\psi}) = L(s+r, \bM^{\psi}) = L(s, \bM^{\psi}(r))$$
with $\Pi^{\psi}$ as in \eqref{Pipsidef} and one can express $\bM^{\psi}$ either as  $M^{\psi} \otimes N$ or $M \otimes N^{\psi}$;
here, in all cases, the superscript $\psi$ on a motive means that we tensor by the one-dimensional Artin motive corresponding to $\psi$.   In general twisting by $\psi$  can change the determinant, 
so that the twisted motive $M^\psi$ (or $N^\psi$) may only correspond to an automorphic form on 
$\GL_n$ (or $\GL_{n+1}$) rather than $\PGL_n$ (or $\PGL_{n+1}$); however this does not affect the computations below, and because of $M^{\psi} \otimes N = M \otimes N^{\psi}$
we can freely twist whichever factor is most convenient for the computation. 

We will  freely use the  $c^+, c^-, \delta$  periods 
of a $\Q$-motive; these are defined in \S \ref{deltacc}.

To avoid very heavy notation, we shall write: 
\begin{align*} \mathcal{L}_M &= F^1 H^0_{\dR} (\Res_{E/\Q} \Ad (M)),\\
 \mathcal{L}_N &= F^1 H^0_{\dR} (\Res_{E/\Q} \Ad (N)).  
 \end{align*} 
These are $\Q$-vector spaces. If the motives in question are equipped with a weak polarization, we may compute the volumes of $\mathcal{L}_M, \mathcal{L}_N$
 according to this polarization, as in \S \ref{volLcomp}.  However, these volumes can be defined
 intrinsically, as in the proof of Lemma \ref{lem:vol-indep}. Thus if we write $\vol(\mathcal{L}_M)$
 without specifying a polarization, we mean the class in $\C^*/\sqrt{\Q^*}$ defined as in that Lemma. 
 
 Also, observe that the adjoint motives for $M$ and $M^{\psi}$ are canonically identified, so we do not need to distinguish
 between $\mathcal{L}_M$ and $\mathcal{L}_{M^{\psi}}$. 
 
 We note that  the adjoint motive for $\Pi$ is identified with $\Res_{E/\Q} \Ad(M) \oplus \Res_{E/\Q} \Ad(N)$, and so  
  \begin{equation}
 \label{eqn:volMN}
 \vol (F^1 H^0_{\dR} (\Ad \Pi))= \vol (\mathcal{L}_M) \vol (\mathcal{L}_N),
 \end{equation} 
 the equality being of complex numbers up to $\sqrt{\Q^*}$. 
Moreover assuming Deligne's conjecture \cite{Deligne}, which is a special case of Beilinson's conjecture,  for the motive $\bM(r)$, we have:  
\begin{equation}
\label{eqn:deligne-conj}
\frac{L(0,\bM(r))}{c^+ (\bM(r))} \in \Q. 
\end{equation}
 
 Now combining \eqref{eqn:volMN}, \eqref{eqn:lfunc-equal}
 and \eqref{eqn:deligne-conj}, we see that the sought after relation \eqref{eqn:recapitulate} reduces 
 to a relation between $c^+ (\bM(r))$, $\vol (\mathcal{L}_M)$, $\vol (\mathcal{L}_N)$, namely
 \begin{equation} 
\label{eqn:condensate} 
 \frac{c^+(\bM(r))^{e}}{ \vol(\mathcal{L}_M) \vol(\mathcal{L}_N)  } \sim_{\sqrt{\Q^*}} (2\pi i )^{m}
\end{equation}
in the $\SO$ cases (with $e=1$) or the $\PGL$ over $E$ case (with $e=2$), or in the remaining case:
   \begin{equation} 
\label{eqn:condensate2} 
 \frac{c^+(\bM(r))^2}{ \vol(\mathcal{L}_M) \vol(\mathcal{L}_N)  }
  \frac{c^+(\bM^{\psi}(r))^2}{ \vol(\mathcal{L}_M) \vol(\mathcal{L}_N) }  \sim_{\sqrt{\Q^*}} (2\pi i )^{m}.
\end{equation}

We verify these statements case-by-case in 
 \eqref{eqn:ggp-pglnR}, \eqref{eqn:ggp-pglnC}, \eqref{eqn:ggp-so2n}, \eqref{eqn:ggp-so2n+1} below.

\subsection{Period invariants of motives} \label{generalities}
   
  Our proof of \eqref{eqn:condensate} and \eqref{eqn:condensate2} will be to write both sides in terms of certain elementary ``period invariants''
  attached to the motives $M$ and $N$.     Such period invariants have been previously considered by   M. Harris \cite{HarrisCrelle}. 
  More precisely we attach an invariant $\cQ_p \in \mathbb{C}^*/E^*$   to the motive $M$, any integer  $p$ for which $F^pH_{\dR}(M)/F^{p+1} H_{\dR}(M)$ is one-dimensional,
  and an embedding $\sigma: E \hookrightarrow \C$.  The main point is that these invariants  behave quite nicely under functorial operations. 
   
  \subsubsection{Bases}

  Let $M$ be a pure motive over $\Q$ of weight $j$ and let $V$ denote the $\Q$-Hodge structure $H^*_B(M_\C, \Q)$.  
Let $V_\C = \oplus_p V^{p,j-p}$ be the Hodge decomposition. We have natural isomorphisms 
\begin{equation}  \label{Qstruc}
V^{p,j-p} \simeq \frac{F^p H^*_{\dR} (M)}{F^{p+1} H^*_{\dR} (M)} \otimes \C.
\end{equation} 
This isomorphism gives a $\Q$-structure on $V^{p,j-p}$, namely that coming from $\frac{F^p H^*_{\dR} (M) }{F^{p+1} H^*_{\dR} (M)}$.

\begin{lemma*}  
Let $\omega_p$ be any element of $V^{p,j-p}$ that is $\Q$-rational for the $\Q$-structure defined above. Then $c_{\dR} (\omega_p) = \omega_p$. Equivalently,
$F_\infty (\omega_p) =c_{\B}(\omega_p)= \bar{\omega}_p$. 
\end{lemma*}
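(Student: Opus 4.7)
The plan is to prove $c_{\dR}(\omega_p) = \omega_p$ by establishing two complementary facts: (i) $c_{\dR}(\omega_p) \in V^{p,j-p}$, and (ii) $c_{\dR}(\omega_p) \equiv \omega_p \pmod{F^{p+1} V_\C}$. Since $V^{p,j-p}$ is a complement to $F^{p+1} V_\C$ inside $F^p V_\C$ (as follows from the Hodge decomposition $F^p V_\C = V^{p,j-p} \oplus F^{p+1} V_\C$), these two facts force literal equality in $V_\C$. The equivalent reformulation $F_\infty(\omega_p) = c_{\B}(\omega_p) = \bar{\omega}_p$ then follows immediately from the identity $F_\infty c_{\B} = c_{\dR}$ of \eqref{Fcc} together with $F_\infty^2 = \mathrm{id}$.

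For (i) I would factor $c_{\dR} = F_\infty \circ c_{\B}$. The standard Hodge symmetry gives $c_{\B}(V^{p,j-p}) = V^{j-p,p}$, expressing that the two Hodge pieces are complex conjugates with respect to the Betti $\Q$-structure. Since $F_\infty$ is complex-linear and is induced by the topological complex conjugation on $M_\C$, it swaps $V^{p,q}$ with $V^{q,p}$. The composition $F_\infty \circ c_{\B}$ therefore sends $V^{p,j-p}$ back into itself, which is (i).

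For (ii) I would use the definition of the $\Q$-structure on $V^{p,j-p}$ furnished by \eqref{Qstruc}: rationality of $\omega_p$ means exactly that its image in $(F^p H_{\dR}(M)/F^{p+1} H_{\dR}(M)) \otimes \C$ lies in the rational subspace. Lifting to a rational representative, we may write $\omega_p = \widetilde{\omega} + \xi$ with $\widetilde{\omega} \in F^p H_{\dR}(M)$ rational and $\xi \in F^{p+1} V_\C$. Then $c_{\dR}(\widetilde{\omega}) = \widetilde{\omega}$ by rationality, while $c_{\dR}(\xi) \in F^{p+1} V_\C$ since this subspace is the complexification of a $\Q$-subspace of $H_{\dR}(M)$; combining, $c_{\dR}(\omega_p) - \omega_p = c_{\dR}(\xi) - \xi \in F^{p+1} V_\C$. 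No serious obstacle arises in this argument; the only point requiring care is the interaction of $F_\infty$ with the Hodge bigrading used in step (i), which is standard.
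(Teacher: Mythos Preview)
Your proof is correct and follows essentially the same route as the paper: lift $\omega_p$ to a rational class $\widetilde{\omega}\in F^p H_{\dR}(M)$ to see that $c_{\dR}(\omega_p)-\omega_p\in F^{p+1}V_\C$, and combine this with $c_{\dR}(V^{p,j-p})\subset V^{p,j-p}$ and the injectivity of $V^{p,j-p}\hookrightarrow H_{\dR}/F^{p+1}\otimes\C$. The only difference is cosmetic: the paper simply asserts that $c_{\dR}$ preserves $V^{p,j-p}$, whereas you justify it via the factorization $c_{\dR}=F_\infty\cdot c_{\B}$ and the fact that each factor swaps the Hodge bigrading.
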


\proof  
The element $\omega_p$  corresponds via the isomorphism above to an element $\omegat_p$ in $F^p H^j_{\dR} (M) $ that is well defined up to elements of $F^{p+1} H^j_{\dR} (M)$. Let us fix once and for all such an $\omegat_p$ so that 
$$ \eta_p:=\omega_p - \omegat_p \in F^{p+1} H^j_{\dR} (M) \otimes \C.$$
Then
  $$c_{\dR} \omega_p - \omega_p = c_{\dR} \eta_p - \eta_p \in  F^{p+1} H^j_{\dR} (M) \otimes \C $$
  (since $c_{\dR}$ preserves the Hodge filtration).  Since $c_{\dR}$ preserves
  the spaces $V^{p,j-p}$ and $V^{p, j-p}$ injects into $H^j_{\dR}(M) /F^{p+1} H^j_{\dR}(M) \otimes \C$, 
  we deduce that $c_{\dR} \omega_p = \omega_p$, as claimed.  
  \qed

  \subsubsection{Motives over $E$} \label{E motive}
   Now suppose that $M$ is a motive over $E$; for this subsection, suppose that $E$ is an imaginary quadratic field.
   
   Let $\sigma$ denote the 
given embedding of $E$ in $\C$ and $\bar{\sigma}$ the complex conjugate of $\sigma$. Then the interaction between the Betti-de Rham comparison isomorphisms and complex conjugation is described by the commutativity of the following diagram:

\xymatrix{
& & & H^*_{\dR}(M) \otimes_{E,\sigma} \C  \ar[r]^{\simeq}_{\varphi_\sigma} \ar[d]_{c_{\dR}} & H^*_B(M_{\sigma,\C}) \otimes \C =: V_\sigma  \ar[d]^{F_\infty \cdot c_{\B}}  \\
& & & H^*_{\dR}(M) \otimes_{E,\sigmab} \C  \ar[r]^{\simeq}_{\varphi_{\bar{\sigma}}} & H^*_B(M_{\sigmab,\C}) \otimes \C =: V_{\bar{\sigma}}  \\
}

Here $c_{\dR}$ is complex conjugation on the second factor, 
$c_{\B}$ is complex conjugation on the second factor, 
$F_\infty$ denotes the map on $H^*_B$ induced 
by complex conjugation on the underlying analytic spaces, and $\varphi_\sigma$, $\varphi_{\bar{\sigma}}$ denote the comparison isomorphisms. 
For $\omegat$ any element in $H^*_{\dR} (M)$, we denote 
by $\omegat^\sigma$ and $\omegat^{\bar{\sigma}}$ the images of $\omegat$ under $\varphi_\sigma$ and $\varphi_{\bar{\sigma}}$ respectively.

Note that 
$$
F_\infty: V_{\sigma}^{p,q} \rightarrow V_{\bar{\sigma}}^{q,p}, \quad c_{\B}: V_{\sigma}^{p,q} \rightarrow V_{\sigma}^{q,p}
\mbox{ and so }
F_\infty c_{\B} : V_{\sigma}^{p,q} \rightarrow V_{\bar{\sigma}}^{p,q}.
$$

Now the map
$$ \varphi_\sigma: F^p H^*_{\dR} (M) \otimes \C \rightarrow \oplus_{i \ge p} V_\sigma^{i, j-i} $$
induces an isomorphism
\begin{equation}
\label{eqn:dRB-isom-sigma}
\frac{F^p H^*_{\dR} (M)}{F^{p+1} H^*_{\dR} (M)} \otimes \C \simeq V_\sigma^{p,j-p},
\end{equation}
and likewise with $\sigma$ replaced by $\bar{\sigma}$.

Next, we discuss how  restriction of scalars (taken in the sense of \cite[Example 0.1.1]{Deligne}) interacts with cohomology.   If $M$ is any motive over $E$, and $\bM := \Res_{E/\Q} (M)$, then 
\begin{equation} \label{restriction}H^*_{\dR} (\bM) = H^*_{\dR} (M),\end{equation}
viewed as a $\Q$-vector space, and 
\begin{equation} \label{Betti res}
H^*_B(\bM) = H^*_{B,\sigma} (M) \oplus H^*_{B,\bar{\sigma}} (M).
\end{equation}

  \subsubsection{Standard elements $\omegat, \omega$.} \label{NIST}
  We return to allowing $E$ to be either $\Q$ or a quadratic imaginary field.
    
    Now, we will use the following notation. For the various $E$-motives $M$ we will consider, 
    let $p$ be any integer  such that   $\dim F^p/F^{p+1}=1$ and $p^*$ the dual integer, so that $p+p^*$ equals the weight of $M$.
    
 We denote by
  $$\omegat_p \in F^p H^*_{\dR}(M)$$  any element that spans the one-dimensional quotient $F^p H^*_{\dR} (M) /F^{p+1} H^*_{\dR}(M)$.
  For  $\sigma: E \hookrightarrow \C$ an embedding we define 
 $$\omega_p^{\sigma} \in H_{\B}(M_{\sigma}, \C)^{p,p^*}$$ the element corresponding to $\omegat_p$ via the 
 isomorphism \eqref{eqn:dRB-isom-sigma}.
 If $E =\Q$ we will omit the $\sigma$.  Observe that
\begin{equation} \label{Gove} F_\infty c_{\B} (\omega_p^\sigma) = \omega_p^{\bar{\sigma}}.\end{equation}
  
Whenever $\omega_p$ and $\omega_{p^*}$ are defined, we define complex scalars $\cQ_p^{\sigma}$ by the rule 
 \begin{equation} \label{rel1} \overline{\omega^\sigma_p}   = c_{\B}(\omega^\sigma_p)  = \cQ_p^{\sigma}  \omega^\sigma_{p^*} \cdot  \begin{cases} 1, & p < p^* \\ 1 = (-1)^w, & p =p^*, \\  (-1)^{w}, & p > p^*  \end{cases}.  \end{equation}
Observe that
\begin{equation} \label{rel2}  \overline{\cQ_p^{\sigma}} \cQ_{p^*}^{\sigma} = (-1)^w.\end{equation}
This invariant is compatible with complex conjugation: 
 \begin{lemma} \label{821lemma}
$\overline{\cQ_p^{\sigma}} = \cQ_p^{\bar{\sigma}}$.   
\end{lemma}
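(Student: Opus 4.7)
The plan is to play off the two defining relations for $\cQ_p^\sigma$ and $\cQ_p^{\bar\sigma}$ against each other using the commuting square relating $F_\infty$, $c_{\B}$, and the two comparison isomorphisms $\varphi_\sigma, \varphi_{\bar\sigma}$. The only substantive inputs should be equation \eqref{Gove} (which says $F_\infty c_{\B}(\omega_p^\sigma)=\omega_p^{\bar\sigma}$), the relation \eqref{rel2}, and the fact that $F_\infty$ is $\C$-linear and commutes with $c_{\B}$ (since one acts on the topological space and the other on the coefficients).

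Concretely, I would start with the defining relation
\[
c_{\B}(\omega_p^\sigma)=\varepsilon\,\cQ_p^\sigma\,\omega_{p^*}^\sigma,
\]
where $\varepsilon\in\{1,(-1)^w\}$ is the sign appearing in \eqref{rel1}. Applying the $\C$-linear map $F_\infty$ and invoking $F_\infty c_{\B}=c_{\B} F_\infty$, the left side becomes $\omega_p^{\bar\sigma}$ by \eqref{Gove}, while the right side becomes $\varepsilon\,\cQ_p^\sigma\,F_\infty(\omega_{p^*}^\sigma)$. To rewrite $F_\infty(\omega_{p^*}^\sigma)$: applying \eqref{Gove} with $p$ replaced by $p^*$ and then $c_{\B}$ gives $F_\infty(\omega_{p^*}^\sigma)=c_{\B}(\omega_{p^*}^{\bar\sigma})$. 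Thus
\[
\omega_p^{\bar\sigma}=\varepsilon\,\cQ_p^\sigma\,c_{\B}(\omega_{p^*}^{\bar\sigma}).
\]

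Now I would apply the defining relation \eqref{rel1} for $\cQ_{p^*}^{\bar\sigma}$ to the factor $c_{\B}(\omega_{p^*}^{\bar\sigma})$: this produces a sign $\varepsilon''$ (the sign attached to the index $p^*$, which is ``dual'' to $\varepsilon$) times $\cQ_{p^*}^{\bar\sigma}\,\omega_p^{\bar\sigma}$. A direct case-check of \eqref{rel1} shows $\varepsilon\varepsilon''=(-1)^w$ in every case (including $p=p^*$, where $w$ is even). Cancelling $\omega_p^{\bar\sigma}$ then yields
\[
\cQ_p^\sigma\cdot\cQ_{p^*}^{\bar\sigma}=(-1)^w.
\]
Comparing with the identity $\overline{\cQ_p^{\bar\sigma}}\cdot\cQ_{p^*}^{\bar\sigma}=(-1)^w$ furnished by \eqref{rel2} (applied to $\bar\sigma$), and dividing, gives $\cQ_p^\sigma=\overline{\cQ_p^{\bar\sigma}}$, which is the claim.

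I do not anticipate a real obstacle here; the proof is a formal manipulation once one carefully tracks signs. The only delicate point is the bookkeeping of the sign $\varepsilon$ in \eqref{rel1}, which has a different value for $p$ and for $p^*$ when $p\neq p^*$; the cleanest way to handle this is probably to verify once and for all that $\varepsilon\varepsilon''=(-1)^w$, after which the argument above is immediate. The same calculation that establishes \eqref{rel2} (from $c_{\B}^2=1$) also produces this sign identity, so in practice both \eqref{rel2} and the lemma can be obtained together.
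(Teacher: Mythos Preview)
Your proposal is correct and takes essentially the same approach as the paper. The paper's proof simply fixes $p\le p^*$ to pin down the signs as $\varepsilon=1$, $\varepsilon''=(-1)^w$, derives $\cQ_p^\sigma\cQ_{p^*}^{\bar\sigma}=(-1)^w$ from \eqref{Gove} exactly as you do, and then compares with \eqref{rel2}; your version keeps the signs generic but is otherwise the same manipulation.
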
 
\proof
 We have for $p \leq p^*$
$$  
F_\infty (  \cQ_p^\sigma \omega_{p^*}^\sigma) \stackrel{\eqref{Gove}}{=}   \omega_p^{\bar{\sigma}} \quad \text{ and } \quad  F_\infty (  \cQ_{p^*}^{\bar{\sigma}} \omega_{p}^{\bar{\sigma}}) \stackrel{\eqref{Gove}}{=}  (-1)^w \omega_{p^*}^{\sigma}. $$ 
which together imply that  $\cQ_p^{\sigma} \cQ_{p^*}^{\bar{\sigma}}=(-1)^w$; now compare with\eqref{rel2}. \qed

 As a result, we will sometimes write
 $$ |\cQ_p|^2 = \cQ_p^{\sigma} \cdot \cQ_p^{\sigmabar},$$
 noting that the right-hand side doesn't depend on $\sigma$, and equals $1$  if $p=p^*$.  
 In particular, in the case when $E=\Q$ so that $\cQ_p^{\sigma} =\cQ_p^{\sigmabar}$ we have $\cQ_{p}=\pm 1$
 in middle dimension $p=p^*$.

  \subsection{The case of $\PGL_n \times \PGL_{n+1}$ over $\Q$} \label{PGLnQ}

In this case, the dimension of each graded piece of the Hodge filtration, for both $M$ and $N$, equals $1$. 
Recall that we write $j=n-1$. 
Therefore, let  $\omega_i, \omegat_i$, $0\le i \le j$
be the standard elements attached to $M$,  as in \S \ref{NIST}, and $\cQ_i$, $0\le i \le j$ the 
associated quadratic period invariants, as in \S \ref{NIST}. The corresponding elements attached to $N$ will be denoted 
$\eta_i, \etat_i, \cR_i$ for $0\le i \le j+1$. 

We may form the dual bases $\omegat_p^{\vee} \in H_{\dR}(M^{\vee}) = H_{\dR}(M)^{\vee}$ and $\omega_p^{\vee} \in H_{B}(M^{\vee},\C) = H_{\B}(M, \C)^{\vee}$, defined as usual by the rule
$$ \langle \omegat_a, \omegat_b^{\vee} \rangle = \delta_{ab}.$$ 
 Then 
  $\omegat_p^{\vee}$ gives a basis for $F^{-p} H_{\dR}(M^{\vee})/F^{1-p} H_{\dR}(M^{\vee})$
and is associated to the element $\omega_p^{\vee} \in H^{-p, -p^*}(M^{\vee},\C)$ under the isomorphism \eqref{Qstruc}, but now for $M^{\vee}$.
Defining period invariants $\cQ^{\vee}$ for $M^{\vee}$ using this basis, we get
$$\cQ_{p}^{\vee} = \pm \cQ_{p^*}.$$
Write  $\omega_{p,q}  = \omega_p \otimes \omega_{j-q}^{\vee} \in H^0_{B}(M \otimes M^{\vee},\C)$ and $\omegat_{p,q} = \omegat_p \otimes \omegat_{j-q}^{\vee} \in H^0_{\dR}(M \otimes M^{\vee})$.   

The subspace $F^1 H^0_{\dR} (\Ad (M))$ has as a $\Q$-basis the elements
 \begin{equation}
 \label{eqn:basisforF1}
  \omegat_{p,q}, \quad  \ p+q \ge j+1.
  \end{equation}
 
  Recall, from the proof of  Lemma \ref{lem:vol-indep}, that the square of $\vol F^1 H_{\dR}(\Ad M)$ can be computed via computing the image
  of a generator of $\det F^1 H_{\dR}(\Ad M)$ under the complex conjugation map
  to $\det (H_{\dR}/F^0 H_{\dR})$.  (See in particular \eqref{lambdadef}). 
   In the case at hand, a generator for $\det F^1 H_{\dR}$ is given by
  $$ \bigwedge_{p + q \geq j+1}\omegat_{p,q} = \bigwedge_{p+q \geq j+1} \omega_{p,q},$$
  and its complex conjugate is given by 
  $$ \left( \prod_{p+q \geq j+1} \cQ_p \cQ_q \right) \bigwedge_{p+q \geq j+1} \omega_{p^*, q^*}  =  \left( \prod_{p+q \geq j+1} \cQ_p \cQ_q \right)  \bigwedge_{p+q \geq j+1} \omegat_{p^*, q^*}$$
  where the last equality is valid in the determinant of $H^0_{\dR}(\Ad M)/F^0 H_{\dR}$.  Therefore
 
\begin{equation} \label{LMvolQ}
 \vol(\mathcal{L}_M) \sim_{\sqrt{\Q^*}} \prod_{0\le p \le j} \mathcal{Q}_p^{p}.
 \end{equation} 
  Likewise, for $N$, we get:
\begin{equation} \label{LNvolQ}
\vol(\mathcal{L}_N) \sim_{\sqrt{\Q^*}} \prod_{0\le p \le j+1} \mathcal{R}_p^{p}.
\end{equation}

Now $M\otimes N$ has a unique critical point, namely $s=j+1$.
We will now compute {\it square} of the period $$c^+ (M\otimes N( j+1))$$ 
in the case $j=2t$ is even; the case $j$ odd is exactly similar.

We first note that since $M$ is attached to a form on $\PGL_n$,  $F_\infty$ acts on $H^{tt}(M)$ by $+1$.  
Let $e_0^+, \ldots ,e_t^+$ be a $\Q$-basis of $H_B(M)^+$ and $e_{t+1}^-, \ldots, e_{2t}^-$ a $\Q$-basis of 
$H_B(M)^-$; here $+$ and $-$ refer to the $F_{\infty}$-eigenvalue. Then 
$$
(e_0^+  \cdots  e_t^+ \ e_{t+1}^- \cdots e_{2t}^- ) = (\omega_0 \cdots \omega_t  \ \omega_{2t} \cdots \omega_{t+1} ) 
\begin{pmatrix} A_M & B_M \\ C_M & D_M \end{pmatrix} 
$$ 
where $A_M$, $B_M$, $C_M$ and $D_M$ are of sizes $(t+1)\times (t+1)$, $(t+1)\times t$, $t\times (t+1)$ and $t\times t$ respectively. 
Likewise let $f_0^+, \ldots, f_t^+$ be a $\Q$-basis of $H_B(N)^+$ and $f_{t+1}^-,\ldots, f_{2t+1}^-$ a $\Q$-basis of $H_B(N)^-$. 
Then 
$$
(f_0^+  \cdots  f_t^+ \ f_{t+1}^- \cdots f_{2t+1}^- ) = (\eta_0 \cdots \eta_t  \ \eta_{2t+1} \cdots \eta_{t+1} ) 
\begin{pmatrix} A_N & B_N \\ C_N & D_N \end{pmatrix} 
$$ 
where $A_N$, $B_N$, $C_N$ and $D_N$ all have size $(t+1)\times (t+1)$. Note that the $i$th row of $C_M$ (resp. of $D_M$) is equal to 
$\mathcal{Q}_i$ (resp. $-\mathcal{Q}_i$) times the $i$th row of $A_M$ (resp. of $B_M$). Likewise the $i$th row of $C_N$ (resp. of $D_N$) is equal to 
$\mathcal{R}_i$ (resp. $-\mathcal{R}_i$) times the $i$th row of $A_N$ (resp. of $B_N$).

Let us compute both $c^\pm (M\otimes N)$ in terms of $c^\pm(M)$ and $c^\pm (N)$. 
Since $H_B (M \otimes N)^+ = (H_B(M)^+ \otimes H_B(N)^+) \oplus (H_B(M)^- \otimes H_B(N)^-)$ and
(with notation $F^{\pm}$ as in \S \ref{deltacc})  
$$F^{\pm} H_{\dR}(M \otimes N) = \oplus_{p+q \ge j+1} \Q \cdot \omega_p \otimes \eta_q,$$ we get
$c^+(M\otimes N) = \det (X)$, where 
$$
\begin{pmatrix} (e_i^+ \otimes f_{k}^+)_{i,k} & (e_{i'}^- \otimes f_{k'}^- )_{i',k'} \end{pmatrix} = \begin{pmatrix}( \omega_p \otimes \eta_q)_{p,q} & (\omega_{p'} \otimes \eta_{q'})_{p',q'} \end{pmatrix} X,
$$
and the indices $i,k,i',k', p,q$ range over $0\le i \le t$, $0\le k \le t$, $2t \ge i' \ge t+1$, $2t+1 \ge k' \ge t+1$, $0\le p\le t$, $0\le q \le t$ and $(p',q')$ ranges over pairs such that $p'>t$ or $q'>t$
but $p'+q'\le 2t$. 
Note that if $p'>t$ then $q'\le t$ and $0\le 2t-p' < t$. Likewise, if $q'>t$, then $p'\le t$ and $0\le 2t+1-q' \le t$. 
Let $A_M^*$ and $B_M^*$ be the matrices obtained from $A_M$ and $B_M$ by deleting the last row. 
Using the relations $\omega_{2t-p} =   \mathcal{Q}_p^{-1} F_\infty( \omega_p)$ and $\eta_{2t+1-q}=\mathcal{R}_q^{-1} F_\infty( \eta_q)$, we see that 
\begin{eqnarray*}
c^+ (M\otimes N) &=& \prod_{0\le p <t} \mathcal{Q}_p^{p+1} \prod_{0\le q \le t} \mathcal{R}_q^{q} \cdot \det \begin{pmatrix} A_M \otimes A_N & B_M \otimes B_N \\ A_M^* \otimes A_N & - B_M^* \otimes B_N \end{pmatrix} \\
&=&\prod_{0\le p <t} \mathcal{Q}_p^{p+1} \prod_{0\le q \le t} \mathcal{R}_q^{q} \cdot \det (A_M\otimes A_N) \cdot \det(-2 B_M^* \otimes B_N) \\
&\sim_{\Q^*} &  \prod_{0\le p <t} \mathcal{Q}_p^{p+1} \prod_{0\le q \le t} \mathcal{R}_q^{q} \cdot  \det(A_M)^{t+1} \det (A_N)^{t+1} \det(B_M^*)^{t+1} \det(B_N)^t \\
&=&  \prod_{0\le p <t} \mathcal{Q}_p^{p+1} \prod_{0\le q \le t} \mathcal{R}_q^{q} \cdot (c^+(M) c^-(M))^{t+1} \cdot c^+(N)^{t+1} c^-(N)^t.
\end{eqnarray*}
Now 
\begin{align*}
\delta(M) &= \det \begin{pmatrix} A_M & B_M \\ C_M & D_M \end{pmatrix}  = \prod_{0\le p <t} \mathcal{Q}_p \cdot \det \begin{pmatrix} A_M & B_M \\ A_M^* & -B_M^* \end{pmatrix} \\
&=\prod_{0\le p <t} \mathcal{Q}_p \cdot \det \begin{pmatrix} A_M & B_M \\ 0 & -2B_M^* \end{pmatrix} \\
&\sim_{\Q^*} \prod_{0\le p <t} \mathcal{Q}_p \cdot c^+(M) c^-(M).
\end{align*}
Likewise,
\begin{equation}
\label{eqn:deltan}
 \delta(N) \sim_{\Q^*} \prod_{0\le q \le t} \mathcal{R}_q \cdot c^+(N) c^-(N).
\end{equation}
Thus, up to $\Q^*$ factors, $c^+ (M\otimes N) $ equals   
{\small \begin{align*}
  \prod_{0\le p <t} \mathcal{Q}_p^{p+1} \prod_{0\le q \le t} \mathcal{R}_q^{q} \cdot \left(\delta(M) \prod_{0\le p <t} \mathcal{Q}_p^{-1}\right)^{t+1} \cdot \left(\delta(N) \prod_{0\le q \le t} \mathcal{R}_q^{-1}\right)^{t}\cdot c^+(N)\\
 = \delta(M)^{t+1} \delta(N)^t \cdot \prod_{0\le p <t} \mathcal{Q}_p^{p-t} \cdot \prod_{0\le q \le t} \mathcal{R}_q^{q-t} \cdot c^+(N)
\end{align*}}
We will also need the same result when we do not assume that $F_{\infty}$ acts on $H^{tt}(M)$ as $+1$,
for example if we replace $M$ by $M \otimes \psi$.
 A similar computation 
 shows: 
 \begin{prop}
 Suppose that $\chi$ is a quadratic idele character for $\Q$; write $\mathrm{sign}(\chi) = \pm 1$ according to whether $\chi$ is trivial or not on $\mathbf{R}^*$. 
Then $$ c^\pm (M^{\chi} \otimes N) \sim_{\Q^*}\delta(M)^{t+1} \delta(N)^t \cdot \prod_{0\le p <t} \mathcal{Q}_p^{p-t} \cdot \prod_{0\le q \le t} \mathcal{R}_q^{q-t} \cdot c^{ \pm \mathrm{sign}(\chi)} (N) $$
\end{prop}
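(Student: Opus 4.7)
The plan is to reduce the identity to the computation of $c^+(M \otimes N)$ performed in the paragraphs immediately preceding the proposition, by transferring the quadratic twist from $M$ to $N$.

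First, I would invoke the motivic identity $M^\chi \otimes N \cong M \otimes N^\chi$, which holds because $\chi$ corresponds to a rank-one Artin motive and the tensor product is symmetric. This lets me apply the formula just derived, verbatim, with $N$ replaced by $N^\chi$:
\[
c^\pm(M^\chi \otimes N) \;=\; c^\pm(M \otimes N^\chi) \;\sim_{\Q^*}\; \delta(M)^{t+1}\,\delta(N^\chi)^{t} \cdot \prod_{0 \le p < t}\mathcal{Q}_p^{p-t} \cdot \prod_{0 \le q \le t} \mathcal{R}_q(N^\chi)^{q-t} \cdot c^\pm(N^\chi).
\]
Here I use that the formula for $c^-$ in the untwisted case follows from the same computation the text performs for $c^+$, with the roles of $H_B^+$ and $H_B^-$ systematically interchanged on both factors.

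Next, I would compare the period invariants of $N^\chi$ and $N$. Since $\chi$ is a weight-zero Artin motive, its de Rham realization is a one-dimensional $\Q$-vector space and the Hodge filtration of $N^\chi$ is identified with that of $N$; consequently $\mathcal{R}_q(N^\chi) \sim_{\Q^*} \mathcal{R}_q(N)$ for each $q$. On the Betti side, $F_\infty$ acts on $H_B(\chi)$ by $\sign(\chi) =: \varepsilon$, so that $H_B^{\pm}(N^\chi) = H_B^{\pm\varepsilon}(N)$ as subspaces. A short direct computation then yields
\[
\delta(N^\chi) \;\sim_{\Q^*}\; \delta(N), \qquad c^{\pm}(N^\chi) \;\sim\; c^{\pm\varepsilon}(N),
\]
where any Artin-discriminant factor $\delta(\chi)^{\dim N}$ arising lands in $\Q^*$ because $\dim N = n+1$ is even.

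Substituting these identities into the previous display yields the proposition.  The main piece of bookkeeping is checking that the various Artin-discriminant factors $\delta(\chi)$ introduced by comparing $N^\chi$-periods with $N$-periods combine to an element of $\Q^*$ rather than only $\sqrt{\Q^*}$; this is handled by the parity observation that $\dim N$ is even. In any case, only the squared quantity $c^{\pm}(M^\chi \otimes N)^{2}$ enters subsequent applications (see \eqref{eqn:condensate2}), so any residual $\sqrt{\Q^*}$-ambiguity would be harmless for the main theorem.
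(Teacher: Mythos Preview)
Your strategy---transferring the twist from $M$ to $N$ via $M^\chi \otimes N \cong M \otimes N^\chi$ and then invoking the formula already established for $c^+(M\otimes N)$---is different from the paper's route, which simply reruns the explicit period-matrix calculation with $M$ replaced by $M^\chi$ (accounting for the changed $F_\infty$-action on the middle Hodge piece). Your approach is more conceptual and avoids repeating the matrix computation; the paper's approach is more uniform with what precedes it.

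There is, however, a genuine gap in your bookkeeping. The Gauss-sum factor appearing in the comparison $c^{\pm}(N^\chi)$ versus $c^{\pm\varepsilon}(N)$ carries exponent $d^{\pm}(N)=t+1$, \emph{not} $\dim N = 2t+2$: one has
\[
c^{\pm}(N^\chi) \sim_{\Q^*} \delta(\chi)^{\,t+1}\, c^{\pm\varepsilon}(N).
\]
(This follows from $H_B(N^\chi)^{\pm}=H_B(N)^{\pm\varepsilon}\otimes \Q e_\chi^B$ together with the fact that the comparison on the rank-one Artin factor contributes one power of $\delta(\chi)$ per dimension of $H_B^{\pm}$.) Since $t+1$ need not be even, your parity argument does not force this factor into $\Q^*$; it lies only in $\sqrt{\Q^*}$. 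So your argument establishes the displayed identity only up to $\sqrt{\Q^*}$, not up to $\Q^*$.

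Your closing remark is exactly right: only the square $c^{\pm}(M^\chi\otimes N)^2$ is used downstream (in the computation of $R^{\pm}(M^\psi,N)$ and in \eqref{eqn:ggp-pglnR}), so this residual $\sqrt{\Q^*}$ ambiguity is harmless for the application. Indeed, the paper's own subsequent display writes $\delta(M^\psi)^{2t+2}$ rather than $\delta(M)^{2t+2}$, and the two agree in $\Q^*$ precisely because the exponent is even; this is the mechanism by which the stray $\delta(\chi)^{t+1}$ is absorbed after squaring. If you want a statement that is literally correct up to $\Q^*$ before squaring, replace $\delta(M)^{t+1}$ by $\delta(M^\chi)^{t+1}$ on the right-hand side (equivalently, retain the factor $\delta(\chi)^{t+1}$).
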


Let $R^\pm (M,N)$ be the ratio defined by:
$$
R^\pm (M,N):= \frac{c^{\pm} (M\otimes N(j+1))^2}{\vol (F^1(\Ad (M))) \cdot \vol (F^1 (\Ad (N)))}.
$$
Since $j+1=2t+1$ is odd, we have 
$$ 
c^\pm (M\otimes N (j+1)) = (2\pi i )^{\frac{1}{2}(j+1) \cdot \mathrm{rank} (M\otimes N)} c^{\mp} (M\otimes N) =(2\pi i )^{\frac{1}{2}(j+1)^2(j+2)} c^{\mp} (M\otimes N).
$$
Therefore,  the Proposition above,  together with the properties of period invariants given in  \eqref{rel2} and Lemma  \ref{rel2}, and the evaluations  \eqref{LMvolQ} and \eqref{LNvolQ}  of the volumes of the $\mathcal{L}$ subspaces, give
$$
R^\pm (M^{\psi},N) \sim_{\Q^*} (2\pi i )^ {(j+1)^2 (j+2)}  \delta(M^{\psi})^{2t+2} \delta(N)^{2t} \cdot  \prod_{0\le q \le t }\mathcal{R}_q \cdot c^{\mp \mathrm{sign}(\chi)} (N)^2. 
$$ 
By \eqref{detcomp1} we have 
$$
\delta(M)^2 \mbox{ and }\delta(M^{\psi})^2 \in (2\pi i)^{-j(j+1)} \cdot (\Q^*)^2 \quad \text{and} \quad \delta(N)^2 \in (2\pi i )^{-(j+1)(j+2)} \cdot \Q^*,
$$
where the computation for $\delta(M^{\psi})$ comes from \cite[Proposition 6.5]{Deligne}. 
We now  get from   \eqref{eqn:deltan} our desired result, namely, if $\psi$ has sign $-1$, then 
  \begin{align} \label{eqn:ggp-pglnR}
 R^\pm (M,N)\cdot R^\pm (M^{\psi},N) \sim_{\Q^*} (2\pi i )^{(j+1)(j+2)}.
\end{align}

  \subsection{The case $\PGL_n \times \PGL_{n+1}$ over imaginary quadratic $E$}

Again  $M$ has weight $j$ and rank $j+1$.  Just as in the prior case, each graded step of the Hodge filtration has dimension $1$, both for $M$ and for $N$.

Let $\omegat_0, \ldots, \omegat_j$ be a $E$-basis for $H^*_{\dR} (M)$, 
chosen as in \S \ref{NIST}, and with associated invariants $\cQ_p^{\sigma}$ as in  \S \ref{NIST}. 
Just as at the start of \S \ref{PGLnQ}, but now keeping track of embeddings, we  form $\omega_p^{\sigma} \in H^*_B(M_{\sigma},\C)$,
and also the dual bases $\omegat^{\vee}_p, \omega_p^{\vee, \sigma}$, and put
$$ \omegat_{p,q} = \omegat_p \otimes \omegat^{\vee}_{j-q} \in H^*_{\dR}(M \otimes M^{\vee})$$
and similarly $\omega_{p,q} \in H^*_{\B}(M_{\sigma} \otimes M_{\sigma}^{\vee}, \C)$.

   We may compute the volume of $\mathcal{L}_M$ in a very similar way to the previous discussion. 
  In the case at hand, a generator for $\det F^1 H_{\dR}$ is given by
  $$ \bigwedge_{p + q \geq j+1}\omegat_{p,q}  \wedge \sqrt{-D} \omegat_{p,q} \sim \bigwedge_{p+q \geq j+1} (\omega_{p,q}^{\sigma}, \omega_{p,q}^{\bar{\sigma}}) \wedge (\sqrt{-D} \omega_{p,q}^{\sigma}, -\sqrt{-D} \omega_{p,q}^{\overline{\sigma}}),$$
  where we used the isomorphism  from \eqref{Betti res} to go from left to right. 
The  complex conjugate of the above element is  given by 
  $$ \left( \prod_{p+q \geq j+1} |\cQ_p|^2  |\cQ_q|^2 \right)  \underbrace{ \bigwedge_{p+q \geq j+1}  \left( \mbox{ same, replacing $p,q$ by $p^*, q^*$}\right)}_{\sim_{\Q^*} \det(H_{\dR}/F^1 H_{\dR})}.$$

Therefore,  
\begin{equation} \label{LMCvol}
\vol (\mathcal{L}_M) \sim_{\sqrt{\Q^*}}   \prod_{0\le p \le j} |\cQ_p|^{2p}. 
\end{equation}
There is an identical expression for the volume of $\mathcal{L}_N$,  simply replacing $j$ by $j+1$ and $\cQ$ by $\cR$.

{\em For the remainder of this subsection, we fix an embedding $\sigma: E \hookrightarrow \C$,  
and when we write $\cQ, \cR$ etc. we mean $\cQ^{\sigma}, \cR^{\sigma}$, etc.}

We shall now compute the
Deligne periods $c^\pm (\Res_{E/\Q} (M\otimes N))$. 
Instead of using the basis consisting of $\tilde{\omega}_i^\sigma$, we can work with the $\omega_i^\sigma$. 
Suppose that  $A$ is the $(j+1) \times (j+1)$ complex matrix defined by 
\begin{equation}
\label{eqn:comp-glnc-1}
(e_0 \cdots e_j) = (\omega_0^\sigma \cdots \omega_j^\sigma) \cdot A.
\end{equation}
Note that this depends on the choice of $\sigma$, but we fixed one above.

 Note that
$$
F_\infty c_{\B} \cdot e_i = F_\infty e_i \quad \text{ and} \quad F_\infty c_{\B} \cdot \omega_i^\sigma = \omega_i^{\bar{\sigma}}.
$$
Thus applying $F_\infty c_{\B}$ to \eqref{eqn:comp-glnc-1}, we get
$$
(F_\infty e_0 \cdots F_\infty e_j) = (\omega_0^{\bar{\sigma}} \cdots \omega_j^{\bar{\sigma}}) \cdot \bar{A}.
$$
Likewise, let $f_0, \ldots, f_{j+1}$ denote a basis of $H^*_{B,\sigma} (N)$ and let $B$ be the $(j+2)\times (j+2)$  complex matrix defined by
\begin{equation}
\label{eqn:comp-glnc-2}
(f_0 \cdots f_{j+1}) = (\eta_0^\sigma \cdots \eta_{j+1}^\sigma) \cdot B,
\end{equation}
where $(\eta_0, \ldots, \eta_{j+1})$ is a $E$-basis for $H^*_{\dR} (M)$. Note that 
\begin{equation} 
\label{eqn:ab-bar}
\overline{a_{t,i}} = \cQ_t^{-1} a_{j-t,i} \quad \text{and} \quad \overline{b_{t',i'}}=\mathcal{R}_{t'}^{-1} b_{j+1-t',i'}.
\end{equation}
where we repeat that $\cQ_t$ really means $\cQ_t^{\sigma}$, with the same choice of $\sigma$ as fixed above. 

Now we need to compute the change of basis matrix $X$ between the bases:
\begin{equation}
\label{eqn:ii'}
e_i \otimes f_{i'} \pm  F_\infty (e_i \otimes f_{i'}) , \quad 0\le i \le j, 0\le i' \le j+1 
\end{equation}
and
\begin{equation}
\label{eqn:tt'}
(\varphi_\sigma, \varphi_{\bar{\sigma}} ) (\omega_t \otimes \eta_{t'}), \ \ (\varphi_\sigma, \varphi_{\bar{\sigma}} ) (\sqrt{-D} \omega_t \otimes \eta_{t'}),  0\le t+t' \le j
\end{equation}
of the complex vector spaces 
$$ (H_B(\Res_{E/\Q} (M\otimes N)) \otimes \C ) ^\pm \simeq H^*_{\dR} (\Res_{E/\Q} (M\otimes N) \otimes \C )/ F^{\mp}.$$ 
Note that
$$ 
(\varphi_\sigma, \varphi_{\bar{\sigma}} ) (\omega_t \otimes \eta_{t'}) = (\omega_t^\sigma \otimes \eta_{t'}^\sigma, \omega_t^{\bar{\sigma}} \otimes \eta_{t'}^{\bar{\sigma}} ),$$
while
$$(\varphi_\sigma, \varphi_{\bar{\sigma}} ) (\sqrt{-D} \cdot \omega_t \otimes \eta_{t'})  =   \sqrt{-D} (\omega_t^\sigma \otimes \eta_{t'}^\sigma,  - \omega_t^{\bar{\sigma}} \otimes \eta_{t'}^{\bar{\sigma}} )
$$
Thus the entries in the $(i,i')$th column of $X$ corresponding to the elements in \eqref{eqn:ii'} and \eqref{eqn:tt'} are 
$$
\begin{pmatrix} \frac{a_{t,i} b_{t',i'} + \overline{a_{t,i} b_{t',i'} }}{2}  \\  \frac{\pm(a_{t,i} b_{t',i'} - \overline{a_{t,i} b_{t',i'} })}{2 \sqrt{-D} } \end{pmatrix} 
$$ 
Then 
$$ \det(X) \sim_{\Q^*}  \frac{1}{\sqrt{-D}^{\frac{(j+1)(j+2)}{2} }} \det(Y)$$
where $Y$ is the matrix whose entries in the $(i,i')$th column corresponding to $(t,t')$ are  
$$ 
\begin{pmatrix} a_{t,i} \cdot b_{t',i} \\  \\ \overline{a_{t,i}} \cdot \overline{ b_{t',i'}} \end{pmatrix}  \stackrel{\eqref{eqn:ab-bar}}{=} \begin{pmatrix} a_{t,i} \cdot b_{t',i'} \\  \\ \cQ_t^{-1} \cR_{t'}^{-1} a_{j-t,i} \cdot b_{j+1-t', i'} \end{pmatrix} 
$$
As $(t,t')$ vary over all pairs such that $t+t' \le j$, the pairs $(t^*, (t')^*):= (j-t, j+1 -t')$ vary over all pairs such that $t^*+ (t')^* \ge j+1$. 
Thus 
$$
\det (Y) = \left(\prod_{0 \le t+t' \le j} \cQ_t^{-1} \cR_{t'}^{-1} \right) \cdot \det (Z)
$$
where up to a permutation of the rows, the matrix $Z$ is just $A \otimes B$. 
Then
\begin{align*}
c^{\pm} (\Res_{E/\Q} (M\otimes N))&\sim_{\Q^*} \frac{1}{\sqrt{-D}^{\frac{1}{2}(j+1)(j+2)} } \cdot \cQ_0^{-(j+1)} \cQ_1^{-j} \cdots \cQ_j^{-1}  \\
& \hspace{20mm} \cdot \cR_0^{-(j+1)} \cR_1^{-j} \cdots \cR_{j+1}^{0} \cdot\det(A)^{j+2} \det(B)^{j+1} \\
\end{align*}

Now we note that \eqref{detcomp1} implies that  
\begin{equation} \label{DetAcomp} \det (A)^2 \sim_{\Q^*} (2\pi i)^{-j(j+1)} \cdot \prod_{i=0}^{j} \cQ_p,\end{equation}
 and  in fact that $\prod_{i=0}^{j} \cQ_p$  is an element in $E$ of norm $1$. 
 
Indeed $\det(M)$ is a Tate motive, as observed in \eqref{detcomp1};   if we denote by  $H_{\dR}(\det M)_{\Q}$ a generator of the canonical $\Q$-line
inside its de Rham cohomology, 
arising from a $\Q$-rational differential form on $\mathbb{G}_m$, we may write
$$\omegat_0 \wedge  \omegat_1 \wedge \dots \wedge \omegat_j = \lambda \cdot H_{\dR}(\det M)_{\Q}$$
for some $\lambda \in E^*$  %
and  computing periods we see that
$$ \det(A) \sim_{\Q^*}  \lambda^{-1} (2 \pi i)^{-j(j+1)/2}.$$
On the other hand, we have
$ \omega_0 \wedge \omega_1 \wedge \dots \wedge \omega_j =  \omegat_0 \wedge \omegat_1 \wedge \dots \wedge \omegat_j$,
and comparing this element with its complex conjugate we find 
$\bar{\lambda} = \pm  \lambda \cdot \prod_{i=0}^j \cQ_j$ (for an explicit, but unimportant, choice of sign). This relation determines $\lambda$ up to $\Q^*$, and we have
$$\det(A)^2 \sim_{\Q^*} (2 \pi i)^{-j(j+1)} \lambda^{-2} \sim_{\Q^*} (2 \pi i)^{-j(j+1)} |\lambda|^{-2} \cdot   \bar{\lambda}/\lambda,$$
which proves \eqref{DetAcomp}. 
 Likewise, $\det(B)^2 \sim_{\Q^*} (2\pi i )^{-(j+1)(j+2)} \prod_{q=0}^{j+1} \cR_q^{-1}$, where again $\prod_{q=0}^{j+1} \cR_q^{-1}$ is an element of $E$ of norm $1$. 
 
 We may thereby simplify the expression above to
 \begin{align*}
c^{\pm} (\Res_{E/\Q} (M\otimes N))^2   & \cdot (2 \pi i)^{j(j+1)(j+2) + (j+1)^2 (j+2)}& 
\\ & \sim_{\Q^*}  \left( \cQ_0^{-j}  \cQ_1^{2-j} \dots \cQ_j^{+j} \right) \cdot \left( \cR_0^{-j-1} \cR_1^{-j+1} \dots \cR_{j+1}^{j+1} \right)
\\ & \sim_{\Q^*}   |\cQ_j|^{2j} |\cQ_{j-1}|^{2(j-2)} \cdots  \times |\cR_{j+1}|^{2(j+1)} |\cR_j|^{2j} \cdots 
\\ & \sim_{\Q^*} \prod_{p=0}^j |\cQ_p|^{2p} \cdot \prod_{q=0}^{j+1} |\cR_q|^{2q}.
\end{align*}

Using \eqref{LMCvol} and the relation
\begin{align*}
c^+ (\Res_{E/\Q}(M \otimes N) (j+1)) &= (2\pi i )^{ (j+1) \cdot \frac{1}{2} \mathrm{rank}  \ \Res_{E/\Q}(M \otimes N)} \cdot c^+ (\Res_{E/\Q}(M \otimes N)) \\
&= (2\pi i ) ^{(j+1)^2(j+2)} c^+ (\Res_{E/\Q}(M \otimes N),
\end{align*}
we find at last
\begin{equation} 
\label{eqn:ggp-pglnC} 
 \frac{c^+(\Res_{E/\Q} (M\otimes N) (j+1) )^2}{ \vol(\mathcal{L}_M)) \vol(\mathcal{L}_N)  } \sim_{\sqrt{\Q^*}} (2\pi i )^{ (j+1)(j+2)}. 
\end{equation}

  \subsection{Polarizations} \label{polarizations section 8}
In the remaining orthogonal cases,  the motives $M$ and $N$ over the imaginary quadratic field $E$ are equipped with (weak) polarizations, as follows from the discussion in the Appendix; 
these arise from the (orthogonal or symplectic) duality on the standard representations used to define $M$ and $N$. 

We will make use of these polarizations
for our analysis, and thus we summarize here some useful properties:

We denote by $S$ the weak polarization on $M$, i.e. 
$
S: M \otimes M \rightarrow \Q(-w),
$ with $w$ the weight of $M$. 
As usual, we write \begin{equation} \label{QandS}Q = (2\pi \sqrt{-1})^w S.\end{equation}
 Thus the form $Q$ is $\Q$-valued on $H^*_{\B}(M_{\sigma},\Q)$  
 (we shall denote this form by $Q_{\sigma}$, and write $S_{\sigma} = (2\pi \sqrt{-1})^{-w} Q_{\sigma}$ on the same space)
 whereas the form $S$ is $E$-valued on $H^*_{\dR}(M)$. 
We denote by the same letter $S$ the weak polarization on $N$.

These polarizations induce also polarizations on $\Ad(M), \Ad(N), M \otimes N$ by transport of structure, and also on the restriction of scalars from $E$ to $\Q$ of any of these motives; we will again  
denote these by the same letters, or by (e.g.) $S^{\mathrm{Ad}}$ if we want to emphasize that we are working with the adjoint motive.  We denote similarly (e.g.) $Q^{\mathrm{Ad}}_{\sigma}, S^{\mathrm{Ad}}_{\sigma}$ for the forms on the $\sigma$-Betti realizations, just as above.  

\subsubsection{Polarizations and restriction of scalars}

For a moment, let $X$ denote an  a polarized $E$-motive and $\mathbf{X}  := \Res_{E/\Q} X$.  Then $\mathbf{X}$ inherits a polarization from $X$. The corresponding bilinear form $Q$ on $H^*_B(\mathbf{X})$ is 
just the sum of the forms  $Q_\sigma$  and $Q_{\bar{\sigma}}$ on $V_\sigma=H^*_{B,\sigma} (X)$ and $V_{\bar{\sigma}}=H^*_{B,\bar{\sigma}} (X)$
 respectively. On 
the de Rham realization, the form is just the trace from $E$ to $\Q$ of the $E$-valued form on $H^*_{\dR} (X)$. 
Further, the $\C$-antilinear isomorphism $F_\infty c_{\B} $ from $V_\sigma$ to $V_{\bar{\sigma}}$ 
identifies $Q_\sigma$ and $Q_{\bar{\sigma}}$ with complex conjugates of each other. 
In particular, to compute the form on $H^*_{\dR} (\mathbf{X})$, we may embed $H^*_{\dR} (\mathbf{X})$ in $V_\sigma$ for 
instance (via $\varphi_\sigma$) and take the trace (from $\C$ to $\R$) of the form $Q_\sigma$.

\subsubsection{The adjoint motive: polarized case}

 Next, some comments on the adjoint motive.  Let $w$ be the weight of the polarized motive $M$.  
 
Since  $\Ad(M) \subset \Hom(M,M) \simeq M \otimes M^\vee$, and since $M^\vee \simeq M(w)$ via the 
polarization, we may view $\Ad (M)$ as a sub-motive of $M\otimes M(w)$. 
Now  
\begin{equation}
\label{BJdR} H_{\dR}(\Ad(M))   \subset  H_{\dR}(M \otimes M^{\vee}) = H_{\dR}(M)^{\otimes 2} \otimes H_{\dR}(\Q(w))  \stackrel{\eqref{TT}}{\simeq}  H_{\dR}(M)^{\otimes 2} \end{equation}
In this way, we can regard $\eta \otimes \eta'$ as an element of $H_{\dR}(M \otimes M^{\vee})$ when $\eta, \eta' \in H_{\dR}(M)$. 
Under the above identification the form $Q_{\Ad}$ induced on the adjoint corresponds to $(2 \pi \sqrt{-1})^{-2w}  Q^{\otimes 2}$,   whereas $S_{\Ad}$ corresponds to $S^{\otimes 2}$.

 Similarly, for $\sigma$ an embedding of $E$ into $\C$, we have \begin{align}
\label{BJB} H_{\B}(\Ad(M)_{\sigma},\C) &\subset  H_{\B}(M_\sigma \otimes M_\sigma^{\vee}, \C) = H_{\B}(M_{\sigma})^{\otimes 2}  \otimes H_{\B}(\Q(w), \C)  \stackrel{\eqref{TT}}{=} H_{\B}(M_{\sigma},\C)^{\otimes 2}. 
\end{align}
  Under this identification $Q_{\Ad}$ corresponds to $  Q^{\otimes 2}$, 
and $S_{\Ad}$ corresponds to $(2 \pi \sqrt{-1})^{2w}  S^{\otimes 2}$.

\subsubsection{} \label{Gram form}
In what follows, we  will compute the volume of $\mathcal{L}_M$ with respect to the polarization, as described
in \S \ref{volLcomp}.

 In other words, we compute the volume  on $\Q$-vector space   $\mathcal{L}_M$ with reference to the quadratic form
 obtained by pulling back the polarization under the map
$$\mathcal{L}_M \rightarrow H^*_{\B}(\Res_{E/\Q} \Ad (M), \R)$$
given by $x \mapsto \frac{1}{2}(x+\bar{x})$. 

If we regard the target above as   
 $H^*_{\B,\sigma}(\Ad (M), \R) \oplus H^*_{\B,\bar{\sigma}}(\Ad M,\R)$
the map is given by $ \frac{1}{2}(\varphi_{\sigma} + \overline{\varphi_{\sigma}}, \varphi_{\bar{\sigma}}+ \overline{ \varphi_{\bar{\sigma}}}) $.
Here $\varphi_{\sigma}$ is as in  \S \ref{E motive}. In other words, the form  on $\mathcal{L}_M$ is given by    
$$
(x,y) :=  \tr_{\C/\R} \ S_\sigma^{\Ad} (\frac{1}{2}(x^{\sigma}+\overline{x^{\sigma}}), \frac{1}{2}(\overline{y^{\sigma}+ \overline{y^{\sigma}}}))= \frac{1}{2} (\tr_{\C/\R} \ S_\sigma^{\Ad} (x^{\sigma}, y^{\sigma}) + \tr_{\C/\R} \ S_\sigma^{\Ad} (x^{\sigma},\overline{y^{\sigma}}) ).
$$

\subsubsection{Period invariants, revisited}

In this case, the previous discussion of period invariants can be slightly simplified.  In \S \ref{NIST}
we have  introduced elements $\omegat_p \in H_{\dR}(M)$ for each integer $p$ with $\dim F^p/F^{p+1} = 1$.   
In the cases with a polarization    we can and  will choose the elements $\omegat_p$  to be self-dual, in that  
\begin{equation} \label{S1} S(\omegat_p, \omegat_{p^*}) = 1 = S_{\sigma}(\omega^\sigma_p, \omega^\sigma_{p^*})  \ \ (p < p^*) \end{equation}
   whenever both $\omegat_p,\omegat_{p^*}$ are both chosen.  (The second equality follows from the first.)   
   The same quantity then equals $(-1)^w$ for $p > p^*$. 

 If $p=p^*$, which only occurs in even weight $w$, we cannot guarantee \eqref{S1}; here  
 $S_{\sigma}(\omega_p^{\sigma},\omega_p^{\sigma}) = \sigma(S(\omegat_p,\omegat_p)) $ lies in $E^\times$ and its class mod $(E^\times)^2$ is independent of the choice of $\omegat_p$.   Define therefore
\begin{equation} \label{alphadRdef}
\alpha_{\dR} (M) =  S(\omegat_p,\omegat_p).
\end{equation}
If the weight $j$ is odd, we set $\alpha_{\dR}(M) =1$.  In all cases, this is an element of $E^{\times}$ whose square-class is independent of choices. 
 
   We may then evaluate the $\cQ_p^{\sigma}$ in terms of the polarization. It follows from \eqref{rel1} that 
   $$ \cQ_p^{\sigma} = \begin{cases} S_{\sigma}(\omega^{\sigma}_p, \overline{\omega^{\sigma}_p}), & p < p^* \\ 
    \sigma( \alpha_{\dR}(M))^{-1} S_{\sigma}(\omega^{\sigma}_p, \overline{\omega^{\sigma}_p}), &  p =p^*. 
    \end{cases} 
    $$
   
   Note that $\cQ_p^{\sigma}$ belongs to $\R^*$ if $p \neq p^*$; thus, 
   when $\omegat_p$ are normalized above, we have $\cQ_p^{\sigma} = \cQ_p^{\sigmabar}$, and we may simply refer to $\cQ_p$. 
For $p=p^*$ we have  $\cQ_p^{\sigma} \in \sigma(\alpha_{\dR}(M))^{-1} \mathbf{R}^*$.     

Finally, if $\omegat_p, \omegat_q$ are both defined,  we denote by 
  $$ \omegat_{p,q} \in H^*_{\dR}(M \otimes M^{\vee}), \omega_{p,q}^{\sigma} \in H_{\B}( (M \otimes M^{\vee})_{\sigma}, \C)$$
  the image of $\omegat_p \otimes \omegat_q$ and $\omega_p \otimes \omega_q$, respectively, under the identifications of \eqref{BJdR} and \eqref{BJB}, respectively.

\subsection{$\SO_{2n} \times \SO_{2n+1}$ over $E$ imaginary quadratic}
\label{sec:so2nx2n+1}
  
 Recall that $M$ is the motive attached to automorphic form on $\SO_{2n}$, and $N$ the motive attached to the automorphic form on $\SO_{2n+1}$, and 
 we have fixed polarizations in \S \ref{polarizations section 8}.  
\subsubsection{Computation of archimedean $L$-factors} 

  In this case, the Hodge numbers  for $\Ad M$ are  somewhat irregular, so we will discuss the archimedean computation by hand. 
   We have
{\small  $$ 
  L_\infty (s, \Res_{E/\Q} \Ad N) = \left( \Gamma_\C (s+2n-1)^1 \Gamma_\C (s+2n-2)^1 \cdots \Gamma_\C(s+3)^{n-1} \Gamma_\C(s+2)^{n-1} \Gamma_\C(s+1)^n \right)^2  \Gamma_\C (s)^n
 $$}
 and
{\small  \begin{align*}
 L^*_\infty (0, \Res_{E/\Q} \Ad N) &= \left( \Gamma_\C (2n-1)^1 \Gamma_\C (2n-2)^1 \cdots \Gamma_\C(3)^{n-1} \Gamma_\C(2)^{n-1} \Gamma_\C(1)^n \right)^2  \Gamma_\C^* (0)^n\\
 & \sim_{\Q^*} \pi^{-2 \left[ n\cdot 1 + (n-1) \cdot (2+3) + \cdots + 1 \cdot (2n-1 + 2n-2) \right]
}= \pi^{-2 \left[ n + \sum_{i=1}^{n-1} i (2n-2i + 2n-2i +1) \right]} \\&= \pi^{-2 \left[ \sum_{i=1}^{n} i + 4 \sum_{i=1}^{n-1} i(n-i) \right]} = \pi^{-\frac{4}{3} n(n-1)(n+1)-n(n+1)}.
 \end{align*}}
 
 For $\Ad M$, 
the Hodge numbers range from
 $(2n-3,-(2n-3))$ to $(-(2n-3),(2n-3))$; the multiplicities are given by
 $$
 1,1,\cdots ,\overline{t,t}, \cdots ,n-1,n-1, n,n,n,n-1,n-1, \cdots ,\overline{t,t}, \cdots,1,1, 
 $$
 if $n=2t$ is even, and by
 $$
 1,1,\cdots , \overline{t,t+1},  \cdots n-1,n-1,n,n,n,n-1,n-1, \cdots  \overline{t+1,t},  \cdots,1,1,
 $$
if $n=2t+1$ is odd. (Here the bar indicates that those terms are skipped.)
In the first case, 
{\small \begin{align*}
L_\infty (s, \Res_{E/\Q} \Ad M) &=  \left( \Gamma_\C (s +4t-3)^1 \Gamma_\C (s+4t-4)^1 \cdots \Gamma_\C (s+2t+1)^{t-1} \Gamma_\C (s+2t)^{t-1} \cdot \right. \\
& \left. \Gamma_\C (s+2t-1)^{t+1} \Gamma_\C (s+2t-2)^{t+1} \cdots  \Gamma_\C (s+3)^{2t-1} \Gamma_\C (s+2)^{2t-1} \Gamma_\C (s+1)^{2t} \right)^2 \Gamma_\C (s)^{2t}
\end{align*}}
and
{\small \begin{align*}
L_\infty^* (0, \Res_{E/\Q} \Ad M) &= \left( \Gamma_\C (4t-3)^1 \Gamma_\C (4t-4)^1 \cdots \Gamma_\C (2t+1)^{t-1} \Gamma_\C (2t)^{t-1} \cdot \right. \\
& \left. \Gamma_\C (2t-1)^{t+1} \Gamma_\C (2t-2)^{t+1} \cdots  \Gamma_\C (3)^{2t-1} \Gamma_\C (2)^{2t-1} \Gamma_\C (1)^{2t} \right)^2 \Gamma_\C^* (0)^{2t}\\
& \sim_{\Q^*} \pi^{-2[ 2t + (2t-1) (2+3) + \cdots + (t+1) (2t-1 + 2t-2) + 
(t-1)( 2t+1 + 2t) + \cdots + 1(4t-3 + 4t-4) ]} \\
&= \pi^{-2 [ \sum_{i=1}^{2t-1} i(4t-2i-1+4t-2i-2) + \sum_{i=1}^{2t-1} i ] } = \pi^{-2 [ 4 \sum_{i=1}^{2t-1} i(2t-i) -2 \sum_{i=1}^{2t-1} i ] } \\
&= \pi^{-2 [ 4 \sum_{i=1}^{n-1} i(n-i) -2 \sum_{i=1}^{n-1} i ]} = \pi^{-\frac{4}{3}n(n-1)(n+1) + 2n(n-1)}.
\end{align*}}

Similarly, if $n=2t+1$, we have:
{\small \begin{align*}
L_\infty (s, \Res_{E/\Q} \Ad M) &= \left( \Gamma_\C (s +4t-1)^1 \Gamma_\C (s+4t-2)^1 \cdots \Gamma_\C (s+2t+3)^{t-1} \Gamma_\C (s+2t+2)^{t-1} \cdot \right. \\
& \left. \Gamma_\C (s+2t+1)^{t} \Gamma_\C (s+2t)^{t+1} \cdots  \Gamma_\C (s+3)^{2t} \Gamma_\C (s+2)^{2t} \Gamma_\C (s+1)^{2t+1} \right)^2 \Gamma_\C (s)^{2t+1}
\end{align*}}
and
{\small \begin{align*}
L_\infty^* (0, \Res_{E/\Q} \Ad M) &= \left( \Gamma_\C (4t-1)^1 \Gamma_\C (4t-2)^1 \cdots \Gamma_\C (2t+3)^{t-1} \Gamma_\C (2t+2)^{t-1} \cdot \right. \\
& \left. \Gamma_\C (2t+1)^{t} \Gamma_\C (2t)^{t+1} \cdots  \Gamma_\C (3)^{2t} \Gamma_\C (2)^{2t} \Gamma_\C (1)^{2t+1} \right)^2 \Gamma_\C^* (0)^{2t+1}\\
& \sim_{\Q^*} \pi^{ -2 [ (2t+1)(0+1) + 2t(2+3) + \cdots + (t+1)(2t) + t (2t+1) + (t-1) (2t+2 + 2t+3)  + \cdots + 1(4t-2 + 4t-1) ]} \\
&= \pi^{ -2[ \sum_{i=1}^{2t} i (4t-2i+4t-2i+1) + \sum_{i=1}^{2t} i ] } = \pi^{ -2[4\sum_{i=1}^{2t} i(2t-i) + 2 \sum_{i=1}^{2t} i ]} \\
&= \pi^{-2[4\sum_{i=1}^{2t} i(2t+1-i) - 2 \sum_{i=1}^{2t} i ]} = \pi^{-2 [ 4 \sum_{i=1}^{n-1} i(n-i) -2 \sum_{i=1}^{n-1} i ]} = \pi^{-\frac{4}{3}n(n-1)(n+1) + 2n(n-1)},
\end{align*}}
which is the same expression as in the case $n=2t$.

Thus, in either case, we have:
\begin{equation}
\label{eqn:gamma-so2n-ad}
L^* (0, \pi_\infty, \Ad) = L_\infty^*(0, \Res_{E/\Q} \Ad M) L_\infty^*(0, \Res_{E/\Q} \Ad N) 
 \sim_{\Q^*} \pi^{-\frac{8}{3} n(n-1)(n+1) +n^2 -3n}. 
\end{equation}  

\subsubsection{Volume computation}
 
We first compute the volume term for $\Ad(N)$.   
As in the $\PGL$ cases, all the graded pieces of the Hodge filtration for $N$ are one-dimensional. Let $\tilde{\eta}_0, \ldots, \tilde{\eta}_{2n-1}$ be a basis of $H^*_{\dR} (N)$, chosen as before;
these define invariants $\cR_0$,$\ldots$, $\cR_{2n-1}$ as well as a basis $\mathcal{B}_N =( \eta_0, \cdots, \eta_{2n-1} )$ of 
$H^*_{\dR} (N) \otimes_{E,\sigma} \C$. 
(As before, we fix an embedding $\sigma: E \hookrightarrow \C$ and when we write $\cQ, \cR$ etc. we mean $\cQ^{\sigma}, \cR^{\sigma}$, etc.)
To compute $\vol (F^1 \Ad (N)) $, we first 
write down an explicitly a basis for $H^*_{\dR} \Res_{E/\Q} \Ad(N)\otimes \C$. 
Here  $\Ad(N)$ is the  $n(2n+1)$-dimensional subobject of $\Hom(N,N)$ consisting of the those endomorphisms $L$
satisfying 
$ Q(Lx,y) + Q(x,Ly)=0,$ where $Q$ is the symplectic form on $N$.  
 
A basis for $F^m H^*_{\dR}(\Ad (N))\otimes_{E,\sigma} \C /F^{m+1} $ is indexed by 
{\it unordered} pairs $(i,j)$ such that $i+j =m+(2n-1)$ and is given by 
$$ \{ \eta_i \otimes \eta_j + \eta_j \otimes \eta_i:  \quad i+j =m+(2n-1) \} $$
or more precisely the image of these elements under the identifications of \eqref{BJdR}. 

If we replace $\Ad(N)$ by $\Res_{E/\Q} (\Ad(N))$, then we also need to throw in $\sqrt{-D}$ times 
the basis vectors above. The union of the elements above with $m\ge 1$ is then 
a $\C$-basis for $F^1 H^*_{\dR} (\Res_{E/\Q} \Ad (N)) \otimes \C$. While it is not 
a $\Q$-basis of the natural rational structure on this space, it is a $\Q$-basis of the corresponding graded 
for the Hodge filtration, so to compute the volume we may as well work with this basis.

In a similar fashion to our previous computations, we get
$$
\vol (\mathcal{L}_N)^2 \sim_{(\Q^*)^2} D^{n^2} \cdot \left( \cR_{2n-1}^{2n} \cR_{2n-2}^{2n-1} \cdots \cR_n^{n+1} \cR_{n-1}^{n-1} \cdots \cR_2^2 \cR_1\right)^2 
$$
and using $\cR_i \cR_{2n-1-i}=  (-1)$, that
$$ 
\vol (\mathcal{L}_N) \sim_{\Q^*} D^{\frac{1}{2} n^2} \cdot \cR_0^{-2n} \cR_1^{-(2n-2)} \cdots \cR_{n-1}^{-2}.
$$

We now turn to $\Ad(M)$. 
For $i=0, \ldots, n-2, n, \ldots 2n-2$  pick elements $\tilde{\omega}_i \in F^i H^*_{\dR} (M)$ 
according to the discussion of \eqref{NIST}, obtaining invariants $\cQ_i$ as explained there.

For the two dimensional space $F^n H^*_{\dR}/F^{n+1}$, there is no natural basis, 
so we just pick any orthogonal basis $\{ \tilde{\omega}^+_{n-1}, \tilde{\omega}_{n-1}^- \}$ for the form $S$.
Let $\tilde{\mathcal{B}}_M = \{ \tilde{\omega}_i \} \cup \{ \tilde{\omega}_{n-1}^+, \tilde{\omega}_{n-1}^- \}$. 
Let $\omega_{n-1}^+$, $\omega_{n-1}^-$ be the images of $\tilde{\omega}^+_{n-1}$, $\tilde{\omega}_{n-1}^-$ 
respectively in $H^{n-1,n-1}_\sigma (M)$. 
Suppose that 
\begin{align*}
\overline{\omega^+_{n-1}} &= \cQ_{11} \omega^+_{n-1} + \cQ_{12} \omega^-_{n-1}, \\
\overline{\omega^-_{n-1}} &= \cQ_{21} \omega^+_{n-1} + \cQ_{22} \omega^-_{n-1}.
\end{align*}

Let $\mathcal{B}_M$ denote the basis $\{ \omega_0, \cdots, \omega_{n-2},\omega_{n-1}^+,\omega_{n-1}^-, \omega_n \cdots , \omega_{2n-2} \}$  of $H_{\dR}(M)\otimes_{E,\sigma} \C$.  
As before
 $\Ad(M)$ is the $n(2n-1)$-dimensional sub-object of $\Hom(M,M)$ consisting of the those endomorphisms $L$
satisfying 
$$ Q(Lx,y) + Q(x,Ly)=0,$$
where $Q$ is the symmetric form on $M$. 

A basis for $F^m H^*_{\dR}(\Ad (M))\otimes_{E,\sigma} \C /F^{m+1} $ is indexed by 
{\it unordered} pairs $(\omega_i,\omega_j)$, $\omega_i, \omega_j \in \mathcal{B}_M$,  such that $i+j =m+(2n-2)$ with $i\neq j$ and is given by 
$$ \{ \omega_i \otimes \omega_j - \omega_j \otimes \omega_i:  \quad i+j =m+(2n-2) \} $$
again with reference to the isomorphism \eqref{BJdR}. 

We will compute $(\vol \cL_M)^2$ as the determinant of the Gram matrix of the form described in  \S \ref{Gram form}.
The only tricky part is the contribution of terms involving $\omega_{n-1}^\pm$. Let  
$$
x^\pm = \omega_{n-1}^\pm \otimes \omega_j - \omega_j \otimes \omega_{n-1}^\pm,
$$
where $j$ lies in the range $n\le j \le 2n-2$. 
Consider the $4\times 4$-matrix $X$ of inner products $(x,y)$ where $x,y$ run over the elements $x^\pm ,\sqrt{-D} x^\pm$.  Set
$$ \cQ_+ = S_\sigma( \omega_{n-1}^+, \overline{\omega_{n-1}^+} ), \quad  \cQ_- =  S_\sigma( \omega_{n-1}^-, \overline{\omega_{n-1}^-} )
$$ 
and 
$$ 
\A+ i \B =    S_\sigma (  \omega_{n-1}^+, \overline{\omega_{n-1}^-} ), \quad \A,\B \in \R,
$$
Note for example that, using \eqref{BJdR}  
$$ 
(x^+,\sqrt{-D}x^+) = \frac{1}{2} \left(\tr \ S_\sigma^{\Ad} ( x^+,\sqrt{-D}x^+) + \tr \ S_\sigma^{\Ad} ( x^+,\overline{\sqrt{-D}x^+} ) \right)= 0, 
$$
while 
$$ (x^+,x^-) = \frac{1}{2} \left(\tr \ S_\sigma^{\Ad} (x^+,x^-) + \tr \ S_\sigma^{\Ad} ( x^+,\overline{x^-} ) \right) = 2\A  \cQ_j.
$$
and 
$$
(x^+,\sqrt{-D}x^-) = \frac{1}{2} \left( \tr \ S_\sigma^{\Ad} ( x^+,\sqrt{-D}x^-) + \tr \ S_\sigma^{\Ad} ( x^+,\overline{\sqrt{-D}x^-} ) \right) = 2\sqrt{D} \B  \cQ_j.
$$
Then
\begin{align*}
\det (X) &= (2\cQ_j)^4 \cdot \det \begin{pmatrix} 
\cQ_+ & \A & 0 & \sqrt{D} \B \\
\A & \cQ_- & -\sqrt{D} \B & 0 \\
0 & -\sqrt{D} \B & D \cQ_+ & D\A \\
\sqrt{D} \B & 0 & D\A & D \cQ_- 
\end{pmatrix} \\
& =  (2\cQ_j)^4 \cdot D^2 (\cQ_+ \cQ_- - \A^2 - \B^2)^2.
\end{align*}
Note that 
$$
\cQ_+ \cQ_- - \A^2 - \B^2 = \det (\Gamma) = \Delta \cdot \det (\Xi),
$$
where
$$\Gamma:= \begin{pmatrix} \cQ_+ & \A+i \B \\ \A- i\B & \cQ_- \end{pmatrix}, \quad \Delta := S_\sigma( \omega_{n-1}^+, \omega_{n-1}^+ ) S_\sigma ( \omega_{n-1}^-, \omega_{n-1}^- ), \quad \Xi:= \begin{pmatrix} \cQ_{11} & \cQ_{12} \\ 
\cQ_{21} & \cQ _{22} \end{pmatrix}.$$
We remark that $\det(\Gamma)$ lies in $\R^\times$, $\Xi \bar{\Xi} = I$ and $\Delta$ lies in $E^\times$, hence 
$$
\det(\Gamma)^2 = \Delta \bar{\Delta} \in \Q^\times.
$$
Combining the above computation with a routine computation of the contribution from terms not involving $\omega_{n-1}^\pm$, we find 
$$
\vol (\mathcal{L}_M)^2 \sim_{(\Q^*)^2} D^{n^2-n} \cdot \left( \cQ_{2n-2}^{2n-2}  \cdots \cQ_{n}^{n} \cQ_{n-2}^{n-2} \cdots  \cQ_2^2 \cQ_1\right)^2 \cdot \det(\Gamma) ^{2(n-1)},
$$
and using $\cQ_i \cQ_{2n-2-i}= 1$, that 
$$
\vol (\mathcal{L}_M) \sim_{\Q^*} 
 \cQ_0^{-(2n-2)} \cQ_1^{-(2n-4)} \cdots \cQ_{n-2}^{-2}\cdot \Delta^{n-1} \cdot \det (\Xi)^{n-1}.
$$

Let $\{ e_0 ,\ldots, e_{2n-1} \}$ and $\{ f_0, \ldots, f_{2n-1} \}$ be $\Q$-bases for $H^*_{B,\sigma} (M)$ and $H^*_{B,\sigma} (N)$ respectively. Then $c^+ (\Res_{E/\Q}(M\otimes N))$ is the determinant of the change of basis matrix between
$$
\{ e_i \otimes f_{i'} + F_\infty (e_i \otimes f_{i'}) \}, \quad 0\le i,i'\le 2n-1,
$$ 
and
$$ 
\{ (\varphi_\sigma, \varphi_{\bar{\sigma}}) \omega \otimes \eta \}, \ \{ (\varphi_\sigma, \varphi_{\bar{\sigma}}) \sqrt{-D} \omega \otimes \eta \}
$$
where $ \omega \in \mathcal{B}_M \cap F^t H^*_{\dR}(M), \ \eta \in \mathcal{B}_N \cap F^{t'}H^*_{\dR} (N), 0\le t+t' \le  2n-2$.  
 We find as in the previous section that 
\begin{align*}
c^\pm(\Res_{E/\Q} M\otimes N) & \sim_{\Q^*} \sqrt{-D}^{-2n^2} \cdot \cQ_0^{-(2n-2)} \cQ_1^{-(2n-4)} \cdots \cQ_{n-2}^{-2} \cdot \\ & \cR_0^{-2n} \cR_1^{-(2n-2)} \cdots \cR_{n-1}^{-2} \cdot 
  \det(\Xi)^{-n}\cdot   \det (A\otimes B),
\end{align*}
where $A$ and $B$ are the period matrices given by
$$
(e_0 \ e_1 \cdots e_{2n-1}) = \mathcal{B}_M \cdot A, \quad (f_0 \ f_1 \cdots f_{2n-1}) = \mathcal{B}_N \cdot B.
$$
Computing the Gram matrices of the bases $e_i$ and $\mathcal{B}_M$ with respect to the polarization
and taking determinants, we may compute $\det(A)$ and $\det(B)$: 
$$ 
\det(A)^2 \sim_{\Q^*} \Delta^{-1} \cdot (2\pi i )^{-2n(2n-2)}, \quad \det (B)^2 \sim_{\Q^*}  (2\pi i )^{-2n(2n-1)},
$$ 
so
$$ 
\det(A\otimes B) = \det(A)^{2n} \det(B)^{2n} \sim_{\Q^*} \Delta^{-n} (2\pi i)^{-2n^2(4n-3)}.
$$
Finally the center is the point $s=2n-1$ and 
$$ c^\pm ((\Res_{E/\Q} M\otimes N) (2n-1)) = c^\mp(\Res_{E/\Q} M\otimes N) \cdot (2\pi i )^{4n^2 (2n-1)}.$$ 
 
Putting all of the above together yields:
\begin{equation} 
\label{eqn:ggp-so2n}
 \frac{c^+((\Res_{E/\Q} M\otimes N) (2n-1))}{\vol (\mathcal{L}_M) \vol (\mathcal{L}_N)} \sim_{\Q^*} (2\pi i)^{2n^2} \cdot  \underbrace{ \sqrt{\Delta \bar{\Delta}}}_{\in \sqrt{\Q^*}}  \cdot \sqrt{D}^n.
 \end{equation}

\subsection{$\SO_{2n+1} \times \SO_{2n+2}$ over $E$ imaginary quadratic}
Recall that here $N$ is associated with $\SO_{2n+1}$ and $M$ with $\SO_{2n+2}$. 
 We will be brief for all the computations  are very similar to the prior section, e.g. 
 the term $L^*(0,\Ad N)$ is the same as in the previous section, while the formula for 
  $L^*(0, \Ad(M))$ is obtained by replacing $n$ by $n+1$ in the formula from the previous section. 

The volume computations are also similar: we have 
$$ 
\vol (F^1 \Res_{E/\Q} (\Ad N)) \sim_{\Q^*} D^{\frac{1}{2} n^2} \cdot \cR_0^{-2n} \cR_1^{-(2n-2)} \cdots \cR_{n-1}^{-2}.
$$ 

$$
\vol (F^1 \Res_{E/\Q} (\Ad M)) \sim_{\Q^*} 
 \cQ_0^{-2n} \cQ_1^{-(2n-2)} \cdots \cQ_{n-1}^{-2}\cdot \Delta^{n} \cdot \det (\Xi)^{n},
$$
where $\Delta$, $\Xi$ are defined similarly. 
\begin{align*}
c^\pm(\Res_{E/\Q} M\otimes N) & \sim_{\Q^*} \sqrt{-D}^{-2n(n+1)} \cdot \cQ_0^{-2n} \cQ_1^{-(2n-2)} \cdots \cQ_{n-1}^{-2} \cdot \\ & \cR_0^{-2n} \cR_1^{-(2n-2)} \cdots \cR_{n-1}^{-2} \cdot 
\det(\Xi)^{-n}\cdot   \det (A\otimes B),
\end{align*}
where $A$ and $B$ are the period matrices as before. 
Now,  computing with Gram matrices as  before  shows  
$$ 
\det(A)^2 \sim_{\Q^*} \Delta^{-1} \cdot (2\pi i )^{-2n(2n+2)}, \quad \det (B)^2 \sim_{\Q^*}  (2\pi i )^{-2n(2n-1)},
$$ 
so
$$ 
\det(A\otimes B) = \det(A)^{2n} \det(B)^{2n+2} \sim_{\Q^*} \Delta^{-n} (2\pi i)^{-n(2n+2)(4n-1)}.
$$
The center is the point $s=2n$ and 
$$ c^\pm ((\Res_{E/\Q} M\otimes N) (2n)) = c^\mp(\Res_{E/\Q} M\otimes N) \cdot (2\pi i )^{4n^2 (2n+2)}.$$

Putting all of the above together yields:
 \begin{equation}
 \label{eqn:ggp-so2n+1}
 \frac{c^+((\Res_{E/\Q} M\otimes N) (2n))}{\vol (\mathcal{L}_M) \vol (\mathcal{L}_N)} \sim_{\Q^*} (2\pi i)^{2n(n+1)} \cdot \sqrt{D}^n.
\end{equation}

\subsection{Motives with coefficients}  
\label{sec:mot-coeffs}
We return to the issue mentioned on page \pageref{c group inner twist}, namely, 
the morphism from the motivic Galois group to the $c$-group of $G_1$ or $G_2$ might not be defined over $\Q$. 
In this remark we outline a modification of the argument above that
accounts for this possibility. We will explain this in the case $G=\Res_{E/\Q}\SO(2n) \times \SO(2n+1)$ 
for an imaginary quadratic $E$, the other cases being similar. The reader is 
referred to \cite{Deligne} Sec. 2 for a survey of motives with coefficients and 
for the formulation of Deligne's conjecture in that setting, which we use below.

 Choose a large enough number field $K$ over which the $\widehat{G_i}$-motives attached to $\pi_1, \pi_2$ are defined, i.e.
so that the associated morphisms from the motivic Galois group to the $c$-group of $G_i$ are defined over $K$. 
 
We get 
motives attached to $\pi_1$ and $\pi_2$ over $E$ with coefficients in $K$, denoted 
$M_K$ and $N_K$ respectively. Attached to $\Pi$ one has the motive
$\bM_K = \Res_{E/\Q} (M_K \otimes N_K)$.  
Then 
$$
L(2n-1, \bM_K)\in (K\otimes \C), \quad c^+ (\bM_K (2n-1)) \in (K\otimes \C)^*/K^*.
$$
where all the tensor products are taken over $\Q$;  Deligne's conjecture states that 
\begin{equation}
\label{eqn:del-coeff}
\frac{L(2n-1, \bM_K)}{c^+ (\bM_K (2n-1))} \in K \hookrightarrow (K\otimes \C).
\end{equation}

Let $\Ad M_K$ and $\Ad N_K$ be defined as above as sub-motives of $M_K \otimes M_K (2n-2)$ and 
of $N_K \otimes N_K (2n-1)$ respectively;  by the general formalism of Appendix  \S \ref{Pimotive Appendix} these are equipped with
polarizations (in the category of motives with $K$-coefficients).\footnote{ It is plausible that this fails in some $\PGL$ cases, but there our proofs
never used polarizations and with minor modifications one proceeds without them.}
 Then we can define the volumes
 $$
\vol  \cL_M, \vol \cL_N \in (K\otimes \C)^\times/ K^\times,
$$
generalizing in the obvious way the definition in \eqref{voldef}, 
and 
$$
\vol F^1 H^0_{\dR} \Ad \bM_K = \vol \cL_M \vol \cL_N.
$$
Moreover the computations 
in Sec. \ref{sec:so2nx2n+1} can be easily modified to show that 
the following variant of \eqref{eqn:ggp-so2n} remains valid:
\begin{equation}
\label{eqn:so2n-coeff}
\frac{c^+ (\bM_K (2n-1))}{\vol F^1 H_{\dR} \Ad \bM_K \cdot (2\pi i)^{2n^2} } \in   \sqrt{(K\otimes \Q)^*}.
\end{equation}
(One uses that the $K$-action on $H_B(M_K)$ and $H_B (\bM_K)$ commutes with the action of $\C^*$ and $W_\R$ respectively.)

Now we have an equality 
\begin{equation}
\label{eqn:lfunc-eq-coeff}
L(\frac{1}{2}, \Pi)= L(2n-1, \bM_K)
\end{equation}
(Rankin-Selberg $L$-function on the left) 
which in fact shows that the RHS lies in $(\Q \otimes \C)  \hookrightarrow (K\otimes \C)$.

 Finally, we note that there is a natural functor 
  $$ \text{Motives with $\Q$-coefficients} \mapsto \text{Motives with $K$-coefficients} $$
  denoted $X \mapsto X_K$ and we have the relation
  $$
  (\Ad \Pi)_K\simeq \Ad \bM_K
  $$
  where $\Ad \Pi$ is the conjectural adjoint motive with $\Q$-coefficients attached to $\Pi$.  
 The proof of Lemma \ref{lem:vol-indep} shows that the square of the volume 
 $\vol_S F^1 H_{\dR} (\Ad \Pi)$ (for any weak polarization $S$ on $\Ad \Pi$),
is (up to $\Q^*$) independent of the choice of $S$. Moreover,  \begin{equation}
\label{eqn:vol-eq-coeff}
\vol_S F^1 H_{\dR} (\Ad \Pi) = \vol_{S} F^1 H_{\dR} (\Ad \bM_K) 
\end{equation}
where the LHS lies in $\C^* / \Q^*$, the RHS in $(K\otimes \C)^*/ K^*$ and the equality 
must be viewed as saying the LHS maps to the RHS under the natural map 
$\C^*/\Q^* \rightarrow (K\otimes \C)^*/ K^*$.
Putting everything together (i.e. \eqref{eqn:del-coeff}, \eqref{eqn:so2n-coeff}, \eqref{eqn:lfunc-eq-coeff} and \eqref{eqn:vol-eq-coeff}) gives
\begin{equation}  \label{oym}
\frac{ L(\frac{1}{2}, \Pi)}{\vol_S F^1 H_{\dR} (\Ad \Pi)\cdot (2\pi i)^{2n^2}} \in (\Q \otimes \C) \cap \sqrt{(K\otimes \Q)^*},
\end{equation}
in particular, the square of the left-hand side lies in $(\Q \otimes \C) \cap (K \otimes \Q) = \Q$, as desired. 

 \section{A   case with $\delta=3$}
 
In this section we offer  what is perhaps the most interesting evidence for our conjecture,
in a case where $Y(K)$ is a $9$-manifold. Namely,
we verify some of the numerical predictions in a cohomological degree
that is neither minimal nor maximal; indeed, 
  we cannot even produce explicit cycles!

What we check
is the following:
  our conjecture relating $H^3$ to $H^4, H^5, H^6$ holds,
``up to rotation'' (see Theorem \ref{SecondMainTheorem} for the precise statement). 
That theorem is phrased as conditional on Beilinson's conjectures, but what we actually do is unconditional: 
we compute many numerical invariants of the lattices $H^*$, and we only need Beilinson's conjectures
to compare these computations with our conjecture. 
We also verify Prediction \ref{pred3} unconditionally (at least up to some factors in $\sqrt{\Q^*}$). 
 It would be interesting to analyze the square classes that appear in our argument, in order to eliminate these factors of $\sqrt{\Q^*}$.

A critical input into our result is the work of M. Lipnowski \cite{Lipnowski}, who combines
the ideas of equivariant analytic torsion with base change. 
 
 \subsection{Notation and assumptions} 
  \begin{itemize}
 \item[-] Let $F$ be an imaginary quadratic field (we will regard it as embedded in $\C$) and $E\supset F$ a   cyclic extension of degree $3$;
 let $\sigma$ be a generator for the Galois group of $E/F$, so that
 $$\Gal(E/F) = \langle \sigma \rangle =  \{ 1, \sigma, \sigma^2 \}.$$   
 
 We will assume $E/F$ to be {\em unramified}, but this is only so we can apply the work of \cite{Lipnowski}
 in the simplest form; the reader can easily verify that the same idea would apply for $E/F$ unramified  at  primes above $3$, for example,
 using the refined theorems later in \cite{Lipnowski}.

  \item[-]   Choose a non-split quaternion algebra $D$ over $F$, and
 let $\GG_F $ be the (algebraic) group (underlying) $D^{\times}/F^{\times}$. 
 Let $\GG_E$ be the base change of $\GG$ to $E$; and let
$$G = \GG_E( E \otimes \C) = \PGL_2(\C) \times \PGL_2(\C) \times \PGL_2(\C)$$
 be the archimedean group at $\infty$ associated to $\GG_E$. 
 
 \item[-] 
 We denote by $ \mathfrak{a}_{\GG_F(\R)} $ or $\mathfrak{a}_F$
 for short the (one-dimensional) complex vector space attached to the real group $\GG_F(\R)$ (see \S \ref{VoganZuckerman}). Similarly 
 we define $\mathfrak{a}_{\GG_E(\R)} = \mathfrak{a}_E \mbox{ for short}$, a three-dimensional complex vector space. 
 Note that we may naturally identify
 $$ \mathfrak{a}_E \simeq \mathfrak{a}_F^{\Sigma}$$
 where $\Sigma$ is the set of  embeddings $E \hookrightarrow \C$
 extending the given embedding of $F$.

  \item[-] Let $\pi$ be an infinite-dimensional automorphic   representation for $\mathbf{G}_F$, cohomological at $\infty$,  and  let $\Pi$ be the base-change of $\pi$ to $\mathbf{G}_E$.
  
  \item[-] 
We suppose that $\pi$ is  trivial at each ramified place for $D$, and with conductor $\mathfrak{p}^{f(\mathfrak{p})}$ at each 
prime $\mathfrak{p}$ that is unramified for $D$. Put $\mathfrak{n} =  \prod_{\mathfrak{p}} \mathfrak{p}^{f(\mathfrak{p})}$.
(If one allows the case where $E/F$ is ramified, we should additionally assume that 
   $\mathfrak{n}$ is  relatively prime to the discriminant of $E/F$.)
  \item[-] 
  Let $K_F$ be  the level structure for $\GG_F$ of ``level $\Gamma_0(\mathfrak{n})$.'' 
  By this we mean $K_F = \prod_{v} K_v$, the product over all finite places $v$, where
  \begin{itemize}
  \item[(a)]  If $v$ is ramified for $D$, then take $K_v =  \mathcal{O}_{D_v}^* F_v^*/F_v^*$ where $\mathcal{O}_{D_v} \subset D_v$ is the maximal order. 
  \item[(b)]  If $v$ is unramified for $D$, fix an isomorphism $D_v \simeq \PGL_2(F_v)$;
  then $K_v$ is given by the preimage of the matrices $\small{\left( \begin{array}{cc} a & b \\ c & d \end{array}\right)} \in \PGL_2(\mathcal{O}_v)$
  where the valuation of $c$ is at least $f(v)$. 
  \end{itemize}
  
  We define similarly $K_E$  to be the level structure for $\GG$
  ``of level $\Gamma_0(\mathfrak{n} \cdot \OO_E)$,'' where we choose the isomorphisms in (b) in such a way that $K_E$ is $\sigma$-invariant.

 \item[-]  Let \begin{equation} \label{YYbardef}  Y =Y(K_E),  \ \ \overline{Y} = Y(K_F)\end{equation}
 be the corresponding arithmetic manifolds for $\mathbf{G}_E$ and $\mathbf{G}_F$, respectively; thus $Y$ is nine-dimensional and $\overline{Y}$ is three-dimensional.
 Moreover there is a natural $\Gal(E/F) = \langle \sigma \rangle$-action on $Y$ (arising from the $\sigma$-action on $\mathbf{G}_E$, which preserves the level structure).
 The inclusion $\mathbf{G}_F \hookrightarrow \mathbf{G}_E$ gives rise to a map $\overline{Y} \rightarrow Y^{\sigma}$
 of $\overline{Y}$ into the $\sigma$-fixed subspace on $Y$. 
  
  We equip $\overline{Y}$ with the Riemannian metric arising from the standard Riemannian metric
  on hyperbolic $3$-space $\mathbb{H}^3$, and we equip $Y$ with the Riemannian metric arising from the standard Riemannian metric
  on $\mathbb{H}^3 \times \mathbb{H}^3 \times \mathbb{H}^3$.   
 \item[-] We suppose that  
 \begin{equation} \label{1dassumption} \dim H^3_{\mathrm{cusp}}(Y, \C) = 1.\end{equation}
 Here the notation ``cusp'' should be understood as meaning the contribution of all infinite-dimensional automorphic representations to cohomology.
 
\eqref{1dassumption} implies  firstly  that $\dim H^1_{\mathrm{cusp}}(\overline{Y}, \C) =  1$, because
 of base change, and secondly that $Y, \overline{Y}$ have only one connected component (which is equivalent to asking that the class numbers of $E$ and $F$ are odd). 
 It also implies that $\pi$ is the only  nontrivial automorphic representation which contributes to the cohomology of $\overline{Y}$,
 and similarly $\Pi$ is the only   nontrivial automorphic representation which contributes to the cohomology of $Y$.

\item[-]Let $L_{\Pi}$ be the coadjoint motivic cohomology $H^1_{\m}(\coAd \Pi, \Q(1))$
as in \eqref{Ldefagain}; 
let $L_{\Pi} \otimes \C \rightarrow \mathfrak{a}_E$ be the Beilinson regulator, as in  \eqref{Lreg}.
  We define similarly
$L_{\pi}$ with  its regulator map $L_{\pi} \otimes \C \rightarrow \mathfrak{a}_F$. 
  There is a natural action of $\langle \sigma \rangle \simeq \Z/3\Z$ on $L_{\Pi}$,  and   an identification
    \begin{equation} \label{Lpifix} L_{\pi}  \stackrel{\sim}{\rightarrow} L_{\Pi}^{\sigma}.\end{equation}
  \end{itemize} 

 Before we give the theorem statement, let us comment a little on the assumptions.
 Although we do not have any numerical examples, we expect that situations like the above should be very easy to find
{\em given} an effective ability to compute $H^3(Y, \C)$  numerically. 
 In particular, it is very common (see discussion in \cite{BSV}) that  the cuspidal cohomology of $\overline{Y}$ is one-dimensional.
When that is so,  we would expect that the cuspidal contribution to $H^3(Y, \C)$ {\em also}  is  one-dimensional, comprising solely the base-change forms -- in situations
with $\delta > 0$, cuspidal cohomology  in characteristic zero that does not arise via a lift from another group  is considered to be very rare.

 Before we formulate the theorem, 
note that 
 $\Gal(E/F)$, and thus the real group algebra $\R[ \Gal(E/F)]$,  acts on $H^*(Y(K), \C)$. 
By a {\em rotation} in the group algebra $\R[\Gal(E/F) ] \simeq \mathbf{R} \times \mathbf{C}$
we mean  an element of the form $(1, z)$ where $|z|=1$.

 \begin{theorem} \label{SecondMainTheorem}
With the assumptions above,  Prediction \ref{pred3}  (more precisely
equation \eqref{frodo2})
holds up to $\sqrt{\Q^*}$.

Moreover, assume Beilinson's conjectures, as formulated in Conjecture
\ref{conj:Beilinson}, and the existence of a $2$-dimensional motive associated to $\pi$ (so also $\Pi$).  Let $\aGs$, and so also $L_{\Pi}^*$, act on $H^*(Y(K), \C)_{\Pi}$ by means of the action constructed in \S \ref{VoganZuckerman}.

Then there are rotations $r_i\in \R[\Gal(E/F)]$, for $1 \leq i \leq 3$, such that 
\begin{equation} \label{j4}   H^3(Y(K), \overline{\Q})_{\Pi}   \cdot \wedge^i L_{\Pi}^* =  r_i  H^{3+i}(Y(K), \overline{\Q})_{\Pi} . \end{equation} 
   In other words, the main Conjecture \ref{mainconjecture} holds, up to replacing $\Q$ by $\overline{\Q}$
 and up to a rotation in $\R[\Gal(E/F)]$.  (In fact, $\overline{\Q}$ can be replaced by  an extension of the form $\Q(\sqrt{a}, b^{1/4})$ for $a,b \in \Q^*$, and $r_3$ can be taken trivial.)
  \end{theorem}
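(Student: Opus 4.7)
The plan is to exploit the action of $\Gal(E/F) = \langle\sigma\rangle$ on everything in sight. Since this group has order coprime to the characteristics involved, all objects decompose into eigenspaces. Writing $\chi$ for a fixed nontrivial character of $\Gal(E/F)$, one has
\[
\mathfrak{a}_E^* \;\simeq\; \mathfrak{a}_F^* \,\oplus\, (\mathfrak{a}_F^*)^{\chi} \,\oplus\, (\mathfrak{a}_F^*)^{\bar\chi},
\qquad
L_\Pi \otimes \overline{\Q} \;\simeq\; (L_\pi \oplus L_\Pi^{\chi} \oplus L_\Pi^{\bar\chi}) \otimes \overline{\Q},
\]
compatibly under the Beilinson regulator \eqref{BRegaGdual}, and the assumption $\dim H^3_{\mathrm{cusp}}(Y,\C)=1$ together with base change forces analogous decompositions of $H^*(Y,\C)_\Pi$ as a module over $\wedge^*\mathfrak{a}_E^*$. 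Over $\R$, the nontrivial part $(L_\Pi/L_\pi)\otimes \R$ is a single irreducible $\Q[\Gal(E/F)]$-module of rank $2$, and $\wedge^2$ of it is canonically a trivial $\Gal(E/F)$-module (since $\chi\bar\chi=1$), so in particular $\wedge^3 L_\Pi^* \simeq L_\pi^*\otimes \det$ carries the trivial Galois action.

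First I would establish \eqref{frodo2} up to $\sqrt{\Q^*}$. Choose $\omega_F \in H^1(\overline{Y},\Q)_\pi$ and $\omega_E \in H^3(Y,\Q)_\Pi$ generating these one-dimensional spaces, and let $\nu_F$ generate $\wedge^1 L_\pi^*$ with image $\nu_L$ in $L_\Pi^*$. Set $\omega_E' = \omega_E \cdot \nu_L$. By the isometry statement of Proposition~\ref{basic properties}(i), $\|\omega_E'\| = \|\omega_E\|\cdot \|\nu_L\|_{\mathfrak{a}_E^*}$, so the content of \eqref{frodo2} is a comparison of $\|\nu_L\|_{\mathfrak{a}_E^*}$ with $\|\nu_F\|_{\mathfrak{a}_F^*}$. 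Under the identifications of Lemma~\ref{trace-doo-doo} both metrics are restrictions of trace forms on the dual Lie algebra (summed over embeddings for $E$), and the pullback of the $E$-form to $\mathfrak{a}_F^*\hookrightarrow \mathfrak{a}_E^*$ is exactly $[E:F]$ times the $F$-form. This yields $\|\nu_L\| \sim_{\sqrt{\Q^*}} \sqrt{[E:F]}\cdot\|\nu_F\|$, which combined with Theorem~\ref{MainPeriodTheorem} applied to $\pi$ on $\GG_F$ (Prediction~1, verified for $(\PGL_2,\mathrm{pt})$-type data over an imaginary quadratic field) gives \eqref{frodo2} up to $\sqrt{\Q^*}$.

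For the  statement \eqref{j4} I would use Lipnowski's equivariant analytic torsion. For each character $\psi$ of $\Gal(E/F)$, Lipnowski's comparison of the $\psi$-isotypic Reidemeister and analytic torsions on $Y$, together with the fact that $E/F$ is unramified cyclic of degree coprime to the residue characteristics of interest, computes the $\psi$-equivariant $L^2$-torsion of the cuspidal part in terms of a special value of an adjoint $L$-function on $\GG_F$ (for $\psi=1$) or on an Asai/twisted-adjoint lift thereof (for $\psi\neq 1$). Accepting Beilinson's conjecture, each such $L$-value is in turn expressible as $\vol_{\mathrm{tr}}$ of the corresponding $\psi$-isotypic component of $L_\Pi$. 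Because cohomology in each intermediate degree $4 \le j \le 6$ is, by Künneth in the exterior algebra, a sum of products of $\psi$-isotypic pieces, one reads off the $L^2$-norms of generators of $H^j(Y,\Q)_\Pi$ in each $\Gal(E/F)$-eigenspace as prescribed multiples (in $\overline{\Q}^\times$) of $\|\omega_E\|\cdot \|\nu\|$ for the appropriate $\nu \in \wedge^* L_\Pi^*$. The action of $\wedge^* L_\Pi^*$ therefore lands in $H^*(Y,\overline{\Q})_\Pi$ up to a complex scalar of absolute value $1$ in each isotypic component -- precisely a rotation $r_i \in \R[\Gal(E/F)] \simeq \R\times \C$. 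In top degree the relevant generator lives in the Galois-invariant summand $\wedge^3 L_\Pi^* \simeq L_\pi^*\otimes \det$, and the ratio is governed entirely by the case already verified for $\pi$; hence $r_3$ can be chosen trivial.

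The main obstacle will be the third step: the equivariant analytic torsion identity of Lipnowski outputs only positive real numbers (norms of classes), so it pins down rational classes up to unit complex scalars, which is why only a rotation and not the full rational statement can be extracted. Converting this into the sharper conjecture would require a refinement that tracks phases -- equivalently, a Galois-equivariant regulator computation rather than just a norm computation. A secondary, bookkeeping difficulty is ensuring that the various square-class ambiguities introduced by Beilinson periods on $\pi$ and on $\Pi$ match up across the base change, which is why the statement is only up to $\overline{\Q}$ (indeed, up to $\Q(\sqrt{a},b^{1/4})$) rather than up to $\Q$.
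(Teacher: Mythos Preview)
Your argument for Prediction~\ref{pred3} is circular. You set $\omega_E' = \omega_E \cdot \nu_L$ and invoke the isometry property, but \eqref{frodo2} asserts a norm identity for a harmonic representative of a \emph{rational} class in $H^4(Y,\Q)_\Pi^{\Gal(E/F)}$, and there is no reason for your $\omega_E'$ to be rational---that is precisely what Conjecture~\ref{mainconjecture} would assert. (Your argument also invokes $L_\Pi$ and hence the motive, whereas the first assertion of the theorem is meant to be unconditional.) The paper proceeds entirely differently: it uses the Reidemeister/analytic torsion identities $\mathrm{RT}(Y)=1$ (Moscovici--Stanton, from the product structure of the universal cover) and $\mathrm{RT}^\sigma(Y)=\mathrm{RT}(\overline{Y})^2$ (Lipnowski) to compute the alternating volumes of the \emph{actual} rational cohomology lattices $H^*(Y,\Q)_\Pi$ and their $\sigma$-invariants, strips off the trivial-representation contribution by explicit calculation, and reads off \eqref{frodo2} directly.

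For \eqref{j4} your outline has the right flavor but omits the two inputs without which the argument does not close. First, Theorem~\ref{MainPeriodTheorem} must be applied \emph{over $E$} to $\Pi$, giving $\vol(H^3_\Pi)^2\cdot\vol(L_\Pi^*)\in\sqrt{\Q^*}$; since $H^3_\Pi\cdot\wedge^3 L_\Pi^*$ and $H^6_\Pi$ are both one-dimensional, this volume identity plus Poincar\'e duality immediately gives the $i=3$ case---so it is governed by $\Pi$ over $E$, not by $\pi$ over $F$ as you claim. Second, the vanishing $\mathrm{RT}(Y)=1$ is essential: it forces $\vol H^4_\Pi \sim_{\sqrt{\Q^*}} \vol H^3_\Pi$, which together with Lipnowski's identity and the period formulas over both $E$ and $F$ determines both $\vol H^4_\Pi$ and $\vol^\sigma H^4_\Pi$ in terms of $L_\Pi, L_\pi$. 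The paper then finishes $i=1$ by an elementary lemma: two $\sigma$-stable $\Q$-lattices in a $3$-dimensional metrized real $\langle\sigma\rangle$-representation, with matching total volume and matching $\sigma$-invariant volume, differ (after a quadratic extension) by a rotation in $\R[\Gal(E/F)]$. The $i=2$ case then follows from $i=1$ by Poincar\'e duality via the adjointness of Lemma~\ref{adjointness2}. Your proposed character-by-character $L$-value computation via ``Asai/twisted-adjoint'' $L$-functions is not what the paper does and would require substantially more input than the two torsion identities actually used.
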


Here the tempered cohomology contributes in degrees $3$ to $6$.  The groups $H^4$ and $H^5$ are  ``inaccessible'', because it appears
to be very difficult to directly construct rational cohomology classes of this degree.  Our method of proof  is in fact quite indirect, going through analytic torsion.
 
We need some setup first on metrized lattices (\S \ref{metrizedlattices})
and then on Reidemeister torsion (\S \ref{Rtorsion}). This setup will allow us to check
that Prediction \ref{pred3} holds in \S \ref{pred3proof}. The full  Theorem  above will follow from a more detailed analysis, which we carry out 
in the remainder of the section.    This final analysis uses many of the results of  this paper: it uses the results of 
Theorem \ref{MainPeriodTheorem} both over $F$ and over $E$, the compatibility with Poincar{\'e} duality (Proposition \ref{basic properties}),  
and the study of analytic torsion over $F$ and over $E$ (both usual and $\sigma$-equivariant).

\subsection{Volumes and functoriality} \label{metrizedlattices} 
 Some brief remarks about the behavior of volumes under functoriality:
 Let $V$ be a $\Q$-vector space  equipped with a metric, i.e.,  $V \otimes \R$ is equipped
 with an inner product.  We define its volume as in  \eqref{voldef}. Then 
 $$V^* := \Hom(V, \Q), \Sym^k  V, \wedge^k V$$
 all obtain metrics; similarly, if $V, W$ are $\Q$-vector spaces
 with metrics, then $V \otimes W$ inherits a metric. 
 We have a natural metrized isomorphism $ \wedge^k V \simeq \left( \wedge^{d-k} V \right)^* \otimes (\det V)$,
 where we wrote $\det(V) = \wedge^{\dim(V)} V$.

Fix an isomorphism $f: (V \otimes \R, \mbox{metric}) \rightarrow \left(\R^n, \mbox{Euclidean inner product}\right)$. 
If we write $f(V) = \Q^n g$ for some $g  \in \GL_n(\R)$, 
we have $\vol(V) = \det(g)$.   Using this  it is easy to check the following identities:
\begin{equation} \label{tensorvolume}  \vol(V_1 \otimes V_2)  =  (\vol V_1)^{\dim V_2} (\vol V_2)^{\dim V_1}\end{equation} \begin{equation} \vol(V^*) = \vol(V)^{-1}, \ \  \prod_i  (\vol \wedge^i V)^{ (-1)^i} = 1 \ \ (\dim V \geq 2),\end{equation} 
where all equalities are in $\R^*/\Q^*$. 

 If $\sigma$ is an automorphism of $V$ with prime order, then
 we denote by $V^{\sigma}$ the fixed point space;  we denote by $V_{\sigma}$ the coinvariants $V/V^{\sigma}$.
 It will be convenient to abridge
 $$\vol^{\sigma}(V) := \vol(V^{\sigma}),$$
 the volume of the $\sigma$-invariants with respect to the induced metric. 
 
Finally, it will be convenient to make the following notation: If $V_i$ are a collection of $\Q$-vector spaces
with metrics, indexed by  the integers, and only finitely many $V_i$ are nonzero, we denote  by
\begin{equation} \label{volproddef} \vol V_* = \prod (\vol \ V_i)^{(-1)^i},\end{equation}  
the {\em alternating product} of the volumes. We will often apply this notation when $V_i$
is the $i$th cohomology group of a Riemannian manifold, equipped with the metric that arises
from its identification with harmonic forms.

\subsection{Analytic torsion and equivariant analytic torsion. The theorems of Moscovici-Stanton and Lipnowski}  \label{Rtorsion}

  As a reference on this topic see \cite{Cheeger} (for the general case) and \cite{LottRothenberg} (for the equivariant case); we briefly summarize the important points. 
  
 Let $M$ be a compact Riemannian manifold, $\sigma$ an automorphism of $M$ of prime order $p$, $G = \langle \sigma\rangle$ the group generated by $\sigma$.   We shall suppose that $\dim(M)$ and $\dim(M^{\sigma})$ are both odd. 
 We may find a $G$-stable triangulation of $M$, by \cite{Illman},
 and it may be assumed to be regular (see \cite[Chapter III]{Bredon}).

If $W$ is a real vector space, let $\det(W)$ be the line (= one-dimensional real vector space)
given by $\wedge^{\dim(W)} W$. If $W$ has a  Euclidean metric, then $\det(W)$ has a metric too; this normalizes
an element of $\det(W)$ up to sign, the element of norm $1$. 
   If $W_{\bullet}$ is a complex of real vector spaces, 
   define $\det W_{\bullet} = \otimes_{i} (\det W_i)^{(-1)^i}$, a one-dimensional  real vector space.
   (Here, $L^{-1}$ denotes the dual of $L$, if $L$ is one-dimensional.)  
   There is a natural isomorphism $\det W_{\bullet} \simeq \det H^*(W_{\bullet})$,
   where we regard the cohomology as a complex of  vector spaces with zero differential.

In particular, writing $C^*(M, \R)$ for the cochain complex of $M$ with respect to the fixed triangulation, we get an isomorphism 
 \begin{equation} \label{canonical id} \det C^*(M, \R) \simeq \det H^*(M, \R)\end{equation}
        Equip the chain complex $C_*(M, \R)$ with the metric where the characteristic functions of cells form an orthonormal basis;
        give $C^*(M, \R)$ the dual metric. 
Equip the cohomology $H^*(M, \R)$ with the metric that arises from its identifications with harmonic forms. 
These metrics induce
metrics on the one-dimensional vector spaces $\det C^*(M, \R)$ and $\det H^*(M, \R)$ respectively. 

We define the Reidemeister torsion of $M$ (with reference to the given triangulation) by comparing these metrics, using  
the identification \eqref{canonical id}: 
\begin{equation} \label{RTdef} \mathrm{RT}(M) \cdot \| \cdot \|_{C^*} = \| \cdot \|_{H^*}.\end{equation}
Evaluate the resulting equality on an element $c \in \det C^*(M, \Q)$;
then $\|c\|_{C^*}$ is easily seen to lie in $\Q^*$,  whereas $\|c\|_{H^*} \sim_{\Q^*}
 \vol H^*(M, \Q)$, where the right-hand side is defined as  an alternating product as in \eqref{volproddef}.  Therefore, 
\begin{equation} \label{RTalt} \mathrm{RT}(M) \sim  \vol H^*(M, \Q),\end{equation}
     
 We also need an equivariant version of the same discussion. 
The complex of invariants
 $C^*(M, \R)^{\sigma}$  has cohomology identified with $H^*(M,\R)^{\sigma}$; we get
\begin{equation} \label{secondiso} \det C^*(M, \R)^{\sigma} \simeq \det H^*(M, \R)^{\sigma}.\end{equation}
 These too have metrics, induced from $C^*(M, \R)$ and $H^*(M, \R)$; we define the 
``invariant part''  $\mathrm{RT}^{\sigma}(M)$ of the Reidemeister torsion  via the same
 rule \eqref{RTdef}, now applied to \eqref{secondiso}. 
 An orthogonal basis for $C_*(M, \Q)^{\sigma}$  is obtained by taking 
 all $\sigma$-invariant cells, and the $\sigma$-orbits of cells that are not invariant; 
 we have a similar (dual) basis for $C^*(M, \Q)^{\sigma}$. 
 The elements of the resulting basis are orthogonal, and their lengths are either $1$  or $\sqrt{p}$,
 where $p$ is the order of $\sigma$. 
Writing $\varepsilon_j$ for the number of $j$-dimensional simplices that are not invariant, we see
 $ \vol C^j(M, \Q)^{\sigma} \sim  p^{\varepsilon_j/2}.$
However, modulo $2$, $\sum \varepsilon_j = \sum (-1)^j \varepsilon_j = \chi(M)- \chi(M^{\sigma})$. 
Both Euler characteristics are zero (we are dealing with odd-dimensional manifolds).  
Proceeding as above, we get 
$$ \mathrm{RT}^{\sigma}(M) \sim   \vol H^*(M, \Q)^{\sigma}.$$

The main theorem of \cite{Cheeger} is an equality between $\mathrm{RT}$ and an analytic invariant, the analytic torsion;
the main theorem of \cite{LottRothenberg} is a corresponding equality for $\mathrm{RT}^{\sigma}$. We do not need to recall these results in full here.

All that is important for us are the following two statements, in the case when $M = Y$ 
from \eqref{YYbardef}, and $\sigma$ is given by the action of a generator of $\Gal(E/F)$ on $Y$:
\begin{equation}  \label{rt1} \mathrm{RT}(Y) = 1 \end{equation}
\begin{equation} \label{rt2}  \mathrm{RT}^{\sigma}(Y) = \mathrm{RT}(\overline{Y})^2. \end{equation} 

 These statements are proved by studying the analytic torsion. 
For \eqref{rt1} see  \cite{MS} or \cite{speh}: the idea is, roughly speaking, that the
product decomposition of the universal cover of $Y$ means that every Laplacian eigenvalue occurs in several cohomological degrees,
leading to a mass cancellation in the  analytic torsion.

As for \eqref{rt2}, this key relationship is due to Lipnowski \cite[\S 0.2, ``Sample Theorem'']{Lipnowski}. 
 Lipnowski's results are deduced
from the theory of base change: the analytic torsion counterparts of $\mathrm{RT}(Y)$ and $\mathrm{RT}^{\sigma}(Y)$ are defined
in terms of a regularized trace of $\log \Delta$, acting on $Y$, and possibly twisted by a power of $\sigma$;
however the theory of base change precisely allows one to relate this to corresponding computations on $\overline{Y}$. 
\footnote{Here are some notes regarding the translation of Lipnowski's theorem to the form above: Lipnowski
works in a situation with a Galois group $\langle \sigma \rangle$ of order $p$ and  shows that $\tau_{\sigma} = \tau^p$.
Here $\tau$ is exactly $\mathrm{RT}(\overline{Y})$, for suitable choices of data,  but $\tau_{\sigma}$ takes some translation: its logarithm is 
 the   logarithmic determinant of the de Rham Laplacian on $Y$
  twisted by $\sigma$. One obtains the same logarithmic determinant if we twist by 
 $\sigma^i$ for any $1 \leq i \leq p-1$.
Add up  over $1   \leq i \leq p-1$ and apply the main theorem  of \cite{LottRothenberg} to the representation of 
 $\langle \sigma \rangle$ which is the difference of the regular representation and $p$ copies of the trivial representation. 
We find $\mathrm{RT}(Y) \cdot \tau_{\sigma}^{p-1} = 
 \mathrm{RT}^{\sigma}(Y)^p$; therefore, in our case with $p=3$, we find  $\frac{\mathrm{RT}^{\sigma}(Y)^3}{\mathrm{RT}(Y)} = \tau_{\sigma}^{2} = \tau^{6}$. To conclude we apply \eqref{rt1}.}
\subsection{Volumes of cohomology groups for $Y$ and $\bar{Y}$} 
We gather some preliminary results related to the volumes of groups $H^j(Y, \Q)$ and $H^j(\overline{Y},\Q)$,
measured as always with respect to the metric induced by the $L^2$-norm on harmonic forms. 

We have
\begin{equation} \label{factorization} \vol H^i(Y, \Q) = \vol H^i_{\Pi}(Y, \Q) \cdot \vol H^i_{\triv}(Y, \Q), \end{equation}
(equality in $\R^*/\Q^*$) by virtue of our assumption that the only cohomological automorphic representations at level $K$
are the trivial representation and $\Pi$: the splitting $H^i = H^i_{\Pi} \oplus H^i_{\triv}$ is both orthogonal
and defined over $\Q$.  Poincar{\'e} duality induces a metric isomorphism $H^i(Y, \Q) \simeq H^{i^*}(Y,\Q)^*$, 
where $i+i^* = 9$, and thus
$$\vol H^i(Y, \Q) \cdot \vol H^{i^*} (Y, \Q) \sim 1$$
and the same result holds for the trivial and $\Pi$ parts individually.
We have similar results for the $\sigma$-invariant volumes, and also a similar equality for $\overline{Y}$:
$$\vol H^i(\overline{Y}, \Q) = \vol H^i_{\pi}(\overline{Y}, \Q) \cdot \vol H^i_{\triv}(\overline{Y}, \Q),$$

We now compute the various volume terms related to the trivial representation.

Observe that
$$ \dim H^i_{\triv}(\overline{Y}, \Q) = \begin{cases} 1, i \in \{0,3\};\\ 0,  \mbox{else}, \end{cases} \mbox{ and   }\dim H^{i}_{\triv}(Y, \Q) = \begin{cases} 1, i \in \{0,9\}; \\ 3, i \in \{3,6\}; \\ 0,  \mbox{else}. \end{cases}$$
Explicitly speaking, harmonic representatives for $H^3_{\triv}(Y,\R)$  
are obtained from the pullbacks $\pi^* \nu$ under the coordinate projections
\begin{equation} \label{threeproj} \mathbb{H} \times \mathbb{H} \times \mathbb{H} \rightarrow \mathbb{H},\end{equation}
here $\mathbb{H}$ is the hyperbolic $3$-space, and $\nu$ the volume form on it.
Moreover,   cup product gives an isomorphism
$$ \wedge^3 H^3(Y, \Q)_{\triv}  \simeq H^9(Y, \Q) = H^9(Y, \Q)_{\triv}.$$ 

\begin{lemma}
\begin{equation} \label{Trivial_Volume} \vol  H^*(Y, \Q)_{\triv}  \sim 1\end{equation} 
\end{lemma}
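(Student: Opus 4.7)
The plan is to compute each factor $\vol H^i(Y,\Q)_{\triv}$ explicitly and check that they cancel in the alternating product \eqref{volproddef}. Everything reduces to linear algebra on explicit harmonic representatives coming from the product structure of the universal cover.

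First I would identify explicit representatives. Writing $\widetilde{Y} = \mathbb{H}^3 \times \mathbb{H}^3 \times \mathbb{H}^3$, the three coordinate projections give pullbacks $\omega_j = \pi_j^* \nu$ ($j=1,2,3$) of the hyperbolic volume form. Since these forms are $G$-invariant on $\widetilde{Y}$, they descend to harmonic forms on $Y$, and they span $H^3_{\triv}(Y,\C)$, as already noted around \eqref{threeproj}. Products $\omega_i \wedge \omega_j$ ($i<j$) span $H^6_{\triv}$ and $\omega_1\wedge\omega_2\wedge\omega_3$ spans $H^9_{\triv}$. Each $\omega_j$ is (up to a global scalar, independent of $j$ by the Galois symmetry and Hecke-equivariance) rational, so we may choose scalars $c_j \in \Q^*$ such that $\alpha_j := c_j\omega_j$ is a $\Q$-basis of $H^3_{\triv}$, and then the $\alpha_i \wedge \alpha_j$, respectively $\alpha_1\wedge\alpha_2\wedge\alpha_3$, form $\Q$-bases of $H^6_{\triv}$ and $H^9_{\triv}$.

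Next I would compute the Gram matrices in the $L^2$ inner product. On the product Riemannian manifold $Y$, the Hodge star satisfies $*\omega_j = \pm\,\omega_k\wedge\omega_l$ for $\{j,k,l\}=\{1,2,3\}$, and $*(\omega_i\wedge\omega_j)=\pm\,\omega_k$ for $\{i,j,k\}=\{1,2,3\}$. Writing $V := \vol(Y)$, a direct computation using $\omega_1\wedge\omega_2\wedge\omega_3 = \pm\,\mathrm{vol}_Y$ yields:
\begin{itemize}
\item $\langle \omega_i,\omega_j\rangle = \delta_{ij}\,V$;
\item $\langle \omega_i\wedge\omega_j,\omega_k\wedge\omega_l\rangle = \delta_{\{i,j\},\{k,l\}}\,V$;
\item $\|\omega_1\wedge\omega_2\wedge\omega_3\|^2 = V$;
\item $\|1\|_{H^0}^2 = V$.
\end{itemize}
From this, by taking $\Q$-volumes of the diagonal bases $\alpha_j$, $\alpha_i\wedge\alpha_j$, etc., we get in $\R^*/\Q^*$:
\[
\vol H^0_{\triv} \sim V^{1/2},\quad
\vol H^3_{\triv} \sim (c_1c_2c_3)\, V^{3/2},
\]
\[
\vol H^6_{\triv} \sim (c_1c_2c_3)^2\, V^{3/2},\quad
\vol H^9_{\triv} \sim (c_1c_2c_3)\, V^{1/2}.
\]

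Finally, the alternating product of \eqref{volproddef} gives
\[
\vol H^*(Y,\Q)_{\triv} \;\sim\; V^{\,\frac{1}{2}-\frac{3}{2}+\frac{3}{2}-\frac{1}{2}}\cdot (c_1c_2c_3)^{\,0-1+2-1} \;=\; 1,
\]
as required. The main subtle point is the rationality claim, i.e.\ that each $\omega_j$ is rational up to $\Q^*$. This is the one place that is not pure linear algebra: one must argue that the line $\R\cdot\omega_j \subset H^3(Y,\R)_{\triv}$ is defined over $\Q$. This follows since the three lines are permuted transitively by the Hecke action (or equivalently by the $\Gal(E/F)$-action together with the outer twists coming from the product decomposition), and each is individually Hecke-stable, so rationality of one forces rationality of all three; and rationality of any one $\omega_j$ follows from the fact that the corresponding volume form has rational periods against a suitable integral Borel--Moore cycle obtained from the factor structure. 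Modulo this verification the exponent cancellation above is immediate.
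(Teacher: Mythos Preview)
Your overall strategy is sound and the Gram computations are correct; the gap is exactly where you flagged it, and your proposed justification for the rationality of each line $\R\omega_j$ does not work. The Hecke algebra acts by scalars on $H^*_{\triv}$ (it is, after all, the trivial representation), so Hecke-stability is vacuous and cannot single out these lines. And there is no ``Borel--Moore cycle from the factor structure'': $Y$ is not a product manifold (the lattice in $\PGL_2(\C)^3$ does not split), so there is no evident $3$-cycle pairing nontrivially with one $\omega_j$ and trivially with the others. All one knows a priori is that the union $\bigcup_j \R\omega_j$ is the rank-$\le 1$ locus of the rational cup-product map $H^3_{\triv}\to\Hom(H^3_{\triv},H^6_{\triv})$ and is permuted cyclically by the $\Q$-rational action of $\Gal(E/F)$; neither fact forces the individual lines to be rational.

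The fix is painless: drop the diagonal ansatz. Write an arbitrary $\Q$-basis as $(v_1,v_2,v_3)=(\omega_1,\omega_2,\omega_3)A$ for some $A\in\GL_3(\R)$; since cup product is rational and an isomorphism here, $\{v_i\wedge v_j\}_{i<j}$ and $v_1\wedge v_2\wedge v_3$ are $\Q$-bases of $H^6_{\triv}$ and $H^9_{\triv}$. Your Gram computations then give, in $\R^*/\Q^*$,
\[
\vol H^0_{\triv}\sim V^{1/2},\quad \vol H^3_{\triv}\sim|\det A|\,V^{3/2},\quad \vol H^6_{\triv}\sim|\det A|^2\,V^{3/2},\quad \vol H^9_{\triv}\sim|\det A|\,V^{1/2},
\]
and both $V$ and $|\det A|$ cancel in the alternating product exactly as your $c_1c_2c_3$ did.

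The paper's own proof is a compressed version of the same idea: rather than computing all four volumes, it uses only that $\int_Y v_1\wedge v_2\wedge v_3\in\Q^*$ (rationality of cup product and of the fundamental class) to get $\vol H^3_{\triv}\cdot\vol H^9_{\triv}\sim 1$ directly, and then invokes Poincar\'e duality for degrees $0$ and $6$. This sidesteps the individual-line rationality question entirely.
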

\proof 
It is enough to show that \begin{equation} \label{needed} \vol(H^3_{\triv}(Y, \Q)) \vol(H^9_{\triv}(Y, \Q))  \sim 1.\end{equation}
because then Poincar{\'e} duality gives
$\vol(H^6_{\triv}(Y, \Q))_{\triv}   \vol(H^0_{\triv}(Y, \Q)) \sim 1$, and that gives the Lemma.
 To verify  \eqref{needed},  take an orthonormal basis $\omega_1, \omega_2, \omega_3$ for harmonic $3$-forms spanning $H^3(Y, \R)_{\triv}$. 
The  norm of each one  at  every point of $Y(K)$ (where the norm is that induced by the Riemannian structure)  equals $1/\sqrt{\vol(Y)}$, 
where we measure the volume of $Y$ with respect to the Riemannian measure. 
 The volume of $H^3(Y, \Q)$ equals (up to $\Q^*$, as usual)
$$ \frac{1}{ \int_{Y} \omega_1 \wedge \omega_2 \wedge \omega_3 }  = \sqrt{\vol (Y)}, $$
 since $\omega_1 \wedge \omega_2 \wedge \omega_3$ is a multiple of the volume form,   and its norm at each point
is $\vol(Y)^{-3/2}$.  On the other hand, the volume of $H^9(Y, \Q)$ equals the $L^2$-norm
of $\frac{d(\vol)}{\vol(Y)}$, with $d(\vol)$ the Riemannian volume form, i.e., $\vol(Y)^{-1/2}$.  That proves \eqref{needed}.   
\qed

\begin{lemma}\label{l2}
\begin{equation} \label{trivvolume}
  \vol^{\sigma}  \ H^*(Y, \Q)_{\triv} 
\sim \vol(\overline{Y})^2.\end{equation}
\end{lemma}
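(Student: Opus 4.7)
The plan is to compute each individual factor $\vol^\sigma H^i_{\triv}(Y,\Q)$ for $i \in \{0,3,6,9\}$ (the only four degrees in which the trivial representation contributes, by K\"unneth together with the fact that the trivial $(\mathfrak g,K)$-cohomology of $\PGL_2(\C)$ is concentrated in degrees $0$ and $3$) and assemble them via the alternating product \eqref{volproddef}. The extremal degrees are immediate: $H^0$ is generated by the constant function $1$ with $L^2$-norm $\vol(Y)^{1/2}$, and $H^9$ by $dV_Y/\vol(Y)$ with norm $\vol(Y)^{-1/2}$; both are fixed by $\sigma$ (which preserves orientation), giving $\vol^\sigma H^0 \sim \vol(Y)^{1/2}$ and $\vol^\sigma H^9 \sim \vol(Y)^{-1/2}$.

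The heart of the argument is the computation of $\vol^\sigma H^3_{\triv}(Y,\Q)$. By K\"unneth, $H^3_{\triv}(Y,\C)$ has a $\C$-basis $\pi_1^*\nu,\pi_2^*\nu,\pi_3^*\nu$, where $\pi_i\colon\mathbb{H}^3\to\mathbb{H}$ is projection to the $i$-th factor and $\nu$ is the hyperbolic volume form. These three forms are pointwise orthonormal (each of pointwise norm $1$), so each has $L^2$-norm $\vol(Y)^{1/2}$, and $\sigma$ cyclically permutes them. The $\sigma$-invariant line is therefore spanned by $\omega_+ := \pi_1^*\nu+\pi_2^*\nu+\pi_3^*\nu$ with $\|\omega_+\|_{L^2}=\sqrt{3\vol(Y)}$, and the $\Q$-generator of the one-dimensional space $H^3_{\triv}(Y,\Q)^\sigma$ is $c_0\omega_+$ for some $c_0 \in \R^*$ well-defined up to $\Q^*$; the task is to determine $c_0$.

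To pin down $c_0$, the plan is to exploit the $\Q$-rational pullback $j^*\colon H^3(Y,\Q)\to H^3(\overline{Y},\Q)$ arising from the diagonal embedding $j\colon\overline{Y}\hookrightarrow Y$ (which is well-defined by the compatibility of $K_F$ and $K_E$). On the universal cover the composition $\pi_i\circ (\mathrm{diagonal})$ is the identity on $\mathbb{H}$, so $j^*[\pi_i^*\nu]=[\nu_{\overline{Y}}]$ for each $i$, and hence $j^*[\omega_+]=3[\nu_{\overline{Y}}]\neq 0$. Since the $\Q$-generator of $H^3(\overline{Y},\Q)$ is the Poincar\'e dual of the fundamental class, harmonically represented by $[\nu_{\overline{Y}}]/\vol(\overline{Y})$, the rationality of $c_0\omega_+$ forces $c_0 \sim \vol(\overline{Y})^{-1}$ and consequently
$$\vol^\sigma H^3_{\triv}(Y,\Q) \;\sim\; c_0\|\omega_+\|_{L^2} \;\sim\; \vol(Y)^{1/2}\vol(\overline{Y})^{-1}.$$

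Finally, Poincar\'e duality gives the remaining factor: the cup-product pairing $H^3(Y,\R)\times H^6(Y,\R)\to\R$ is $\sigma$-equivariant, and because the trivial and non-trivial $\Z/3$-isotypes in $H^3_{\triv}$ pair independently with their counterparts in $H^6_{\triv}$, the pairing restricts to a metric duality on $\sigma$-invariants. This yields $\vol^\sigma H^6_{\triv}(Y,\Q) \sim \vol(\overline{Y})\vol(Y)^{-1/2}$. Substituting the four values into the alternating product \eqref{volproddef}, all powers of $\vol(Y)^{\pm 1/2}$ cancel and two factors of $\vol(\overline{Y})$ survive, giving $\vol(\overline{Y})^2$. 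The main obstacle is the normalization step for $c_0$: it relies on the explicit identification $j^*[\omega_+] = 3[\nu_{\overline{Y}}]$ and on the fact that $j^*$ remains non-zero on the relevant one-dimensional $\Q$-line, both of which are direct but crucial checks.
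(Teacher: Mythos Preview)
Your proof is correct and follows essentially the same approach as the paper: both arguments compute the four factors $\vol^\sigma H^i_{\triv}$ for $i\in\{0,3,6,9\}$ individually, identify the $\sigma$-invariant generator in degree $3$ as $(\pi_1^*\nu+\pi_2^*\nu+\pi_3^*\nu)/\vol(\overline{Y})$ by pulling back to (equivalently, integrating over) $\overline{Y}$, obtain the degree $6$ factor by Poincar\'e duality, and then assemble the alternating product. Your write-up is in fact somewhat more explicit than the paper's about why the integral/pullback condition suffices to pin down the rational generator of the one-dimensional line $H^3_{\triv}(Y,\Q)^\sigma$.
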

\proof  Notation as in \eqref{threeproj}, a generator $\omega_3$ for $H^3(Y, \Q)_{\triv}^{\sigma}$
is given as $\frac{ \pi_1^* \nu + \pi_2^* \nu + \pi_3^* \nu}{\vol \overline{Y}}$. To verify this,
recall that we have a map $\overline{Y} \rightarrow Y^{\sigma}$ (it is possible that this map is not surjective but it doesn't matter).   Each $\pi_i^* \nu$ pulls back to 
$\nu$ on $Y$, and in particular integrates to $\vol (\overline{Y})$, where the volume is measured w.r.t. $\nu$. 
Therefore $\int_{\overline{Y}} \omega_3 = 3$, so $\omega_3$ really does belong to $H^3(Y, \Q)$. 
The $L^2$-norm of $\omega_3$  is given by $\sqrt{  3 \cdot \frac{\vol Y}{\vol(\overline{Y})^2}}$.   Therefore, the left hand side of \eqref{trivvolume} is 
$\sim   \vol (Y)^{1/2} \cdot   \frac{ \vol(\overline{Y})}{\vol(Y)^{1/2}} \cdot  \frac{ \vol(\overline{Y})}{\vol(Y)^{1/2}}  \cdot \vol (Y)^{1/2} = \vol(\overline{Y})^2$. 
 \qed

\subsection{Proof of Prediction \ref{pred3}}  \label{pred3proof}
In what follows we abbreviate $$H^i_{\Pi} := H^i(Y, \Q)_{\Pi}$$
for the $\Pi$-summand of cohomology.

Combining  \eqref{RTalt}, \eqref{rt1}, \eqref{factorization}, \eqref{Trivial_Volume}  and Poincar{\'e} duality we get
 \begin{equation} \label{oinkA} \vol \  H^4_{\Pi} \in \sqrt{\Q^*} \cdot \vol  \ H^3_{\Pi}. \end{equation}
  Next, we have 
  \begin{equation} \label{oink1}   \vol^{\sigma} H^*_{\Pi}  \sim  \left( \vol H^*_{\pi} \right)^2 \ \end{equation}
  since from $ \mathrm{RT}^{\sigma}(Y) \stackrel{\eqref{rt2}}{=} \mathrm{RT}(\overline{Y})^2$ 
we get   \begin{equation} \label{oink3} \vol^{\sigma} H^*_{\Pi}(Y, \Q)  \cdot    \vol^{\sigma} H^*_{\triv}(Y, \Q)  \sim  \left( \vol H^*_{\pi}(\overline{Y} ,\Q) \cdot  \prod \vol H^*_{\triv}(\overline{Y}, \Q)\right)^2 \end{equation}
but  Lemma \ref{l2} implies that the contribution of the trivial representation on left and right cancel.
  
  Expanding \eqref{oink1}, noting that $H^3_{\Pi}$ is $\sigma$-fixed, and using Poincar{\'e} duality, we see  
$$ \left( \frac{  \vol^{\sigma} H^4_{\Pi} }{\vol H^3_{\Pi}} \right)^2 \sim \left( \frac{1}{\vol H^1_{\pi}}\right)^4,$$
that is to say
\begin{equation} \label{buggerme} \frac{ \vol^{\sigma} H^4_{\Pi}  \left(\vol H^1_{\pi}\right)^2 }{\vol H^3_{\Pi}} = \sqrt{q'},\end{equation}
for some $q' \in \Q^*$. This indeed verifies Prediction \ref{pred3}  up to $\sqrt{\Q^*}$. 

\subsection{Computation of $\vol H^3_{\Pi}$ and $\vol H^1_{\pi}$.}
In this case we know  \eqref{frodo}  both over $E$ and $F$: 
\begin{equation} \label{KP1} \vol(H^3_{\Pi})^2 \cdot\vol(L_{\Pi}^*) \sim \sqrt{q_1}.\end{equation} 
  \begin{equation} \label{KP2} \vol(H^1_{\pi})^2 \cdot \vol(L_{\pi}^*) \sim \sqrt{q_2}.\end{equation}
for $q_i \in \Q^*$. 

The computation of the periods  of cohomological forms on inner forms of $\GL(2)$ in minimal cohomological degree,  in terms of associated $L$-functions, was in essence done by Waldspurger \cite{WaldspurgerA, WaldspurgerB},
and \eqref{KP1}, \eqref{KP2} can be deduced from this computation, together with a  computation along the lines of \S \ref{GGP2} relating these $L$-functions  to $L_{\Pi}$ and $L_{\pi}$. 

However, we will now briefly outline how to deduce  \eqref{KP1} and \eqref{KP2}  directly from some mild variants of
 Theorem \ref{MainPeriodTheorem}, because that Theorem already has done all the appropriate normalizations and Hodge--linear algebra needed 
 to get the result in the desired form. We will focus on \eqref{KP2}; all steps of the proof of Theorem \ref{MainPeriodTheorem}, and the variant
we will  need below, go through with $F$ replaced by $E$ or indeed any CM field, and that  will give \eqref{KP1}. 
Besides this issue of working over $E$ rather than $F$, the reason we need  ``variants'' of Theorem \ref{MainPeriodTheorem} is to provide  enough flexibility to ensure that the $L$-values occuring are not zero.
 One pleasant feature of the current case is that the hypotheses of \S \ref{sec:periodintegrals}  
 are all known here. 
 
 We apply Theorem \ref{MainPeriodTheorem} with:
\begin{itemize} \item[-] $\GG$ the form of $\SO(3)$
defined by the reduced norm on the trace-free part of $D$; in particular $\GG(F) = D^*/F^*$. 
\item[-] $\HH \subset \GG$ the $\SO(2)$-subgroup defined by a  subfield $\tilde{F} \subset D$, quadratic over $F$, i.e. we have $\HH(F) = \tilde{F}^*/F^*$. 
\item[-] The cycle $Z(U)$ will be twisted, as in \S \ref{psidef}, by a quadratic idele class character $\psi$ of $\tilde{F}$, trivial on $F$.  
 \end{itemize}
 The twist mentioned was not used in Theorem \ref{MainPeriodTheorem}, but all steps of the proof go through.
 The only change is in the nonvanishing criterion in the last paragraph: one must replace the 
  Rankin-Selberg $L$-function by its $\psi$-twist. 
 
It is possible,  by Theorem  \cite[Theorem 4, page 288]{WaldspurgerShimura} and a local argument, given below,  to choose such  $\tilde{F},\psi$ in such a fashion that:
  \begin{itemize}
  \item[(a)] 
 $L(\frac{1}{2}, \mathrm{BC}_{F}^{\tilde{F}} \pi \otimes \psi) \neq 0$, and
 \item[(b)]   
 For $v$ a place of $F$ which remains inert in $\tilde{F}$, the local $\varepsilon$-factor $\varepsilon_v( \mathrm{BC}^{\tilde{F}}_F \pi \otimes \psi)$
 equals $-1$  when $D$ is ramified and otherwise $1$. 
 \end{itemize} 
 In both cases $\mathrm{BC}^{\tilde{F}}_F$ means base change (global or local) from $F$ to $\tilde{F}$. 
According to the last paragraph of Theorem \ref{MainPeriodTheorem}, together with the work of Tunnell--Saito \cite{Tunnell, Saito, prasad} relating invariant linear forms to $\varepsilon$-factors,  conditions (a) and (b) imply that the $\sqrt{\Q}$ ambiguity of the theorem statement
is actually nonzero, giving \eqref{KP1}.

Finally, we describe the local argument alluded to above.  We will find 
a pair of distinct quadratic idele class characters $\chi_1, \chi_2$ of $F$, and then construct $\tilde{F}, \psi$ from them, so that 
there is an equality of $L$-functions $L(\tilde{F}, \psi) = L(F, \chi_1) L(F, \chi_2)$.   (Thus, if $\chi_i$ corresponds to the quadratic extension
$F(\sqrt{d_i})$, we take $\tilde{F} = F(\sqrt{d_1d_2})$, and $\psi$ to correspond to the quadratic extension $F(\sqrt{d_1},\sqrt{d_2})$ over $\tilde{F}$). 

 Let $T$ be the set of ramified places for $D$. Let $S$ be the set of all places not in $T$ where $\pi$ is ramified, together with the archimedean places.
 Let $R$ be the remaining places. 
Our requirements (a) and (b) 
then translate to:
\begin{itemize} 
\item[(a)'] $L(\frac{1}{2}, \pi \times \chi_1) L(\frac{1}{2}, \pi \times \chi_2) \neq 0$, and 
\item[(b)']  $\varepsilon_v(\pi \times \chi_1) \varepsilon_v(\pi \times \chi_2)  \chi_1 \chi_2(-1) = \begin{cases} -1, v \in T \\ 1, v \in S \coprod R \end{cases}. $
\end{itemize}
 
 Let us recall  (see e.g. the summary in \cite[\S 1]{Tunnell}) that for $k$ a local field and $\sigma$ a representation of $\PGL_2(k)$, 
 the local epsilon factor $\varepsilon(\sigma,  \psi, 1/2) = \varepsilon(\sigma)$
 is independent of additive character $\psi$. Moreover, if $\sigma$ is a principal series,
 induced from the character $\alpha$ of $k^*$, we have $\varepsilon(\sigma) = \alpha(-1)$;
 if $\sigma$ is the Steinberg representation we have $\varepsilon(\sigma) = -1$,
 and for the unramified quadratic twist of the Steinberg representation have $\varepsilon(\sigma) = 1$.

 If $\chi$ is a quadratic idele class character of $F$ that is unramified at $T$ and trivial at $S$, 
 the global root number of the $\chi$-twist satisfies
 $$ \frac{ \varepsilon(\pi \times \chi) }{\varepsilon(\pi)} = \prod_{v \in T} \chi_v(\varpi_v) \cdot  \underbrace{ \prod_{v \in R} \chi_v(-1)}_{= \prod_{v \in S \coprod T} \chi_v(-1) = 1} = \prod_{v \in T} \chi_v(\varpi_v).$$
 In other words, twisting by such a $\chi$ changes the  global root number by a factor $(-1)^{t}$, where $t$ is the number of places in $T$ where $\chi$ is nontrivial.
  
 Choose $\chi_{1}$ and $\chi_2$  of this type such that $\chi_1$ and $\chi_2$ are ``opposed'' at each place of $T$
 (i.e. one is trivial and one is the nontrivial quadratic unramified character), and such that $\chi_1$ and $\chi_2$ are both trivial at each place of $S$.  Then 
\begin{equation} \label{oinkoink} \varepsilon_v(\pi \times \chi_1) \varepsilon_v(\pi \times \chi_2)  \chi_1 \chi_2(-1) = \begin{cases}  -1, v \in T \\ 1, v \in S \coprod R \end{cases}   \end{equation}
    
  The global root numbers of $\pi \times \chi_i$ is given by $\varepsilon(\pi) \cdot (-1)^{t}$, 
  where $t$ is the number of nontrivial places in $T$ for $\chi_1$ or $\chi_2$ (they have the same parity). Choosing $t$ appropriately
  we arrange that $\varepsilon(\pi \times \chi_1) = \varepsilon(\pi \times \chi_2) = 1$.

  Waldspurger's result implies that we may now find twists $\chi_1', \chi_2'$ of $\chi_1, \chi_2$, 
coinciding with $\chi_1, \chi_2$ at all places of $T \coprod S$, such that $L(\frac{1}{2}, \pi \times \chi_i') \neq 0$. 
The condition \eqref{oinkoink} continues to hold for the $\chi_i'$, so we  have achieved (a)' and (b)' as required.  

 \subsection{Proof of the remainder of Theorem \ref{SecondMainTheorem}} 
 
We must verify \eqref{j4} for $1 \leq i \leq 3$. 
Let us compute volumes of everything in sight in terms of the volumes of $L_{\Pi}$ and $L_{\pi}$.

First of all, 
\begin{equation} \label{brodo}   \vol(H^3_{\Pi} \otimes L_{\Pi}^{*})^2 \stackrel{\eqref{tensorvolume}}{\sim} \vol(H^3_{\Pi})^6 \cdot \vol(L_{\Pi})^{-2}  \stackrel{\eqref{KP1}}{\sim}    \vol(H^3_{\Pi})^2
\stackrel{\eqref{oinkA}}{\sim}   \vol(H^4_{\Pi})^2  \end{equation}
Also we have
\begin{align} \label{brodo1} \vol^{\sigma} (H^3_{\Pi} \otimes L_{\Pi}^*)^2   &= \vol^{\sigma}(L_{\Pi}^*)^2 \vol(H^3_{\Pi})^2  \stackrel{\eqref{Lpifix}}{\sim}  \vol(L_{\pi}^*)^2 \vol(H^3_{\Pi})^2\\
 \stackrel{\eqref{KP2}}{\sim}& \frac{(\vol H^3_{\Pi})^2}{(\vol H^1_{\pi})^4} \stackrel{\eqref{buggerme}}{\sim} \left( \vol^{\sigma} H^4_{\Pi}\right)^2.  \end{align}

We can now deduce the conclusions of the Theorem. 
First, it follows from \eqref{KP1} that 
 $$H^3_{\Pi} \cdot  \wedge^3 L_{\Pi}^* =  \sqrt{q_1} H^6_{\Pi},$$
 by comparing volumes -- both sides above are one dimensional $\Q$-vector spaces;  that proves \eqref{j4} for $i=3$ . 
 For $i=4$ we use the following lemma, applied with $L_1$ the image of  $H^3_{\Pi} \otimes L_{\Pi}^{*}$ in $H^4(Y, \R)_{\Pi}$, and
  $L_2 := H^4_{\Pi}$. 
 \begin{lemma*}
 Let $V_{\R}$ be a three-dimensional real vector space with metric,  equipped 
 with an isometric action of $\langle \sigma \rangle \simeq \Z/3\Z$, with $\dim V_{\R}^{\sigma} = 1$. 
 Suppose $V_1,  V_2 \subset V_{\R}$ are two different $\Q$-structures, both stable under $\sigma$. If
\begin{equation} \label{vol equ}  \vol(V_1) =  \vol(V_2), \ \ \vol^{\sigma}(V_1) = \vol^{\sigma}(V_2),\end{equation}
 then we have  $$V_1  \otimes \Q(\sqrt{b}) = \alpha (V_2 \otimes \Q(\sqrt{b}))$$ for a rotation $\alpha \in \R[\sigma]^*$ and some positive $b\in \Q^*$.
\end{lemma*}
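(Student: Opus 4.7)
The plan is to decompose $V_{\R}$ into isotypic components for $\langle\sigma\rangle$ and reduce the lemma to a statement about $\Q(\omega)$-lines in $\C$. First I would take the orthogonal decomposition
$$ V_{\R} = V_{\R}^{\sigma} \oplus V', $$
which exists because $\sigma$ acts isometrically and its order is coprime to $\mathrm{char}(\R)$. The only nontrivial irreducible of $\Z/3\Z$ over $\R$ is two-dimensional, so $\dim V_{\R}^{\sigma} = 1$ forces $\dim V' = 2$, and $V'$ acquires a canonical $\C$-vector-space structure in which $\sigma$ acts as multiplication by $\omega := e^{2\pi i/3}$. Fix once and for all an isometric identification $V' \simeq \C$ compatible with this $\C$-action.

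Each $V_i$ splits, via the averaging idempotent $e = \tfrac{1}{3}(1+\sigma+\sigma^{2})$, as $V_i = V_i^{\sigma} \oplus W_i$, where $W_i = (1-e)V_i$ is a one-dimensional $\Q(\omega)$-subspace of $V'$; write $W_i = \alpha_i\,\Q(\omega)$ for some $\alpha_i \in \C^*$. The hypothesis $\vol^{\sigma}(V_1) = \vol^{\sigma}(V_2)$ (in $\R^*/\Q^*$) says that the two $\Q$-lines $V_i^{\sigma} \subset V_{\R}^{\sigma}$ have equal volumes up to $\Q^*$; since $V_{\R}^{\sigma}$ is one-dimensional this forces $V_1^{\sigma} = V_2^{\sigma}$. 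Orthogonality of the decomposition then reduces the equal-total-volume hypothesis to $\vol(W_1) \sim_{\Q^*} \vol(W_2)$.

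The key computation is an evaluation of $\vol(W_i)$. Using the $\Q$-basis $\{\alpha_i,\alpha_i\omega\}$ and the relation $\langle\alpha_i,\alpha_i\omega\rangle = \mathrm{Re}(\alpha_i\,\overline{\alpha_i\omega}) = -\tfrac{1}{2}|\alpha_i|^{2}$, the Gram matrix is $|\alpha_i|^{2}\bigl(\begin{smallmatrix}1&-1/2\\-1/2&1\end{smallmatrix}\bigr)$ and has determinant $\tfrac{3}{4}|\alpha_i|^{4}$. Hence $\vol(W_i)^{2} = \tfrac{3}{4}|\alpha_i|^{4}$, and the volume equality forces
$$ b \;:=\; |\alpha_2/\alpha_1|^{2} \;\in\; \Q_{>0}^{*}. $$

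To finish I would set $\mu = \alpha_2/\alpha_1$ and $z = \mu/\sqrt{b}$, so that $|z| = 1$. Inside $\Q(\omega,\sqrt{b})^{*}$ the element $\sqrt{b}$ is rational, so
$$ z\cdot\bigl(W_1 \otimes \Q(\sqrt{b})\bigr) \;=\; z\alpha_1\cdot \Q(\omega,\sqrt{b}) \;=\; (\alpha_2/\sqrt{b})\cdot\Q(\omega,\sqrt{b}) \;=\; W_2 \otimes \Q(\sqrt{b}). $$
Combined with $V_1^{\sigma} = V_2^{\sigma}$, this shows that $\alpha := (1,z^{-1}) \in \R^{*}\times\C^{*} = \R[\sigma]^{*}$ is a rotation carrying $V_2\otimes\Q(\sqrt{b})$ to $V_1\otimes\Q(\sqrt{b})$, as required. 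The only substantive point is the Gram-matrix computation that converts the metric hypothesis into the rationality of $|\mu|^{2}$; once that is in hand, passing to $\Q(\sqrt{b})$ is precisely what is needed to trivialize the obstruction coming from the fact that $N_{\Q(\omega)/\Q}$ does not surject onto $\Q_{>0}^{*}$.
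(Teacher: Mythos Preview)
Your proof is correct and follows essentially the same approach as the paper: decompose under the idempotent $e=\tfrac{1}{3}(1+\sigma+\sigma^2)$, use equal $\sigma$-invariant volumes to identify $V_1^\sigma=V_2^\sigma$, write each coinvariant piece as $\alpha_i\,\Q(\zeta_3)\subset\C$, and convert the volume hypothesis into $|\alpha_1/\alpha_2|^2\in\Q^*$. You are simply more explicit than the paper in computing the Gram matrix and in writing down the rotation $z=\mu/\sqrt{b}$; the paper states the last step in one sentence.
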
 

\proof We have an isomorphism $\Q[\sigma] \simeq \Q \oplus \Q[\zeta_3]$
and correspondingly we may split orthogonally
$$V_i = V_i^{\sigma} \oplus (V_i)_{\sigma}.$$
Since $V_1^{\sigma}, V_2^{\sigma}$ have the same volume, they are equal. 
On the other hand, $(V_1)_{\sigma} \otimes \R = (V_2)_{\sigma} \otimes \R$,
and these spaces are both isometric to $\R[\zeta_3]$ equipped with the 
  the standard quadratic form $|x+iy|^2 = x^2+y^2$. The images of $V_1, V_2$
  in $\R[\zeta_3]$ must be of the form   $\alpha_i \cdot \Q[\zeta_3]$ for some $\alpha \in \R[\zeta_3]^* \simeq \C^*$;
  since the volumes of these spaces coincide  in $\R^*/\Q^*$ we get   $|\alpha_1|^2  = b |\alpha_2|^2$ for some $b \in \Q^*$.  Therefore
  $V_1 \otimes \Q(\sqrt{b}), V_2 \otimes \Q(\sqrt{b})$ differ by a rotation as claimed.  \qed 

In our case we do not have the exact equality of volumes as in \eqref{vol equ}, but only equality up to certain factors in $\sqrt{\Q^*}$.
Correspondingly, we get $L_1= \alpha L_2$ only after first extending scalars to a field of the form $\Q(\sqrt{a_1}, b^{1/4})$.  
  This implies the case $i=4$ in the theorem. 
  
 Finally,  the case of $i=5$ follows from Poincar{\'e} duality:
   take $h, h' \in H^3_{\Pi}$ and $a \in \wedge^2 L_{\Pi}^{*}, a' \in    L_{\Pi}^{*}$. Then Lemma \ref{adjointness2} implies        $$ \langle h \cdot a, h' \cdot a' \rangle =    \langle h \cdot a a', h' \rangle \in \Q \cdot \sqrt{q_1} $$
   where $\langle -, - \rangle$ is the Poincar{\'e} duality pairing on $H^*(Y, \R)$, and we used \eqref{KP1} at the last step. 
Therefore, the three-dimensional  vector spaces $H^3_{\Pi} \cdot L_{\Pi}^{*} \otimes \overline{\Q}$
and $H^3_{\Pi} \cdot \wedge^2 L_{\Pi}^{*} \otimes \overline{\Q}$ are  dual to one another under the Poincar{\'e} duality pairing. 
    Since  the former space is a rotation of $H^4_{\Pi} \otimes \overline{\Q}$, as explained above, 
    we deduce that the latter space is a rotation of $H^5_{\Pi} \otimes \overline{\Q}$. This concludes the proof of the theorem.

\appendix

 \section{The motive of a cohomological automorphic representation} \label{Pimotive Appendix}

 In this appendix,  for lack of a sufficiently general reference, we shall formulate the precise conjectures relating
 cohomological automorphic representations to motives. 
 \subsection{ The notion of a $\Gh$-motive.}   \label{motive_general}  

\subsubsection{The motivic Galois group} 
Let $F$ be a number field. 
Assuming standard conjectures, the category $\cM_{F}$ of  Grothendieck motives over $F$ (with $\Q$-coefficients) is a neutral Tannakian category,  
 with fiber functor   sending the motive $M$ to the Betti cohomology of 
 $ M_v := 
 M \times_v \C$ for an embedding $v:F  \hookrightarrow \C$. 
 (See \S \ref{sec:purity}). 
 Fixing such $v$ gives a motivic Galois group (the automorphisms of this fiber functor),
 denoted $G_{\Mot}$. It is a pro-algebraic group over $\Q$; it depends on the choice of $v$, but we will suppress this dependence in our notation.

  For any object $M$ of $\cM_F$, we let $G_M$ denote the algebraic group over $\Q$ defined similarly but 
  with $\cM_F$ replaced by the smallest Tannakian subcategory containing $M$. Then 
  $G_M$ is of finite type and 
  \begin{equation}
  \label{eqn:gmot=invlimit}
  G_{\Mot} = \varprojlim_M G_M.
 \end{equation}
   The natural map 
$$ 
\rho_{M,\ell}: \Gamma_F \rightarrow \GL (H_\et^* (M_v, \Q_\ell )) =\GL (H_B^* (M_v) \otimes \Q_\ell) 
$$ 
factors through $G_M (\Q_\ell)$. Conjecturally the image of this map is Zariski dense in 
$G_M(\Q_\ell)$  \cite{serre} \S 3.2? (sic), and we will assume this in our discussion.

The groups $G_M$ and  $G_{\Mot}$ sit in exact sequences:  
$$
1 \rightarrow G_M^0 \rightarrow G_M \rightarrow \Gamma_M \rightarrow 1
$$
and 
$$
1 \rightarrow G_{\Mot}^0 \rightarrow G_{\Mot} \rightarrow \Gamma_F \rightarrow 1
$$
where $G_M^0$ and $G_{\Mot}^0$ denotes the identity components of $G_M$ and $G_{\Mot}$ respectively.
The group $\Gamma_F =\Gal (\overline{F}/F)$ 
may be viewed as the Tannakian group associated with the Tannakian category of Artin motives over $F$.

The Galois representations $\rho_{M,\ell}: \Gamma_F \rightarrow G_{M}(\Q_{\ell})$  yields, in the inverse limit, a map \begin{equation} \label{rholdef}
\rho_\ell: \Gamma_F \rightarrow G_{\Mot} (\Q_\ell)
\end{equation}
with the property that the composite map $\Gamma_F \rightarrow G_{\Mot} (\Q_\ell) \rightarrow \Gamma_F$ is the identity. 

\subsubsection{The group $\GC$} 
We will use the $C$-group defined in \cite{BG}, see in particular Proposition 5.3.3 therein. Let  $\Gth = (\widehat{G} \times \mathbb{G}_m)/\langle \Sigma \rangle,$
where $\Sigma$ is the order $2$ element defined by $(\Sigma_{\widehat{G}}(-1), -1)$, and $\Sigma_{\widehat{G}}$
is the co-character of $\widehat{T} \subset \widehat{G}$ corresponding to the sum of all positive roots for $\GG$. 
This has the property that the cocharacter $x \mapsto (x, \Sigma_{\widehat{G}}(x))$ from $\mathbb{G}_m \rightarrow  \widehat{G} \times \mathbb{G}_m$
admits a square root when projected to $\Gth$; this square root will be denoted by $\varpi$:
\begin{equation} \label{weightchardef} \varpi: \mathbb{G}_m \rightarrow \Gth,\end{equation}
so that we may informally write $\varpi(x)= ( \sqrt{x}, \Sigma_{\widehat{G}}(\sqrt{x}))$.

We define the $C$-group as the semidirect product 
 $$ 
\GC = \Gth \rtimes \Gamma_F
$$
where $\Gamma_F$ acts on $\widehat{G}$ in the natural way and on $\mathbb{G}_m$ trivially.  The action of $\Gamma_F$ on $\Gth$ factors through a finite quotient of $\Gamma_F$. 
We understand $\GC$ to be a pro-algebraic group defined over $\Q$. 

Note that, parallel to the structure of $G_{\Mot}$ noted above, there is an exact sequence $$
1 \rightarrow \Gth \rightarrow{}  \GC \rightarrow \Gamma_F \rightarrow 1.$$

  Just as for $\Gh$ itself, the complex algebraic groups $\Gth, \ \  \GC$ can be  descended to algebraic groups $\Gth, \  \GC$ over  $\Z$,    using the split Chevalley model of $\Gh$; thus their $R$-points make sense for any ring $R$ and, by a slight abuse of notation,
  we will allow ourselves to write $\Gth(R),  \ \GC(R)$ for these $R$-points. We also write $\Gth_R,  \ \GC_R$
  for the corresponding $R$-algebraic groups. 
   
    \subsubsection{$\Gh$-motives}  
A {\em $\Gh$-motive $X$ (over $F$)} will by definition be a   homomorphism
\begin{equation}\label{iotaX1} \iota_X: G_{\Mot, \overline{\Q}} \rightarrow {} \GC_{\overline{\Q}},\end{equation}
commuting with the projections to $\Gamma_F$ ,and whose projection to $\mathbb{G}_m/\{\pm 1\} \simeq \mathbb{G}_m$ 
gives the representation associated to the Tate motive $\Q(-1)$.

Here the subscripts refer to base extensions of these algebraic groups to $\overline{\Q}$.  
The morphisms between $G$-motives $X, Y$ will be understood to be 
the elements of 
$\Gth (\Qbar)$ conjugating $\iota_X$ to $\iota_Y$; in particular, the isomorphism class of $X$ depends only on the $\Gth(\Qbar)$-conjugacy class of $\iota_X$.

Then $X$ defines a functor (also denoted $X$) from finite-dimensional $\GC$-representations over $\overline{\Q}$ to the category of motives over $F$ with coefficients in $\Qbar$. In fact, this functor is a more intrinsic presentation of a $\Gh$-motive, because, after all, the motivic Galois group depends on a choice of fiber functor to begin with. 

Composing $\iota_X$ with $\rho_\ell$ (see \eqref{rholdef}) gives a map
$$
\rho_{X,\ell}: \Gamma_F \xrightarrow{\rho_\ell} G_{\Mot} (\Q_\ell) \rightarrow G_{\Mot,\Qbar} (\Q_\ell \otimes \Qbar) 
\xrightarrow{\iota_X}     {} \GC_{\overline{\Q}} (\Q_\ell \otimes \Qbar).
$$ 
Thus we get a representation $\rho_{X, \lambda}: \Gamma_F \rightarrow{}  \GC_{\overline{\Q}} (\overline{\Q}_{\lambda})$
for each prime $\lambda$ of $\overline{\Q}$ above $\ell$, with the property that the composite of this 
map with the projection $\GC \rightarrow \Gamma_F$ is the identity. 

\begin{lemma} [The Galois representation determines the motive] 
If $\rho_{X,\lambda}$ and $\rho_{Y,\lambda}$ are conjugate  under $\Gth(\Qbar_{\lambda})$ for some $\lambda$, then also $X,Y$ are isomorphic -- i.e., $\iota_X, \iota_Y$ define
the same $\Gth(\Qbar)$-conjugacy  class of maps.   \end{lemma}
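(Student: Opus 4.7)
\medskip

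\noindent\textbf{Proof plan.} The plan is to proceed in two steps: first, upgrade the hypothesis that the $\ell$-adic Galois representations are $\Gth(\overline{\Q}_\lambda)$-conjugate to the stronger statement that the full motivic morphisms $\iota_X$ and $\iota_Y$ are $\Gth(\overline{\Q}_\lambda)$-conjugate; then descend this $\overline{\Q}_\lambda$-conjugacy to a $\overline{\Q}$-conjugacy by a Nullstellensatz-type argument on the transporter scheme.

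\medskip

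\noindent\emph{Step 1 (Zariski density).} Suppose $g \in \Gth(\overline{\Q}_\lambda)$ satisfies $g \, \rho_{X,\lambda}(\gamma) \, g^{-1} = \rho_{Y,\lambda}(\gamma)$ for all $\gamma \in \Gamma_F$. Unwinding the definition $\rho_{X,\lambda} = \iota_X \circ \rho_\ell$, this says that the two morphisms of pro-algebraic groups
\[
\operatorname{Ad}(g) \circ (\iota_X)_{\overline{\Q}_\lambda}, \quad (\iota_Y)_{\overline{\Q}_\lambda} : G_{\Mot,\overline{\Q}_\lambda} \longrightarrow {}\GC_{\overline{\Q}_\lambda}
\]
agree on the subset $\rho_\ell(\Gamma_F) \subset G_{\Mot}(\Q_\ell)$. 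By the standing hypothesis, $\rho_\ell(\Gamma_F)$ is Zariski dense in $G_{\Mot,\Q_\ell}$, and Zariski density is preserved by the flat base change $\Q_\ell \hookrightarrow \overline{\Q}_\lambda$; so the two morphisms agree identically. Hence $g$ conjugates $(\iota_X)_{\overline{\Q}_\lambda}$ to $(\iota_Y)_{\overline{\Q}_\lambda}$.

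\medskip

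\noindent\emph{Step 2 (Descent to $\overline{\Q}$).} Consider the transporter functor
\[
T \; := \; \bigl\{\, g \in \Gth_{\overline{\Q}} \;:\; g \cdot \iota_X \cdot g^{-1} = \iota_Y \,\bigr\},
\]
regarded as a subscheme of $\Gth_{\overline{\Q}}$. A priori the condition of conjugation involves infinitely many equations (one per element of $G_{\Mot}$), but since $\Gth$ is a finite-type algebraic group over $\overline{\Q}$, hence Noetherian, the intersection of these closed conditions stabilizes to a finite-type closed subscheme $T \subset \Gth_{\overline{\Q}}$. Step 1 supplies a point $g \in T(\overline{\Q}_\lambda)$, so $T$ is nonempty. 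Since $T$ is a nonempty scheme of finite type over the algebraically closed field $\overline{\Q}$, Hilbert's Nullstellensatz gives $T(\overline{\Q}) \neq \emptyset$; any such $\overline{\Q}$-point provides the required $\Gth(\overline{\Q})$-conjugator between $\iota_X$ and $\iota_Y$, proving $X \cong Y$ as $\widehat{G}$-motives.

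\medskip

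\noindent\emph{Main obstacle.} The principal technical subtlety is that $G_{\Mot}$ is only pro-algebraic, so one must check that the transporter subscheme $T \subset \Gth_{\overline{\Q}}$ is actually cut out by \emph{finitely many} conditions — equivalently, that the equality $\operatorname{Ad}(g)\iota_X = \iota_Y$ can be tested on a single finite-type quotient $G_M$ of $G_{\Mot}$. This reduces to noting that, by \eqref{eqn:gmot=invlimit}, any morphism from $G_{\Mot}$ to a finite-type algebraic group factors through some $G_M$ (since it kills the kernel of some $G_{\Mot} \twoheadrightarrow G_M$); applying this simultaneously to $\iota_X$ and $\iota_Y$ reduces the problem to comparing two morphisms $G_M \to \GC$, and the transporter between these is a finite-type closed subscheme of $\Gth$. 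The Zariski density input in Step 1, although conjectural in general, is part of the framework stated in the excerpt and is used without further comment.
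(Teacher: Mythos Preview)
Your proposal is correct and follows essentially the same approach as the paper's proof: first use Zariski density of $\rho_\ell(\Gamma_F)$ in $G_{\Mot}$ to upgrade conjugacy of $\rho_{X,\lambda}$ and $\rho_{Y,\lambda}$ to conjugacy of $(\iota_X)_{\overline{\Q}_\lambda}$ and $(\iota_Y)_{\overline{\Q}_\lambda}$, then descend from $\overline{\Q}_\lambda$ to $\overline{\Q}$. The paper's version is quite terse on the descent (``But then they are also conjugate over $\overline{\Q}$''), and you have supplied a clean justification via the transporter scheme and the Nullstellensatz; your remark on factoring through some finite-type $G_M$ to handle the pro-algebraic nature of $G_{\Mot}$ is likewise a useful detail the paper leaves implicit.
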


\proof  
If   $\rho_{X, \lambda}$ and $\rho_{Y, \lambda}$ are conjugate, then
 $\iota_X$ and $\iota_Y$, considered as maps of $\overline{\Q}_{\lambda}$-algebraic groups, are conjugate on a Zariski-dense subset
of the source (by our assumption that the image of $\Gamma_F$ in $G_{\Mot}(\Q_{\ell})$ is dense).    Thus $\iota_X$ and $\iota_Y$ are conjugate over ${\overline{\Q}}_{\lambda}$. But then they are also conjugate over $\overline{\Q}$.
 \qed

If $(\rho,V_\rho)$ is a $\GC$-representation over $\Qbar$,  
we write $X_{\rho}$ for the associated motive, i.e. the motive  with $\Qbar$ coefficients associated to the composite $\rho \circ \iota_X$. 
There is a tautological isomorphism \begin{equation} \label{tau} H_B(X_{\rho} \times_{v} \C, \overline{\Q})   \simeq V_{\rho}. \end{equation} 

  \subsection{The $\widehat{G}$ motive attached to a cohomological automorphic representation} \label{GhC}
  
  Now let $F=\Q$; we will formulate the precise connections between cohomological automorphic representation for $\G$, 
  and $\Gh$-motives. 

 It is convenient to start with a character
$\chi: \mathscr{H} \rightarrow \overline{\Q}$ of the cohomological Hecke algebra for $Y(K)$, as in \S \ref{sec:numerical_invariants}  but allowing $\overline{\Q}$ values. 
Attached to each embedding $\lambda: \overline{\Q} \hookrightarrow \C$ there is a near-equivalence class of cohomological automorphic representation
$\Pi^{\lambda}$ whose Hecke eigenavalues coincide with  $\lambda \circ \chi$.

Attached to $\chi$ there should be a compatible system of Galois representations to $\GC$ in the following sense:
 For each nonarchimedean place $\lambda$ of
 $\overline{\Q}$ we should have \cite[Conjecture 5.3.4]{BG} attached a distinguished conjugacy class of maps
  \begin{equation}  \label{rho1} \rho_{\lambda} :  \Gal(\overline{\Q}/\Q)  \longrightarrow{}  \GC(\Qbar_{\lambda} ) \   \ \ \mbox{ $\lambda$ nonarchimedean};  \end{equation}
which matches with $\lambda \circ \chi$ under the Satake correspondence,
(see {\em loc. cit.} for details). 

The basic conjecture concerning the existence of motives  (cf. the discussion at the end of \cite[\S 2]{LanglandsMarchen})\  is then the following: \ \begin{conj} \label{Conj Pimotive}
Given a cohomology class as above, there exists a $\Gh$-motive $X$ over $\Q$,  with the property that
for each nonarchimedean $\lambda$ the Galois representation
$\rho_{\lambda}$ attached to the cohomology class  is isomorphic to the Galois representation $\rho_{X, \lambda}$ arising from $X$. 
\end{conj}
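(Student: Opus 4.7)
The plan is to construct the $\widehat{G}$-motive $X$ by realizing, for each finite-dimensional representation $r\colon {}^CG \to \mathrm{GL}(V_r)$ over $\overline{\Q}$, a corresponding motive $X_r$ over $\Q$ with $\overline{\Q}$-coefficients, in such a way that these motives fit into a tensor functor from $\mathrm{Rep}({}^CG)$ to the Tannakian category of motives; by Tannakian duality this functor is exactly the data of a $\widehat{G}$-motive $X$ in the sense of \S\ref{motive_general}. The starting point is the compatible system $\{\rho_\lambda\}$ attached to $\chi$ in \eqref{rho1}, which for each $r$ produces a compatible system of $\ell$-adic Galois representations $r\circ\rho_\lambda\colon \Gal(\overline{\Q}/\Q)\to \mathrm{GL}(V_r\otimes \overline{\Q}_\lambda)$.

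First I would check that this compatible system has the right local properties to come from a motive: at unramified places, the Frobenius eigenvalues are Weil numbers (a consequence of temperedness plus the Satake-normalized compatibility with $\chi$, as in \cite{BG}), and at all places the representation should be de Rham at $\ell$, with Hodge–Tate numbers determined by the archimedean parameter of $\Pi^\lambda$ through the recipe of \S\ref{TemperedCohomologicalParameter}. Assuming a strong form of the Fontaine–Mazur conjecture together with the standard conjectures ensuring semisimplicity and independence of $\ell$, one obtains for each $r$ a Grothendieck motive $X_r$ over $\Q$ with $\overline{\Q}$-coefficients whose $\ell$-adic realization is $r\circ\rho_\lambda$; the motive is determined up to isomorphism by the Lemma in \S\ref{motive_general} (``the Galois representation determines the motive''), once one works inside the Tannakian subcategory generated by $X$.

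The key step, and the main obstacle, is then to upgrade the collection $\{X_r\}_{r}$ to a genuine tensor functor $r\mapsto X_r$ — i.e.\ to exhibit functorial isomorphisms $X_{r\otimes r'}\simeq X_r\otimes X_{r'}$ and $X_{r\oplus r'}\simeq X_r\oplus X_{r'}$ compatible with associativity and commutativity constraints. On the Galois side these isomorphisms are automatic from the definition $r\circ\rho_\lambda$, and by the lemma quoted above they lift uniquely to isomorphisms of motives; the content is that these lifts are compatible in families, which reduces to the statement that the image of $\rho_\lambda$ is Zariski-dense in the centralizer-theoretic sense needed to recover ${}^CG$ from its action on all $V_r$ simultaneously. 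Granting this, Tannakian formalism converts the tensor functor into a homomorphism $\iota_X\colon G_{\mathrm{Mot},\overline{\Q}}\to {}^CG_{\overline{\Q}}$ commuting with projection to $\Gamma_\Q$, whose composition with $\rho_\ell$ recovers $\rho_\lambda$ on the nose.

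The hardest part is of course that essentially every input — Fontaine–Mazur, independence of $\ell$, semisimplicity of Frobenius, the existence of the motivic Galois group $G_{\mathrm{Mot}}$ itself, and the very existence of $\{\rho_\lambda\}$ for general reductive $\G$ — is open. Thus this proposal is genuinely a reduction of Conjecture \ref{Conj Pimotive} to a package of standard conjectures rather than an unconditional proof; in special cases (e.g.\ when $\G$ is a form of $\mathrm{GL}_n$ and $\Pi^\lambda$ is associated to a Shimura variety) most of these inputs are known, and the construction above can be carried out unconditionally, which is consistent with all the evidence marshalled in this paper.
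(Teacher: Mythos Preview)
The statement you are attempting to prove is not a theorem in the paper but an explicitly labeled \emph{conjecture}. The paper offers no proof of Conjecture~\ref{Conj Pimotive}; it is formulated in the appendix precisely as a hypothesis, and throughout the body of the paper it is invoked as an assumption (see e.g.\ Theorem~\ref{MainPeriodTheorem}, assumption (b)). So there is no ``paper's own proof'' to compare your proposal against.

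Your proposal is a reasonable sketch of how one might hope to reduce this conjecture to other standard conjectures (Fontaine--Mazur, independence of $\ell$, semisimplicity, existence of the compatible system $\{\rho_\lambda\}$ itself), and you correctly recognize that it is a reduction rather than a proof. That is all one can realistically do here, and the paper does not even go that far: it simply posits the existence of the $\widehat{G}$-motive and proceeds. One minor point: your ``hardest step'' of upgrading the collection $\{X_r\}$ to a tensor functor is not quite the right framing, since in the Tannakian setup of \S\ref{motive_general} a $\widehat{G}$-motive is \emph{defined} as a single homomorphism $\iota_X\colon G_{\Mot,\overline{\Q}}\to {}^CG_{\overline{\Q}}$, so the tensor compatibilities among the $X_r$ come for free once you have $\iota_X$; the actual difficulty is producing $\iota_X$ in the first place, which requires knowing that the motives $X_r$ (whose existence already presupposes Fontaine--Mazur) all live in a single Tannakian subcategory whose Tannaka group surjects onto the Zariski closure of the image of $\rho_\lambda$ in ${}^CG$.
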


\subsection{Descent of the coefficient field for a $\widehat{G}$-motive }
In \S \ref{GhC} we have formulated the conjectures over $\overline{\Q}$. However if the Hecke character $\chi$ takes values in a
subfield $E \subset \overline{\Q}$ it is of course preferable to work over $E$.  In the current section, we outline
how to do this, i.e. how to descend the coefficient field of a $\Gh$-motive, at the cost of replacing $\widehat{G}$ by an inner form. 
 
\subsubsection{Twisting  a Galois representation}  \label{Twisting} 
Let us first recall how to ``apply a Galois automorphism to a representation.''

Suppose that $H$ is an algebraic group over $\overline{\Q}$,
and $\sigma$ is an automorphism of $\overline{\Q}$. 
We can define the $\sigma$-twist $H^{\sigma}$: 
if $H$ is defined by various equations $f_i = 0$, then $H^{\sigma}$
is defined by the equations $f_i^{\sigma} = 0$, and so on;
if $H$ is defined over $\Q$ there is a canonical isomorphism $H \simeq H^{\sigma}$. 
Also $\sigma$ induces a bijection $H(\overline{\Q}) \rightarrow H^{\sigma}(\overline{\Q})$
denoted by $h \mapsto h^{\sigma}$. 

In particular, given a homomorphism $\pi: H \rightarrow H'$ of $\overline{\Q}$-algebraic groups,
we obtain a twisted morphism $\pi^{\sigma}: H^{\sigma} \rightarrow (H')^{\sigma}$,
with the property that $\pi^{\sigma}(h^{\sigma}) = \pi(h)^{\sigma}$.

\subsubsection{Descent of coefficients for a motive} \label{shrinky coeff}
Let $X$ be a $\Gh$-motive. 
For  $\sigma \in \Gal(\overline{\Q}/\Q)$, we can form a new motive $X^{\sigma}$ by the rule $$\iota_{X^\sigma} = (\iota_X)^\sigma.$$
Informally, $X^{\sigma}$ applies $\sigma$ to the coefficients of the system of motives defined by $X$. 

Now let $E$ be a finite extension of $\Q$, and suppose that $X^{\sigma} \simeq X$ for all $\sigma \in \Gal(\overline{\Q}/E)$.   In particular,
there exists an element $g_{\sigma} \in \Gth(\overline{\Q})$ with the property that
$$ \Ad(g_{\sigma}) \iota_X = \iota_{X^{\sigma}}.$$ 
Explicitly, this means that for $g \in G_{\Mot}(\overline{\Q})$ we have
$\Ad(g_{\sigma}) \iota_X(g^{\sigma}) = \iota_{X}(g)^{\sigma}$, 
so that the image of $G_{\Mot}(\Q)$ is fixed under $z \mapsto \Ad(g_{\sigma}^{-1})  z^{\sigma}$.

The element $g_{\sigma}$ is determined up to $\overline{\Q}$-points of $Z(\iota_X)$, the centralizer of $\iota_X$ inside $\Gth_{\overline{\Q}}$.  In particular, 
if the centralizer of $\iota_X$ coincides with the center of $\Gth_{\overline{\Q}}$, the rule $\sigma \mapsto g_{\sigma}$ 
defines a cocycle; its cohomology class lies in $$H^1(\Gal(\overline{\Q}/E), \Gth(\overline{\Q}) \mbox{ modulo center}) = H^1(E,  \widehat{G} \mbox{ modulo center}),$$
where we use the usual notation for Galois cohomology on the right. 

This cocycle can be used to descend  $\widehat{G}_{\overline{\Q}},\Gth_{\overline{\Q}}$ and $\GC_{\overline{\Q}}$ 
to $\Q$-forms $\widehat{G}_*, \Gth_*,  \ \GC_*$, described as the fixed points of $ z \mapsto \Ad(g_{\sigma}^{-1}) z^{\sigma}$ on the respective (pro)-groups.  
We may then descend $\iota_X$ to a morphism
\begin{equation} \label{iotaXdescent} \iota_X :  G_{\Mot} \longrightarrow \  \GC_*  \ \ \ \ \ \mbox{ (morphism of $E$-groups) }\end{equation}
Composition with the adjoint representation of $\GC_*$ should then yield the adjoint motive described in Definition  \ref{AdjointMotiveDefinition}.

 \subsection{Standard representations of the $c$-group for $\PGL$ and $\SO$}
  \label{sec:archpar}
  
 According to our prior discussion, a cohomological form for $\GG$ gives rise to a $\widehat{G}$-motive with $\overline{\Q}$ coefficients;  
 in particular, a representation of $\GC$ gives rise to a usual motive with $\Qbar$ coefficients.  
 The Hodge weights of the resulting motive are given by the  eigenvalues of the weight
 cocharacter \eqref{weightchardef}. 
 
 In the remainder of this section,
 we specify a standard representation of the $c$-group in the cases
 of interest, namely, $\GG =\PGL_n$ and $\GG = \SO_m$.    
 We will compute the Hodge numbers both for this motive (denoted $M$) 
 and for the motive associated to the adjoint representation of $\GC$ (denoted $\Ad M$). 
 We work over an arbitrary number field $F$;
 in the text, $F$ will sometimes be an imaginary quadratic extension of $\Q$.

  \begin{itemize}
  \item[-] $G=\PGL_{n}, \Gth = \SL_n \times \mathbb{G}_m/ (  (-1)^{n+1} \mathrm{Id}_n, -1)$. 
 
 Here \begin{equation}
 \label{varpidefPGL} \varpi(x) =    (  \Sym^{n-1} \begin{bmatrix}  \sqrt{x} & \\ &  1/\sqrt{x} \end{bmatrix},\sqrt{x}),\end{equation}
and we define the standard representation of $\Gth$ to be the tensor product of the character  $x \mapsto x^{n-1}$
  on $\mathbb{G}_m$ with the standard representation of $\SL_n$.  This extends to $\GC$, by extending trivially on $\Gamma_F$.

  Thus the Hodge numbers of $M$ are 
  $$
  (n-1,0), (n-2,1) \ldots, (1, n-2), (0,n-1)
  $$
  each with multiplicity one, and the Hodge numbers of $\Ad M$ are 
\begin{align*}  (n-1, -(n-1))^1,  (n-2, -(n-2))^2 \ldots, (1,-1)^{n-1}, (0,0)^{n-1}, \\  (-1,1)^{n-1} \ldots, (-(n-2),n-2)^2, (-(n-1),n-1)^1
  \end{align*}
where we wrote the multiplicities as superscripts.
 
 \bigskip

\item[-] $G=  \SO_{2n}, \Gth = \mathrm{SO}_{2n} \times \mathbb{G}_m/(\mathrm{Id}_n,-1)$.

 Here  
   $$\varpi(x)  =   \left( \Sym^{2n-2} \begin{bmatrix}  \sqrt{x} & \\ &  1/\sqrt{x}   \end{bmatrix} \oplus \mathrm{Id}_1, \sqrt{x} \right).$$  
where $\mathrm{Id}_1$ is the identity matrix in one dimension, and we define the standard representation of $\Gth$ to be the tensor product of the standard representation 
on $\SO_{2n}$ and the character $x \mapsto x^{2n-2}$ on $\mathbb{G}_m$.  This extends to $\GC$:  first extend it to $\mathrm{O}_{2n} \times \mathbb{G}_m/(1,-1)$, and 
then use the map $\GC \rightarrow  \mathrm{O}_{2n} \times \mathbb{G}_m/(1,-1)$
extending the inclusion of $\Gth$; here the map $\Gamma_F \rightarrow \mathrm{O}_{2n}$
should induce the natural action of $\Gamma_F $ on $\mathrm{SO}_{2n} = \widehat{G}$ by pinned automorphisms.   
 
 Thus the Hodge numbers of $M$ are 
$$ (2n-2,0)^1, (2n-3,1)^1, \cdots , (n-1,n-1)^2, \cdots, (1,2n-3)^1, (0,2n-2)^1 $$
and the  Hodge numbers of $\Ad(M)$ (which is of rank
$n(2n-1)$) range from $(2n-3,-(2n-3))$ to $(-(2n-3),(2n-3))$ and admit a 
pattern that depends on the parity of $n$. If $n=2t$, the multiplicities are given by
 $$
 1,1,\cdots ,\overline{t,t}, \cdots ,n-1,n-1, n,n,n,n-1,n-1, \cdots ,\overline{t,t}, \cdots,1,1, 
 $$
 where the bar above $(t,t)$ indicates that those terms are skipped. If $n=2t+1$, then the 
multiplicities are
 $$
 1,1,\cdots , \overline{t,t+1},  \cdots n-1,n-1,n,n,n,n-1,n-1, \cdots  \overline{t+1,t},  \cdots,1,1,
 $$
 where again the bar has the same interpretation as before. 

\bigskip

\item[-] $G= \SO_{2n+1}, \Gth =   \Sp(2n) \times \mathbb{G}_m/(-\mathrm{Id}_n, -1).$

Here $\varpi$ is given   by 
$$\varpi(x) =    (  \Sym^{2n-1} \begin{bmatrix}  \sqrt{x} & \\ &  1/\sqrt{x} \end{bmatrix},\sqrt{x}),$$
and we define the  standard representation of $\Gth$ to be the tensor product of the standard representation of $\Sp(2n)$
and the character $x \mapsto x^{2n-1}$ on $\mathbb{G}_m$. 
 This extends to $\GC$, by extending trivially on $\Gamma_F$. 
 
The Hodge numbers of $M$ are 
$$(2n-1,0), (2n-2,1), \ldots, (1,2n-2), (0,2n-1),$$
each with multiplicity one. 
The Hodge numbers of $\Ad(M)$ (which is of rank
$n(2n+1)$) range from $(2n-1,-(2n-1))$ to $(-(2n-1),(2n-1))$ and have multiplicities
 $$
 1,1,2,2,\cdots,n-1,n-1, n,n,n,n-1,n-1, \cdots , 2,2,1,1.
 $$

\end{itemize}

 \bibliography{PVBib}
   \bibliographystyle{plain}
  \end{document}